\numberwithin{equation}{section}
\numberwithin{figure}{section}
\theoremstyle{plain}
\newtheorem{thm}{\protect\theoremname}[section]
  \theoremstyle{plain}
  \newtheorem{cor}[thm]{\protect\corollaryname}
  \newtheorem{prop}[thm]{\protect\propositionname}
  \theoremstyle{definition}
  \newtheorem{defn}[thm]{\protect\definitionname}
  \theoremstyle{remark}
  \newtheorem{rem}[thm]{\protect\remarkname}
  \theoremstyle{plain}
  \newtheorem{lem}[thm]{\protect\lemmaname}
  \theoremstyle{definition}
  \newtheorem{example}[thm]{\protect\examplename}
  \theoremstyle{plain}
  \providecommand{\definitionname}{Definition}
  \providecommand{\examplename}{Example}
  \providecommand{\lemmaname}{Lemma}
  \providecommand{\propositionname}{Proposition}
  \providecommand{\remarkname}{Remark}
  \providecommand{\corollaryname}{Corollary}
\providecommand{\theoremname}{Theorem}
\begin{document}

\title[Bialgebras, Hochschild cohomology and formality]{Deformation theory of bialgebras, higher Hochschild cohomology and formality}

\author{Grégory Ginot, Sinan Yalin}

\begin{abstract}
A first goal of this paper is to precisely relate the homotopy theories of bialgebras and $E_2$-algebras.
For this, we construct a conservative and fully faithful $\infty$-functor from pointed conilpotent homotopy bialgebras to augmented  $E_2$-algebras which consists in an appropriate ``cobar'' construction.
Then we prove that the (derived) formal moduli problem of homotopy bialgebras structures on a bialgebra is equivalent to the (derived) formal moduli problem of $E_2$-algebra structures on this ``cobar'' construction.
We show consequently that the $E_3$-algebra structure on the higher Hochschild complex of this cobar construction, given by the solution to the higher Deligne conjecture, controls the deformation theory of this bialgebra. This implies the existence of an $E_3$-structure on the deformation complex of a dg bialgebra, solving a long-standing conjecture of Gerstenhaber-Schack.
On this basis we solve a long-standing conjecture of Kontsevich, by proving the $E_3$-formality of the deformation complex
of the symmetric bialgebra. This provides as a corollary a new proof of Etingof-Kazdhan deformation quantization of Lie
bialgebras which extends to homotopy dg Lie bialgebras and is independent from the choice of an associator.
Along the way, we establish new general results of independent interest about the deformation theory of algebraic structures, which shed a new light on various deformation complexes and cohomology theories studied in the literature.
\end{abstract}
\maketitle

\tableofcontents{}

\section*{Introduction}

A deep interplay between bialgebras and quantum groups on one side and higher algebras on the other side (for instance $E_2$ and $E_3$-algebras) has been noticed for many years and more have been expected for a long time.
This led people to investigate the relationship between these two kinds of structures with the hope to establish some equivalence between their deformation theories, which could help to understand various related problems on both sides. 
One of the first goals of this paper is to give a precise relation between their respective homotopy theories and to use it to study the deformation theory of dg-bialgebras.

Algebras governed by $E_n$-operads and their deformation theory play a prominent role in a variety of topics such as the study 
of iterated loop spaces, Goodwillie-Weiss calculus for embedding spaces, deformation quantization of Poisson manifolds and Lie bialgebras,
and factorization 
homology~\cite{Ko1, Ko2, Lur0, Lur2, CPTVV, FG, Fra, Fre5, GJ, GTZ, Hin, HiLe, Kap-TFT, KoSo, May, Preygel, Shoikhet, Tam1, Toen-ICM}. 
These algebras form a hierarchy of ``more and more'' commutative and homotopy associative structures,
interpolating between homotopy associative or $A_\infty$-algebras (the $E_1$ case) and $E_{\infty}$-algebras (the colimit of the $E_n$'s, 
which is homotopy equivalent to differential graded commutative algebras in characteristic zero).
Their natural cohomology theory, the higher Hochschild cohomology, is a suitable generalization of the Hochschild cohomology
of associative algebras.

On the other hand, bialgebras are central in various topics of algebraic topology, representation theory and mathematical physics
(often incarnated as Hopf algebras)~\cite{Dri, Dri2, Baues, EK1, EK2, GS, Mer1, Mer2}. They consist of two structures, an associative algebra structure and a coassociative coalgebra
structure, related by a compatibility condition such that bialgebras can be alternately defined as algebras in coalgebras or vice-versa. 
Their natural cohomology theory, the Gerstenhaber-Schack cohomology, is a suitable mixing between Hochschild cohomology of algebras
and co-Hochschild cohomology of coalgebras.

Two important and long-standing conjectures expected from the relationship between these two kind of structures are the following. The first one, enunciated by Gerstenhaber and Schack (in a wrong way) 
at the beginning of the 90's \cite{GS}, characterizes the structure of the complex controlling the deformation theory of bialgebras,  which remained quite mysterious for a while. It is a ``differential graded bialgebra version'' of the famous Deligne conjecture for associative differential graded algebras (see for instance \cite{Tam1} and \cite{Ko1}). 
The second one, enunciated by Kontsevich in his celebrated work on deformation quantization of Poisson manifolds \cite{Ko2},
is a formality statement for the deformation complex of the symmetric bialgebra, which should imply as a corollary Drinfeld's 
and Etingof-Kazdhan's deformation quantization theory (see \cite{Dri}, \cite{EK1} and \cite{EK2}).

This situation is somehow a generalization to bialgebras of the situation encountered in deformation quantization of Poisson manifolds: 
the formality of the $E_2$-operad implies the Deligne conjecture, which states the existence of an $E_2$-algebra structure on the Hochschild
complex of an associative algebra, and this allows to prove that the Hochschild complex of the ring of functions
on a Poisson manifold is formal. This formality implies, in turn, the deformation quantization of such a manifold. 
Here, the formality of the $E_3$-operad should allow to prove a bialgebra version of this Deligne conjecture, 
implying the formality of the Gerstenhaber-Schack complex of the symmetric bialgebra, inducing in turn Drinfeld 
and Etingof-Kazdhan deformation quantization of Lie bialgebras. 

In this paper, we solve this bialgebra version of the Deligne conjecture proposed by Gerstenhaber-Schack 
as well as Kontsevich's $E_3$-formality conjecture, both at a greater level of generality than the original statements,
by implementing methods coming from derived algebraic geometry (see in particular \cite{Lur0}, \cite{Lur2} and \cite{Toe})
as well as from previous results of the authors (see in particular \cite{GTZ} and \cite{Yal2}) and by solving partially a conjecture of 
Francis-Gaistgory. 

We deduce from it a generalization of Etingof-Kadhan's celebrated deformation quantization.

Moreover, we enlighten along the way the role of various versions of deformation complexes of algebraic structures 
considered in the literature, by characterizing their associated derived formal moduli problems in full generality. 
Our new methods have thus an independent interest going even further than the solutions to the two aforementioned conjectures.
We now describe our main results and conclude with some perspectives for future works.

\subsection{From bialgebras to $E_2$-algebras}

The cobar construction defining a Quillen equivalence between conilpotent dg coalgebras and augmented dg algebras can be lifted 
to a functor from the category of associative coassociative bialgebras to the category of $E_2$-algebras \cite{Kad}.
A natural question~\cite{FG} was thus to know whether this implies an equivalence of homotopy theories (in the sense of $(\infty,1)$-categories)
between bialgebras and $E_2$-algebras. The appropriate answer to this problem is more subtle, 
and we solve it by using an appropriate notion of cobar construction for bialgebras,
which intertwines a bar construction on the algebra part of the structure with a cobar construction on the coalgebra part: 
\begin{thm}\label{T:Barenhanced}
(1) There exists a bar-cobar adjunction
\[
\mathcal{B}^{enh}_{E_1}: E_1-Alg^{0-con}(dgCog^{conil})\rightleftarrows E_1-Cog^{conil}(dgCog^{conil}):\Omega^{enh}_{E_1}
\]
inducing an equivalence of $(\infty,1)$-categories between $0$-connected homotopy associative algebras in conilpotent dg coalgebras ($0$-connected conilpotent homotopy associative bialgebras) and conilpotent homotopy coassociative coalgebras in conilpotent dg coalgebras.

(2) The equivalence above induces an equivalence of $(\infty,1)$-categories
\[
(\mathcal{B}^{enh,pt}_{E_1}(-)_-)_+: E_1-Alg^{aug,con}(dgCog^{conil})\rightleftarrows E_1-Cog^{conil,pt}(dgCog^{conil}):(\Omega^{enh,pt}_{E_1}(-)_-)_+
\]
between connected augmented conilpotent homotopy associative bialgebras and pointed conilpotent homotopy coassociative coalgebras in conilpotent dg coalgebras.
\end{thm}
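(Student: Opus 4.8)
The plan is to realize the statement as an instance of bar--cobar (Koszul) duality carried out \emph{internally} to the symmetric monoidal $(\infty,1)$-category $\mathcal V := dgCog^{conil}$, rather than over dg modules. First I would record that $\mathcal V$ is a presentable symmetric monoidal $(\infty,1)$-category: the tensor product of conilpotent dg coalgebras is again conilpotent, the ground field is the monoidal unit, and the underlying-complex functor $U\colon \mathcal V \to dgMod$ is symmetric monoidal and conservative. Under this identification an object of $E_1-Alg(\mathcal V)$ is exactly an associative algebra in conilpotent coalgebras, i.e.\ a (homotopy) bialgebra, whereas an object of $E_1-Cog(\mathcal V)$ is a coassociative coalgebra in conilpotent coalgebras; this is what makes the statement a genuinely bialgebraic phenomenon rather than a purely associative one.

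Next I would define the two functors by the usual bar and cobar formulas, but with every tensor power formed in $\mathcal V$. For an augmented $E_1$-algebra $A$ in $\mathcal V$ with augmentation ideal $\bar A$, set $\mathcal B^{enh}_{E_1}(A) = \bigoplus_{n \ge 0}\big(\bar A[1]\big)^{\otimes_{\mathcal V} n}$, equipped with the bar differential built from the multiplication. Because each internal tensor power $\big(\bar A[1]\big)^{\otimes_{\mathcal V} n}$ is again a conilpotent coalgebra, the result simultaneously carries the inner coalgebra structure of $\mathcal V$ and the outer coalgebra structure produced by the bar, hence lands in $E_1-Cog^{conil}(\mathcal V)$; dually $\Omega^{enh}_{E_1}(C) = \bigoplus_{n\ge 0}\big(\bar C[-1]\big)^{\otimes_{\mathcal V} n}$ with the cobar differential. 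The adjunction is then formal: it follows from the twisting-cochain / Maurer--Cartan description of the hom-sets, $\mathrm{Hom}(\Omega^{enh}_{E_1} C, A) \cong \mathrm{MC}\big(\mathrm{Hom}(C,A)\big) \cong \mathrm{Hom}(C, \mathcal B^{enh}_{E_1} A)$, which is valid in any symmetric monoidal dg category and therefore lifts verbatim to $\mathcal V$, upgrading to an $\infty$-adjunction after localization.

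The heart of the proof is to promote this adjunction to an equivalence, i.e.\ to show that the unit $C \to \mathcal B^{enh}_{E_1}\Omega^{enh}_{E_1} C$ and the counit $\Omega^{enh}_{E_1}\mathcal B^{enh}_{E_1} A \to A$ are weak equivalences on the connectivity-restricted subcategories. Here I would exploit that $U$ is conservative and intertwines the enhanced constructions with the classical ones: after forgetting the inner coalgebra structure, $\mathcal B^{enh}_{E_1}$ and $\Omega^{enh}_{E_1}$ compute the ordinary bar and cobar complexes, for which the unit and counit are quasi-isomorphisms on conilpotent coalgebras and on $0$-connected (resp.\ augmented connected) algebras by the classical bar--cobar equivalence. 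The $0$-connectedness hypothesis on the algebra side and the conilpotency on the coalgebra side are precisely what guarantee convergence of the length (weight) filtrations, so the classical acyclicity argument transfers. I expect the crux to be exactly this transfer: the weak equivalences of $\mathcal V = dgCog^{conil}$ are themselves defined through a (Koszul-dual) bar--cobar mechanism and may be strictly finer than quasi-isomorphisms of underlying complexes, so one must verify that $U$ detects the relevant weak equivalences in the presence of the connectivity bounds and that the internal filtration arguments remain valid when the tensor powers are taken in $\mathcal V$.

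Finally, part (2) I would deduce from part (1) by bookkeeping of basepoints. An augmented $E_1$-algebra in $\mathcal V$ is an algebra together with a map to the monoidal unit, and the operations $(-)_-$ and $(-)_+$ strip off and re-adjoin the (co)unit; these furnish adjoint equivalences between the augmented (resp.\ pointed) categories and their reduced counterparts. Composing these with the equivalence of part (1), and checking that bar--cobar interchanges the augmentation on the algebra side with the coaugmentation on the coalgebra side, yields the pointed equivalence carried by $(\mathcal B^{enh,pt}_{E_1}(-)_-)_+$ and $(\Omega^{enh,pt}_{E_1}(-)_-)_+$. The connectedness assumption ensures that the augmentation is canonical and that the pointing is compatible with the weak equivalences established above.
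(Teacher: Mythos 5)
Your strategy---explicit internal bar/cobar formulas, an adjunction by twisting cochains, then a direct check that unit and counit are quasi-isomorphisms---is genuinely different from the paper's, and it breaks down precisely at the point where the paper does its real work. To pass from your strict $1$-categorical adjunction to an equivalence of $(\infty,1)$-categories you need (as in the paper's Lemma~\ref{L: stricteq}) an adjunction of \emph{relative} categories, i.e.\ both functors must preserve weak equivalences. The bar construction does preserve quasi-isomorphisms, but the cobar construction does \emph{not} preserve quasi-isomorphisms of conilpotent dg coalgebras in general: this is the classical defect of bar-cobar duality (cobar is homotopy invariant only on suitably reduced coalgebras, whereas the coalgebra side of the statement is all of $E_1-Cog^{conil}(dgCog^{conil})$). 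So "unit and counit are quasi-isos" does not yield the claimed localized equivalence. Your caveat about weak equivalences is aimed at the wrong place: the weak equivalences of the base $\mathcal{V}=dgCog^{conil}$ are just quasi-isomorphisms (by the model structure recalled in Section 1 of the paper); the genuine problems are (i) the homotopy invariance of $\Omega^{enh}_{E_1}$, and (ii) the identification of the $\infty$-category $E_1-Cog^{conil}(dgCog^{conil})$ appearing in the statement---which in the Francis-Gaitsgory framework \cite{FG} is the category of coalgebras over the comonad $\mathcal{B}_{E_1}\circ triv$---with the naive localization of strict coalgebras at quasi-isomorphisms. Note also a sign of this mismatch: your twisting-cochain adjunction has cobar as the \emph{left} adjoint, whereas in the statement (following \cite{FG}) $\mathcal{B}^{enh}_{E_1}$ is the left adjoint, bar being a colimit-type construction left adjoint to the trivial-algebra functor.

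The paper sidesteps unit/counit entirely: it applies the comonadic Barr-Beck-Lurie theorem (Theorem~\ref{T:BarBeckdual}), so that the enhanced bar functor is an equivalence onto comonad-coalgebras (identified with conilpotent $BE_1$-coalgebras by \cite[Lemma 3.3.4]{FG}) \emph{provided} $\mathcal{B}_{E_1}$ is conservative and preserves totalizations; these two hypotheses are the actual content (Theorem~\ref{T: BarConsTot}). Conservativity on $0$-connected algebras is proved by reduction to the classical associative bar construction via Fresse's theory of right modules over operads \cite{Fre3}---this, not convergence of length filtrations, is where $0$-connectedness enters---together with the fact that the forgetful functor $U$ creates tensor products, colimits and weak equivalences so that $U\circ\mathcal{B}_{E_1}=\mathcal{B}^{\mathbb{K}}_{E_1}$. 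Commutation with totalizations is the technical heart of Section 4: the triple coresolution $Res^{\bullet}(C)$ built from the cofree-coalgebra comonad, its Reedy fibrancy, the explicit simplicial frame via conormalization of simplices, and Proposition~\ref{P: barvstot}. Nothing in your proposal plays the role of this totalization argument, and without it (or a worked-out substitute resolving the homotopy-invariance problem above) the equivalence in part (1) does not follow. Your treatment of part (2) is fine and essentially the paper's own: the $(-)_{+}/(-)_{-}$ equivalences between pointed (coaugmented) and reduced categories, composed with part (1).
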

This solves a conjecture of Francis-Gaitsgory \cite{FG} in the case where the base category is the category of conilpotent dg coalgebras, respectively the category of pointed conilpotent dg coalgebras. Let us note that we work here with non-negatively graded chain complexes.
\begin{cor}\label{C:Barenhanced}
The left adjoint of Theorem 0.1(2) induces a fully faithful $\infty$-functor
\[
\tilde{\Omega}: E_1-Alg^{aug,con}(dgCog^{conil})\hookrightarrow E_2-Alg^{aug}
\]
embedding connected augmented conilpotent homotopy associative bialgebras into augmented $E_2$-algebras.
\end{cor}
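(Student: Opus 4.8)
The plan is to factor $\tilde{\Omega}$ as the equivalence of Theorem 0.1(2) followed by a fully faithful functor $E_1-Cog^{conil,pt}(dgCog^{conil})\hookrightarrow E_2-Alg^{aug}$, and then to invoke the elementary fact that a composite of an equivalence with a fully faithful functor is again fully faithful. Since Theorem 0.1(2) already asserts that the left adjoint $(\mathcal{B}^{enh,pt}_{E_1}(-)_-)_+$ is an equivalence of $(\infty,1)$-categories, hence in particular fully faithful, the whole problem reduces to constructing and analyzing this second functor; $\tilde{\Omega}$ is then its composite with the left adjoint, which is exactly the sense in which it is \emph{induced} by that adjoint.

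First I would identify the intermediate category. An object of $E_1-Cog^{conil,pt}(dgCog^{conil})$ carries two coassociative comultiplications: the ``inner'' one witnessing that it lives in $dgCog^{conil}$, and the ``outer'' $E_1$-coalgebra structure, the two being related by an interchange law. Since conilpotent dg coalgebras are a model for conilpotent $E_1$-coalgebras, the coalgebraic form of the Dunn--Lurie additivity theorem lets me read this compatible pair as a single conilpotent $E_2$-coalgebra structure, the pointing corresponding to a coaugmentation; that is, $E_1-Cog^{conil,pt}(dgCog^{conil})\simeq E_2-Cog^{conil,pt}$.

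Next I would embed $E_2-Cog^{conil,pt}$ fully faithfully into $E_2-Alg^{aug}$ by the cobar construction for $E_2$-coalgebras, i.e. the $E_2$ instance of Koszul duality, which is fully faithful precisely on conilpotent coaugmented objects. Concretely, this can be assembled from the classical ($E_1$) bar--cobar duality alone: over the base field the latter upgrades to a symmetric monoidal equivalence $\Omega\colon dgCog^{conil}\xrightarrow{\sim} E_1-Alg^{aug}$, so transporting the inner coalgebra direction gives $E_1-Cog^{conil,pt}(dgCog^{conil})\simeq E_1-Cog^{conil,pt}(E_1-Alg^{aug})$, and cobarring the remaining conilpotent outer $E_1$-coalgebra lands in $E_1-Alg(E_1-Alg^{aug})\simeq E_2-Alg^{aug}$ by Dunn additivity. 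Each cobar step is fully faithful on conilpotent objects, so the composite is fully faithful, and the connectivity and augmentation hypotheses are exactly what cut out the essential image and prevent the embedding from being an equivalence.

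The main obstacle is controlling the cobar construction in this enriched, iterated setting. Over chain complexes the fully-faithfulness of $\Omega$ on conilpotent coalgebras is classical, but here I must run the argument for coalgebras internal to $E_1-Alg^{aug}$ (equivalently, establish the fully-faithfulness of the two-fold cobar, i.e. $E_2$-Koszul duality), which requires that the cobar--bar adjunction unit be an equivalence on conilpotent objects over a base that is itself a category of augmented algebras rather than plain chain complexes. Keeping the symmetric monoidal structures strict enough to apply Dunn additivity, and tracking the conilpotence, pointing and connectivity conditions through each transport, is where the real care is needed; the homotopy-coherent comparison of the ``inner'' and ``outer'' cobar steps is the delicate point, whereas the final formal step — that a composite of equivalences with a fully faithful functor is fully faithful — is immediate.
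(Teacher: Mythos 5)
Your overall skeleton (compose the equivalence of Theorem 0.1(2) with a fully faithful functor into $E_2-Alg^{aug}$ built from Dunn additivity and an $E_2$-cobar construction) matches the paper's, but your middle step hides a genuine gap. The objects of $E_1-Cog^{conil,pt}(dgCog^{conil})$ carry \emph{strict} structure: the pointing identifies this category with $suE_1\otimes uAss-Cog^{conil,coaug}$, in which the counit of the $E_1$-direction is strict. Dunn--Lurie additivity, however, identifies $uE_2$ with $uE_1\otimes uE_1\simeq uE_1\otimes uAss$, i.e.\ it applies to \emph{homotopy}-counital structures. So your claimed equivalence $E_1-Cog^{conil,pt}(dgCog^{conil})\simeq E_2-Cog^{conil,pt}$ is not available as stated: one must first relax the strict counit, i.e.\ embed $suE_1\otimes uAss$-coalgebras into $uE_1\otimes uAss$-coalgebras along the operad surjection $uE_1\otimes uAss\twoheadrightarrow suE_1\otimes uAss$. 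The paper proves (via Proposition~\ref{P: fullfaith} and Corollary~\ref{C: fullfaithprop}) that this relaxation is fully faithful, but it is \emph{not} claimed, and not known, to be essentially surjective --- and this is precisely the step that makes $\tilde{\Omega}$ an embedding rather than an equivalence. Your proposal never notices this mismatch, so the one nontrivial full-faithfulness assertion in your argument is left unjustified.

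Consequently you have also misplaced the source of non-essential-surjectivity and the role of the hypotheses. In the paper, the final step $Cobar^{(2)}: E_2-Cog^{conil}\to E_2-Alg^{aug}$ restricted to conilpotent coalgebras and augmented algebras is a Quillen equivalence, hence an equivalence of $\infty$-categories --- not merely fully faithful --- and the connectivity hypothesis plays no role there: it is consumed inside Theorem 0.1(2) itself, where it guarantees conservativity of the bar construction. Your alternative ``inner then outer'' iterated cobar would additionally need the classical cobar $\Omega: dgCog^{conil}\to E_1-Alg^{aug}$ to be monoidal in a homotopy-coherent sense strong enough to transport conilpotent $E_1$-coalgebra structures (this is the Milgram-map subtlety), which you correctly flag as the delicate point but do not establish. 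The paper sidesteps this entirely: all non-equivalence steps are restrictions along maps of operads ($uE_2\stackrel{\sim}{\to}uE_1\otimes uAss$ and the surjection above), so that full faithfulness follows from a purely formal statement about surjections of props rather than from any analysis of an enriched cobar construction.
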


We use the above results to study deformation  of bialgebras in terms of those of $E_2$-algebras, 
by the means of the theory of moduli spaces of algebraic structures.

\subsection{Moduli spaces associated to deformation complexes}
One of the main tool for studying deformation is given by deformation complexes of algebraic structures.
In a part of this paper which is of independent interest, we prove several new results about deformation theory of algebraic structures. 
In particular, we give a conceptual explanation of the differences between various deformation complexes appearing in the literature 
by explaining which kind of deformations each of these complexes controls. We later use these results to identify the deformation complexes
of dg-bialgebras with those of its (appropriate) cobar construction.

\smallskip

Deformation theory is classicaly encoded by moduli problem and their associated dg-Lie-algebras
(for instance see~\cite{Hin0, Ko2, Lur0, Pri}).
Formal moduli problems are simplicial presheaves over augmented artinian cdgas satisfying some extra properties with respect
to homotopy pullbacks, which control the infinitesimal deformation theory of points on a given moduli space 
(variety, scheme, derived stack).
According to Lurie's equivalence theorem \cite{Lur0}, (derived) formal moduli problems and dg Lie algebras are equivalent
as $\infty$-categories, so to a given formal moduli problem controlling the infinitesimal neighbourhood 
of a point on a moduli space corresponds a dg Lie algebra called the deformation complex of this point.
Here we are interested in moduli spaces of algebraic structures and formal moduli problems controlling the deformations of such structures.
A convenient formalism to deal with algebraic structures at a high level of generality,
in order to encompass not only algebras but also bialgebras, is the notion of properad \cite{Val}, a suitable generalization of operads.
In the differential graded setting, algebraic structures are deformed as algebraic structures up to homotopy, 
for instance dg associative algebras are deformed as $A_{\infty}$-algebras.
Such a notion can be properly defined by considering cofibrant resolutions of properads.
Given a cofibrant dg properad $P_{\infty}$ and a $P_{\infty}$-algebra structure $\psi:P_{\infty}\rightarrow End_X$ on a complex $X$,
there is a formal moduli problem $\underline{P_{\infty}\{X\}}^{\psi}$ controlling the deformation theory of $\psi$. 
The associated deformation complex is an explicit Lie algebra noted $g_{P,X}^{\psi}$. 
In this paper, we prove the following general criterion to compare formal moduli problems between two kind of algebras:
\begin{thm}
Let $F:P_{\infty}-Alg\rightarrow Q_{\infty}-Alg$ be a fully faithful and conservative $\infty$-functor inducing functorially in $A$,
for every augmented artinian cdga $A$, a fully faithful and conservative $\infty$-functor 
$F:P_{\infty}-Alg(Mod_A)\rightarrow Q_{\infty}-Alg(Mod_A)$. Then $F$ induces an equivalence of formal moduli problems
\[
\underline{P_{\infty}\{X\}}^{\psi} \sim  \underline{Q_{\infty}\{F(X)\}}^{F(\psi)},
\]
where $F(\psi)$ is the $Q_{\infty}$-algebra structure on the image $F(X,\psi)$ of $(X,\psi)$ under $F$, 
hence an equivalence of tangent $L_{\infty}$-algebras
\[
g_{P,X}^{\psi}\sim g_{Q,F(X)}^{F(\psi)}.
\]
\end{thm}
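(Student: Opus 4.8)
The plan is to realize both formal moduli problems as homotopy fibers built out of the $\infty$-categories of algebras in $A$-modules, and then to transport the hypotheses on $F$ through this construction. Recall that for an augmented artinian cdga $(A,\mathfrak{m}_A)$ with augmentation $\epsilon\colon A\to k$, the value $\underline{P_{\infty}\{X\}}^{\psi}(A)$ is the space of $A$-deformations of $\psi$, which I would identify with the homotopy fiber, taken over the point $(X,\psi)$, of the base-change map
\[
\epsilon_*\colon \bigl(P_{\infty}-Alg(Mod_A)\bigr)^{\simeq}\longrightarrow \bigl(P_{\infty}-Alg(Mod_k)\bigr)^{\simeq},
\]
where $(-)^{\simeq}$ denotes the maximal $\infty$-subgroupoid and $\epsilon_*=(-)\otimes_A k$. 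The first step is to record this fiber description on both the $P$ side and the $Q$ side, taking care that the basepoint $\psi$ goes to $F(\psi)$ and that the two base-change maps are intertwined by $F$. This identification (between the properadic mapping-space definition of the moduli problem underlying $g_{P,X}^{\psi}$ and the fiber-in-the-algebra-category description just given) is really the crux, and I would treat it as the technical heart of Step~1.

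First I would produce the comparison map. Since $F$ is given functorially in $A$ and, as part of this functoriality, commutes with the base-change functors along $\epsilon\colon A\to k$, applying $F$ levelwise carries $A$-deformations of $\psi$ to $A$-deformations of $F(\psi)$ compatibly with the fiber structure. This yields a natural transformation of formal moduli problems
\[
F_*\colon \underline{P_{\infty}\{X\}}^{\psi} \longrightarrow \underline{Q_{\infty}\{F(X)\}}^{F(\psi)}.
\]

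Next I would check that $F_*$ is a levelwise equivalence. Fix $A$. A fully faithful $\infty$-functor induces homotopy equivalences on all mapping spaces, hence a homotopy equivalence of $\bigl(P_{\infty}-Alg(Mod_A)\bigr)^{\simeq}$ onto the full $\infty$-subgroupoid of $\bigl(Q_{\infty}-Alg(Mod_A)\bigr)^{\simeq}$ spanned by its essential image; conservativity guarantees that $F$ reflects equivalences, so that this essential image is saturated and no spurious extra components intervene in the relevant fibers. Because the same holds over $k$ and the base-change maps are intertwined by $F$, the induced map on homotopy fibers over $(X,\psi)$ and $(F(X),F(\psi))$ is an equivalence of spaces. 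Thus $F_*$ is an equivalence after evaluation at every augmented artinian $A$, hence an equivalence of formal moduli problems. Finally, Lurie's equivalence of $\infty$-categories between formal moduli problems and $L_{\infty}$-algebras sends $F_*$ to an equivalence of the associated tangent $L_{\infty}$-algebras $g_{P,X}^{\psi}\sim g_{Q,F(X)}^{F(\psi)}$, the tangent Lie algebra being a functorial invariant of the moduli problem (computed up to shift by evaluation on the square-zero extensions $k\oplus k[n]$).

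The main obstacle I anticipate is precisely Step~1: the hypotheses on $F$ concern the \emph{full} $\infty$-categories of algebras, whereas each moduli problem only sees a single homotopy fiber based at $\psi$, so the genuine work is to ensure that being fully faithful and conservative globally actually controls this local fiber. This is where the two properties play complementary roles: full faithfulness supplies the equivalences on mapping and path spaces needed to match the fibers, while conservativity ensures saturation of the essential image and reflection of equivalences, so that the homotopy fiber over $F(\psi)$ is not artificially enlarged. Once the fiber description and the basepoint/base-change compatibility are pinned down, the remaining steps are formal.
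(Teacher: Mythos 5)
Your Step~1 is where the argument breaks, and it cannot be repaired as stated. The homotopy fiber of the base-change map $\bigl(P_{\infty}-Alg(Mod_A)\bigr)^{\simeq}\to \bigl(P_{\infty}-Alg(Mod_{\mathbb{K}})\bigr)^{\simeq}$ over $(X,\psi)$ is the space of $A$-deformations of $(X,\psi)$ \emph{as an object} of the $\infty$-category of $P_{\infty}$-algebras: a point is a $P_{\infty}$-algebra in $Mod_A$ whose underlying $A$-module is arbitrary (in particular its differential may be twisted), together with an equivalence of its reduction with $(X,\psi)$. This is \emph{not} $\underline{P_{\infty}\{X\}}^{\psi}(A)$, which by definition only deforms the structure map $\psi$ on the fixed dg $A$-module $X\otimes A$. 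The two formal moduli problems are genuinely different: the object-deformation problem has tangent $L_{\infty}$-algebra $g_{P^+,X}^{\psi^+}\simeq Lie(\underline{L^HwP_{\infty}-Alg}(X,\psi))$ (this is exactly the content of the ``plus'' construction of Section~\ref{S:Plus} and Theorem~\ref{T:Def+=hAut}), whereas $\underline{P_{\infty}\{X\}}^{\psi}$ has tangent $g_{P,X}^{\psi}$, and these differ by the $Hom(X,X)$-term, as in the fiber sequence $g_{P,X}^{\psi}\to g_{P^+,X}^{\psi^+}\to Lie(\underline{haut}(X))$ (compare $Hom(A^{\otimes>1},A)$ versus $Hom(A^{\otimes>0},A)$ for associative algebras). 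A minimal counterexample to your identification: take $P_{\infty}$ to be the trivial operad, so that $\underline{P_{\infty}\{X\}}^{\psi}$ is constant with contractible value and $g_{P,X}^{\psi}=0$, while your fiber over $A=\mathbb{K}[t]/(t^2)$ has $\pi_0$ given by twisted differentials on $X\otimes A$ up to gauge, i.e.\ $H^1(Hom(X,X))$, which is nonzero in general. So your argument, even if completed, would compare the wrong (``plus''-type) moduli problems rather than the ones in the statement.

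There is a second, independent gap in your Step~3: full faithfulness and conservativity control \emph{mapping spaces} (hence automorphisms and the components lying in the essential image), but they give no essential surjectivity on deformed objects — nothing guarantees that every $Q_{\infty}$-algebra deformation of $F(X,\psi)$ over $A$ is equivalent to $F$ of a deformation of $(X,\psi)$, and ``saturation of the essential image'' does not address this; so a direct levelwise comparison of deformation spaces is out of reach with these hypotheses. The paper's proof is engineered to avoid both traps: it uses the generalized Rezk fiber sequence $\underline{P_{\infty}\{X\}}\to \underline{\mathcal{N}wP_{\infty}-Alg}\to \underline{\mathcal{N}wCh_{\mathbb{K}}}$ of Proposition~\ref{P: Rezkgen} — the base is the classification space of \emph{modules}, not of algebras over $\mathbb{K}$ — whose based-loop version (Proposition~\ref{P: hofibgroups}) exhibits $\Omega_{\psi}\underline{P_{\infty}\{X\}}^{\psi}$ as the homotopy fiber of $\underline{L^HwP_{\infty}-Alg}(X,\psi)\to \underline{haut}(X)$; then Lemma~\ref{L: equivhaut} shows that a fully faithful conservative $F$ induces equivalences of homotopy automorphism spaces, hence an equivalence of these fibers, i.e.\ of the based loop spaces of the two formal moduli problems. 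Only loop spaces at the canonical basepoint are ever compared (which is exactly what full faithfulness plus conservativity buy), and the equivalence of formal moduli problems themselves is then \emph{deduced}, not constructed: the loop-space equivalence gives an equivalence of tangent $L_{\infty}$-algebras, and Lurie's theorem converts this into the desired equivalence of formal moduli problems. Your comparison map and your final appeal to Lurie's theorem are fine, but the fiber description at the heart of your proof must be replaced by this loop-space/fiber-sequence argument over $\underline{\mathcal{N}wCh_{\mathbb{K}}}$.
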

Moreover, we get the following fiber sequence comparing the deformation complex of an algebra $(X,\psi)$ 
with the homotopy Lie algebra $Lie(\underline{haut}_{P_{\infty}}(X,\psi))$ tangent to 
the derived algebraic group $\underline{haut}_{P_{\infty}}(X,\psi)$ (in the sense of \cite{Fra})
of homotopy automorphisms of this algebra:
\begin{prop}
There is a homotopy fiber sequence of $L_{\infty}$-algebras
\[
g_{P,X}^{\psi}\rightarrow Lie(\underline{haut}_{P_{\infty}}(X,\psi))\rightarrow Lie(\underline{haut}(X)).
\]
\end{prop}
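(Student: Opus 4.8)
The plan is to realize all three $L_\infty$-algebras as tangent Lie algebras attached to an orbit--stabilizer fiber sequence of derived group stacks, and then to transport that fiber sequence to $L_\infty$-algebras via Lurie's equivalence \cite{Lur0} between formal moduli problems and Lie algebras. First I would recall the moduli stack $\underline{P_{\infty}\{X\}}$ of $P_{\infty}$-algebra structures on the fixed complex $X$, whose $A$-points form the space of properad morphisms $P_\infty\rightarrow End_{X\otimes A}$ (equivalently, $P_\infty$-structures on $X\otimes A$ in $Mod_A$), pointed at $\psi$. The derived group $\underline{haut}(X)$ acts on $\underline{P_{\infty}\{X\}}$ by transport of structure, and the resulting orbit map
\[
a:\underline{haut}(X)\longrightarrow \underline{P_{\infty}\{X\}},\qquad \phi\longmapsto \phi_{*}\psi,
\]
sends the identity to $\psi$.

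Second, I would identify $\underline{haut}_{P_{\infty}}(X,\psi)$ with the homotopy stabilizer of $\psi$, that is, the homotopy fiber of $a$ over $\psi$: a self-equivalence $\phi$ of $X$ lifts to a self-equivalence of the $P_\infty$-algebra $(X,\psi)$ precisely when $\phi_*\psi$ is coherently equivalent to $\psi$, so that the forgetful map fits into a homotopy fiber sequence of pointed derived stacks
\[
\underline{haut}_{P_{\infty}}(X,\psi)\longrightarrow \underline{haut}(X)\stackrel{a}{\longrightarrow}\underline{P_{\infty}\{X\}}.
\]
Rotating this sequence one step to the left yields a fiber sequence whose three terms are all group-like,
\[
\Omega_{\psi}\underline{P_{\infty}\{X\}}\longrightarrow \underline{haut}_{P_{\infty}}(X,\psi)\longrightarrow \underline{haut}(X),
\]
with the loop group $\Omega_{\psi}\underline{P_{\infty}\{X\}}$ sitting inside $\underline{haut}_{P_{\infty}}(X,\psi)$ as those self-equivalences of $(X,\psi)$ projecting to the identity of $X$.

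Third, I would pass to formal completions at the respective unit/base points and apply the tangent Lie algebra functor $Lie(-)$ of \cite{Fra}. Since this functor is one half of an equivalence of $\infty$-categories between formal groups and $L_\infty$-algebras, it is exact and carries the fiber sequence above to a homotopy fiber sequence of $L_\infty$-algebras
\[
Lie(\Omega_{\psi}\underline{P_{\infty}\{X\}})\longrightarrow Lie(\underline{haut}_{P_{\infty}}(X,\psi))\longrightarrow Lie(\underline{haut}(X)).
\]
It then remains to identify the fiber term with $g_{P,X}^{\psi}$. By construction the formal completion of $\underline{P_{\infty}\{X\}}$ at $\psi$ is the formal moduli problem $\underline{P_{\infty}\{X\}}^{\psi}$, whose associated tangent Lie algebra is $g_{P,X}^{\psi}$; equivalently its tangent complex is $g_{P,X}^{\psi}[1]$. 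Looping shifts the tangent complex, so $Lie(\Omega_{\psi}\underline{P_{\infty}\{X\}})\simeq T_{\psi}\underline{P_{\infty}\{X\}}[-1]\simeq g_{P,X}^{\psi}$ as $L_\infty$-algebras, which gives the claim.

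The main obstacle I anticipate is the rigorous construction of the orbit--stabilizer fiber sequence in the derived setting: one must set up the action of $\underline{haut}(X)$ on $\underline{P_{\infty}\{X\}}$ and the identification of $\underline{haut}_{P_{\infty}}(X,\psi)$ as its homotopy stabilizer functorially in the test cdga $A$, so that every map is a map of derived group stacks and the rotation step remains within the group-like world on which $Lie(-)$ is exact. The delicate quantitative point is the final shift bookkeeping: one must ensure that the loop-space shift exactly cancels the formal-moduli-problem shift, rather than doubling it, which forces one to pin down the precise conventions relating $g_{P,X}^{\psi}$ to the tangent complex of $\underline{P_{\infty}\{X\}}^{\psi}$.
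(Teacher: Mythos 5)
Your proof is correct in substance and shares its skeleton with the paper's own argument (Proposition~\ref{P: hofibgroups}): both produce a homotopy fiber sequence of derived algebraic groups with $\underline{haut}_{P_{\infty}}(X,\psi)$ in the middle and $\underline{haut}(X)$ as base, and both finish by applying $Lie(-)$ together with the principle that the Lie algebra of the loop group of a formal moduli problem is the tangent $L_{\infty}$-algebra of that formal moduli problem. Where you differ is in how the group-level sequence is built and what its fiber is taken to be. The paper works with the formal moduli problem $\underline{P_{\infty}\{X\}}^{\psi}$ from the start: its map to $\underline{haut}(X)$ is reduction modulo the maximal ideal, $f\mapsto f\otimes_A\mathbb{K}$, its homotopy fiber is described pointwise as the Kan subcomplex of automorphisms of $(X\otimes A,\psi\otimes A)$ reducing to $id_X$, and that subcomplex is identified with $\Omega_{\psi}\underline{P_{\infty}\{X\}}^{\psi}(A)$ by restricting the equivalence of Proposition~\ref{P: Rezkgen}(2) through a geometric-realization argument. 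You instead rotate the orbit--stabilizer sequence, which is nothing but the Puppe sequence of the fibration of Proposition~\ref{P: Rezkgen}(1) (the orbit map is its connecting map, and the stabilizer is identified via the Dwyer--Kan description of loops in classification spaces); so your fiber is $\Omega_{\psi}\underline{P_{\infty}\{X\}}$, your map is the forgetful one, and the formal moduli problem only appears after completion. Your route avoids the paper's delicate pointwise simplicial manipulations, and your first anticipated obstacle (rigorously constructing the action) is already discharged by Proposition~\ref{P: Rezkgen}(1); the paper's route has the advantage that $Lie(-)$ applies immediately, with no completion step.

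The step you treat too quickly is precisely that completion, and it is where the content of the proposition sits. None of the three terms of your rotated sequence is a formal moduli problem pointwise, since their values at $A=\mathbb{K}$ are non-contractible, so you need three facts: formal completion (taking the homotopy fiber over the value at $\mathbb{K}$) preserves your fiber sequence; completion commutes with loops, so the completion of $\Omega_{\psi}\underline{P_{\infty}\{X\}}$ is exactly $\Omega_{\psi}\underline{P_{\infty}\{X\}}^{\psi}$; and $Lie$ of a derived group depends only on its completion. This bookkeeping is not cosmetic: it is exactly what separates $g_{P,X}^{\psi}$ from the strictly larger $L_{\infty}$-algebra $Lie(\underline{haut}_{P_{\infty}}(X,\psi))\simeq g_{P^{+},X}^{\psi^{+}}$ of Theorem~\ref{T:Def+=hAut}, so an error here would collapse the proposition. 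Relatedly, note that your (correct) sequence exhibits $\Omega_{\psi}\underline{P_{\infty}\{X\}}$ as the homotopy \emph{fiber} of the forgetful map, whereas Proposition~\ref{P: Rezkgen}(2) as printed asserts $\Omega_{\psi}\underline{P_{\infty}\{X\}}\simeq\underline{L^HwP_{\infty}-Alg}(X,\psi)$, identifying it with the whole middle term; the two cannot both hold unless the forgetful map to $\underline{haut}(X)$ is homotopically trivial. Your version is the one forced by Proposition~\ref{P: Rezkgen}(1), and after completion both descriptions of the fiber coincide with $\Omega_{\psi}\underline{P_{\infty}\{X\}}^{\psi}$, which is what the paper's proof actually uses; with the completion step spelled out, your argument goes through.
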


However, one should note that the deformation complex $g_{P,X}^{\psi}$ does not give exactly the usual cohomology theories. 
For instance, let us consider the case of the Hochschild complex Hom $( A^{\otimes >0}, A)$ of a dg associative algebra $A$. 
This Hochschild complex is bigraded, with a cohomological grading induced by the grading of $A$ 
and a weight grading given by the tensor powers $A^{\otimes \bullet}$. It turns out that the part $Hom(A,A)$
of weight $1$ in the Hochschild complex is missing in $g_{Ass,A}^{\psi}$. 

To correct this, and in particular to get the correct deformation complexes, 
we use a ``plus'' construction $g_{P^+,X}^{\psi^+}$ which gives the right cohomology theory and can be obtained by a slight modification
of the properad $P$, see~\S~\ref{S:Plus}. 
Moreover, we relate it to the homotopy automorphisms of the algebra:
\begin{thm}
There is a quasi-isomorphism of $L_{\infty}$-algebras
\[
g_{P^+,X}^{\psi^+}\simeq Lie(\underline{haut}_{P_{\infty}}(X,\psi)).
\]
\end{thm}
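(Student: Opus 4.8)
The plan is to realize the desired quasi-isomorphism by matching two homotopy fiber sequences of $L_{\infty}$-algebras that share the same fiber and the same base: the one furnished by the Proposition above, and a parallel one extracted from the plus construction. Recall from Section~\ref{S:Plus} that $P^{+}$ is obtained from $P$ by adjoining the unary (arity $(1,1)$) part, so that $g_{P^{+},X}^{\psi^{+}}$ is the arity $\geq 1$ part of the convolution complex $\mathrm{Hom}(P_{\infty},End_{X})$ twisted by the Maurer--Cartan element $\psi$, whereas $g_{P,X}^{\psi}$ is its reduced, arity $\geq 2$ part. First I would record that, since the convolution bracket sends arities $(p,q)$ to arity $p+q-1$ and the twisting cochain $\psi$ is concentrated in arities $\geq 2$, the arity $\geq 2$ part is a sub-$L_{\infty}$-algebra while the arity-one part is the quotient, with quotient differential the internal differential of $Hom(X,X)$. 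This yields a short exact sequence of complexes, hence a homotopy fiber sequence of $L_{\infty}$-algebras
\[
g_{P,X}^{\psi}\longrightarrow g_{P^{+},X}^{\psi^{+}}\longrightarrow Hom(X,X),
\]
in which the quotient $Hom(X,X)$ carries its commutator bracket.

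Second, I would identify this base $Hom(X,X)=\mathfrak{gl}(X)$, with its commutator bracket, as the tangent $L_{\infty}$-algebra $Lie(\underline{haut}(X))$ of the derived group of homotopy self-equivalences of the underlying complex $X$. Indeed $\underline{haut}(X)$ is the grouplike part of the endomorphism monoid of $X$, whose tangent space at the identity is $End_{X}=Hom(X,X)$; this matches the base appearing in the Proposition's fiber sequence. At this point the two homotopy fiber sequences share the fiber $g_{P,X}^{\psi}$ and the base $Lie(\underline{haut}(X))$.

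Third, to deduce
\[
g_{P^{+},X}^{\psi^{+}}\simeq Lie(\underline{haut}_{P_{\infty}}(X,\psi)),
\]
I would produce a morphism between the two fiber sequences which is the identity on the fiber and on the base, and then invoke the five lemma on the associated long exact sequences in homology (equivalently, the universal property of the homotopy fibre). The comparison map is obtained from an explicit derivation model: the tangent $L_{\infty}$-algebra of $\underline{haut}_{P_{\infty}}(X,\psi)$ is computed by the $\psi$-twisted complex of homotopy derivations of the $P_{\infty}$-algebra $(X,\psi)$, which is precisely the arity $\geq 1$ convolution complex $g_{P^{+},X}^{\psi^{+}}$, the forgetful map to $\underline{haut}(X)$ inducing on tangent spaces the projection onto the arity-one component.

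The main obstacle is exactly this last identification together with the compatibility of the two connecting maps: one must check that the tangent map of the forgetful functor $\underline{haut}_{P_{\infty}}(X,\psi)\to\underline{haut}(X)$ agrees, under $Hom(X,X)\simeq Lie(\underline{haut}(X))$, with the properadic projection reading off the arity-one part. I expect to resolve it by choosing a strict model of $\underline{haut}_{P_{\infty}}(X,\psi)$ in the model category of $P_{\infty}$-algebras, in which homotopy automorphisms are represented by automorphisms of a cofibrant resolution and their infinitesimal deformations are literally the $\psi$-twisted derivations of arity $\geq 1$; the forgetful functor then visibly records the underlying arity-one derivation, so that the two fiber sequences coincide and the middle comparison map is forced to be a quasi-isomorphism.
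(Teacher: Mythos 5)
Your first two steps reproduce, by a slightly different route, the paper's own setup: where you extract a strict short exact sequence of convolution complexes $g_{P,X}^{\psi}\to g_{P^{+},X}^{\psi^{+}}\to Hom(X,X)$, the paper obtains the same homotopy fiber sequence $g_{P,X}^{\psi}\to g_{P^{+},X}^{\psi^{+}}\to g_{Di,X}^{0}$ from a homotopy cofiber sequence of properads $Di\to P_{\infty}^{+}\to P_{\infty}$ (with $Di$ the operad of differentials), and identifies the base with $Lie(\underline{haut}(X))$ through the equivalence $\Omega_{0}\underline{Di\{X\}}^{0}\simeq\underline{haut}(X)$. The genuine gap is in your third step, which is circular: the assertion that the tangent $L_{\infty}$-algebra of $\underline{haut}_{P_{\infty}}(X,\psi)$ ``is computed by the $\psi$-twisted complex of homotopy derivations, which is precisely $g_{P^{+},X}^{\psi^{+}}$'' is exactly the statement of the theorem. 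It cannot be taken as the definition of the comparison map between the two fiber sequences and then fed into the five lemma; the five lemma only does work once that middle map, together with its compatibility with the projections to the base, has been constructed by independent means.

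Your proposed fix --- a ``strict model of $\underline{haut}_{P_{\infty}}(X,\psi)$ in the model category of $P_{\infty}$-algebras'' in which infinitesimal deformations of automorphisms ``are literally'' the twisted derivations of arity $\geq 1$ --- is unavailable in the generality of the theorem and insufficient even where it makes sense. When $P$ is a properad rather than an operad (the case needed for bialgebras), there is no model structure on $P_{\infty}$-algebras at all; this is precisely why the paper replaces $\underline{haut}_{P_{\infty}}(X,\psi)$ by the derived group $\underline{L^HwP_{\infty}-Alg}(X,\psi)$ built from the hammock localization. Even for operads, the passage from homotopy automorphisms of $(X\otimes A,\psi\otimes A)$ reducing to the identity modulo $m_{A}$ to derivation/Maurer-Cartan data is the real mathematical content here, and it is supplied by the moduli-theoretic machinery: the paper identifies $\Omega_{\psi^{+}}\underline{P_{\infty}^{+}\{X\}}^{\psi^{+}}$ with $\underline{L^HwP_{\infty}-Alg}(X,\psi)$ by fibering both over the discrete presheaf $\underline{Aut}(H_{*}X)$ (so that the fiber inclusions become equivalences), using the generalized Rezk homotopy pullback theorem of Proposition~\ref{P: Rezkgen} and the fiber sequence of Proposition~\ref{P: hofibgroups}, and then applies the fact that the Lie algebra of the loop space of a formal moduli problem is its tangent $L_{\infty}$-algebra. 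Your sketch supplies neither this identification nor the verification --- which you yourself flag as ``the main obstacle'' --- that the tangent map of the forgetful functor matches the arity-one projection; both are asserted rather than proved, and they are where the entire difficulty of the theorem lives.
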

The conceptual explanation behind this phenomenon is that $g_{P,X}^{\psi}$ controls the deformations of the $P_{\infty}$-algebra
structure over a fixed complex $X$, whereas $g_{P^+,X}^{\psi^+}$ \emph{controls deformations} of this  $P_{\infty}$-algebra structure
plus compatible deformations of the differential of $X$, that is, deformations of the $P_{\infty}$-algebra
structure \emph{up to self quasi-isomorphisms} of $X$ (and in particular up to automorphisms). This is the role of the part $Hom(X,X)$ appearing for instance in Hochschild cohomology.

For instance, in the case of a an associative dg algebra $A$, the complex $g_{Ass^+,A}^{\psi^+}\cong Hom ( A^{\otimes >0}, A) [1]$ 
computes the reduced Hochschild cohomology of $A$, where the right hand side is a sub-complex of the standard Hochschild cochain complex
shifted down by $1$ equipped with its standard Lie algebra structure (due to Gerstenhaber).
The complex $g_{Ass,A}^{\psi} \cong Hom ( A^{\otimes >1}, A)[1]$ is the one controlling the formal moduli problem of deformations of $A$
with fixed differential, where the right hand side is the subcomplex of the previous shifted Hochschild cochain complex 
where we have removed the $Hom(A,A)$ component. There is also a third complex, the full shifted Hochschild complex 
$Hom ( A^{\otimes\geq 0}, A)[1)$, which controls not the deformations of $A$ itself but the linear deformations of its dg category 
of modules $Mod_A$~\cite{KellerLowen, Preygel}.

In the case of $n$-Poisson algebras (Poisson algebras with a Poisson bracket of degree $1-n$), 
we prove in Section 6 that the \emph{Tamarkin deformation complex}~\cite{Ta-deformationofd-algebra} (which we denote $CH_{Pois_n}^{(\bullet>0)}(A)[n]$ 
since it is the part of positive weight in the full Poisson complex~\cite{CaWi}) 
\emph{controls deformations
of $A$ into dg-$Pois_n$-algebras}, that is, 
it is the tangent Lie algebra $g_{Pois_n^+,A}^{\psi^+}$ of $\underline{haut_{Pois_n}}(A)$. 
We also prove that the deformation complex
$g_{Pois_n,R}^{\psi}$ of the formal moduli problem $\underline{{Pois_{n}}_{\infty} \{A \}}^{\psi}$ 
of homotopy $n$-Poisson algebra structures deforming $\psi$ is given by the $L_\infty$-algebra  $CH_{Pois_n}^{(\bullet>1)}(A)[n]$,
which is a further truncation of $CH_{Pois_n}(A)[n]$. Concerning the full shifted Poisson complex, we conjecture the following:

\noindent
\textbf{Conjecture.} Let $n\geq 2$ and $A$ be an $n$-Poisson algebra. The $L_{\infty}$-algebra structure
of the full shifted Poisson complex $CH_{Pois_n}^*(A)[n]$ controls the deformations of $Mod_A$ into  $E_{n-1}$-monoidal dg categories.

In the same way, we relate deformation complexes of bialgebras with truncations of the \emph{full} Gerstenhaber-Schack complex 
(see~\S~\ref{SS:GScomplex}), and we conjecture the following:

\noindent
\textbf{Conjecture.}
Let $B$ be a conilpotent dg bialgebra. The full shifted Gerstenhaber-Schack complex $C^{full}_{GS}(B,B)[2]$ controls the deformations
of $Mod_B$ as an $E_2$-monoidal dg category, that is, a braided monoidal dg category.

\subsection{Gerstenhaber-Schack conjecture}

Another open problem is to prove that the higher Hochschild complex of the cobar construction of a bialgebra is a deformation complex 
of this bialgebra. It is important since it allows to reduce questions of deformations of the bialgebras to those of
$E_2$-structures for which more tools are available.

We deduce it as a consequence of a more general result stating 
the equivalence of formal moduli problems in the sense of Lurie \cite{Lur2} between the moduli problem 
$\underline{Bialg_{\infty}\{B\}}^{\varphi}$ of homotopy bialgebra structures on a bialgebra $B$ deforming the bialgebra structure 
$\varphi:Bialg\rightarrow End_B$ (where $Bialg$ is the prop of bialgebras) and the moduli problem 
$\underline{E_2\{\tilde{\Omega} B\}}^{\tilde{\Omega}\varphi}$ of $E_2$-algebra structures deforming the $E_2$-algebra structure
$\tilde{\Omega}\varphi:E_2\rightarrow End_{\tilde{\Omega}B}$ on its cobar construction $\tilde{\Omega} B$.
For this, we use the characterization of such moduli problems obtained in \cite{Yal2}. 
We deduce that the Gerstenhaber-Schack complex of $B$ is quasi-isomorphic to the higher Hochschild complex of
$\Omega B$ as $L_{\infty}$-algebras, for the $L_{\infty}$ structure on this Hochschild complex induced by its $E_3$ structure. 
The existence of this $E_3$ structure on the higher Hochschild complex is a higher version of Deligne conjecture proved in \cite{Fra}
and \cite{GTZ}. Precisely, we prove the following results:
\begin{thm}\label{T:EquivDef}
Let $B$ be a pointed conilpotent homotopy associative dg bialgebra. Let $\varphi:Bialg_{\infty}\rightarrow End_B$ be this homotopy 
bialgebra structure on $B$, and let $\tilde{\Omega}\varphi:E_2\rightarrow End_{\tilde{\Omega}B}$ be the corresponding $E_2$-algebra
structure on its cobar construction $\tilde{\Omega} B$.

(1) There is a homotopy equivalence of formal moduli problems
\[
\underline{Bialg_{\infty}\{B\}}^{\varphi} \simeq \underline{E_2\{\tilde{\Omega}B\}}^{\tilde{\Omega}\varphi}.
\]
This homotopy equivalence induces a quasi-isomorphism of $L_{\infty}$-algebras
\[
g_{Bialg,B}^{\varphi}\stackrel{\sim}{\rightarrow} g_{E_2,\tilde{\Omega}B}^{\tilde{\Omega}\varphi}
\]
between the deformation complex of $B$ and the deformation complex of $\tilde{\Omega}B$.

(2) There is a homotopy equivalence of formal moduli problems
\[
\underline{Bialg_{\infty}^+}\{B\}^{\varphi^+} \simeq \underline{E_2^+}\{\tilde{\Omega}B\}^{\tilde{\Omega}\varphi^+}.
\]
This homotopy equivalence induces a quasi-isomorphism of $L_{\infty}$-algebras
\[
C^*_{GS}(B,B)\stackrel{\sim}{\rightarrow} T_{\tilde{\Omega}(B)} 
\]
between the Gerstenhaber-Schack complex of $B$ and the (truncated) $E_2$-Hochschild complex\footnote{that is the $E_2$-cotangent 
complex $T_{\tilde{\Omega}(B)}$ which is equivalent to $\mathbb{R}Der_{E_2}(\tilde{\Omega}B,\tilde{\Omega}B)$} of $\tilde{\Omega}B$.
\end{thm}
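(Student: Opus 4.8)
The plan is to deduce both parts of Theorem~\ref{T:EquivDef} from the general comparison criterion for formal moduli problems stated above, applied to the fully faithful cobar functor $\tilde{\Omega}$ of Corollary~\ref{C:Barenhanced}. I would take $F=\tilde{\Omega}$, regarded as an $\infty$-functor from $Bialg_\infty$-algebras (pointed conilpotent homotopy associative bialgebras, with cofibrant resolution $Bialg_\infty$ of the prop $Bialg$) to augmented $E_2$-algebras, and verify the hypotheses of the criterion. Full faithfulness is exactly Corollary~\ref{C:Barenhanced}, and conservativity is then automatic, since a fully faithful $\infty$-functor reflects equivalences. The substantial point is the $A$-linear refinement: for every augmented artinian cdga $A$ one needs the analogous functor $\tilde{\Omega}_A:Bialg_\infty-Alg(Mod_A)\to E_2-Alg(Mod_A)$ to be fully faithful (hence conservative), and functorial in $A$. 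I would obtain this by running the proof of Theorem~\ref{T:Barenhanced} internally to the symmetric monoidal $\infty$-category $Mod_A$: the enhanced bar and cobar constructions, the conilpotency and connectivity conditions, and the unit/counit comparisons of the adjunction are all built from tensor products and (co)limits preserved by the symmetric monoidal base-change functors relating the $Mod_A$, so the equivalence holds in families and is compatible with restriction along maps $A\to A'$.

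Granting this, the criterion immediately produces the equivalence of formal moduli problems
\[
\underline{Bialg_{\infty}\{B\}}^{\varphi} \simeq \underline{E_2\{\tilde{\Omega}B\}}^{\tilde{\Omega}\varphi}
\]
together with the induced quasi-isomorphism of tangent $L_{\infty}$-algebras $g_{Bialg,B}^{\varphi}\stackrel{\sim}{\rightarrow} g_{E_2,\tilde{\Omega}B}^{\tilde{\Omega}\varphi}$, which is part (1). For part (2) I would instead invoke the identification of the plus deformation complex with the tangent Lie algebra of the derived automorphism group, $g_{P^+,X}^{\psi^+}\simeq Lie(\underline{haut}_{P_{\infty}}(X,\psi))$, established earlier. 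Because a fully faithful $\infty$-functor induces equivalences on mapping spaces, and hence on homotopy automorphism spaces, and because this persists $A$-linearly by the previous step, $\tilde{\Omega}$ yields an equivalence of derived group objects $\underline{haut}_{Bialg_{\infty}}(B,\varphi)\simeq \underline{haut}_{E_2}(\tilde{\Omega}B,\tilde{\Omega}\varphi)$, and therefore an equivalence of their tangent Lie algebras, giving $g_{Bialg^+,B}^{\varphi^+}\simeq g_{E_2^+,\tilde{\Omega}B}^{\tilde{\Omega}\varphi^+}$ and the promised equivalence of plus moduli problems.

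It then remains to identify the two sides with the named cohomology theories. On the bialgebra side I would use the explicit description of the prop deformation complex from \S\ref{SS:GScomplex} to identify $g_{Bialg^+,B}^{\varphi^+}$ with the Gerstenhaber--Schack complex $C^*_{GS}(B,B)$; on the $E_2$ side I would use the solution of the higher Deligne conjecture of~\cite{Fra,GTZ} to identify $g_{E_2^+,\tilde{\Omega}B}^{\tilde{\Omega}\varphi^+}$ with the (truncated) $E_2$-Hochschild complex, that is, the cotangent complex $T_{\tilde{\Omega}(B)}\simeq \mathbb{R}Der_{E_2}(\tilde{\Omega}B,\tilde{\Omega}B)$. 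Composing these identifications with the quasi-isomorphism of the previous paragraph yields $C^*_{GS}(B,B)\stackrel{\sim}{\rightarrow} T_{\tilde{\Omega}(B)}$. I expect the main obstacle to be precisely the $A$-linear (families) version of Theorem~\ref{T:Barenhanced}: one must check that every ingredient of the enhanced bar-cobar equivalence—especially the preservation of the pointedness and conilpotency hypotheses and the connectivity bounds that make the cobar construction well behaved—remains valid over an arbitrary augmented artinian base and is natural in $A$, since the entire comparison of moduli problems rests on this functoriality.
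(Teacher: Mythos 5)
Your overall strategy is the one the paper follows: apply the comparison criterion of Theorem~\ref{T:fiberseqDefwith+} to $\tilde{\Omega}$, check full faithfulness, conservativity and the $A$-linear functoriality by observing that the constructions entering Theorem~\ref{T: tildomega} (the enhanced bar construction and $Cobar^{(2)}$) make sense in $Mod_A$ and commute with base change $-\otimes_A B$, and then identify $g_{Bialg^+,B}^{\varphi^+}$ with $C^*_{GS}(B,B)$ (Merkulov \cite{Mer2}) and $g_{E_2^+,\tilde{\Omega}B}^{\tilde{\Omega}\varphi^+}$ with $T_{\tilde{\Omega}B}$. Your route for part (2) --- invoking Theorem~\ref{T:Def+=hAut} together with the equivalence on homotopy automorphism spaces induced by a fully faithful conservative functor (Lemma~\ref{L: equivhaut}) --- is a mild, legitimate variant of the paper's argument, which instead re-runs part (1) with the plus-props $Bialg_{\infty}^+$ and $E_2^+$ (using the results of \cite{MV2} that the plus construction preserves the relevant cofibrant quasi-free resolutions); both rest on the same lemmas. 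One small slip: the identification of $g_{E_2^+,\tilde{\Omega}B}^{\tilde{\Omega}\varphi^+}$ with $T_{\tilde{\Omega}B}$ is not the higher Deligne conjecture (that provides the $E_3$-structure, used only later in Corollary~\ref{C:GS=HH}); it is Francis's identification of the tangent complex with $Lie(\underline{haut}_{E_2})$, i.e.\ Corollary~\ref{L:gE2+=TA}.

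There is, however, a genuine gap at your very first step. You take $F=\tilde{\Omega}$ ``regarded as an $\infty$-functor from $Bialg_{\infty}$-algebras to augmented $E_2$-algebras,'' but $\tilde{\Omega}$ is not defined on $Bialg_{\infty}$-algebras: by Theorem~\ref{T: tildomega} its domain is $E_1-Alg^{aug,con}(dgCog^{conil})$, that is, bialgebras whose coalgebra part is a \emph{strict} conilpotent dg coalgebra. This category is not the category of algebras over the cofibrant prop $Bialg_{\infty}$; it only embeds into it by restriction along the prop surjection $Bialg_{\infty}\twoheadrightarrow (E_1,Ass^{\vee})$, and since prop-algebras carry no model structure there is no rectification theorem allowing you to extend $\tilde{\Omega}$ to all of $Bialg_{\infty}-Alg$. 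Meanwhile the moduli problem $\underline{Bialg_{\infty}\{B\}}^{\varphi}$ in the statement is, by definition, the problem of deforming $\varphi$ as a $Bialg_{\infty}$-algebra structure, i.e.\ inside the larger category. So applying Theorem~\ref{T:fiberseqDefwith+} to $\tilde{\Omega}$ on its true domain only compares deformations of $B$ within $E_1-Alg^{aug,con}(dgCog^{conil})$ with deformations of $\tilde{\Omega}B$; you still owe the identification of the former with $\underline{Bialg_{\infty}\{B\}}^{\varphi}$. The paper closes exactly this gap by noting that the surjection $Bialg_{\infty}\twoheadrightarrow(E_1,Ass^{\vee})$ induces, by Corollary~\ref{C: fullfaithprop}, a fully faithful conservative $\infty$-functor $E_1-Alg^{aug,con}(dgCog^{conil})\hookrightarrow Bialg_{\infty}-Alg$, functorially in $A$, so the same comparison criterion identifies the two tangent moduli problems; the plus-version of this step is likewise needed in part (2). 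With this supplement your argument is complete and agrees with the paper's proof.
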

Then we extend these equivalences to $E_3$-algebra equivalences, and even extend this to the full Gerstenhaber-Schack complex $C_{GS}^{full}(B,B)$. We get eventually the following solution to a longstanding problem:
\begin{cor}[Gerstenhaber-Schack conjecture]\label{C:GS=HH}
(1) There is an $E_3$-algebra structures  on $C^*_{GS}(B,B)$ and a unital $E_3$-algebra structure on $C_{GS}^{full}(B,B))$
such that the following diagram  \[\xymatrix{ \tilde{\Omega}B [-1] \ar[r] & T_{\tilde{\Omega}(B)} \ar[r] &  CH_{E_2}^{*}( \tilde{\Omega}B, \tilde{\Omega}B) \\ 
  \tilde{\Omega}B[-1] \ar@{=}[u] \ar[r] & C_{GS}^{*}(B,B) \ar[u]^{\simeq} \ar[r] & C_{GS}^{full}(B,B) \ar[u]_{\simeq} }\] 
  is 
a commutative diagram of non-unital $E_3$-algebras with vertical arrows being equivalences. 

(2) The $E_3$-algebra structure on $C^*_{GS}(B,B)$ is
 a refinement of its $L_{\infty}$-algebra structure controlling the deformation theory of the bialgebra $B$.
\end{cor}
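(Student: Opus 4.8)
The plan is to produce the $E_3$-structures on the Gerstenhaber--Schack complexes by transporting, along the equivalences of Theorem \ref{T:EquivDef}, the $E_3$-structure that the higher Deligne conjecture supplies on the $E_2$-side, and then to check that this transport is coherent with the two horizontal fiber sequences. First I would invoke the solution of the higher Deligne conjecture of \cite{Fra} and \cite{GTZ} to equip the full higher Hochschild complex $CH_{E_2}^*(\tilde\Omega B,\tilde\Omega B)$ with its unital $E_3$-algebra structure. The next point is that the top row
\[
\tilde\Omega B[-1]\to T_{\tilde\Omega(B)}\to CH_{E_2}^*(\tilde\Omega B,\tilde\Omega B)
\]
is a fiber sequence of \emph{non-unital} $E_3$-algebras relating the $E_2$-cotangent complex $T_{\tilde\Omega(B)}\simeq\mathbb{R}Der_{E_2}(\tilde\Omega B,\tilde\Omega B)$, the full center $CH_{E_2}^*(\tilde\Omega B,\tilde\Omega B)$, and the underlying algebra via $\tilde\Omega B[-1]$. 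I would establish this from the description, for an augmented $E_n$-algebra $A$, of the tangent complex $\mathbb{R}Der_{E_n}(A,A)$ as the positive-weight part of the $E_n$-center complex, the discrepancy $A[-1]$ (weight zero) being exactly the fiber, together with the fact that this relation is realised compatibly with the Deligne $E_{n+1}$-structure.

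Second, Theorem \ref{T:EquivDef}(2) provides the middle vertical map as a quasi-isomorphism of $L_\infty$-algebras $C_{GS}^*(B,B)\xrightarrow{\simeq} T_{\tilde\Omega(B)}$, while the leftmost vertical map is the identity of $\tilde\Omega B[-1]$. Since $E_3$-structures transfer along quasi-isomorphisms, I would transport the non-unital $E_3$-structure of $T_{\tilde\Omega(B)}$ to $C_{GS}^*(B,B)$. To treat the full complexes, I would use that both rows are fiber sequences sharing the same fiber $\tilde\Omega B[-1]$ and, after the middle identification, the same middle term; taking cofibers then produces the right vertical equivalence $C_{GS}^{full}(B,B)\xrightarrow{\simeq} CH_{E_2}^*(\tilde\Omega B,\tilde\Omega B)$ and endows $C_{GS}^{full}(B,B)$ with a unital $E_3$-structure by transport. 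Arranging all three transfers into a single coherent zig-zag of fiber sequences makes the whole square commute as non-unital $E_3$-algebras with the asserted vertical equivalences, which is part (1).

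For part (2), the transported $E_3$-structure on $C_{GS}^*(B,B)$ has, by construction, underlying $L_\infty$-structure the image of that of $T_{\tilde\Omega(B)}$ under the quasi-isomorphism of Theorem \ref{T:EquivDef}(2). But that quasi-isomorphism is precisely the one identifying $C_{GS}^*(B,B)$ with the tangent $L_\infty$-algebra of the formal moduli problem $\underline{Bialg_\infty\{B\}}^{\varphi}$; hence the underlying $L_\infty$-structure of our $E_3$-structure is the deformation $L_\infty$-structure of the bialgebra $B$, so the $E_3$-structure refines it.

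The step I expect to be the main obstacle is the coherent upgrade of Theorem \ref{T:EquivDef} from an equivalence of tangent $L_\infty$-algebras to one respecting the whole Deligne $E_3$-structure and both fiber sequences simultaneously. A formal moduli problem only remembers its tangent $L_\infty$-algebra, so on its face the equivalence does not carry the multiplicative data; the difficulty is to show that the comparison, induced by the fully faithful cobar embedding $\tilde\Omega$ of Corollary \ref{C:Barenhanced}, is compatible with the functorial construction of $CH_{E_2}^*(-,-)$ and its Deligne structure, and in particular matches the weight-zero parts $\tilde\Omega B[-1]$ on both sides so that the left-hand square commutes. Controlling this weight matching for the \emph{full} complexes, where no formal-moduli-problem statement is directly available, is the technical heart of the argument.
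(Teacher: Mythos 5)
Your overall architecture --- put Francis's non-unital $E_3$ fiber sequence on the top row and transfer it down vertical $L_\infty$-equivalences, with part (2) following because the middle equivalence is the one coming from the formal moduli problem of Theorem~\ref{T:EquivDef} --- is exactly the paper's strategy. The gap is in how you produce the right-hand vertical equivalence. You propose to obtain $C_{GS}^{full}(B,B)\simeq CH_{E_2}^{*}(\tilde{\Omega}B,\tilde{\Omega}B)$ by taking cofibers of the left square, but that argument consumes as input precisely the two facts you defer to the end of your proposal: that the bottom row is a fiber sequence of $L_\infty$-algebras with fiber $\tilde{\Omega}B[-1]$, and that the left square commutes. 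Neither is automatic: there is not even an evident $L_\infty$-map $\tilde{\Omega}B[-1]\to C_{GS}^{*}(B,B)$ to start from, since $\tilde{\Omega}B$ is the output of the composite functor $Cobar^{(2)}\phi^{*}\mathcal{B}^{enh}_{E_1}(-)_-$ while $C_{GS}^{*}(B,B)$ is an explicit product of Hom-complexes, and identifying the cofiber of $C_{GS}^{*}(B,B)\hookrightarrow C_{GS}^{full}(B,B)$ with (a shift of) this iterated cobar construction \emph{compatibly} with the middle equivalence of Theorem~\ref{T:EquivDef}(2) is the actual mathematical content. So your derivation of the right vertical arrow depends circularly on the very ``weight matching'' that you correctly flag as the main obstacle but do not resolve.

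The paper closes this gap by running the square in the opposite direction, through the properadic plus constructions rather than through cofibers. Both Gerstenhaber--Schack complexes are realized as convolution $L_\infty$-algebras: $C_{GS}^{*}(B,B)[2]\cong g_{Bialg_{\infty}^+,B}^{\varphi^+}$ (Merkulov's identification) and the extended complex as $g_{suBialg_{\infty}^+,B}^{\varphi^+}$, the strictly unital plus construction, exactly as $CH_{Pois_n}^{(\bullet>0)}$ and $CH_{Pois_n}^{*}$ are realized via $g_{Pois_n^+}$ and $g_{suPois_n^+}$ in Lemma~\ref{L:gPoiss=Tam} and Theorem~\ref{T:Defcomplexesallagree}; the inclusion $g_{P^+}\hookrightarrow g_{suP^+}$ then provides the fiber-sequence structure on both rows for structural reasons, with the fiber being the weight-zero discrepancy. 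The middle \emph{and} right vertical equivalences are then constructed directly, by extending the moduli-problem comparison of Theorem~\ref{T:EquivDef} to the strictly unital plus level (the analogue of the $su\varphi^+_{\infty}$ tangent quasi-isomorphism argument in the proof of Theorem~\ref{T:Defcomplexesallagree}), which yields a morphism of fiber sequences from $g_{Bialg_{\infty}^+,B}^{\varphi^+}\hookrightarrow g_{suBialg_{\infty}^+,B}^{\varphi^+}$ to $g_{E_2^+,\tilde{\Omega}B}^{\psi^+}\hookrightarrow g_{suE_2^+,\tilde{\Omega}B}^{\psi^+}$ with equivalences in the middle and on the right. The identification of the fibers --- your left square --- is then \emph{deduced} from this morphism of fiber sequences in Corollary~\ref{C:DefcomplexesGS=Hoch}, rather than taken as an input. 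If you wish to keep your cofiber route, you must first establish the bottom fiber sequence and the left square independently, and the only available mechanism for that in this setting is the same su-plus/plus convolution-algebra comparison.
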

This corollary provides a stronger version of a longstanding conjecture stated by Gerstenhaber-Schack in \cite[Section 8]{GS} 
(the existence of a homotopy graded Lie algebra structure 
on the Gerstenhaber-Schack complex) for the three different versions 
of the Gerstenhaber-Schack and $E_2$-Hochschild complexes. This also solves a stronger version of a conjecture proposed by Kontsevich
in \cite{Ko2} in 2003 (existence of an $E_3$ structure in the case where $B=Sym(V)$ is the symmetric bialgebra on a vector space $V$).

\subsection{Kontsevich formality conjecture and deformation quantization of Lie bialgebras}
In the introduction of his work on deformation quantization of Poisson manifolds \cite{Ko2}, 
Kontsevich conjectured that Etingof-Kazhdan quantization should be the ``degree zero part'' of a much more general formality theorem  of the deformation complex $Def(Sym(V))$ of the symmetric bialgebra $Sym(V)$ on a vector space $V$.
This deformation complex should possess an $E_3$-algebra structure whose underlying $L_{\infty}$-structure controls the deformations  of this bialgebra, and should be formal as an $E_3$-algebra.

The existence of such an $E_3$-algebra structure on $Def(Sym(V))$ controling the deformations of $Sym(V)$ is a particular case
of Corollary~\ref{C:GS=HH}. To prove the $E_3$ formality of $Def(Sym(V))$, one has to compare two $E_3$-algebra structures on the Gerstenhaber-Schack cohomology of $Sym(V)$, the one coming from the action of the operad of $3$-Poisson algebras $P_3\cong H^*E_3$  via the formality $E_3\stackrel{\sim}{\rightarrow}H^*E_3\cong P_3$ of $E_3$ operads (see \cite{LaV} over $\mathbb{R}$ 
and more generally \cite{FW} over $\mathbb{Q}$), and the one transferred from $Def(Sym(V))$.
The following formality result was conjectured by Kontsevich (in the case where $V$ is a vector space) in the introduction of \cite{Ko2}:
\begin{thm}\label{T:GSforSym}
The deformation complex of the symmetric bialgebra $Sym(V)$ on a $\mathbb{Z}$-graded cochain complex $V$ whose cohomology is of finite dimension in each degree is formal over $\mathbb{Q}$ as an $E_3$-algebra.
\end{thm}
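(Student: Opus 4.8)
The plan is to prove formality \emph{intrinsically}, using an extra weight grading rather than a Drinfeld associator; this is precisely what makes the resulting quantization associator-independent. By Corollary~\ref{C:GS=HH}, the deformation complex $Def(Sym(V)) \simeq C^*_{GS}(Sym(V),Sym(V))$ carries an $E_3$-structure, equivalent to the $E_2$-Hochschild complex of $\tilde{\Omega}Sym(V)$. By the formality of the $E_3$-operad over $\mathbb{Q}$ \cite{FW}, its cohomology $H := H^*_{GS}(Sym(V),Sym(V))$ inherits a $P_3$-algebra structure, and the statement to prove is that $Def(Sym(V))$ is equivalent, as an $E_3$-algebra, to $H$ equipped with this $P_3$-structure pulled back along $E_3 \stackrel{\sim}{\rightarrow} P_3$. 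First I would compute $H$ together with its $P_3$-structure: the finite-dimensionality hypothesis permits dualizing, and the expected answer is a completed shifted ``cotangent'' Poisson algebra built on $V$ and its dual $V^*$, whose commutative product is symmetric and whose bracket of degree $-2$ is the canonical pairing extended by the Leibniz rule.

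Next I would introduce the weight grading. The symmetric powers $Sym^{\bullet}(V)$ endow $Sym(V)$ with an additional $\mathbb{N}$-grading; this propagates through the enhanced cobar construction $\tilde{\Omega}$ and through the $E_2$-Hochschild complex, so that the entire chain of equivalences of Corollary~\ref{C:GS=HH} is compatible with weight. The key observation is that both $E_3$-structures under comparison are homogeneous for this weight: the transferred one because $\tilde{\Omega}$ and the higher Deligne-conjecture $E_3$-structure preserve weight, and the $P_3$-structure because the formality morphism $E_3 \stackrel{\sim}{\rightarrow} P_3$ does. This reduces the theorem to an \emph{intrinsic formality} statement for the weight-graded $P_3$-algebra $H$: any $E_3$-structure whose cohomology is $H$ as a weight-graded $P_3$-algebra must be equivalent to $H$ itself.

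I would then run the standard obstruction-theoretic criterion for intrinsic formality. Transferring the $E_3$-structure of $Def(Sym(V))$ to $H$ via the homotopy transfer theorem produces a $P_3$-infinity structure extending the $P_3$-structure by higher homotopies, and formality amounts to gauging these away. The successive obstructions, and the ambiguities in trivializing them, are governed by the deformation (André--Quillen, or Poisson) cohomology $H^*_{P_3}(H,H)$ of $H$ as a $P_3$-algebra, and each higher homotopy is homogeneous of positive weight. Hence it suffices to show that the graded pieces of $H^*_{P_3}(H,H)$ in which these positive-weight obstructions sit vanish; the gauging can then be carried out inductively along the weight filtration. Since $H$ is a completion of a free/cotangent $P_3$-algebra on $V \oplus V^*$, its Poisson cohomology is computable and the relevant positive-weight components indeed vanish, yielding the desired $E_3$-equivalence.

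The hard part will be this last step: the explicit identification of $H$ as a $P_3$-algebra together with a computation of the graded pieces of its Poisson cohomology precise enough to certify the vanishing of \emph{all} positive-weight obstructions. This is where the finite-dimensionality of the cohomology of $V$ is essential, both to make $V^*$ and the completions behave well and to keep each weight-graded summand finite-dimensional; and it is where care is needed to track the interaction between the internal cohomological degree, the weight, and the degree $-2$ shift of the Poisson bracket, so that the two gradings genuinely lock the obstruction classes into the vanishing range.
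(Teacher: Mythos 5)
Your proposal and the paper's proof share only the starting point (the $E_3$-structure from Corollary~\ref{C:GS=HH} and the expected form of the cohomology); after that they diverge completely, and the paper's route is not the one you sketch. The paper never runs an obstruction-theoretic or intrinsic-formality argument on the cohomology. It works at the chain level: first it computes the enhanced cobar construction, $\tilde{\Omega}(Sym(V)) \simeq Sym^{\geq 1}(V[-1])$ as $E_2$-algebras (Proposition~\ref{P:EnhancedCobarofSym(V)}), and then it invokes the higher HKR theorem of Calaque--Willwacher \cite{CaWi} and \cite{GTZ}, which is already an equivalence of $E_3$-algebras $CH^*_{E_2}(A,A) \simeq \widehat{Sym}_A(Der(A,A)[-2])$ for any Sullivan cdga $A$ (Proposition~\ref{P:CHE2Symshifted}). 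Applied to $A = Sym(V[-1])$, this identifies the deformation complex, \emph{naturally in $V$}, with the strict $Pois_3$-algebra $Sym(V[-1]) \otimes \widehat{Sym}((V)^*[-1])$ regarded as an $E_3$-algebra via the (associator-free, since $n\geq 3$) operad formality. Formality is then immediate: the strict model carries any quasi-isomorphism $H^*(V)\stackrel{\sim}{\to} V$ (which exists in characteristic zero) to an equivalence of $E_3$-algebras, and the model built on $H^*(V)$ \emph{is} the cohomology (Theorem~\ref{T:IdentificationGSforSym(V)}). In other words, all the "intrinsic formality" content is outsourced to an already-established chain-level HKR equivalence, and no vanishing of obstruction groups ever needs to be checked.

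Your route — cohomology-level identification plus a Tamarkin-style intrinsic formality argument along a weight grading — has a genuine gap, in fact two. First, the entire mathematical content is deferred to the assertion that "the relevant positive-weight components of $H^*_{Pois_3}(H,H)$ vanish"; this is exactly the hard Kontsevich-style vanishing statement, it is not a routine computation (the completion $\widehat{Sym}$, the interaction of the three gradings, and the fact that $H$ is a \emph{completed} free $Pois_3$-algebra all matter), and nothing in your sketch establishes it. Second, and more seriously, the mechanism you describe is internally inconsistent. With the weight grading you introduce (induced by symmetric-power degree, so that $V$ contributes weight $+1$ and $(V)^*$ weight $-1$ on $H$), the $Pois_3$-product and bracket are both homogeneous of weight $0$; a weight-equivariant homotopy transfer then produces higher operations, and hence obstruction classes, that are also concentrated in weight $0$ — not in positive weight. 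So the obstructions are never "locked into the vanishing range" by this grading. With the alternative convention (both $V$ and $(V)^*$ of positive weight) the bracket has weight $-2$, and your premise that "both $E_3$-structures are homogeneous for this weight" fails. To salvage the plan you would need a genuinely different degree analysis — e.g. the $\mathfrak{gl}$-equivariance and bidegree arguments Tamarkin and Hinich use for polyvector fields, adapted to the $Pois_3$-deformation complex of this completed algebra — and that is precisely the work the paper's chain-level HKR route is designed to avoid.
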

We obtain it by using the relationship with $E_2$-Hochschild cohomology (Corollary~\ref{C:GS=HH}) 
and the higher HKR-theorem for the latter~\cite{CaWi}.

We then (see~\S~\ref{S:EKQ}) obtain a new proof of Etingof-Kazhdan quantization theorem from the underlying $L_{\infty}$  formality given by
Theorem~\ref{T:GSforSym},
 the properties of Maurer-Cartan spaces for filtered $L_{\infty}$-algebras \cite{Yal1} and by using the results of \cite{Mer2}. 
In fact, after the appropriate completion, $Def(Sym(V))$ controls the (homotopy) $\mathbb{K}[[\hbar]]$-bialgebra  
structures quantizing the (homotopy) Lie bialgebra structures on $V$, which are controlled by $H^*_{GS}(Sym(V),Sym(V))$ which by formality 
also controls deformation of dg-bialgebras.
In particular, our work shows that  quantization of bialgebras follows from a higher analogue of the Kontsevich approach 
to quantization of Poisson manifolds in~\cite{Ko2, Tam1} as was conjectured from the early 2000's.

\smallskip

Let us add two important remarks.
First, what we get is actually a generalization of Etingof-Kazhdan quantization to homotopy dg Lie bialgebras, 
which encompasses the case of usual Lie bialgebras. Second, since the $E_3$ formality does not rely on the choice of a Drinfeld associator 
(contrary to the $E_2$ formality), we thus solve another open problem by showing that such a quantization does not rely 
on the choice of an associator. Precisely, we get the following results:
\begin{cor}\label{C:EKQuant}

(1) The $L_{\infty}$-formality underlying Theorem~\ref{T:GSforSym} induces a generalization of Etingof-Kazdhan deformation quantization theorem to homotopy dg Lie bialgebras whose cohomology is of finite dimension in each degree. In the case where $V$ is a vector space, this gives a new proof of Etingof-Kazdhan's theorem.

(2) Deformation quantization of homotopy dg Lie bialgebras whose cohomology is of finite dimension in each degree does not rely on the choice of a Drinfeld associator.
\end{cor}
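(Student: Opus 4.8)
The plan is to deduce the quantization from the underlying $L_\infty$-formality of Theorem~\ref{T:GSforSym} by a Maurer-Cartan argument. First I would set up the two sides of the correspondence as filtered $L_\infty$-algebras. On one side, the (completed) shifted Gerstenhaber-Schack complex $C^*_{GS}(Sym(V),Sym(V))$ is, by Corollary~\ref{C:GS=HH}, an $L_\infty$-algebra whose Maurer-Cartan elements over $\mathbb{K}[[\hbar]]$ encode the homotopy $\mathbb{K}[[\hbar]]$-bialgebra structures deforming the symmetric bialgebra; by the results of~\cite{Mer2} these Maurer-Cartan elements are exactly the quantizations one wishes to produce. On the other side, the higher HKR theorem of~\cite{CaWi} together with the $P_3\cong H^*E_3$ action identifies the cohomology $H^*_{GS}(Sym(V),Sym(V))$ with a Poisson-type $L_\infty$-algebra whose Maurer-Cartan elements are precisely the homotopy Lie bialgebra structures on $V$. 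The finite-dimensionality hypothesis on the cohomology of $V$ is what makes this HKR identification, and the attendant duality between bracket and cobracket, work.

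Next I would invoke Theorem~\ref{T:GSforSym}: the $E_3$-formality provides, after forgetting down to the underlying $L_\infty$-structures, a chain of $L_\infty$-quasi-isomorphisms between $C^*_{GS}(Sym(V),Sym(V))$ and its cohomology $H^*_{GS}(Sym(V),Sym(V))$. The key point is that these quasi-isomorphisms are \emph{filtered}, compatible with the weight filtration, so that after $\hbar$-adic completion they remain quasi-isomorphisms of complete filtered $L_\infty$-algebras. I would then apply the theory of Maurer-Cartan spaces for filtered $L_\infty$-algebras developed in~\cite{Yal1}: a filtered $L_\infty$-quasi-isomorphism induces a homotopy equivalence of the associated Maurer-Cartan simplicial sets, hence in particular a bijection on gauge-equivalence classes of Maurer-Cartan elements.

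Combining the two steps produces the quantization. Given a homotopy Lie bialgebra structure on $V$, rescaled by $\hbar$ so as to land in the completed cohomology $L_\infty$-algebra, its image under the Maurer-Cartan equivalence is a homotopy $\mathbb{K}[[\hbar]]$-bialgebra structure deforming $Sym(V)$ whose semiclassical limit is the given Lie bialgebra. Functoriality and compatibility with the classical limit follow from the naturality of the formality morphism and of the Maurer-Cartan construction; specializing to the case where $V$ is a vector space concentrated in degree zero then recovers the classical Etingof-Kazhdan statement, which gives part (1). For part (2), the decisive observation is that the formality morphism used here is the $E_3$-formality of~\cite{FW}, which, unlike the $E_2$-formality underlying Tamarkin's and Kontsevich's approach, does not depend on the choice of a Drinfeld associator; since the whole construction is built solely from this associator-free morphism, the resulting quantization is likewise independent of any such choice.

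The main obstacle, I expect, is the precise identification of the two Maurer-Cartan problems with their deformation-theoretic meaning. On the cohomology side one must check that the transferred Poisson bracket coming from $P_3\cong H^*E_3$ is genuinely the Lie bialgebra bracket-cobracket structure, so that its Maurer-Cartan elements really are Lie bialgebras; on the deformation side one must verify, through~\cite{Mer2}, that Maurer-Cartan elements of the completed Gerstenhaber-Schack complex are honest quantizations rather than merely abstract formal deformations. Controlling the filtrations carefully enough that completion commutes with passage to Maurer-Cartan spaces, so that the hypotheses of~\cite{Yal1} are met, is the technical heart of the argument.
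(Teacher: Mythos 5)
Your strategy is essentially sound and is in fact the route advertised in the paper's introduction (formality $+$ filtered Maurer--Cartan spaces from \cite{Yal1} $+$ \cite{Mer2}), but the proof actually written in Section~\ref{S:EKQ} packages things differently, through formal moduli problems rather than filtered $L_\infty$-algebras. The paper first proves Theorem~\ref{T:EK}: the moduli problems $\underline{Bialg_{\infty}^+}\{Sym(V)\}^{\varphi^+}$ and $\underline{BiLie_{\infty}^+}\{V\}^{0^+}$ are weakly equivalent, because their tangent $L_\infty$-algebras are linked by the chain of quasi-isomorphisms $g_{Bialg_{\infty}^+,Sym(V)}^{\varphi^+}\simeq C^*_{GS}(Sym(V),Sym(V))\simeq Sym^{\geq 1}(V[-1])\otimes\widehat{Sym}^{\geq 1}((V)^*[-1])\simeq g_{BiLie_{\infty}^+,V}^{0^+}$ coming from Theorem~\ref{T:IdentificationGSforSym(V)} and \cite[Corollary 5.1]{Mer1}, and Lurie's classification upgrades this to an equivalence of moduli problems. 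It then sets $\mathcal{F}(k[[\hbar]]):=\lim_i\mathcal{F}(k[\hbar]/\hbar^i)$ and invokes To\"en's natural equivalence $\mathcal{F}(k[[\hbar]])\simeq Map(k[-1],g_{\mathcal{F}})$ from \cite{To-Bourbaki}; since the right-hand side is manifestly invariant under $L_\infty$-quasi-isomorphism, the equivalence of $\hbar$-adic points follows with \emph{no} filtration bookkeeping whatsoever --- which is exactly the step you single out as the technical heart of your argument. Your route can be made to close, but with a correction: the filtration that matters is the $\hbar$-adic one, not the weight filtration you invoke. For any $L_\infty$-quasi-isomorphism $g\to g'$, the induced map $g\hat{\otimes}\hbar k[[\hbar]]\to g'\hat{\otimes}\hbar k[[\hbar]]$ is \emph{automatically} a filtered quasi-isomorphism for the $\hbar$-adic filtrations (the associated graded is just a direct sum of copies of the original map), so the machinery of \cite{Yal1} applies without ever knowing whether the formality equivalences respect the weight grading --- which they have no reason to do strictly, so insisting on weight-compatibility would put you on shakier ground than necessary. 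With that substitution your two identifications (Maurer--Cartan elements of the completed Gerstenhaber--Schack complex as quantizations via \cite{Mer2}, and Maurer--Cartan elements of the cohomology as homotopy Lie bialgebra structures via \cite[Corollary 5.1]{Mer1}) match the paper's, and your treatment of claim (2) --- associator-independence because the $E_3$-formality of \cite{FW} is associator-free --- is identical to the paper's. In short: same inputs, two packagings; the paper's trades the filtered Maurer--Cartan analysis for quasi-isomorphism invariance of mapping spaces of $L_\infty$-algebras, while yours is the more classical, Kontsevich--Tamarkin-style argument, valid once the filtration is chosen correctly.
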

In the case of a Lie algebra equipped with a Casimir element, the invariance under the choice of an associator was already known by Drinfeld in \cite{Dri}. A similar result was announced in \cite{Enr} for finite dimensional Lie bialgebras but without complete proof. Our corollary provides an alternative complete proof of this statement, as well as an extension of this invariance result to homotopy dg Lie bialgebras with finite dimensional cohomology which is completely new.

\subsection{Perspectives}

The new methods developed in this paper to approach deformation theory and quantization problems have several possible continuations. A first perspective is to get a better understanding of the moduli problem associated to the full $E_n$-Hochschild complex, which should control the deformation theory of $E_n$-monoidal dg categories and is deeply related to the deformation quantization of shifted Poisson structures in derived algebraic geometry (see for instance \cite[Theorem 5.2]{Toen-ICM}).
In particular, in the case of the full $E_2$-Hochschild complex, the equivalence with the full Gerstenhaber-Schack complex $C_{GS}^{full}(B,B)$ proved in this paper (Corollary~\ref{C:GS=HH}) will imply that $C_{GS}^{full}(B,B)$ controls the deformations of $Mod_B$ as an $E_2$-monoidal dg category, that is, a braided monoidal dg category. This is still a conjecture at present. The $E_3$-formality of $C_{GS}^{full}(Sym(g),Sym(g))$, where $g$ is a Lie bialgebra, then corresponds to the deformation quantization of $Mod_g$ with the equivalence relation on quantizations given by Drinfeld's gauge equivalence \cite{Dri} (equivalence of braided monoidal categories). In the case where $g=Lie(G)$ is the tangent Lie algebra of a reductive algebraic group $G$, this result has to be compared with the quantization of the derived classifying stack $BG$ in the sense of \cite{CPTVV}.

A second perspective is to recover the major results in deformation quantization obtained in \cite{EH1} and \cite{EH2} 
as consequences of suitable variants of our Formality Theorem~\ref{T:GSforSym}. In particular, we expect the quantization
of coboundary Lie bialgebras of \cite{EH2} to be a consequence of this formality suitably extended to algebras over framed little disks
operads (see for instance \cite{SW} for a definition of framed little disks, and \cite{GiS},\cite{Sev} for formality results). 
The coboundary on the bialgebra side should correspond to the framing on the little disks side. 
Let us note that this quantization would be then automatically independent from the choice of an associator.

Both perspectives will be investigated by the authors in future works.


\section*{Notations}

The reader will find below a list of the main notations used at several places in this article.
\begin{itemize}
\item We work over a field of characteristic zero noted $\mathbb{K}$.

\item $Ch_{\mathbb{K}}$ is the category of $\mathbb{Z}$-graded chain complexes over $\mathbb{K}$.

\item Let $(\mathcal{C},W_{\mathcal{C}})$ be a relative category, also called a category with weak equivalences. Here $\mathcal{C}$ is a category and $W_{\mathcal{C}}$ its subcategory of weak equivalences. The hammock localization (see \cite{DK}) of such a category with respect to its weak equivalences is noted $L^H(\mathcal{C},W_{\mathcal{C}})$, and the mapping spaces of this simplicial localization are noted $L^H(\mathcal{C},W_{\mathcal{C}})(X,Y)$.

\item Several categories of algebras and coalgebras will have a dedicated notation: 
$cdga$ for the category of commutative differential graded algebras, $dgArt$ for the category of artinian cdgas, $dgCog$ for the category of dg coassociative coalgebras and $dgLie$ for the category of dg Lie algebras.

\item Given a cdga $A$, the category of $A$-modules is noted $Mod_A$. More generally, if $\mathcal{C}$ is a symmetric monoidal category tensored over $Ch_{\mathbb{K}}$, the category of $A$-modules in $\mathcal{C}$ is noted $Mod_A(\mathcal{C})$.

\item Given a dg Lie algebra $g$, its Chevalley-Eilenberg algebra is noted $C^*_{CE}(g)$ and its Chevalley-Eilenberg coalgebra is noted $C_*^{CE}(g)$.

\item More general categories of algebras and coalgebras over operads or properads will have the following generic notations:
given a properad $P$, we will note $P-Alg$ the category of dg $P$-algebras and given an operad $P$ we will note $P-Cog$ the category of dg $P$-coalgebras.

\item Given a properad $P$, a cofibrant resolution of $P$ is noted $P_{\infty}$.

\item When the base category is a symmetric monoidal category $\mathcal{C}$ other than $Ch_{\mathbb{K}}$, we note $P-Alg(\mathcal{C})$ the category of $P$-algebras in $\mathcal{C}$ and $P-Cog(\mathcal{C})$ the category of $P$-coalgebras in $\mathcal{C}$.

\item These various categories of algebras and coalgebras will be restricted to several subcategories for which we will add the following super scripts: $aug$ for augmented and $coaug$ for coaugmented (for example, $P-Alg^{aug}$ is the category of augmented $P$-algebras), $con$ for connected, $0-con$ for $0$-connected, $conil$ for conilpotent, and $pt$ for pointed.

\item Algebras over properads form a relative category for the weak equivalences defined by chain quasi-isomorphisms. The subcategory of weak equivalences of $P-Alg$ is noted $wP-Alg$.

\item Given a properad $P$ and a complex $X$, we will consider an associated convolution Lie algebra noted $g_{P,X}$ which will give rise to two deformation complexes: the deformation complex $g_{P,X}^{\varphi}$ controling the formal moduli problem of deformations of a $P$-algebra structure $\varphi$ on $X$, and a variant $g_{P^+,X}^{\varphi^+}$ whose role will be explained in Section 3.

\item We will consider various moduli functors in this paper, defined as simplicial presheaves over artinian augmented cdgas: the simplicial presheaf of $P_{\infty}$-algebra structures on $X$ noted $\underline{P_{\infty}\{X\}}$, the formal moduli problem of deformations of a given $P_{\infty}$-algebra structure $\varphi$ on $X$ noted $\underline{P_{\infty}\{X\}}^{\varphi}$, and the derived algebraic group of homotopy automorphisms of $(X,\varphi)$ noted $\underline{haut}_{P_{\infty}}(X,\varphi)$.
The derived algebraic group of automorphisms of $X$ as a chain complex will be noted $\underline{haut}(X)$.

\item The little $n$-disks operads have several variants we will distinguish as follows: $E_n$ for the operad of non-unitary $E_n$-algebras, $uE_n$ for the operad of $E_n$-algebras with a unit up to homotopy, and $suE_n$ for the operad of strictly unitary $E_n$-algebras. The same notations hold for the operad $Pois_n$ of $n$-Poisson algebras.

\item We will use two tensor products of operads. The tensor product arity by arity, also called the Hadamar tensor product, will be noted $P\otimes_H Q$. The Boardman-Vogt tensor product will be noted $\otimes$ (see \cite{FV} for a definition).

\item The standard (full) complex computing the (Andr\'e-Quillen) cohomology theory of $n$-Poisson algebra $A$ is 
denoted $CH_{Pois_n}^*(A,A)$. Tamarkin standard deformation complex is the subcomplex in weight greater than $0$, that we denote
$CH_{Pois_n}^{(\bullet > 0)}(A,A)$. There is also another truncation $CH_{Pois_n}^{(\bullet > 1)}(A,A)$ 
complex has two gradings, the cohomological grading from the one of $A$ and a weight grading for the subcomplex in weight greater than $1$.
 The same notations will be used for some standard complexes computing $E_n$-Hochschild cohomology  of $E_n$-algebras.

\item The tangent complex of an $E_n$-algebra $A$ in the sense of \cite{Fra} is noted $T_A$.

\item The centralizer of an $E_n$-algebra morphism $f$ in the sense of \cite{Lur2} is noted $\mathfrak{z}_{E_n}(f)$, the version for $n$-Poisson algebras is noted $\mathfrak{z}_{Pois_n}(f)$.

\item Several kinds of bar and cobar constructions will play an important role in this paper. The functors $Bar$ and $\Omega$ are the classical bar and cobar constructions respectively for augmented associative algebras and coaugmented coassociative coalgebras. The operadic Koszul duality functors $Bar^{(n)}$ and $Cobar^{(n)}$ are adjoint functors defining respectively the bar construction for augmented $E_n$-algebras and the cobar construction for conilpotent $E_n$-coalgebras, which can be modeled respectively by an $n$-iterated bar construction and an $n$-iterated cobar construction (hence the notations). The versions for $n$-Poisson algebras are noted $Bar^{(n)}_{Pois_n}$ and $Cobar^{(n)}_{Pois_n}$. Another Koszul duality functor is the one used in \cite{Lur0}, which is an endofunctor of the category of augmented $E_n$-algebras noted $\mathcal{D}^n$.

\item More generally, given a Koszul operad $P$ in a symmetric monoidal $\infty$-category $\mathcal{C}$, there is a bar construction $\mathcal{B}_P:P-Alg(\mathcal{C})\rightarrow\mathcal{C}$, which can be enhanced by the Barr-Beck-Lurie theorem \cite{Lur2} to a bar functor $\mathcal{B}_P^{enh}:P-Alg(\mathcal{C})\rightarrow P^!-Cog^{conil}(\mathcal{C})$ where $P^!$ is the Koszul dual operad of $P$. Similarly, there is a cobar construction $\Omega_P$ and an enhanced version $\Omega^{enh}_P$.
\end{itemize}

\section{Recollections}

The goal of this section is to briefly review the key notions and results at the heart of the present paper.

\subsection{Symmetric monoidal categories over a base category}

\begin{defn}
Let $\mathcal{C}$ be a symmetric monoidal category. A symmetric monoidal
category over $\mathcal{C}$ is a symmetric monoidal category $(\mathcal{E},\otimes_{\mathcal{E}},1_{\mathcal{E}})$
endowed with a symmetric monoidal functor $\eta:\mathcal{C}\rightarrow\mathcal{E}$,
that is, an object under $\mathcal{C}$ in the $2$-category of symmetric monoidal categories.

This defines on $\mathcal{E}$ an external tensor product $\otimes :\mathcal{C}\times\mathcal{E}\rightarrow\mathcal{E}$
by $C\otimes X = \eta(C)\otimes_{\mathcal{E}} X$ for every $C\in\mathcal{C}$ and $X\in\mathcal{E}$.
This external tensor product is equipped with the following natural unit, associativity and symmetry isomorphisms:

(1) $\forall X\in\mathcal{E},1_{\mathcal{C}}\otimes X\cong X$,

(2) $\forall X\in\mathcal{E},\forall C,D\in\mathcal{C},(C\otimes D)\otimes X\cong C\otimes (D\otimes X)$,

(3) $\forall C\in\mathcal{C},\forall X,Y\in\mathcal{E},C\otimes (X\otimes Y)\cong(C\otimes X)\otimes Y\cong X\otimes(C\otimes Y)$.

The coherence constraints of these natural isomorphisms
(associativity pentagons, symmetry hexagons and unit triangles which mix both internal and external tensor products)
come from the symmetric monoidal structure of the functor $\eta$.

We will implicitly assume throughout the paper that all small limits and small
colimits exist in $\mathcal{C}$ and $\mathcal{E}$, and that each of these categories admit an internal hom bifunctor.
We suppose moreover the existence of an
external hom bifunctor $Hom_{\mathcal{E}}(-,-):\mathcal{E}^{op}\times\mathcal{E}\rightarrow\mathcal{C}$
satisfying an adjunction relation
\[
\forall C\in\mathcal{C},\forall X,Y\in\mathcal{E},Mor_{\mathcal{E}}(C\otimes X,Y)\cong Mor_{\mathcal{C}}(C,Hom_{\mathcal{E}}(X,Y))
\]
(so $\mathcal{E}$ is naturally an enriched category over $\mathcal{C}$).
\end{defn}
\begin{example}
Let $I$ be a small category; the $I$-diagrams in a symmetric monoidal category $\mathcal{C}$ form a symmetric monoidal
category over $\mathcal{C}$. The internal tensor product is defined pointwise, and the external tensor product
is defined by the functor $\eta$ which associates to $X\in\mathcal{C}$ the constant $I$-diagram $C_X$ on $X$.
The external hom $Hom_{\mathcal{C}^I}(-,-):\mathcal{C}^I\times\mathcal{C}^I\rightarrow\mathcal{C}$
is given by
\[
Hom_{\mathcal{C}^I}(X,Y)=\int_{i\in I}Hom_{\mathcal{C}}(X(i),Y(i)).
\]
\end{example}

Throughout this paper we will deal with symmetric monoidal categories equipped with a model
structure. We assume that the reader is familiar with the basics of model categories.
We refer to to Hirschhorn \cite{Hir} and Hovey \cite{Hov} for a comprehensive
treatment of homotopical algebra. We just recall the axioms of symmetric monoidal model categories
formalizing the interplay between the tensor and the model structures.
\begin{defn}
(1) A symmetric monoidal model category is a symmetric monoidal category
$\mathcal{C}$ equipped with a model category structure such that
the following axioms holds:

\textbf{MM0.} For any cofibrant object $X$ of $\mathcal{C}$, the map $Q1_{\mathcal{C}}\otimes X\rightarrow 1_{\mathcal{C}}\otimes X\cong X$ induced by a cofibrant resolution $Q1_{\mathcal{C}}\rightarrow 1_{\mathcal{C}}$ of the unit $1_{\mathcal{C}}$ is a weak equivalence.

\textbf{MM1.} The pushout-product $(i_{*},j_{*}):A\otimes D\oplus_{A\otimes C}B\otimes C\rightarrow B\otimes D$
of cofibrations $i:A\rightarrowtail B$ and $j:C\rightarrowtail D$ is a cofibration
which is also acyclic as soon as $i$ or $j$ is so.

(2) Suppose that $\mathcal{C}$ is a symmetric monoidal model category.
A symmetric monoidal category $\mathcal{E}$ over $\mathcal{C}$ is
a symmetric monoidal model category over $\mathcal{C}$ if the axiom MM1 holds for both the internal and external tensor
products of $\mathcal{E}$.
\end{defn}
\begin{example}
The usual projective model category $Ch_{\mathbb{K}}$ of unbounded chain complexes over
a field $\mathbb{K}$ forms a symmetric monoidal model category.
\end{example}
A property of the pushout-product axiom MM1 which will be useful later is that it is equivalent to the following dual version:
\begin{lem}(cf. \cite[Lemma 4.2.2]{Hov})
In a symmetric monoidal model category $\mathcal{C}$, the axiom MM1 is equivalent to the
following one:

\textbf{MM1'.} The morphism
\[
(i^{*},p_{*}):Hom_{\mathcal{C}}(B,X)\rightarrow Hom_{\mathcal{C}}(A,X)\times_{Hom_{\mathcal{C}}(A,Y)}Hom_{\mathcal{C}}(B,Y)
\]
induced by a cofibration $i:A\rightarrowtail B$ and a fibration $p:X\twoheadrightarrow Y$ is a fibration in $\mathcal{C}$ which
is also acyclic as soon as $i$ or $p$ is so.
\end{lem}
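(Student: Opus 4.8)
The plan is to deduce the equivalence from the tensor–hom adjunction of the closed symmetric monoidal category $\mathcal{C}$, exactly along the lines of \cite[Lemma 4.2.2]{Hov}, by transporting lifting problems across this adjunction. The whole argument rests on one combinatorial bijection, after which everything is formal lifting calculus.

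First I would record the compatibility between the pushout-product and the pullback-hom. For maps $i:A\to B$, $j:C\to D$ and $p:X\to Y$, write $i\square j:A\otimes D\oplus_{A\otimes C}B\otimes C\to B\otimes D$ for the pushout-product of MM1, and write
\[
\langle i,p\rangle:Hom_{\mathcal{C}}(B,X)\to Hom_{\mathcal{C}}(A,X)\times_{Hom_{\mathcal{C}}(A,Y)}Hom_{\mathcal{C}}(B,Y)
\]
for the map $(i^{*},p_{*})$ of MM1', the pullback-hom of $i$ and $p$. Using the adjunction $Mor_{\mathcal{C}}(C\otimes X,Y)\cong Mor_{\mathcal{C}}(C,Hom_{\mathcal{C}}(X,Y))$ together with the universal property of the pushout defining the source of $i\square j$ and of the pullback defining the target of $\langle i,p\rangle$, I would establish a natural bijection between commutative squares exhibiting a lifting problem of $i\square j$ against $p$ and commutative squares exhibiting a lifting problem of $j$ against $\langle i,p\rangle$, under which solutions correspond to solutions. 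Equivalently, $i\square j$ has the left lifting property with respect to $p$ if and only if $j$ has the left lifting property with respect to $\langle i,p\rangle$. This diagram chase, matching the two universal properties against the tensor–hom adjunction, is the technical heart of the proof and the step I expect to be the most delicate to write out carefully.

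Granting the bijection, the equivalence follows from the lifting characterizations of the distinguished classes: a map is a cofibration (resp.\ acyclic cofibration) iff it has the left lifting property against all acyclic fibrations (resp.\ all fibrations), and dually a fibration (resp.\ acyclic fibration) iff it has the right lifting property against all acyclic cofibrations (resp.\ all cofibrations). To prove MM1 $\Rightarrow$ MM1', fix a cofibration $i$ and a fibration $p$. Testing $\langle i,p\rangle$ against an arbitrary acyclic cofibration $j$ reduces, via the bijection, to lifting $i\square j$ against $p$; since MM1 makes $i\square j$ an acyclic cofibration and $p$ is a fibration, the lift exists, so $\langle i,p\rangle$ is a fibration. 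For the acyclicity clauses one instead tests $\langle i,p\rangle$ against an arbitrary cofibration $j$: if $i$ is an acyclic cofibration then $i\square j$ is an acyclic cofibration by MM1, while if $p$ is an acyclic fibration then $i\square j$ is a cofibration; in either case $i\square j$ lifts against $p$, so $\langle i,p\rangle$ is an acyclic fibration.

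The converse MM1' $\Rightarrow$ MM1 is the dual reading of the same bijection. Given cofibrations $i,j$, to see that $i\square j$ is a cofibration it suffices to lift it against an arbitrary acyclic fibration $p$, which the bijection converts into lifting $j$ against $\langle i,p\rangle$; MM1' guarantees $\langle i,p\rangle$ is an acyclic fibration, against which the cofibration $j$ lifts. If moreover $i$ (resp.\ $j$) is acyclic, I would test $i\square j$ against an arbitrary fibration $p$: the bijection again reduces this to lifting $j$ against $\langle i,p\rangle$, and MM1' makes $\langle i,p\rangle$ an acyclic fibration when $i$ is acyclic (resp.\ a fibration when $i$ is a plain cofibration and $j$ is acyclic), against which the cofibration $j$ (resp.\ the acyclic cofibration $j$) lifts. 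Hence $i\square j$ has the left lifting property against all fibrations and is an acyclic cofibration. Throughout, the only input beyond this formal lifting bookkeeping is the closed monoidal structure of $\mathcal{C}$, which is part of the standing hypotheses.
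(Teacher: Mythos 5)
Your proposal is correct and is precisely the standard argument of Hovey's Lemma 4.2.2, which is all the paper itself invokes (the lemma is stated by citation, with no independent proof given): the adjunction between pushout-product and pullback-hom translating lifting problems, followed by the lifting-property characterizations of (acyclic) cofibrations and fibrations. Nothing is missing; the bijection you isolate as the technical heart is indeed the only non-formal step, and your case analysis for both implications is complete.
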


\subsection{Operads, Props and their algebras}

\subsubsection{Props and their algebras}

Let $\mathcal{C}$ be a symmetric monoidal category.
A $\Sigma$-biobject is a double sequence $\{M(m,n)\in\mathcal{C}\}_{(m,n)\in\mathbb{N}^2}$
where each $M(m,n)$ is equipped with a right action of $\Sigma_{m}$
and a left action of $\Sigma_{n}$ commuting with each other.
\begin{defn}
A prop is a $\Sigma$-biobject endowed with associative horizontal composition products
\[
\circ_{h}:P(m_1,n_1)\otimes P(m_2,n_2)\rightarrow P(m_1+m_2,n_1+n_2),
\]
associative vertical composition products
\[
\circ_{v}:P(k,n)\otimes P(m,k)\rightarrow P(m,n)
\]
and units $1\rightarrow P(n,n)$ which are neutral for $\circ_v$.
These products satisfy the exchange law
\[
(f_1\circ_h f_2)\circ_v(g_1\circ_h g_2) = (f_1\circ_v g_1)\circ_h(f_2\circ_v g_2)
\]
and are compatible with the actions of symmetric groups.

Morphisms of props are equivariant morphisms of collections compatible with the composition products.
\end{defn}

\begin{defn}
(1) To any object $X$ of $\mathcal{C}$ we can associate an endomorphism prop $End_X$ defined by
\[
End_X(m,n)=Hom_{\mathcal{C}}(X^{\otimes m},X^{\otimes n}).
\]

(2) A $P$-algebra is an object $X\in\mathcal{C}$ equipped with a prop morphism $P\rightarrow End_X$.
\end{defn}
We can also define a $P$-algebra in a symmetric monoidal category
over $\mathcal{C}$:
\begin{defn}
Let $\mathcal{E}$ be a symmetric monoidal category over $\mathcal{C}$.

(1) The endomorphism prop of $X\in\mathcal{E}$ is given by $End_X(m,n)=Hom_{\mathcal{E}}(X^{\otimes m},X^{\otimes n})$
where $Hom_{\mathcal{E}}(-,-)$ is the external hom bifunctor of $\mathcal{E}$.

(2) Let $P$ be a prop in $\mathcal{C}$. A $P$-algebra in $\mathcal{E}$
is an object $X\in\mathcal{E}$ equipped with a prop morphism $P\rightarrow End_X$.
\end{defn}

There is a functorial free prop construction $F$ leading to an adjunction
\[
F:\mathcal{C}^{\mathbb{S}}\rightleftarrows Prop:U
\]
with the forgetful functor $U$. The category of $\Sigma$-biobjects $\mathcal{C}^{\mathbb{S}}$
is a diagram category over $\mathcal{C}$, so it inherits the usual projective model structure of diagrams, which is a cofibrantly
generated model category structure. In the case of unbounded chain complexes over a field of characteristic zero, this model structure can be transferred along the free-forgetful adjunction:
\begin{thm}
(cf. \cite[Theorem 5.5]{Frep}) The category of dg props $Prop$ equipped with the
classes of componentwise weak equivalences and componentwise fibrations forms a cofibrantly generated model category.
\end{thm}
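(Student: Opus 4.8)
The plan is to obtain the model structure on $Prop$ by transfer (right induction) along the free–forgetful adjunction $F:\mathcal{C}^{\mathbb{S}}\rightleftarrows Prop:U$, invoking the cofibrantly generated transfer theorem in its path-object form. Recall that $\mathcal{C}^{\mathbb{S}}=Ch_{\mathbb{K}}^{\mathbb{S}}$ carries the projective model structure, which is cofibrantly generated with generating cofibrations $I$ and generating acyclic cofibrations $J$ induced arity-by-arity from those of the projective model category $Ch_{\mathbb{K}}$. One declares a morphism $f$ of dg props to be a weak equivalence (resp. a fibration) exactly when $U(f)$ is a componentwise quasi-isomorphism (resp. a componentwise surjection), and lets the cofibrations be the maps with the left lifting property against the acyclic fibrations. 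The candidate generating sets in $Prop$ are then $F(I)$ and $F(J)$, and it remains to verify the two hypotheses of the transfer theorem: a smallness condition and the acyclicity of relative $F(J)$-cell complexes.

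For the smallness condition, I would observe that the domains and codomains of the maps in $I$ and $J$ are finite-dimensional chain complexes, hence small in $Ch_{\mathbb{K}}$; since $U$ creates filtered colimits — the underlying $\Sigma$-biobject of a filtered colimit of props being computed arity-wise — the domains of $F(I)$ and $F(J)$ remain small relative to the corresponding cell complexes. This legitimizes the small object argument and supplies the functorial factorizations.

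The heart of the argument is the acyclicity condition, which I would establish by the path-object argument rather than by a direct analysis of cell attachments. First, in the transferred classes every dg prop is fibrant, since fibrations are componentwise surjections and every object of $Ch_{\mathbb{K}}$ is fibrant; it therefore suffices to produce a functorial path object. To this end let $\Lambda=\mathbb{K}[t,dt]$ be the commutative dg algebra of polynomial differential forms on the interval, equipped with its two evaluation maps to $\mathbb{K}$ at $t=0$ and $t=1$. Because $\Lambda$ is a commutative algebra, the arity-wise tensor product $P\otimes\Lambda$ — with composition products induced from those of $P$ together with the multiplication of $\Lambda$ — is again a dg prop, and the factorization of the diagonal
\[
P\longrightarrow P\otimes\Lambda\longrightarrow P\times P
\]
has first map a componentwise quasi-isomorphism and second map a componentwise surjection. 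This exhibits a functorial path object, so the transfer theorem applies and yields the desired cofibrantly generated model structure.

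The point at which I expect the real work to sit, and where the standing hypothesis of characteristic zero is indispensable, is the claim that $P\to P\otimes\Lambda$ is a quasi-isomorphism: this rests on $\Lambda\simeq\mathbb{K}$, i.e. on the Poincaré lemma for polynomial forms, whose contracting homotopy is an integration operator $t^{k}\,dt\mapsto t^{k+1}/(k+1)$ that exists only after inverting the integers. In positive characteristic $\Lambda$ acquires spurious cohomology (for instance $t^{p}$ becomes a nontrivial degree-zero class) and the construction collapses, which is why one obtains only a semi-model structure there. An alternative, more combinatorial route would verify acyclicity directly, by filtering a pushout of $F(j)$ according to the number of adjoined generators and identifying the successive subquotients as coinvariants for symmetric group actions on the graphs underlying the free prop; the genuine obstacle in that approach is the intricate graph combinatorics specific to props (carrying both horizontal and vertical composition, unlike operads), and there too characteristic zero enters decisively, guaranteeing that these coinvariants compute an exact functor and hence preserve the stagewise quasi-isomorphisms.
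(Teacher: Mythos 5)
Your proof is correct, but note that the paper itself contains no proof of this statement: it is recalled verbatim from Fresse's work, with the citation [Frep, Theorem 5.5], so the only meaningful comparison is with that reference. Your route—right-induction along the free--forgetful adjunction $F:\mathcal{C}^{\mathbb{S}}\rightleftarrows Prop:U$, smallness via the fact that $U$ creates filtered colimits, and acyclicity via Quillen's path-object argument with the functorial path object $P\otimes\mathbb{K}[t,dt]$ (legitimate as a prop because $\mathbb{K}[t,dt]$ is graded-commutative, and a path object because every prop is fibrant, $\mathbb{K}\to\mathbb{K}[t,dt]$ is a quasi-isomorphism in characteristic zero, and evaluation at $0,1$ is componentwise surjective)—is a complete and clean argument in the dg setting over a characteristic zero field; incidentally, the prop $P\otimes\Omega_{\bullet}$ that you use is exactly the construction the paper itself employs to define moduli spaces $Mor_{Prop}(P_{\infty},End_X\otimes\Omega_{\bullet})$. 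Fresse's cited proof sets up the same transfer but, because he works in greater generality (obtaining semi-model structures of props over general symmetric monoidal model bases), he establishes acyclicity by the direct analysis of pushouts along free maps, filtered by graphs, with characteristic zero entering through Maschke-type exactness of $\Sigma$-coinvariants; this is precisely the alternative, more combinatorial route you sketch in your last paragraph. What your approach buys is brevity and independence from the graph combinatorics of free props; what Fresse's buys is applicability beyond characteristic zero (where only a semi-model structure survives, exactly as you predict) together with the pushout estimates that are needed anyway for cofibrancy and homotopy-invariance statements. Two small points you leave implicit and should record: $Prop$ is bicomplete (limits are created componentwise by $U$, colimits exist since props are algebras over a finitary monad), and the identification of $P\times P$ with the componentwise product, which is what makes the evaluation map a fibration.
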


\subsubsection{Properads}

Composing operations of two $\Sigma$-biobjects $M$ and $N$ amounts to consider $2$-levelled directed graphs
(with no loops) with the first level indexed by operations of $M$ and the second level by operations of $N$.
Vertical composition by grafting and horizontal composition by concatenation allows one to define props as before.
The idea of properads is to mimick operads (for operations with several outputs), which are defined as monoids in $\Sigma$-objects,
by restricting the vertical composition product to connected graphs.
The unit for this connected composition product $\boxtimes_c$ is the $\Sigma$-biobject $I$ given by $I(1,1)=\mathbb{K}$ and $I(m,n)=0$ otherwise.
The category of $\Sigma$-biobjects then forms a symmetric monoidal category $(Ch_{\mathbb{K}}^{\mathbb{S}},\boxtimes_c,I)$.
\begin{defn}
A dg properad $(P,\mu,\eta)$ is a monoid in $(Ch_{\mathbb{K}}^{\mathbb{S}},\boxtimes_c,I)$,
where $\mu$ denotes the product and $\eta$ the unit.
It is augmented if there exists a morphism of properads $\epsilon:P\rightarrow I$.
In this case, there is a canonical isomorphism $P\cong I\oplus\overline{P}$
where $\overline{P}=ker(\epsilon)$ is called the augmentation ideal of $P$.

Morphisms of properads are morphisms of monoids in $(Ch_{\mathbb{K}}^{\mathbb{S}},\boxtimes_c,I)$.
\end{defn}
Properads have also their dual notion, namely coproperads:
\begin{defn}
A dg coproperad $(C,\Delta,\epsilon)$ is a comonoid in $(Ch_{\mathbb{K}}^{\mathbb{S}},\boxtimes_c,I)$.
\end{defn}
As in the prop case, there exists a free properad functor $\mathcal{F}$ forming an adjunction
\[
\mathcal{F}:Ch_{\mathbb{K}}^{\mathbb{S}}\rightleftarrows Properad :U
\]
with the forgetful functor $U$.
There is an explicit construction of the free properad in terms of direct sums of labelled graphs for which
we refer the reader to \cite{Val}. Dually, there exists a cofree coproperad functor denoted $\mathcal{F}_c(-)$ having the same underlying $\Sigma$-biobject. Moreover, according to \cite{MV2}, this adjunction equips dg properads with a cofibrantly generated model category structure with componentwise fibrations and weak equivalences.

There is also a notion of algebra over a properad similar to an algebra over a prop,
since the endomorphism prop restricts to an endomorphism properad.
Properads are general enough to encode a wide range of bialgebra structures such as associative and coassociative bialgebras, Lie bialgebras, Poisson bialgebras, Frobenius bialgebras for instance.

\subsubsection{Algebras over operads}

Operads are used to parametrize various kind of algebraic structures consisting of operations with
one single output. Fundamental examples of operads include the operad $As$ encoding associative algebras,
the operad $Com$ of commutative algebras, the operad $Lie$ of Lie algebras and the operad
$Pois$ of Poisson algebras. They can be defined as monoids in the category of $\Sigma$-objects (collections $\{P(n)\}_{n\in\mathbb{N}}$ with an action of $\Sigma_n$ on each $P(n)$), for the appropriate composition product. Dg operads form a model category with bar-cobar resolutions and Koszul duality \cite{LV}.
An algebra $X$ over a dg operad  $P$ can be defined in any symmetric monoidal category $\mathcal{E}$ over $Ch_{\mathbb{K}}$, alternatively as an algebra over the corresponding monad or as an operad morphism $P\rightarrow End_X$ where $End_X(n)=Hom_{\mathcal{E}}(X^{\otimes n},X)$ and $Hom_{\mathcal{E}}$ is the external hom bifunctor.
The category of $P$-algebras satisfies good homotopical properties, namely:
\begin{thm}(see \cite[Theorem 12.3.A]{Fre3})
Let $P$ be a $\Sigma$-cofibrant dg operad and $\mathcal{E}$ be a cofibrantly generated symmetric monoidal model category over $Ch_{\mathbb{K}}$.
Then the category $\mathcal{E}^P$ of $P$-algebras in $\mathcal{E}$ inherits a cofibrantly generated semi-model category structure such that the forgetful functor $U:\mathcal{E}^P\rightarrow\mathcal{E}$ creates weak equivalences and fibrations, and that the generating (acyclic) cofibrations are the images under the free $P$-algebra functor of the generating (acyclic) cofibrations of $\mathcal{E}$.
\end{thm}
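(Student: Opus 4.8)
The plan is to construct the structure by the standard transfer principle (Kan's lemma) along the free-forgetful adjunction
\[
F:\mathcal{E}\rightleftarrows\mathcal{E}^P:U,
\]
where $U$ is the forgetful functor and $F(X)=\bigoplus_{n\geq 0}(P(n)\otimes X^{\otimes n})_{\Sigma_n}$ is the free $P$-algebra functor. I would \emph{define} a morphism $f$ of $P$-algebras to be a weak equivalence, respectively a fibration, exactly when $U(f)$ is one in $\mathcal{E}$, take the cofibrations to be the maps with the left lifting property against the acyclic fibrations, and propose the images $FI=\{F(i)\}$ and $FJ=\{F(j)\}$ of the generating (acyclic) cofibrations $I$ and $J$ of $\mathcal{E}$ as generating (acyclic) cofibrations for $\mathcal{E}^P$. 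By adjunction, $F(i)$ has the left lifting property against $p$ in $\mathcal{E}^P$ if and only if $i$ has the left lifting property against $U(p)$ in $\mathcal{E}$, so the (acyclic) fibrations of $\mathcal{E}^P$ are precisely the maps with the right lifting property against $FI$ (resp. $FJ$). This already supplies one of the lifting axioms and forces $U$ to create weak equivalences and fibrations.

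Two conditions then remain for the transfer to go through. The first is a smallness/permanence condition: the domains of $FI$ and $FJ$ must be small relative to the relative $FI$- and $FJ$-cell complexes, so that the small object argument yields functorial factorizations. This holds because $\mathcal{E}$ is cofibrantly generated and because $U$ preserves filtered colimits and reflexive coequalizers; the latter follows since $F$ is assembled from the finite symmetric tensor powers $(P(n)\otimes(-)^{\otimes n})_{\Sigma_n}$, which commute with such colimits, so $U$ transports smallness from $\mathcal{E}$ to $\mathcal{E}^P$ via the adjunction isomorphism $\mathrm{Hom}_{\mathcal{E}^P}(F(A),-)\cong\mathrm{Hom}_{\mathcal{E}}(A,U(-))$.

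The second and genuinely delicate point is the \emph{acyclicity condition}: pushouts and transfinite composites of maps in $FJ$ must be sent by $U$ to weak equivalences, and this is where the $\Sigma$-cofibrancy of $P$ and the axiom MM1 enter, and where the restriction to a \emph{semi}-model structure originates. Given a pushout square of $P$-algebras
\[
\begin{array}{ccc}
F(C) & \longrightarrow & Y\\
\downarrow & & \downarrow\\
F(D) & \longrightarrow & Y'
\end{array}
\]
with $j:C\rightarrowtail D$ an acyclic cofibration of $\mathcal{E}$, I would analyze $U(Y')$ through a canonical filtration $Y=Y_0\rightarrow Y_1\rightarrow\cdots$ with $\mathrm{colim}\,Y_n=Y'$, in which each stage $Y_{n-1}\rightarrow Y_n$ is a pushout in $\mathcal{E}$ of a map built from $U(Y)$, the operad $P$, and the $n$-th iterated pushout-product of $j$. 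By MM1 the iterated pushout-product of the acyclic cofibration $j$ is again an acyclic cofibration, and the $\Sigma$-cofibrancy of $P$ ensures that tensoring with the $\Sigma_n$-object $P(n)$ and passing to $\Sigma_n$-coinvariants preserves this property, \emph{provided} $U(Y)$ is cofibrant. Under that hypothesis each $Y_{n-1}\rightarrow Y_n$, and hence the composite $Y\rightarrow Y'$, is an acyclic cofibration in $\mathcal{E}$. The cofibrancy of $U(Y)$ cannot be dropped in general, which is exactly why the conclusion is a cofibrantly generated \emph{semi}-model category: the acyclicity condition, and with it the second lifting axiom and the factorization of maps out of cofibrant objects, is only guaranteed when the source is a cofibrant $P$-algebra.

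I expect the filtration analysis of the pushout to be the main obstacle: constructing the filtration of $Y'$ functorially, identifying its associated graded in terms of $P$, tensor powers of $U(Y)$, and iterated pushout-products of $j$, and tracking the symmetric-group actions carefully enough to invoke both $\Sigma$-cofibrancy and MM1. Once this lemma is in place, the semi-version of the small object argument provides the factorizations for maps with cofibrant domain, the remaining axioms follow formally from the adjunction, and $FI$, $FJ$ are by construction the generating (acyclic) cofibrations, completing the verification.
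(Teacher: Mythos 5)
Your proposal is correct, and it is essentially the proof of this statement: the paper itself does not prove the theorem but imports it verbatim from Fresse \cite[Theorem 12.3.A]{Fre3}, whose argument is precisely the transfer along the free--forgetful adjunction that you describe, with weak equivalences and fibrations created by $U$, generating (acyclic) cofibrations $FI$ and $FJ$, smallness obtained from the adjunction, and the filtration of pushouts along free maps (controlled by the $\Sigma$-cofibrancy of $P$ together with the pushout-product axiom) as the key technical lemma. In particular, your identification of the cofibrancy hypothesis on the underlying object of the source as the reason the conclusion is only a \emph{semi}-model structure matches the cited proof exactly, so there is nothing to correct.
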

\begin{rem}
In general, algebras over properads and props do not inherit such a model category structure, since there is no free algebra functor in this case.
\end{rem}

Dual to operads is the notion of cooperad, defined as a comonoid in the category of $\Sigma$-objects. A coalgebra over a cooperad is a coalgebra over the associated comonad.
We can go from operads to cooperads and vice-versa by dualization.
Indeed, if $C$ is a cooperad, then the $\Sigma$-module $P$ defined by $P(n)=C(n)^*=Hom_{\mathbb{K}}(C(n),\mathbb{K})$
form an operad. Conversely, suppose that $\mathbb{K}$ is of characteristic zero and $P$ is an operad such that 
each $P(n)$ is finite dimensional. Then the $P(n)^*$ form a cooperad in the sense of \cite{LV}.
The additional hypotheses are needed because we have to use, for finite dimensional vector spaces $V$ and $W$, the isomorphism
$(V\otimes W)^*\cong V^*\otimes W^*$ to define properly the cooperad coproduct.
We also give the definition of coalgebras over an operad:
\begin{defn}
(1) Let $P$ be an operad. A $P$-coalgebra is a complex $C$ equiped with linear applications
$\rho_n:P(n)\otimes C \rightarrow C^{\otimes n}$ for every $n\geq0$. These maps are $\Sigma_n$-equivariant
and associative with respect to the operadic compositions.

(2) Each $p\in P(n)$ gives rise to a cooperation $p^*:C\rightarrow C^{\otimes n}$.
The coalgebra $C$ is usually said to be conilpotent if for each $c\in C$, there exists $N\in\mathbb{N}$
so that $p^*(c)=0$ when we have $p\in P(n)$ with $n>N$.
\end{defn}
If $\mathbb{K}$ is a field of characteristic zero and the $P(n)$ are finite dimensional, then
it is equivalent to define a $P$-coalgebra via a family of applications
$\overline{\rho}_n:C\rightarrow P(n)^*\otimes_{\Sigma_n} C^{\otimes n}$.

\begin{thm}
The category $dgCog^{conil}$ of non-negatively graded conilpotent coassociative dg coalgebras forms a cofibrantly generated model category with quasi-isomorphisms as weak equivalences and degreewise injections as cofibrations.
\end{thm}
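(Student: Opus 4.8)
The plan is to realize this structure as a \emph{left-induced} model structure along the forgetful functor
\[
U : dgCog^{conil} \longrightarrow Ch_{\mathbb{K}}^{\geq 0},
\]
which is the left adjoint of the cofree conilpotent coalgebra functor $T^c$ (the reduced tensor coalgebra with deconcatenation coproduct), and where $Ch_{\mathbb{K}}^{\geq 0}$ carries its projective model structure (quasi-isomorphisms as weak equivalences and, over the field $\mathbb{K}$, degreewise injections as cofibrations). Left-induction along $U$ means declaring a map of coalgebras to be a weak equivalence, respectively a cofibration, exactly when $U$ sends it to one; by the description of the projective cofibrations this yields precisely quasi-isomorphisms as weak equivalences and degreewise injections as cofibrations, matching the statement, while the fibrations are forced to be the maps with the right lifting property against acyclic cofibrations. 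It therefore suffices to verify the hypotheses of the existence theorem for left-induced model structures on a locally presentable category (in the style of Hess--Kedziorek--Riehl--Shipley and Bayeh et al.).

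First I would record the ambient categorical input. Since $U$ is a left adjoint it preserves all colimits, so colimits in $dgCog^{conil}$ are computed on underlying complexes; in particular degreewise injections are stable under pushout, transfinite composition and retracts, while quasi-isomorphisms obviously satisfy two-out-of-three and are closed under retracts, giving all the closure axioms for the prospective cofibrations and weak equivalences for free. Local presentability of $dgCog^{conil}$ follows from the fundamental theorem of coalgebras: every conilpotent coalgebra is the filtered colimit of its finite-dimensional (equivalently, coradical-bounded) subcoalgebras, which form a set of presentable generators. Hence the category is locally presentable, in particular complete and cocomplete, and the general existence machinery applies in principle.

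The heart of the proof is then the \emph{acyclicity condition}: one must show that every map with the right lifting property against all monomorphisms that are quasi-isomorphisms is itself a quasi-isomorphism, equivalently that a functorial good path object exists detecting the induced weak equivalences. Here I would build the path object directly from the cofree functor, applying $T^c$ to a path object of non-negatively graded complexes and correcting it to a coalgebra path object; dually the mapping-cocylinder furnishes the $(\text{acyclic cofibration}, \text{fibration})$ factorization, and the $(\text{cofibration}, \text{acyclic fibration})$ factorization is obtained by an enhanced small object argument over the presentable generators. This is exactly the step where non-negative grading and conilpotency are essential: filtering by coradical degree yields associated-graded complexes that are bounded below, so the spectral sequence comparing a map with its factorizations converges and the comparison maps produced are genuine quasi-isomorphisms rather than merely filtered or pro-equivalences.

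I expect this acyclicity and convergence step to be the main obstacle, because it is precisely here that the non-negative grading and conilpotency are indispensable: in the unbounded or non-conilpotent setting the analogous statement fails and one is forced to enlarge the weak equivalences to the coarser class of cobar-equivalences. A robust alternative I would use as a cross-check is the direct small-object construction of Getzler--Goerss and Lefèvre-Hasegawa: there one defines weak equivalences through the cobar functor and proves, by a coradical-filtration argument, the comparison lemma that for a map $f$ of non-negatively graded conilpotent coalgebras $\Omega(f)$ is a quasi-isomorphism of augmented algebras if and only if $f$ is a quasi-isomorphism. Boundedness below guarantees convergence of the relevant spectral sequence, hence the identification of the two classes of weak equivalences, and the explicit factorizations built there yield the same model structure. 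Once either route is completed, the two remaining lifting axioms follow formally from the factorizations together with the retract argument, establishing that $dgCog^{conil}$ equipped with quasi-isomorphisms and degreewise injections is a cofibrantly generated model category.
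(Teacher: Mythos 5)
Your overall architecture coincides with the paper's: the paper obtains this theorem by transferring the model structure through exactly the adjunction you use, between the forgetful functor and the reduced tensor (cofree conilpotent) coalgebra functor, citing the argument line of \cite{GG} adapted to the conilpotent case and of \cite{Yal0} adapted to the non-negatively graded setting; your ``cross-check'' route (coradical filtration, convergence, comparison of quasi-isomorphisms under the cobar functor) is essentially that argument. The problem is that the step you yourself call the heart of the proof is misstated, and as stated it is false. For a left-induced structure, the acyclicity condition is that every map with the right lifting property against \emph{all} cofibrations (here: all degreewise injections of coalgebras) be a quasi-isomorphism, i.e.\ that the prospective \emph{acyclic} fibrations are weak equivalences. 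What you propose to prove --- that every map with the right lifting property against the acyclic cofibrations, in other words every fibration, is a quasi-isomorphism --- can never hold: combined with the factorization axiom and two-out-of-three, it would force every map to be a weak equivalence.

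The same dualization slip affects your choice of tool: path objects belong to Quillen's argument for \emph{right}-induced structures (algebras over operads); for \emph{left}-induced structures the correct dual uses cofibrancy and \emph{cylinder} objects, and this is where the repair is easy. Since $U$ creates colimits, every object of $dgCog^{conil}$ is cofibrant, and $C\oplus C\rightarrowtail C\otimes N_*(\Delta^1)\stackrel{\sim}{\rightarrow}C$ is a good cylinder (the tensor product of coalgebras is computed on underlying complexes, and one checks that tensoring a conilpotent coalgebra with the chains on $\Delta^1$ yields a conilpotent coalgebra); the usual section-plus-homotopy argument then gives acyclicity. By contrast, applying $T^c$ to a path object of complexes only produces a path object for the cofree coalgebra $T^c(UC)$, not for $C$, and since limits of coalgebras are \emph{not} computed on underlying complexes, ``correcting'' it to a path object for $C$ is precisely the hard point rather than a formality. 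Two further cautions: the general left-induction machinery yields an accessible model structure, but the cofibrant generation asserted in the theorem (and explicitly needed later in the paper) must still be extracted from generating sets of (acyclic) injections with finite-, resp.\ countable-dimensional targets via the fundamental theorem of coalgebras --- this is the actual content of \cite{GG}; and in the unbounded setting the cobar-equivalences of \cite{Hin0} form a strictly \emph{smaller} class than the quasi-isomorphisms, the opposite of what you assert.
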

This follows verbatim from the argument line of \cite[Theorem 0.1]{Yal0} adapted to the non-negatively graded setting, or the argument line of \cite{GG} adapted to the conilpotent case. In both cases, the model structure is transferred via the adjunction between the forgetful functor and the reduced tensor coalgebra functor (which is both the cofree coalgebra functor in the positively graded setting of \cite{Yal0} and the cofree conilpotent coalgebra functor in the non-negatively graded setting of \cite{GG}). The fact that this model category is cofibrantly generated is crucial in the proof of Theorem 0.1(2).

We will also need a more general model category structure on conilpotent coalgebras over dg operads, however without needing it to be cofibrantly generated. The existence of such a model category results from the following arguments.
There are very general assumptions under which a category of coalgebras over a comonad in a model category inherits a model category structure \cite{HS}, which can be applied to the case of coalgebras over differential non-negatively graded operads. Given a differential non-negatively graded operad $P$, to check that the category $Ch_{\mathbb{K}}$ has the needed properties to get a left-induced model structure on the category $P-Cog^{conil}$ of dg $P$-coalgebras from \cite[Theorem 5.8]{HS}, one just specializes the results of \cite[Section 6]{HS} (enunciated for right dg modules over a dga $A$) to the case $A=\mathbb{K}$:
\begin{thm}
Let $P$ be a differential non-negatively graded operad. The category $P-Cog^{conil}$
inherits a model category structure such that the forgetful functor $U: P-Cog^{conil}\rightarrow Ch_{\mathbb{K}}$ creates the cofibrations and the weak equivalences.
\end{thm}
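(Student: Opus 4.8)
The plan is to transfer the model structure from chain complexes to conilpotent $P$-coalgebras using the left-induction machinery of Hess–Shipley \cite{HS}, exactly as the statement suggests. The category $P\text{-}Cog^{conil}$ is comonadic over $Ch_{\mathbb{K}}$: there is a cofree-conilpotent-coalgebra comonad whose right adjoint is the forgetful functor $U$, and coalgebras over this comonad are precisely the conilpotent $P$-coalgebras. The strategy is therefore to invoke \cite[Theorem 5.8]{HS}, which gives a \emph{left-induced} model structure on the category of coalgebras over a comonad — one in which $U$ creates both cofibrations and weak equivalences — provided the base category and the comonad satisfy a short list of hypotheses (existence of the requisite limits/colimits, local presentability or accessibility of the relevant model structure, and a ``fibrant-generation'' or Postnikov-type condition guaranteeing the acyclicity part of the lifting).

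First I would set up the comonadic description precisely. By the recollections above, the cofree conilpotent $P$-coalgebra functor is right adjoint to $U$, so $P\text{-}Cog^{conil}$ is the category of coalgebras over the induced comonad $\bot$ on $Ch_{\mathbb{K}}$, in the non-negatively graded setting. Next I would verify the standing hypotheses of the Hess–Shipley theorem for $Ch_{\mathbb{K}}$: it is a locally presentable symmetric monoidal model category, and the generating (acyclic) cofibrations are well understood. The key simplification the authors point to is that one need not reprove \cite[Theorem 5.8]{HS} from scratch: the technical verifications have already been carried out in \cite[Section 6]{HS} for right dg modules over an arbitrary dga $A$, and one recovers the present case by \emph{specializing to $A=\mathbb{K}$}. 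So the core of the proof is the observation that the operadic conilpotent-coalgebra comonad fits the template of \cite[Section 6]{HS} once $A=\mathbb{K}$, after which Theorem 5.8 applies verbatim and yields a model structure with $U$ creating cofibrations and weak equivalences.

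\textbf{The main obstacle} is checking the acyclicity condition in the left-lifting argument — concretely, that every map which $U$ sends to an acyclic cofibration of chain complexes is a weak equivalence that also lifts against the fibrations, and dually that the left-induced fibrations (characterized by a right-lifting property rather than created by $U$) interact correctly with the comonad. In a left-induced structure the cofibrations and weak equivalences are created by the forgetful functor, but the fibrations are the maps with the appropriate lifting property, and the delicate point is the existence of a functorial fibrant replacement / the factorization axioms, which is precisely what the Postnikov-presentation hypothesis in \cite[Theorem 5.8]{HS} and its verification in \cite[Section 6]{HS} supply. The plan is to let the $A=\mathbb{K}$ specialization of \cite[Section 6]{HS} discharge exactly this point, so that no new homotopical input beyond \cite{HS} is required.

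One remark on scope: since the theorem only asserts existence of the model structure and the creation property of $U$ — and explicitly does \emph{not} claim cofibrant generation (contrasting with the coassociative case of the preceding theorem) — I would not attempt to identify generating sets. This keeps the proof light: the entire content is the comonadic recognition plus the citation of the left-induced structure from \cite{HS}, specialized to $A=\mathbb{K}$.
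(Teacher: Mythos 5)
Your proposal is correct and follows essentially the same route as the paper: the paper likewise obtains the model structure by left-induction along the comonadic adjunction using \cite[Theorem 5.8]{HS}, with the required hypotheses on $Ch_{\mathbb{K}}$ discharged by specializing \cite[Section 6]{HS} (stated for right dg modules over a dga $A$) to $A=\mathbb{K}$, and with no claim of cofibrant generation. The only point to polish is the phrase ``comonad whose right adjoint is the forgetful functor $U$'' --- as you correctly state later, $U$ is the \emph{left} adjoint of the cofree conilpotent coalgebra functor, and the comonad is the composite of the two.
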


\subsection{$E_n$-operads, higher Hochschild cohomology and the Deligne conjecture}\label{S:EnHoch}

Configuration spaces of $n$-disks into a bigger $n$-disk gather into a topological operad $D_n$ called the little $n$-disks operad. 
An $E_n$-operad is a dg operad quasi-isomorphic to the singular chains $C_*D_n$ of the little $n$-disks operad. 
The formality of the little $2$-disks operad (i.e. the fact that it is quasi-isomorphic as an operad to its cohomology) 
is the key point to prove Deligne conjecture about the homotopy Gerstenhaber structure (that is, the $E_2$-algebra structure) 
of the Hochschild complex, which in turn provided an alternative method for deformation quantization of Poisson manifolds \cite{Ko1}, 
\cite{Ko2}, \cite{Tam1}, \cite{Tam2}. Its formality holds over $\mathbb{Q}$ and relies on the choice of a Drinfeld associator 
(which exists over $\mathbb{Q}$, as proved in \cite{Dri}), a process closely related to deformation quantization of Lie bialgebras as well. 
The formality of $E_n$-operads for $n\geq 3$ was sketched over $\mathbb{R}$ in \cite{Ko1} and fully proved in \cite{LaV}. 
This result was recently superseded in \cite{FW}, where this formality was proved to hold over $\mathbb{Q}$ and to be intrinsic
(under a mild technical assumption satisfied in particular by the little $n$-disks operads). 
An important observation to note here is that contrary to the case $n=2$, the formality of $E_n$-operads
for $n\geq 3$ does not rely on the choice of an associator.

Given an ordinary associative (or $E_1$) algebra $A$, its endomorphisms $Hom_{biMod_A}(A,A)$ in the category $biMod_A$ 
of $A$-bimodules form nothing but the center $Z(A)$ of $A$. Deriving this hom object gives the Hochschild complex $CH^*(A,A)$ of $A$,
and the Hochschild cohomology $HH^*(A,A)$ of $A$ satisfies $HH^0(A,A)=Z(A)$. 
One says that the Hoschchild complex is the derived center of $A$, and the Deligne conjecture can then be reformulated 
as \lq\lq{}the derived center of an $E_1$-algebra forms an $E_2$-algebra\rq\rq{}. This sentence enlarges to a similar statement 
for $E_n$-algebras. 
\begin{defn}\label{D:HochCochain}
The (full) Hochschild complex of an $E_n$-algebra $A$, computing its higher Hochschild cohomology,
is the derived hom $CH^*_{E_n}(A,A)=\mathbb{R}Hom^{E_n}_A(A,A)$ in the category of (operadic) $A$-modules over $E_n$.
\end{defn} 
The Deligne conjecture endows the Hochschild cochain complex with an $E_{n+1}$-algebra structure~\cite[Theorem 6.28]{GTZ} 
or~\cite{Fra, Lur2}.  Associated to an $E_n$-algebra $A$, one also has its cotangent complex $L_A$, 
which classifies square-zero extensions of $A$~\cite{Fra, Lur2}. 
\begin{defn}[\cite{Fra}]\label{D:HochTangent} The tangent complex $T_A$ of an $E_n$-algebra $A$ is the dual
$T_A:= Hom^{E_n}_A(L_A,A)\cong \mathbb{R}Der(A,A)$.
\end{defn}
Francis~\cite{Fra, Lur2} has proved that $T_A[-n]$ has a canonical structure of $E_{n+1}$-algebra 
and further that we have a fiber sequence $T_A[-n]\to  CH^*_{E_n}(A,A) \to A$ where the first map is a map of $E_{n+1}$-algebras.

\subsection{Deformation quantization of Lie bialgebras}\label{SS:DQLieBialg}

Lie bialgebras originally arose from Poisson-Lie groups in mathematical physics.
Poisson-Lie groups are Lie groups with a compatible Poisson structure, and appear as gauge groups 
of various classical mechanical systems such as integrable systems. The tangent space $T_eG$ 
of a Poisson-Lie group $G$ has more structure than a Lie algebra, because of the Poisson bracket, which induces a compatible Lie coalgebra structure on $T_eG$ so that $T_eG$ forms Lie bialgebra.
The compatibility relation between the bracket and the cobracket is called the Drinfeld's compatibility relation or the cocycle relation. Quantizing tangent spaces of Poisson-Lie groups gives solutions to the quantum Yang-Baxter equation, which allow to build ``exactly solvable'' quantum systems from classical mechanical systems.  Deformation quantization of Lie bialgebras produces quantum groups, which are relevant not only for mathematical physics but also for low-dimensional topology (quantum invariants of knots and $3$-manifolds), and are deeply related to algebraic geometry and number theory (moduli spaces of curves, Grothendieck-Teichmüller groups, multizeta values) via the Drinfeld associators \cite{Dri}.

The problem of a universal quantization of Lie bialgebras raised by Drinfeld was solved by Etingof and Kazhdan \cite{EK1}, \cite{EK2}. A deformation quantization of a Lie bialgebra $g$ is a topologically free Hopf algebra $H$ over the ring of formal power series $\mathbb{K}[[\hbar]]$ such that $H/\hbar H$ is isomorphic to $U(g)$ (the enveloping algebra of $g$)as a co-Poisson bialgebra. Such a Hopf algebra is called a quantum universal enveloping algebra (QUE for short). Conversely, the quasi-classical limit of a QUE algebra $H$ is the Lie bialgebra of primitive elements $g=Prim(H/\hbar H)$ whose cobracket is induced by the coproduct of $H$. For technical reasons (the need of a ``quasi-triangular structure'' on the Lie bialgebra, that is, of a classical $r$-matrix), the initial construction of Etingof-Kazdhan takes as input a quasi-triangular Lie bialgebra $D(g)$ called the double of $g$. Then they equip the category $Mod_{D(g)}[[\hbar]]$ (same objects than $Mod_{D(g)}$ but the vector spaces of morphisms are tensored by the ring of formal series), called the Drinfeld category, with a braided monoidal structure induced by the $r$-matrix of $D(g)$ and the choice of a Drinfeld associator. The forgetful functor from the Drinfeld category to the category of modules is a braided monoidal functor, so by the tannakian formalism the Drinfeld category is equivalent to the category of modules over a topologically free Hopf algebra quantizing $D(g)$. The QUE algebra of $g$ is then a certain Hopf subalgebra of this Hopf algebra.

As explained in the sequel \cite{EK2} with the formalism of cyclic categories, and later in \cite{EE} in the formalism of props, deformation quantization of Lie bialgebras can be reformulated as the existence of a prop morphism
\[
QUE\rightarrow UE_{cP}[[\hbar]]
\]
between the prop of $QUE$ algebras and the prop $UE_{cP}[[\hbar]]$ of topologically free co-Poisson $\mathbb{K}[[\hbar]]$-bialgebras. This prop morphism induces a quantization functor from the category of $UE_{cP}[[\hbar]]$-algebras to the category of $QUE$-algebras. The category of $UE_{cP}[[\hbar]]$-algebras is equivalent to the category of topologically free Lie bialgebras over $\mathbb{K}[[\hbar]]$ (the enveloping algebra of a Lie bialgebra is a co-Poisson bialgebra). The existence of such a prop morphism relies on the use of a Drinfeld associator.
An alternative propic approach to deformation quantization, which will be useful for us in this paper, is presented in \cite{Mer2}. We will go back to this with more details in Section 8.

\section{Moduli problems of algebraic structures and deformation complexes}

\subsection{Formal moduli problems and (homotopy) Lie algebras}

Formal moduli problems arise when one wants to study the deformation theory of a point of a given moduli space (variety, scheme, stack, derived stack). The general principle of moduli problems is that the deformation theory of a given point in a formal neighbourhood of this point (that is, the formal completion of the moduli space at this point) is controlled by a certain tangent dg Lie algebra. This idea of a correspondence between formal moduli problems and dg Lie algebras arose from unpublished work of Deligne, Drinfed and Feigin, and was developed further by Goldman-Millson, Hinich, Kontsevich, Manetti among others.
However, there was no systematic recipe to build a dg Lie algebra for a given moduli problem, and even worse, different dg Lie algebras could represent the same moduli problem. To overcome these difficulties, one has to consider moduli problems in a derived setting. The rigorous statement of an equivalence between derived formal moduli problems and dg Lie algebras was proved independently by Lurie in \cite{Lur0} and by Pridham in \cite{Pri}.In this paper, what we will call moduli problems are actually derived moduli problems.

Briefly, formal moduli problems are functors $F:dgArt_{\mathbb{K}}^{aug}\rightarrow sSet$ from augmented artinian commutative differential graded algebras to simplicial sets, such that $F(\mathbb{K})\simeq pt$ and $F$ preserves certain pullbacks (we refer the reader to \cite{Lur0} for more details). The value $F(\mathbb{K})$ corresponds to the point of which we study the formal neighbourhood, the evaluation $F(\mathbb{K}[t]/(t^2))$ on the algebra of dual numbers encodes infinitesimal deformations of this point, and the $F(\mathbb{K}[t]/(t^n))$ are polynomial deformations of a higher order, for instance.
Formal moduli problems form a full sub-$\infty$-category noted $FMP_{\mathbb{K}}$ of the $\infty$-category of simplicial presheaves over augmented artinian cdgas. By \cite[Theorem 2.0.2]{Lur0}, this $\infty$-category is equivalent to the $\infty$-category $dgLie_{\mathbb{K}}$ of dg Lie algebras. Moreover, one side of the equivalence is made explicit, and is equivalent to the nerve construction of dg Lie algebras studied thoroughly by Hinich in \cite{Hin0}. The homotopy invariance of the nerve relies on nilpotence conditions on the dg Lie algebra. In the case of formal moduli problems, this nilpotence condition is always satisfied because one tensors the Lie algebra with the maximal ideal of an augmented artinian cdga.

It turns out that this nerve construction can be extended to homotopy Lie algebras, that is, $L_{\infty}$-algebras.
There are two equivalent definitions of an $L_{\infty}$-algebra:
\begin{defn}
(1) An $L_{\infty}$-algebra is a graded vector space $g=\{g_n\}_{n\in\mathbb{Z}}$ equipped with maps
$l_k:g^{\otimes k}\rightarrow g$ of degree $2-k$, for $k\geq 1$, satisfying the following properties:
\begin{itemize}
\item $l_k(...,x_i,x_{i+1},...)=-(-1)^{|x_i||x_{i+1}|}l_k(...,x_{i+1},x_i,...)$
\item for every $k\geq 1$, the generalized Jacobi identities
\[
\sum_{i=1}^k\sum_{\sigma\in Sh(i,k-i)}(-1)^{\epsilon(i)}l_k(l_i(x_{\sigma(1)},...,x_{\sigma(i)}),x_{\sigma(i+1)},...,x_{\sigma(k)})=0
\]
where $\sigma$ ranges over the $(i,k-i)$-shuffles and
\[
\epsilon(i) = i+\sum_{j_1<j_2,\sigma(j_1)>\sigma(j_2)}(|x_{j_1}||x_{j_2}|+1).
\]
\end{itemize}

(2) An $L_{\infty}$-algebra structure on a graded vector space $g=\{g_n\}_{n\in\mathbb{Z}}$ is a
coderivation $Q:Sym^{\bullet\geq 1}(g[1])\rightarrow Sym^{\bullet\geq 1}(g[1])$ of degree $1$ of the cofree cocommutative coalgebra 
$ Sym^{\bullet\geq 1}(g[1])$ such that $Q^2=0$.
\end{defn}
The bracket $l_1$ is actually the differential of $g$ as a cochain complex. When the brackets $l_k$ vanish
for $k\geq 3$, then one gets a dg Lie algebra.
The dg algebra $C^*(g)$ obtained by dualizing the dg coalgebra of (2) is called the Chevalley-Eilenberg algebra of $g$.

A $L_{\infty}$ algebra $g$ is filtered if it admits a decreasing filtration
\[
g=F_1g\supseteq F_2g\supseteq...\supseteq F_rg\supseteq ...
\]
compatible with the brackets: for every $k\geq 1$,
\[
l_k(F_rg,g,...,g)\in F_rg.
\]
We suppose moreover that for every $r$, there exists an integer $N(r)$ such that $l_k(g,...,g)\subseteq F_rg$
for every $k>N(r)$.
A filtered $L_{\infty}$ algebra $g$ is complete if the canonical map $g\rightarrow lim_rg/F_rg$ is an isomorphism.

The completeness of a $L_{\infty}$ algebra allows to define properly the notion of Maurer-Cartan element:
\begin{defn}
(1) Let $g$ be a dg $L_{\infty}$-algebra and $\tau\in g^1$, we say that $\tau$ is a Maurer-Cartan element of $g$ if
\[
\sum_{k\geq 1} \frac{1}{k!} l_k(\tau,...,\tau)=0.
\]
The set of Maurer-Cartan elements of $g$ is noted $MC(g)$.

(2) The simplicial Maurer-Cartan set is then defined by
\[
MC_{\bullet}(g)=MC(g\hat{\otimes}\Omega_{\bullet}),
\],
where $\Omega_{\bullet}$ is the Sullivan cdga of de Rham polynomial forms on the standard simplex $\Delta^{\bullet}$ (see \cite{Sul})
and $\hat{\otimes}$ is the completed tensor product with respect to the filtration induced by $g$.
\end{defn}
The simplicial Maurer-Cartan set is a Kan complex, functorial in $g$ and preserves quasi-isomorphisms of complete $L_{\infty}$-algebras.
The Maurer-Cartan moduli set of $g$ is $\mathcal{MC}(g)=\pi_0MC_{\bullet}(g)$: it is the quotient of the set
of Maurer-Cartan elements of $g$ by the homotopy relation defined by the $1$-simplices.
When $g$ is a complete dg Lie algebra, it turns out that this homotopy relation is equivalent to the action of the gauge
group $exp(g^0)$ (a prounipotent algebraic group acting on Maurer-Cartan elements), so in this case
this moduli set coincides with the one usually known for Lie algebras.
We refer the reader to \cite{Yal2} for more details about all these results.
We also recall briefly the notion of twisting by a Maurer-Cartan element.
The twisting of a complete $L_{\infty}$ algebra $g$ by a Maurer-Cartan element $\tau$ is the complete $L_{\infty}$ algebra $g^{\tau}$
with the same underlying graded vector space and new brackets $l_k^{\tau}$ defined by
\[
l_k^{\tau}(x_1,...,x_k)=\sum_{i\geq 0}\frac{1}{i!}l_{k+i}(\underbrace{\tau,...,\tau}_i,x_1,...,x_k)
\]
where the $l_k$ are the brackets of $g$.

To conclude, we explain why Lurie's equivalence \cite[Theorem 2.0.2]{Lur0} lifts from the $\infty$-category of dg Lie algebras $dgLie$ to the $\infty$-category of $L_{\infty}$-algebras $L_{\infty}-Alg$.
Let $p:L_{\infty}\stackrel{\sim}{\rightarrow}Lie$ be the cofibrant resolution of the operad $Lie$ encoding $L_{\infty}$-algebras. This morphism induces a functor $p^*:dgLie\rightarrow L_{\infty}-Alg$ which associates to any dg Lie algebra the $L_{\infty}$-algebra with the same differential, the same bracket of arity $2$ and trivial higher brackets in arities greater than $2$.
It turns out that this functor is a right Quillen functor belonging to a Quillen equivalence
\[
p_{!}:L_{\infty}-Alg\leftrightarrows dgLie :p^*,
\]
since $p$ is a quasi-isomorphism of $\Sigma$-cofibrant operads (see \cite[Theorem 16.A]{Fre3} for the general result).
Quillen equivalences induce equivalences of the $\infty$-categories associated to these model categories (which can be realized, for instance, by taking the coherent nerve of the simplicial localization of these model categories). We have a commutative triangle of $\infty$-categories
\[
\xymatrix{
L_{\infty}-Alg\ar[dr]^-{\tilde{\psi}} & \\
dgLie \ar[u]^-{p^*}\ar[r]_-{\psi} & FMP_{\mathbb{K}}
}
\]
where $\psi$ and $\tilde{\psi}$ send a Lie algebra, respectively an $L_{\infty}$-algebra, to its nerve functor or Maurer-Cartan space. The maps $p^*$ and $\psi$ are weak equivalences of $\infty$-categories in the chosen model category of $\infty$-categories (which can be for instance the one of quasicategories \cite{Lur1}, but actually any model works). By the two-out-of-three property of weak equivalences, this implies that $\tilde{\psi}:L_{\infty}-Alg\rightarrow FMP_{\mathbb{K}}$ is a weak equivalence of $\infty$-categories.

\subsection{Fully faithful $\infty$-functors}

In the next section, we will need the following useful criteria to obtain fully faithful $\infty$-functors between categories of algebras, and see under which conditions they induce equivalences between the associated formal moduli problems:
\begin{lem}\label{L: stricteq}
Let $F:(\mathcal{C},W_{\mathcal{C}})\rightleftarrows (\mathcal{D},W_{\mathcal{D}}):G$ be an adjunction of relative categories (that is, the functors $F$ and $G$ preserves weak equivalences) such that the unit and counit of this adjunction are pointwise weak equivalences. Then $F$ induces an equivalence of $\infty$-categories with inverse $G$.
\end{lem}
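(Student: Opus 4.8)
The plan is to pass to the Dwyer--Kan hammock localizations and exhibit $G$ as a two-sided inverse of $F$ there. First, since $F$ and $G$ are relative functors, the functoriality of the hammock localization $L^H$ produces simplicial functors (hence, after taking coherent nerves, $\infty$-functors) $L^HF : L^H(\mathcal{C},W_{\mathcal{C}}) \to L^H(\mathcal{D},W_{\mathcal{D}})$ and $L^HG$ in the opposite direction. Because $L^H$ is itself a functor on relative categories, it preserves composition, so $L^H(GF) = L^HG \circ L^HF$ and $L^H(FG) = L^HF \circ L^HG$. Thus it suffices to show that these two composites are equivalent to the respective identity $\infty$-functors.

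Next I would promote the unit $\eta : \mathrm{id}_{\mathcal{C}} \Rightarrow GF$ and the counit $\epsilon : FG \Rightarrow \mathrm{id}_{\mathcal{D}}$ to natural transformations of the induced $\infty$-functors. To do this cleanly, view $\eta$ as a single functor $H : \mathcal{C} \times [1] \to \mathcal{C}$, where $[1] = \{0 \to 1\}$ carries the trivial relative structure (only identities are weak equivalences) and $\mathcal{C} \times [1]$ the product structure. The functor $H$ is relative precisely because $GF$ preserves weak equivalences and each component $\eta_X$ is one: the morphism $H(f, 0\to 1) = GF(f)\circ\eta_X$ then lies in $W_{\mathcal{C}}$ whenever $f$ does. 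Using that hammock localization is compatible with finite products, $L^H(\mathcal{C} \times [1]) \simeq L^H(\mathcal{C},W_{\mathcal{C}}) \times \Delta^1$, so $L^HH$ realizes a natural transformation $\mathrm{id} \Rightarrow L^HG \circ L^HF$. Applying the same construction to $\epsilon$ yields a natural transformation $L^HF \circ L^HG \Rightarrow \mathrm{id}$.

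Finally, I would invoke that the localization functor $\mathcal{C} \to L^H(\mathcal{C},W_{\mathcal{C}})$ sends weak equivalences to equivalences, so each component $\eta_X$ (resp. $\epsilon_Y$), being a weak equivalence, becomes an equivalence in the localization. A natural transformation of $\infty$-functors all of whose components are equivalences is an equivalence in the corresponding functor $\infty$-category; hence $L^HG \circ L^HF \simeq \mathrm{id}$ and $L^HF \circ L^HG \simeq \mathrm{id}$. This is exactly the assertion that $L^HF$ is an equivalence of $\infty$-categories with inverse $L^HG$, i.e. that $F$ induces an equivalence with inverse $G$.

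The one genuinely nontrivial point is the 2-functoriality used in the second step, namely that $L^H$ converts a relative natural transformation with weak-equivalence components into an $\infty$-natural equivalence. This rests on the compatibility of hammock localization with products, $L^H(\mathcal{C} \times [1]) \simeq L^H(\mathcal{C},W_{\mathcal{C}}) \times \Delta^1$, for which I would cite Dwyer--Kan \cite{DK}. Everything else is the standard $\infty$-categorical principle that a functor possessing a quasi-inverse whose unit and counit are invertible is an equivalence; note in particular that the full adjunction data is not needed beyond producing the two natural transformations $\eta$ and $\epsilon$ with pointwise-invertible components.
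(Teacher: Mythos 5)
Your proof is correct, but it takes a genuinely different route from the paper's. The paper never passes through the hammock localization here: it works with the Barwick--Kan model structure on the category $RelCat$ of relative categories, lifted from the complete Segal space model structure along the adjunction $K_{\xi}\dashv N_{\xi}$ of \cite{BK}. The adjunction hypothesis is used only to observe that $F$ is a \emph{strict homotopy equivalence} of relative categories in the sense of \cite{BK}; then \cite[Proposition 7.5 (iii)]{BK} says that $N_{\xi}$ preserves homotopy equivalences, so $N_{\xi}F$ is a homotopy equivalence of bisimplicial sets, hence a Reedy weak equivalence, hence a weak equivalence in $CSS$, and applying the fibrant replacement $(-)^f$ gives the desired equivalence of complete Segal spaces. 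You instead localize with $L^H$ and upgrade the unit and counit to natural equivalences of $\infty$-functors via the cylinder $\mathcal{C}\times[1]$. The conceptual core is the same in both arguments---the passage to $\infty$-categories converts natural weak equivalences into homotopies, so strict homotopy equivalences become equivalences---but the black boxes differ: you need the compatibility of $L^H$ with finite products (or, more directly, the Dwyer--Kan statement that a natural transformation with weak-equivalence components induces homotopic maps of hammock localizations), whereas the paper needs the Barwick--Kan machinery. The paper's choice has the advantage of staying inside the $CSS$ framework it uses throughout Section 2 to build its $\infty$-categories, so the lemma is literally a statement about $N_{\xi}F^f$; your version is more self-contained from a modern $\infty$-categorical viewpoint and makes explicit the useful observation (also implicit in the paper) that the adjunction data beyond the two pointwise weak equivalences $\eta$ and $\epsilon$ is never used.

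One caveat on references, which is a citation issue rather than a logical gap: the product compatibility $L^H(\mathcal{C}\times[1])\simeq L^H(\mathcal{C},W_{\mathcal{C}})\times\Delta^1$ is not stated in \cite{DK} in this form. What \cite{DK} does provide---and what suffices, letting you bypass the cylinder construction entirely---is that a natural transformation between relative functors whose components are weak equivalences induces homotopic maps on hammock localizations. Alternatively, the product statement can be deduced from the fact that Rezk's classification diagram commutes strictly with finite products and models the same $\infty$-category; either repair closes the gap without altering the structure of your argument.
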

\begin{proof}
Let us denote by $RelCat$ the category of relative categories. The objects are the relative categories
and the morphisms are the relative functors, that is, the functors restricting to functors between the
categories of weak equivalences.
By \cite[Theorem 6.1]{BK}, there is an adjunction between
the category of bisimplicial sets and the category of relative categories
\[
K_{\xi}:sSets^{\Delta^{op}}\leftrightarrows RelCat:N_{\xi}
\]
(where $K_{\xi}$ is the left adjoint and $N_{\xi}$ the right adjoint)
which lifts any Bousfield localization of the Reedy model structure of bisimplicial sets into a model structure on $RelCat$.
In the particular case of the Bousfield localization defining the model category $CSS$ of complete Segal spaces \cite[Theorem 7.2]{Rez2}, one obtains a Quillen equivalent homotopy theory of $\infty$-categories in $RelCat$ \cite{BK}.

A way to build the $\infty$-category associated to a relative category $(\mathcal{C},W_{\mathcal{C}})$ is to take a functorial fibrant resolution $N_{\xi}(\mathcal{C},W_{\mathcal{C}})^f$ of the bisimplicial set $N_{\xi}(\mathcal{C},W_{\mathcal{C}})$ in $CSS$ to get a complete Segal space. So we want to prove that $N_{\xi}F^f$ is a weak equivalence of $CSS$. For this, let us note first that the assumption on the adjunction between $F$ and $G$ implies that $F$ is a strict homotopy equivalence in $RelCat$ in the sense of \cite{BK}. By \cite[Proposition 7.5 (iii)]{BK}, the functor $N_{\xi}$ preserves homotopy equivalences, so $N_{\xi}F$ is a homotopy equivalence of bisimplicial sets, hence a Reedy weak equivalence. Since $CSS$ is a Bousfield localization of the Reedy model structure on bisimplicial sets, Reedy weak equivalences are weak equivalences in $CSS$, then by applying the fibrant resolution functor $(-)^f$ we conclude that $N_{\xi}F^f$ is a weak equivalence of complete Segal spaces.
\end{proof}
From this we deduce:
\begin{prop}\label{P: fullfaith}
Let $F:(\mathcal{C},W_{\mathcal{C}})\rightarrow (\mathcal{D},W_{\mathcal{D}})$ be a functor of relative categories (that is, a functor preserving weak equivalences). If $F$ is fully faithful, then it induces a fully faithful $\infty$-functor at the level of the associated $\infty$-categories.
\end{prop}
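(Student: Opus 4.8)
The plan is to leverage the complete Segal space model for the $\infty$-category of a relative category that was already set up in the proof of Lemma~\ref{L: stricteq}. Recall from there that the $\infty$-category attached to a relative category $(\mathcal{A},W_{\mathcal{A}})$ is the complete Segal space $N_{\xi}(\mathcal{A},W_{\mathcal{A}})^f$, obtained by fibrant replacement in $CSS$ of the bisimplicial set $N_{\xi}(\mathcal{A},W_{\mathcal{A}})$ of \cite{BK}. An $\infty$-functor is fully faithful precisely when it induces weak equivalences on all mapping spaces, and mapping spaces in a complete Segal space $Z_{\bullet}$ are recovered as the homotopy fibers of $(d_1,d_0)\colon Z_1\rightarrow Z_0\times Z_0$. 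The first step is therefore to translate the statement into the assertion that, for the map $N_{\xi}(F)$, the square
\[
\xymatrix{
N_{\xi}(\mathcal{C},W_{\mathcal{C}})_1 \ar[r] \ar[d] & N_{\xi}(\mathcal{D},W_{\mathcal{D}})_1 \ar[d] \\
N_{\xi}(\mathcal{C},W_{\mathcal{C}})_0\times N_{\xi}(\mathcal{C},W_{\mathcal{C}})_0 \ar[r] & N_{\xi}(\mathcal{D},W_{\mathcal{D}})_0\times N_{\xi}(\mathcal{D},W_{\mathcal{D}})_0
}
\]
becomes homotopy cartesian over each pair $(X,Y)$ of objects of $\mathcal{C}$ after fibrant replacement, i.e. that the induced map on homotopy fibers over $(X,Y)$ is a weak equivalence. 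Note that I only ask for this over pairs coming from $\mathcal{C}$; I am \emph{not} claiming the square is globally homotopy cartesian, which would additionally force essential surjectivity.

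Second, I would identify these homotopy fibers with the mapping spaces of the Dwyer--Kan hammock localization $L^H$, so that the problem reduces to proving that
\[
F_{*}\colon L^H(\mathcal{C},W_{\mathcal{C}})(X,Y)\longrightarrow L^H(\mathcal{D},W_{\mathcal{D}})(FX,FY)
\]
is a weak equivalence of simplicial sets for every pair of objects $X,Y$ of $\mathcal{C}$. The advantage of this reformulation is that the simplices of $L^H$ are completely explicit: they are hammocks, that is, commutative diagrams whose horizontal arrows are arbitrary morphisms and whose vertical and backward-pointing arrows are weak equivalences. On the horizontal (morphism) data, full faithfulness of $F$ gives a bijection, and $F$ preserves the backward weak equivalences by hypothesis; this makes $F_{*}$ levelwise injective with a description of its image in terms of hammocks in $\mathcal{D}$ whose objects lie in $F(\mathcal{C})$.

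The third and decisive step is to show that $F_{*}$ is not merely injective but a weak equivalence, using the calculus of hammocks together with the fact that $F$ preserves weak equivalences, to reduce the width and length of an arbitrary hammock in $\mathcal{D}$ between $FX$ and $FY$ until it can be rectified into the image of $F$. This last reduction is exactly where the main obstacle lies: full faithfulness of $F$ controls only the morphisms between objects that already belong to $\mathcal{C}$, whereas a hammock in $\mathcal{D}$ may pass through objects of $\mathcal{D}$ and through weak equivalences of $\mathcal{D}$ that are \emph{not} in the image of $F$. One must argue that such exterior zigzags of weak equivalences emanating from $FX$ and $FY$ can be contracted back into $F(\mathcal{C})$ without changing the homotopy type of the mapping space; concretely this is the point at which the relative structure must be used in an essential way (for instance that $F$ reflects, and not merely preserves, weak equivalences in the situations to which we will apply the proposition), and it is the heart of the argument. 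Once this contraction is established, $F_{*}$ is a levelwise weak equivalence, hence a weak equivalence, and unwinding the reductions yields the full faithfulness of the induced $\infty$-functor.
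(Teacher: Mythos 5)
Your first two steps are sound but purely definitional: the homotopy fibers of $(d_1,d_0)$ on a fibrant replacement of the Barwick--Kan nerve are indeed the mapping spaces of the hammock localization, and the paper itself records, immediately after the statement of the proposition, that fully faithfulness of the induced $\infty$-functor means exactly that $L^H(\mathcal{C},W_{\mathcal{C}})(X,Y)\rightarrow L^H(\mathcal{D},W_{\mathcal{D}})(F(X),F(Y))$ is a weak equivalence for all $X,Y$. So at the end of your second step you have restated the goal, not reduced it. The entire mathematical content lies in your third step, and that step is missing: you name the obstacle (hammocks in $\mathcal{D}$ passing through objects and weak equivalences outside the image of $F$) and then write ``once this contraction is established,'' which is precisely the statement to be proved. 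This is a genuine gap, and it is not one that hammock calculus can close from the stated hypotheses alone: a fully faithful relative functor need not induce equivalences on hammock mapping spaces. For instance, let $\mathcal{D}$ be the category $0\stackrel{f}{\rightarrow}1$ with $W_{\mathcal{D}}=\{\mathrm{id}_0,\mathrm{id}_1,f\}$, and let $\mathcal{C}$ be the same category with $W_{\mathcal{C}}=\{\mathrm{id}_0,\mathrm{id}_1\}$; the identity functor is fully faithful and preserves weak equivalences, yet $L^H(\mathcal{C},W_{\mathcal{C}})(1,0)=\emptyset$ while $L^H(\mathcal{D},W_{\mathcal{D}})(1,0)$ is nonempty (it contains the formal inverse of $f$). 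Even adding your parenthetical hypothesis that $F$ reflects weak equivalences does not suffice: take $\mathcal{D}$ to be the cospan $0\rightarrow 2\leftarrow 1$ with both arrows weak equivalences and $\mathcal{C}$ the full subcategory on $\{0,1\}$; then $F$ is fully faithful and reflects weak equivalences, but $L^H(\mathcal{C},W_{\mathcal{C}})(0,1)=\emptyset$ while $L^H(\mathcal{D},W_{\mathcal{D}})(0,1)\neq\emptyset$. So the ``contraction of exterior zigzags'' you appeal to would require hypotheses (for example, stability of the image of $F$ under zigzags of weak equivalences) that are neither stated nor verified, and the heart of your argument cannot be supplied as written.

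For comparison, the paper's own proof takes a different and much shorter route that never touches hammocks: it factors $F$ through its full image, observes that fully faithfulness makes $\mathcal{C}\rightarrow F(\mathcal{C})$ an equivalence of relative categories, and invokes Lemma~\ref{L: stricteq} (strict homotopy equivalences of relative categories induce equivalences of complete Segal spaces after applying $N_{\xi}$) to conclude that $\mathcal{C}\rightarrow F(\mathcal{C})$ is an equivalence of $\infty$-categories, from which fully faithfulness of $\mathcal{C}\rightarrow\mathcal{D}$ is read off. One may object that the last reading-off step (comparing mapping spaces computed in $F(\mathcal{C})$ with those computed in $\mathcal{D}$) absorbs the very subtlety your third step makes explicit; but that is exactly the comparison to draw: the paper's argument bypasses the rectification problem by factoring through the image, whereas your proposal walks directly into that problem and stops at the point where a proof would have to begin.
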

\begin{proof}
Since $F$ is fully faithful, it induces an equivalence of relative categories between $\mathcal{C}$ and $F(\mathcal{C})$, hence an equivalence of the associated $\infty$-categories by Lemma~\ref{L: stricteq}, which exactly means that $F$ induces a fully faithful $\infty$-functor between $\mathcal{C}$ and $\mathcal{D}$.
\end{proof}
\begin{cor}\label{C: fullfaithprop}
A surjection of props $\varphi:P\twoheadrightarrow Q$ induces a fully faithful $\infty$-functor
\[
\varphi^*:Q-Alg\hookrightarrow P-Alg,
\]
where weak equivalences on both sides are defined by quasi-isomorphisms.
\end{cor}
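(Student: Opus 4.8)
The plan is to exhibit $\varphi^*$ as a fully faithful relative functor and then invoke Proposition~\ref{P: fullfaith}. Recall that a $Q$-algebra is an object $X$ together with a prop morphism $\rho_X:Q\rightarrow End_X$; precomposing with $\varphi$ produces a prop morphism $\rho_X\circ\varphi:P\rightarrow End_X$, and this assignment defines the restriction functor $\varphi^*:Q-Alg\rightarrow P-Alg$. On morphisms, $\varphi^*$ sends a $Q$-algebra map $f:X\rightarrow Y$ to the same underlying chain map, now regarded as a $P$-algebra map between $\varphi^*(X)$ and $\varphi^*(Y)$; this makes sense precisely because the $P$-structures are obtained from the $Q$-structures by precomposition with $\varphi$.

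First I would check that $\varphi^*$ is a relative functor. Since the underlying chain complex of $\varphi^*(X)$ coincides with that of $X$, and $\varphi^*$ leaves the underlying map of a morphism unchanged, a quasi-isomorphism of $Q$-algebras is sent to a quasi-isomorphism of $P$-algebras; hence $\varphi^*$ preserves weak equivalences. Faithfulness is equally immediate, as two $Q$-algebra maps with the same image under $\varphi^*$ share the same underlying chain map and are therefore equal.

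The key step, and the only place the surjectivity hypothesis enters, is fullness. Given $Q$-algebras $X,Y$ and a $P$-algebra morphism $f:\varphi^*(X)\rightarrow\varphi^*(Y)$, by definition $f$ intertwines the $P$-actions: for every $p\in P(m,n)$ one has $f^{\otimes n}\circ\rho_X(\varphi(p))=\rho_Y(\varphi(p))\circ f^{\otimes m}$. Because $\varphi$ is componentwise surjective, every operation $q\in Q(m,n)$ is of the form $\varphi(p)$, so these identities are exactly the compatibility conditions asserting that $f$ is a morphism of $Q$-algebras. Thus $f$ is itself a $Q$-algebra map, hence lies in the image of $\varphi^*$, which is fullness.

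With $\varphi^*$ shown to be a fully faithful functor of relative categories, Proposition~\ref{P: fullfaith} immediately yields that it induces a fully faithful $\infty$-functor $Q-Alg\hookrightarrow P-Alg$, completing the proof. I expect no serious obstacle here: the entire content is the translation of the surjectivity of $\varphi$ into the fullness of $\varphi^*$, after which the abstract machinery of Proposition~\ref{P: fullfaith} does the rest.
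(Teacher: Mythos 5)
Your proof is correct and follows exactly the route the paper intends: the corollary is stated as an immediate consequence of Proposition~\ref{P: fullfaith}, and your verification that $\varphi^*$ preserves quasi-isomorphisms, is faithful, and is full (using the componentwise surjectivity of $\varphi$ to translate the $P$-algebra intertwining conditions into the $Q$-algebra ones) supplies precisely the routine details the paper leaves implicit.
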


In the formalism of Dwyer-Kan's hammock localization, a fully faithful functor $F:\mathcal{C}\rightarrow \mathcal{D}$ is a functor satisfying the following property:
for every two objects $X$ and $Y$ of $\mathcal{C}$, it induces a weak equivalence of simplicial mapping spaces
\[
L^H(\mathcal{C},W_{\mathcal{C}})(X,Y)\stackrel{\sim}{\rightarrow}L^H(\mathcal{D},W_{\mathcal{D}})(F(X),F(Y)).
\]
In particular, the associated functor $Ho(F)$ at the level of homotopy categories is fully faithful (but not an equivalence).
We would like this weak equivalence to restrict at the level of homotopy automorphisms. For this, let us recall that a functor $F:\mathcal{C}\rightarrow\mathcal{D}$ is conservative if the following property holds: let $f$ be a morphism of $\mathcal{C}$, if $F(f)$ is a weak equivalence of $\mathcal{D}$ then $f$ is a weak equivalence of $\mathcal{C}$.
\begin{lem}\label{L: equivhaut}
Let $F:(\mathcal{C},W_{\mathcal{C}})\rightarrow (\mathcal{D},W_{\mathcal{D}})$ be a functor of relative categories. Let us suppose that $F$ is conservative and fully faithful. Then the restriction of $F$ to the subcategories of weak equivalences
\[
wF:W_{\mathcal{C}}\rightarrow W_{\mathcal{D}}
\]
is fully faithful and induces, for every $X\in\mathcal{C}$, a weak equivalence of homotopy automorphisms
\[
L^HW_{\mathcal{C}}(X,X)\stackrel{\sim}{\rightarrow}L^HW_{\mathcal{D}}(F(X),F(X)),
\]
where $L^HW_{\mathcal{C}}$ is Dwyer-Kan's hammock localization of $W_{\mathcal{C}}$ with respect to itself.
\end{lem}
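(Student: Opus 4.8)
The plan is to deduce the statement from the ordinary full faithfulness of $F$ together with the conservativity hypothesis, and then to feed the result into Proposition~\ref{P: fullfaith}.

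First I would check that $wF$ is fully faithful as a functor of ordinary categories, i.e. that for all objects $X,Y$ the map
\[
W_{\mathcal{C}}(X,Y)\rightarrow W_{\mathcal{D}}(F(X),F(Y))
\]
is a bijection. Since $F$ is fully faithful, the underlying map $\mathcal{C}(X,Y)\rightarrow\mathcal{D}(F(X),F(Y))$ is already a bijection, so injectivity of $wF$ on hom-sets is immediate and only surjectivity onto weak equivalences needs attention. Because $F$ is a relative functor it sends a weak equivalence $f\colon X\to Y$ to the weak equivalence $F(f)$, so $wF$ is well defined; conversely, given a weak equivalence $g\colon F(X)\to F(Y)$, full faithfulness produces a unique $f\colon X\to Y$ with $F(f)=g$, and \emph{here conservativity is essential}: it forces $f$ itself to be a weak equivalence, whence $f\in W_{\mathcal{C}}(X,Y)$. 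This is the one place where the conservativity hypothesis is genuinely used, and it is the crux of the argument.

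Next I would regard $W_{\mathcal{C}}$ and $W_{\mathcal{D}}$ as relative categories in which \emph{every} morphism is a weak equivalence, i.e. the pairs $(W_{\mathcal{C}},W_{\mathcal{C}})$ and $(W_{\mathcal{D}},W_{\mathcal{D}})$, so that the hammock localizations $L^HW_{\mathcal{C}}$ and $L^HW_{\mathcal{D}}$ are the ones appearing in the statement. The functor $wF$ is trivially a relative functor between these two, since it carries morphisms of $W_{\mathcal{C}}$ to morphisms of $W_{\mathcal{D}}$ and all of them are declared to be weak equivalences. Having established that $wF$ is fully faithful in the $1$-categorical sense, Proposition~\ref{P: fullfaith} applies and shows that $wF$ induces a fully faithful $\infty$-functor between the associated $\infty$-categories.

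Finally, unwinding the meaning of ``fully faithful $\infty$-functor'' recalled just before the statement, this says exactly that $wF$ induces a weak equivalence of mapping spaces
\[
L^HW_{\mathcal{C}}(X,Y)\stackrel{\sim}{\rightarrow}L^HW_{\mathcal{D}}(F(X),F(Y))
\]
for every pair of objects $X,Y$; specializing to $Y=X$ yields the desired weak equivalence of homotopy automorphism spaces. The only genuine subtlety is the one flagged above, namely disentangling the $1$-categorical notion of full faithfulness (a bijection of hom-sets, needed to invoke Proposition~\ref{P: fullfaith}) from the $\infty$-categorical, or hammock, notion (a weak equivalence of mapping spaces, which is the conclusion), together with recognizing that conservativity is precisely what matches the weak equivalences downstairs with those upstairs.
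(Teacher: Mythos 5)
Your proof is correct and follows essentially the same route as the paper: establish that $wF$ is fully faithful as an ordinary functor (faithfulness from full faithfulness of $F$, fullness from fullness plus conservativity to match weak equivalences), then invoke Proposition~\ref{P: fullfaith} applied to $W_{\mathcal{C}}$ viewed as a relative category with itself as weak equivalences, and read off the equivalence of hammock mapping spaces. Your extra care in spelling out that $(W_{\mathcal{C}},W_{\mathcal{C}})$ is the relative category being localized is only implicit in the paper, but the argument is the same.
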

\begin{proof}
Since $F$ preserves weak equivalences, it induces a functor $wF:W_{\mathcal{C}}\rightarrow W_{\mathcal{D}}$.
Moreover, the functor $wF$ is clearly faithful as well. Now let $f:F(X)\rightarrow F(Y)$ be a weak equivalence of $\mathcal{D}$. Since $F$ is full, there exists a morphism $g$ of $\mathcal{C}$ such that $f=F(g)$.Since $F$ is conservative, the fact that $f$ is a weak equivalence of $\mathcal{D}$ implies that $g$ is a weak equivalence of $\mathcal{C}$, hence $wF$ is full. According to Proposition 2.4, the functor $wF$ gives a fully faithful $\infty$-functor, so at the level of hammock localizations it induces a weak equivalence
\[
L^HW_{\mathcal{C}}(X,Y)\stackrel{\sim}{\rightarrow}L^HW_{\mathcal{D}}(F(X),F(Y))
\]
for every two objects $X$ and $Y$ of $\mathcal{C}$, in particular
\[
L^HW_{\mathcal{C}}(X,X)\stackrel{\sim}{\rightarrow}L^HW_{\mathcal{D}}(F(X),F(X)).
\]
\end{proof}
\begin{cor}\label{C: surjprop}
A surjection of props $\varphi:P\twoheadrightarrow Q$ induces weak equivalences of homotopy automorphisms
\[
L^HwQ-Alg(X,X)\stackrel{\sim}{\rightarrow}L^HwP-Alg(\varphi^*X,\varphi^*X)
\]
for every $Q$-algebra $X$.
\end{cor}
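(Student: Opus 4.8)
The plan is to obtain this as a direct consequence of Lemma~\ref{L: equivhaut} applied to the functor $\varphi^*:Q-Alg\rightarrow P-Alg$. By that lemma, once we know $\varphi^*$ is a functor of relative categories which is both fully faithful and conservative, the asserted weak equivalence of homotopy automorphisms is exactly its conclusion in the special case $X=Y$. So the whole proof reduces to checking these two hypotheses and then invoking the lemma.

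First I would record that $\varphi^*$ is fully faithful: this is precisely Corollary~\ref{C: fullfaithprop}, which is itself deduced from Proposition~\ref{P: fullfaith} together with the fact that a surjection of props yields a fully faithful functor on the underlying categories of algebras. Nothing further needs to be done for this hypothesis beyond citing that corollary.

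Next I would verify that $\varphi^*$ preserves and reflects weak equivalences, and is therefore in particular conservative. The key observation is that $\varphi^*$ leaves the underlying chain complex untouched: it sends a $Q$-algebra $X$, given by a prop morphism $Q\rightarrow End_X$, to the $P$-algebra on the \emph{same} complex $X$ with structure morphism $P\stackrel{\varphi}{\rightarrow}Q\rightarrow End_X$, and it sends a morphism of $Q$-algebras to the morphism of $P$-algebras carried by the same underlying chain map. Since weak equivalences in both $Q-Alg$ and $P-Alg$ are by definition the chain quasi-isomorphisms, a morphism $f$ is a weak equivalence in $Q-Alg$ if and only if $\varphi^* f$ is a weak equivalence in $P-Alg$. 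In particular $\varphi^*$ preserves weak equivalences, so it is a functor of relative categories, and it reflects them, so it is conservative.

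Having checked these hypotheses, I would simply apply Lemma~\ref{L: equivhaut} with $F=\varphi^*$, $(\mathcal{C},W_{\mathcal{C}})=(Q-Alg,wQ-Alg)$ and $(\mathcal{D},W_{\mathcal{D}})=(P-Alg,wP-Alg)$, which yields
\[
L^HwQ-Alg(X,X)\stackrel{\sim}{\rightarrow}L^HwP-Alg(\varphi^*X,\varphi^*X)
\]
for every $Q$-algebra $X$. I do not expect a genuine obstacle here: the substantive content has already been packaged into Corollary~\ref{C: fullfaithprop} and Lemma~\ref{L: equivhaut}. The only point one must be careful not to take for granted is conservativity, and that is immediate from the fact that $\varphi^*$ does not alter underlying complexes or chain maps.
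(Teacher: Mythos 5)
Your proposal is correct and is exactly the derivation the paper intends: Corollary~\ref{C: surjprop} is stated right after Lemma~\ref{L: equivhaut} precisely because it follows by applying that lemma to $F=\varphi^*$, whose fully faithfulness comes from Corollary~\ref{C: fullfaithprop} (surjectivity of $\varphi$) and whose conservativity is immediate since restriction along $\varphi$ does not change underlying complexes or chain maps and weak equivalences on both sides are the quasi-isomorphisms. Your care in isolating conservativity as the one hypothesis not to take for granted matches the paper's setup.
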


\subsection{Moduli problems of algebraic structures} \label{S:ModuliProblems}

Moduli spaces of algebraic structures were originally defined as simplicial sets, in the setting of simplicial operads \cite{Rez}. This notion can be extended to algebras over differential graded props as follows (see \cite{Yal2}):
\begin{defn}
Let $P_{\infty}$ be a cofibrant prop and $X$ be a complex. The moduli space of $P_{\infty}$-algebra structures on $X$ is the simplicial set $P_{\infty}\{X\}$ defined by
\[
P_{\infty} = Mor_{Prop}(P_{\infty},End_X\otimes\Omega_{\bullet}),
\]
where the prop $End_X\otimes\Omega_{\bullet}$ is defined by $End_X\otimes\Omega_{\bullet})(m,n)=End_X(m,n)\otimes\Omega_{\bullet}$ and $\Omega_{\bullet}$ is the Sullivan cdga of the standard simplex $\Delta^{\bullet}$.
\end{defn}
This simplicial set enjoys the following properties:
\begin{prop}\label{P: classif}
(1) The simplicial set $P_{\infty}\{X\}$ is a Kan complex and
\[
\pi_0P_{\infty}\{X\} = [P_{\infty},End_X]_{Ho(Prop)}
\]
is the set of homotopy classes of $P_{\infty}$-algebra structures on $X$.

(2) Any weak equivalence of cofibrant props $P_{\infty}\stackrel{\sim}{\rightarrow}Q_{\infty}$ induces a weak equivalence of Kan complexes $Q_{\infty}\{X\}\stackrel{\sim}{\rightarrow}P_{\infty}\{X\}$.
\end{prop}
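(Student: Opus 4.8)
The plan is to recognize $P_{\infty}\{X\} = Mor_{Prop}(P_{\infty},End_X\otimes\Omega_{\bullet})$ as the (right) homotopy mapping space of the model category of dg props, computed through the simplicial object $End_X\otimes\Omega_{\bullet}$, and then to invoke the standard homotopical behaviour of such mapping spaces. The two properties of the Sullivan forms that drive everything are: each $\Omega_n$ is quasi-isomorphic to $\mathbb{K}$, and for every horn inclusion $\Lambda^n_k\hookrightarrow\Delta^n$ the restriction $\Omega(\Delta^n)\rightarrow\Omega(\Lambda^n_k)$ is a surjective quasi-isomorphism of cdgas (the polynomial-form analogue of the Kan condition).

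For part (1), I would first observe that in the transferred model structure on $Prop$ (cf. \cite{Frep}) fibrations are componentwise, so every prop is fibrant --- in particular $End_X$ --- because every chain complex over the field $\mathbb{K}$ is fibrant. Next I would check that $End_X\otimes\Omega_{\bullet}$ is a simplicial frame on $End_X$: in each simplicial degree the unit map $End_X=End_X\otimes\mathbb{K}\rightarrow End_X\otimes\Omega_n$ is a weak equivalence, since tensoring over a field preserves quasi-isomorphisms and $\Omega_n\simeq\mathbb{K}$, while the matching maps $End_X\otimes\Omega_n\rightarrow M_n(End_X\otimes\Omega_{\bullet})$ are componentwise surjections --- hence fibrations of props --- because the corresponding matching maps of $\Omega_{\bullet}$ are surjective and $End_X\otimes(-)$ preserves surjections. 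With $P_{\infty}$ cofibrant and $End_X$ fibrant, the general theory of framings then yields that $Mor_{Prop}(P_{\infty},End_X\otimes\Omega_{\bullet})$ is a Kan complex whose $\pi_0$ is $[P_{\infty},End_X]_{Ho(Prop)}$; since a homotopy class of prop morphisms $P_{\infty}\rightarrow End_X$ is by definition a homotopy class of $P_{\infty}$-algebra structures on $X$, this gives the asserted identification.

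The Kan property can also be seen directly, which is instructive: unwinding the definitions (and using that $End_X\otimes(-)$ preserves the finite limits computing forms on a finite simplicial set), a horn $\Lambda^n_k\rightarrow P_{\infty}\{X\}$ corresponds to a prop morphism $P_{\infty}\rightarrow End_X\otimes\Omega(\Lambda^n_k)$, and filling it amounts to lifting against $End_X\otimes\Omega(\Delta^n)\rightarrow End_X\otimes\Omega(\Lambda^n_k)$. The latter is an acyclic fibration of props, being the image under $End_X\otimes(-)$ of the acyclic fibration $\Omega(\Delta^n)\rightarrow\Omega(\Lambda^n_k)$, so the lift exists because $P_{\infty}$ is cofibrant. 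For part (2), I would appeal to the homotopy invariance of mapping spaces in the cofibrant variable: the functor $Mor_{Prop}(-,End_X\otimes\Omega_{\bullet})$ sends weak equivalences between cofibrant props to weak equivalences of Kan complexes (Ken Brown's lemma applied to the frame above), so a weak equivalence $P_{\infty}\stackrel{\sim}{\rightarrow}Q_{\infty}$ induces $Q_{\infty}\{X\}\stackrel{\sim}{\rightarrow}P_{\infty}\{X\}$.

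The main obstacle, and the step deserving the most care, is the verification that $End_X\otimes\Omega_{\bullet}$ genuinely constitutes a simplicial frame and thus computes the homotopy mapping space: the category of dg props is not a priori a simplicial model category, so the frame must be produced by hand out of the two properties of $\Omega_{\bullet}$ recalled above, and its Reedy fibrancy must be checked degreewise. This is carried out in \cite{Yal2}, to which I would refer for the full details; the remaining assertions are then formal consequences of the general machinery of mapping spaces.
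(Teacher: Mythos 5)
Your proposal is correct and follows essentially the same route as the paper, which does not prove this proposition directly but defers to \cite{Yal2} (and to Rezk's thesis for the operadic case), where the argument is exactly the one you give: $End_X\otimes\Omega_{\bullet}$ is a Reedy-fibrant simplicial frame on the fibrant prop $End_X$ (using that $\Omega_n\simeq\mathbb{K}$ and that the maps $\Omega(\Delta^n)\rightarrow\Omega(\Lambda^n_k)$ are surjective quasi-isomorphisms), so that $Mor_{Prop}(P_{\infty},End_X\otimes\Omega_{\bullet})$ computes the homotopy mapping space, from which the Kan property, the identification of $\pi_0$ with $[P_{\infty},End_X]_{Ho(Prop)}$, and the homotopy invariance in the cofibrant variable all follow by the standard theory of framings.
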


For the remaining part of the paper, we will restrict ourselves to dg properads.
Cofibrant resolutions of a properad $P$ can always be obtained as a cobar construction $\Omega(C)$
on some coproperad $C$ (which is usually the bar construction or the Koszul dual if $P$ is Koszul).
Given a cofibrant resolution $\Omega(C)\stackrel{\sim}{\rightarrow}P$ of $P$ and another properad $Q$,
one considers the convolution dg Lie algebra $Hom_{\Sigma}(\overline{C},Q)$ consisting in morphisms
of $\Sigma$-biobjects from the augmentation ideal of $C$ to $Q$. The Lie bracket is the antisymmetrization
of the convolution product. This convolution product is defined similarly to the convolution product of
morphisms from a coalgebra to an algebra, using the infinitesimal coproduct of $C$ and the infinitesimal
product of $Q$. The total complex $Hom_{\Sigma}(\overline{C},Q)$ is a complete dg Lie algebra.
More generally, if $P$ is a properad with minimal model $(\mathcal{F}(s^{-1}C),\partial)\stackrel{\sim}{\rightarrow}P$ for a certain homotopy coproperad $C$ (see \cite[Section 4]{MV1} for the definition of homotopy coproperads), and $Q$ is any properad, then the complex $Hom_{\Sigma}(\overline{C},Q)$ is a complete dg $L_{\infty}$ algebra. The relationship between the simplicial mapping space of morphisms $P_{\infty}\rightarrow Q$ and the Lie theory of the convolution $L_{\infty}$-algebra $Hom_{\Sigma}(\overline{C},Q)$ is provided by the following theorem:
\begin{thm}(cf. \cite[Theorem 2.10,Corollary 4.21]{Yal2})\label{T:Yal2}
Let $P$ be a dg properad equipped with a minimal model $P_{\infty}:=(\mathcal{F}(s^{-1}C),\partial)\stackrel{\sim}{\rightarrow}P$ and $Q$ be a dg properad. The simplicial presheaf
\[
\underline{Map}(P_{\infty},Q):A\in dgArt_{\mathbb{K}}^{aug}\mapsto Map_{Prop}(P_{\infty},Q\otimes A)
\]
is equivalent to the simplicial presheaf
\[
\underline{MC_{\bullet}}(Hom_{\Sigma}(\overline{C},Q)):A\in dgArt_{\mathbb{K}}^{aug}\mapsto MC_{\bullet}(Hom_{\Sigma}(\overline{C},Q)\otimes A)
\]
associated to the complete $L_{\infty}$-algebra $Hom_{\Sigma}(\overline{C},Q)$.
\end{thm}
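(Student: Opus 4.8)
The plan is to prove something slightly stronger than the asserted equivalence, namely a natural isomorphism of simplicial presheaves, by identifying properad morphisms out of the quasi-free properad $P_\infty$ with Maurer--Cartan elements of the convolution $L_\infty$-algebra and then framing both sides with the Sullivan forms $\Omega_\bullet$. I would first analyse the $0$-simplices for a fixed target properad $R$. By the free--forgetful adjunction $\mathcal{F}\dashv U$, a morphism of graded properads $\mathcal{F}(s^{-1}C)\rightarrow R$ is the same datum as a morphism of $\Sigma$-biobjects $s^{-1}C\rightarrow R$, hence, after the suspension isomorphism and with the degree conventions making a properad map correspond to a degree $1$ element, an element $\alpha\in Hom_{\Sigma}(\overline{C},R)^1$. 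The only remaining condition for $\alpha$ to define a morphism of \emph{dg} properads $P_\infty=(\mathcal{F}(s^{-1}C),\partial)\rightarrow R$ is that the induced derivation commute with the differentials, and since a derivation of a free properad is determined by its restriction to generators this is a single equation on $\alpha$.

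Second, I would unwind this equation. The differential $\partial$ on $\mathcal{F}(s^{-1}C)$ is, by construction of the minimal model, assembled from the (homotopy) coproperad decomposition maps of $C$ together with the internal differentials. Writing out $d_R\circ\alpha=\alpha\circ\partial$ on generators, and translating the arity-$k$ decomposition maps of $C$ into the $k$-ary convolution operations of $Hom_{\Sigma}(\overline{C},R)$ built from the infinitesimal product of $R$, one checks that this compatibility is exactly the generalized Maurer--Cartan equation
\[
\sum_{k\geq 1}\frac{1}{k!}\,l_k(\alpha,\dots,\alpha)=0,
\]
where $l_1$ is the internal differential of $Hom_{\Sigma}(\overline{C},R)$ and the higher $l_k$ are the antisymmetrizations of the convolution operations coming from the arity-$k$ part of the homotopy coproperad structure of $C$. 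When $C$ is a strict coproperad only $l_1$ and $l_2$ survive and one recovers the usual dg Lie convolution algebra of the cobar construction. This yields a bijection $Mor_{Prop}(P_\infty,R)\cong MC(Hom_{\Sigma}(\overline{C},R))$, natural in $R$.

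Third, I would upgrade to simplicial presheaves. Since props form a model category in which every object is fibrant and $P_\infty$ is cofibrant, the mapping space is computed by the cosimplicial frame, $Map_{Prop}(P_\infty,Q\otimes A)_n=Mor_{Prop}(P_\infty,Q\otimes A\otimes\Omega_n)$. Applying the natural bijection above levelwise with $R=Q\otimes A\otimes\Omega_n$, it remains to commute the external hom with the tensor factors: using that $A$ is artinian (hence finite dimensional) and that $C$ is of finite type in each arity and degree, the canonical map
\[
Hom_{\Sigma}(\overline{C},Q)\,\hat{\otimes}\,A\,\hat{\otimes}\,\Omega_n\longrightarrow Hom_{\Sigma}(\overline{C},Q\otimes A\otimes\Omega_n)
\]
is an isomorphism compatible with the $L_\infty$-brackets and the simplicial structure. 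Hence the levelwise bijections assemble into a natural isomorphism $Map_{Prop}(P_\infty,Q\otimes A)\cong MC_\bullet(Hom_{\Sigma}(\overline{C},Q)\otimes A)$ of simplicial sets, natural in $A\in dgArt_{\mathbb{K}}^{aug}$, which is the claimed equivalence of presheaves.

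The hard part will be the matching carried out in the second step: producing the higher brackets $l_k$ from the homotopy coproperad structure of $C$ and verifying, with all signs, that the minimal-model differential $\partial$ corresponds term by term to the Maurer--Cartan equation. Closely related is the bookkeeping ensuring that the convolution $L_\infty$-algebra is \emph{complete}, which is what makes the Maurer--Cartan equation converge, guarantees that $MC_\bullet$ is a Kan complex, and licenses the commutation of $Hom_{\Sigma}(\overline{C},-)$ with the completed tensor product $\hat{\otimes}\,\Omega_n$ in the third step; the remaining naturality checks in $R$ and in $A$ are then formal.
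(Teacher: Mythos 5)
Your strategy coincides with that of the cited proof (the paper itself gives no argument for this statement; it is imported from \cite{Yal2}, whose proof runs along exactly these lines): identify morphisms of dg properads out of the quasi-free properad $(\mathcal{F}(s^{-1}C),\partial)$ with Maurer--Cartan elements of the convolution $L_{\infty}$-algebra --- the properadic twisting-morphism correspondence of Merkulov--Vallette, extended to homotopy coproperads --- then frame both sides with Sullivan forms and commute the relevant tensor products. Your first two steps (the free--forgetful adjunction producing a degree $1$ element of $Hom_{\Sigma}(\overline{C},R)$, and the translation of $d_R\circ\alpha=\alpha\circ\partial$ on generators into the generalized Maurer--Cartan equation whose brackets $l_k$ are the antisymmetrized convolution operations attached to the homotopy coproperad structure of $C$ carried by the minimal-model differential) are correct, and you rightly identify them as the place where the sign and combinatorial work lives.

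The gap is in your third step. Commuting $Hom_{\Sigma}(\overline{C},-)$ with $-\otimes A$ is unproblematic, since an augmented artinian cdga is finite dimensional. But for the Sullivan factor you assert that the canonical map $Hom_{\Sigma}(\overline{C},Q\otimes A)\,\hat{\otimes}\,\Omega_n\rightarrow Hom_{\Sigma}(\overline{C},Q\otimes A\otimes\Omega_n)$ is an isomorphism, justified by the hypothesis that ``$C$ is of finite type in each arity and degree''. That hypothesis appears nowhere in the statement: $P$ is an arbitrary dg properad, and the generators $C$ of a minimal model (typically of bar-construction type) have components as large as those of $P$, hence need not be finite dimensional. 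For $V$ infinite dimensional the canonical map $Hom(V,W)\otimes\Omega_n\rightarrow Hom(V,W\otimes\Omega_n)$ is only injective (a linear map need not factor through a finite-dimensional subspace of $\Omega_n$), and this injectivity-without-surjectivity persists after completing along the weight filtration; so without finiteness your argument only yields an inclusion of simplicial sets $MC_{\bullet}(Hom_{\Sigma}(\overline{C},Q)\otimes A)\subseteq Map_{Prop}(P_{\infty},Q\otimes A)$, not the claimed levelwise bijection. To close this you must either (i) add and verify the arity-, weight- and degree-wise finiteness of $C$ --- which does hold for the Koszul-type resolutions of $Bialg$, $E_n$, $Pois_n$ and $BiLie$ actually used in this paper, or (ii) prove by a separate argument that the inclusion is a weak equivalence, which is the room that the word ``equivalent'' (rather than ``isomorphic'') in the statement leaves open. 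As written, your proof establishes the theorem only under a finiteness assumption that the statement does not grant you.
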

By \cite[Corollary 2.4]{Yal1}, the tensor product $MC_{\bullet}(Hom_{\Sigma}(\overline{C},Q)\otimes A)$ does not need to be completed because $A$ is artinian. In the following, we will also consider the simplicial presheaf
\[
\underline{MC_{\bullet}^{fmp}}(Hom_{\Sigma}(\overline{C},Q)):A\in dgArt_{\mathbb{K}}^{aug}\mapsto MC_{\bullet}(Hom_{\Sigma}(\overline{C},Q)\otimes m_A),
\]
where $m_A$ is the maximal ideal of $A$. This presheaf is the formal moduli problem associated to $Hom_{\Sigma}(\overline{C},Q)$ under Lurie's equivalence theorem.

We will use such results in the case where $Q=End_X$. In this case, the simplicial presheaf $\underline{Map}(P_{\infty},Q)$ will be noted $\underline{P_{\infty}\{X\}}$. Let us note that $\underline{P_{\infty}\{X\}}$ can be alternately defined by
\[
A\mapsto P_{\infty}\otimes A\{X\otimes A\}_{Mod_A},
\]
where $P_{\infty}\otimes A\{X\otimes A\}_{Mod_A}$ is the mapping space of dg props in $A$-modules $Map(P_{\infty}\otimes A,End_{X\otimes A}^{Mod_A})$ where $End_{X\otimes A}^{Mod_A}$ is the endormorphism prop of $X\otimes A$ taken in the category of $A$-modules. That is, the simplicial moduli space of $P_{\infty}$-algebra structures on $X\otimes A$ in the category of $A$-modules. Indeed, since $Mod_A$ is tensored over $Ch_{\mathbb{K}}$, on can make $P_{\infty}$ act on $A$-modules either by morphisms of dg props in $A$-modules from $P_{\infty}\otimes A$ to the endomorphism prop defined by the internal hom of $Mod_A$, or by morphisms of dg props from $P_{\infty}$ to the endomorphism prop defined by the external hom of $Mod_A$.

This theorem applies in particular to the case of a Koszul properad, which includes for instance Frobenius algebras, Lie bialgebras and their variants such as involutive Lie bialgebras in string topology. It applies also to more general situations such as the properad $Bialg$ encoding associative and coassociative bialgebras, which is homotopy Koszul \cite[Proposition 41]{MV1}.

By Proposition~\ref{P: classif}, the simplicial set $\underline{P_{\infty}\{X\}}(A)$ classifies $P_{\infty}\otimes A$-algebra structures on $X\otimes A$. However, the simplicial presheaf $\underline{P_{\infty}\{X\}}$ is not a formal moduli problem, since $\underline{P_{\infty}\{X\}}(\mathbb{K})$ is in general not contractible. The formal moduli problem $\underline{P_{\infty}\{X\}}^{\psi}$ controlling the formal deformations of a $P_{\infty}$-algebra structure $\psi:P_{\infty}\rightarrow End_X$ on $X$ is defined, on any augmented artinian cdga $A$, by the homotopy fiber
\[
\underline{P_{\infty}\{X\}}^{\psi}(A)=hofib(\underline{P_{\infty}\{X\}}(A)\rightarrow \underline{P_{\infty}\{X\}}(\mathbb{K}))
\]
taken over the base point $\psi$, the map being induced by the augmentation $A\rightarrow\mathbb{K}$.
The twisting of $Hom_{\Sigma}(\overline{C},End_X)$ by a properad morphism $\psi:P_{\infty}\rightarrow End_X$ is the deformation complex of $\psi$, and we have an isomorphism
\[
g_{P,X}^{\psi} = Hom_{\Sigma}(\overline{C},End_X)^{\psi} \cong Der_{\psi}(\Omega(C),End_X)
\]
where the right-hand term is the complex of derivations with respect to $\psi$ \cite[Theorem 12]{MV2}.
\begin{prop}
The tangent $L_{\infty}$-algebra of the formal moduli problem $\underline{P_{\infty}\{X\}}^{\psi}$ is given by
\[
g_{P,X}^{\psi} = Hom_{\Sigma}(\overline{C},End_X)^{\psi}.
\]
\end{prop}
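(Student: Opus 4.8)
The plan is to identify the tangent $L_\infty$-algebra of $\underline{P_\infty\{X\}}^\psi$ by combining Theorem~\ref{T:Yal2} with the general principle that passing to a homotopy fiber over a Maurer–Cartan point corresponds, on the algebraic side, to twisting the controlling $L_\infty$-algebra by the associated Maurer–Cartan element. First I would recall that, by Theorem~\ref{T:Yal2} applied to $Q = End_X$, the simplicial presheaf $\underline{P_\infty\{X\}} = \underline{Map}(P_\infty, End_X)$ is equivalent to the presheaf $\underline{MC_\bullet}(Hom_\Sigma(\overline{C},End_X))$ sending an augmented artinian cdga $A$ to $MC_\bullet(Hom_\Sigma(\overline{C},End_X)\otimes A)$. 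Under this equivalence, the basepoint $\psi : P_\infty \to End_X$ corresponds to a Maurer–Cartan element $\tau_\psi$ of the convolution $L_\infty$-algebra $g_{P,X} = Hom_\Sigma(\overline{C},End_X)$, namely the image of $\psi$ under the dictionary between properad morphisms and Maurer–Cartan elements.

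Next I would analyze the defining homotopy fiber
\[
\underline{P_\infty\{X\}}^\psi(A) = \mathrm{hofib}\big(\underline{P_\infty\{X\}}(A) \to \underline{P_\infty\{X\}}(\mathbb{K})\big)
\]
taken over $\psi$. Under the equivalence above this becomes the homotopy fiber of $MC_\bullet(g_{P,X}\otimes A) \to MC_\bullet(g_{P,X})$ induced by the augmentation $A \to \mathbb{K}$, again over $\tau_\psi$. The key step is then the standard fact from the Lie theory of Maurer–Cartan spaces: the homotopy fiber of the Maurer–Cartan functor of a complete $L_\infty$-algebra $g$ over a point $\tau \in MC(g)$ is precisely the Maurer–Cartan functor of the twisted $L_\infty$-algebra $g^\tau$, with brackets $l_k^\tau(x_1,\dots,x_k) = \sum_{i\geq 0}\frac{1}{i!}\, l_{k+i}(\tau,\dots,\tau,x_1,\dots,x_k)$ as recalled in the excerpt. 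Applying this with $g = g_{P,X}$ and $\tau = \tau_\psi$ identifies $\underline{P_\infty\{X\}}^\psi$ with $\underline{MC_\bullet^{fmp}}$ of the twisted algebra $g_{P,X}^{\tau_\psi}$, which by definition is $g_{P,X}^\psi = Hom_\Sigma(\overline{C},End_X)^\psi$. Since this latter presheaf is, by construction, the formal moduli problem associated to $g_{P,X}^\psi$ under Lurie's equivalence, the twisted convolution $L_\infty$-algebra is the tangent $L_\infty$-algebra of $\underline{P_\infty\{X\}}^\psi$, as claimed.

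The main obstacle I anticipate is the careful bookkeeping needed to make the homotopy-fiber-equals-twisting step rigorous at the level of simplicial presheaves over $dgArt_\mathbb{K}^{aug}$, rather than just for a single complete $L_\infty$-algebra. Concretely, one must check that the equivalence of Theorem~\ref{T:Yal2} is natural in $A$ and sends the augmentation-induced map to the augmentation-induced map on Maurer–Cartan spaces, and that the completeness/artinian hypotheses (so that, by \cite[Corollary 2.4]{Yal1}, no completion of the tensor product is required) persist after twisting. I expect the twisting identity itself to be routine given the recalled formula for $l_k^\tau$, but verifying that twisting commutes with the base change $A \mapsto g_{P,X}\otimes A$ and that it correctly implements the homotopy fiber — i.e.\ that $MC_\bullet(g^\tau \otimes m_A)$ computes the fiber over $\tau$ of $MC_\bullet(g\otimes A)\to MC_\bullet(g)$ — is the technical heart of the argument, relying on the good homotopical properties of $MC_\bullet$ for complete $L_\infty$-algebras.
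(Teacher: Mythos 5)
Your proposal is correct and follows essentially the same route as the paper: apply Theorem~\ref{T:Yal2} to identify $\underline{P_{\infty}\{X\}}$ with $\underline{MC_{\bullet}}(g_{P,X})$, compute the homotopy fiber over $\psi$ of $MC_{\bullet}(g_{P,X}\otimes A)\rightarrow MC_{\bullet}(g_{P,X})$ as $MC_{\bullet}(g_{P,X}^{\psi}\otimes m_A)$ via twisting by the Maurer--Cartan element, and conclude by Lurie's equivalence. The only cosmetic difference is that the paper factors your ``homotopy fiber equals twisting'' step into two moves --- $MC_{\bullet}$ commutes with homotopy fibers of $L_{\infty}$-algebras, and the $L_{\infty}$-homotopy fiber of $g_{P,X}\otimes A\rightarrow g_{P,X}$ over $\psi$ is $g_{P,X}^{\psi}\otimes m_A$ --- which is exactly the bookkeeping you flagged as the technical heart.
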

\begin{proof}
Let $A$ be an augmented artinian cdga.
By Theorem~\ref{T:Yal2}, we have the homotopy equivalences
\begin{eqnarray*}
\underline{P_{\infty}\{X\}}^{\psi}(A) & \simeq & hofib(\underline{MC_{\bullet}}(g_{P,X})(A)\rightarrow \underline{MC_{\bullet}}(g_{P,X})(\mathbb{K})) \\
 & = & hofib(MC_{\bullet}(g_{P,X}\otimes A)\rightarrow MC_{\bullet}(g_{P,X})) \\
 & \simeq & MC_{\bullet}(hofib_{L_{\infty}}(g_{P,X}\otimes A\rightarrow g_{P,X}))
\end{eqnarray*}
where $hofib_{L_{\infty}}(g_{P,X}\otimes A\rightarrow g_{P,X})$ is the homotopy fiber, over the Maurer-Cartan element $\psi$, of the $L_{\infty}$-algebra morphism $g_{P,X}\otimes A\rightarrow g_{P,X}$ given by the tensor product of the augmentation $A\rightarrow\mathbb{K}$ with $g_{P,X}$. This homotopy fiber is nothing but $g_{P,X}^{\varphi}\otimes m_A$, where $m_A$ is the maximal ideal of $A$, so there is an equivalence of formal moduli problems
\[
\underline{P_{\infty}\{X\}}^{\psi} \simeq \underline{MC_{\bullet}^{fmp}}(g_{P,X}^{\psi}).
\]
By Lurie's equivalence theorem, this means that $g_{P,X}^{\psi}$ is the Lie algebra of the formal moduli problem $\underline{P_{\infty}\{X\}}^{\psi}$.
\end{proof}

In this paper, we will also be interested in derived algebraic groups in the sense of \cite{Fra} and deformation complexes governing homotopy automorphisms of algebras over properads. Let us first note that for any cdga $A$, the category $Mod_A$ is a (cofibrantly generated) symmetric monoidal model category tensored over chain complexes, so that one can define the category $P_{\infty}-Alg(Mod_A)$ of $P_{\infty}$-algebras in $Mod_A$.
\begin{defn}\label{D:haut}
For a chain complex $X$, we denote $\underline{haut}(X)$  
the derived algebraic group of homotopy automorphisms of the underlying complex $X$ 
taken in the model category of chain complexes. It is defined by
\[
A\mapsto haut_{Mod_A}(X\otimes A),
\]
where $haut_{Mod_A}$ is the simplicial monoid of homotopy automorphisms in the category of $A$-modules. Since for every $A$ we have $haut(X)\sim haut_{Mod_A}(X\otimes A)$, it can be defined alternately by the constant functor
\[
A\mapsto haut(X).
\]

Further, let  $O$ be an operad, let $O_\infty$ be a cofibrant resolution of $O$, and let $(X,\psi: O_\infty\to End_X)$ be an $O$-algebra structure on $X$. 
We define $\underline{haut}_{O_{\infty}}(X,\psi)$ to be the derived algebraic group associated to the simplicial monoid of homotopy automorphisms of $(X,\psi)$ in the model category of $O_{\infty}$-algebras, that is  the simplicial sub-monoid of self weak equivalences in the usual homotopy mapping space $Map_{O_{\infty}-Alg}(X,X)$.  Such simplicial monoids of self weak-equivalences  are defined in any model category, see for instance \cite[Chapter 17]{Hir}. The (weak) simplicial presheaf is defined by
\[
A\mapsto haut_{O_{\infty}}(X\otimes A,\psi\otimes A)_{Mod_A}
\]
where $haut_{O_{\infty}}(X\otimes A,\psi\otimes A)_{Mod_A}$ is the simplicial monoid of homotopy automorphisms
of $(X\otimes A,\psi\otimes A)\in O_{\infty}-Alg(Mod_A)$.
\end{defn}
By \cite{DK} we have a homotopy equivalence $\underline{haut}_{O_{\infty}}(X,\psi)\, \simeq \, \underline{L^HwO_{\infty}-Alg}(X,\psi)$,
where $\underline{L^HwO_{\infty}-Alg}(X,\psi)$ is the derived algebraic group of homotopy automorphisms in the Dwyer-Kan hammock localization of $O_{\infty}$-algebras with respect to quasi-isomorphisms \cite{DK}.
When $P$ is a prop but not an operad, we do not have a model structure on $P_\infty$-algebras to define $\underline{haut}_{P}(X,\psi)$ but we can still define the
hammock localization $\underline{L^HwP_{\infty}-Alg}(X,\psi)$.
To such a derived algebraic group, one can associate a formal moduli problem $B\underline{L^HwP_{\infty}-Alg}(X,\psi)$ given by the strictification (see \cite[Section I.2.3.1]{Ane}) of the weak simplicial presheaf
\[
A\in dgArt_{\mathbb{K}}^{aug}\mapsto \underline{L^HwP_{\infty}-Alg(Mod_A)}(X\otimes A,\psi\otimes A),
\]
where $B$ is the classifying complex functor for simplicial monoids (see \cite{Fra} for the definition of such a formal moduli problem in the case of algebras over operads).
We introduce two related and useful classification space constructions. The assignment
\[
A\mapsto wP_{\infty}-Alg(Mod_A),
\]
where the $w(-)$ stands for the subcategory of weak equivalences, defines a weak presheaf of categories in the sense of \cite[Definition I.56]{Ane}, sending a morphism $A\rightarrow B$ to the functor $-\otimes_A B$, which is symmetric monoidal, hence lifted at the level of $P_{\infty}$-algebras. This weak presheaf can be strictified into a presheaf of categories. Applying the nerve functor then defines a simplicial presheaf of Dwyer-Kan classification spaces that we note $\underline{\mathcal{N}wP_{\infty}-Alg}$. The simplicial presheaf associated to $A\mapsto Mod_A$ is the simplicial presheaf of quasi-coherent modules of \cite[Definition 1.3.7.1]{TV} that we note $\underline{\mathcal{N}wCh_{\mathbb{K}}}$. The loop construction on these simplicial presheaves is defined as follows.
The loop space on $\underline{\mathcal{N}wP_{\infty}-Alg}$ based at a $P_{\infty}$-algebra $(X,\psi)$ is the strictification of the weak simplicial presheaf
\[
\Omega_{(X,\psi)}\underline{\mathcal{N}wP_{\infty}-Alg}:A\mapsto \Omega_{(X\otimes A,\psi\otimes A)}\mathcal{N}wP_{\infty}-Alg(Mod_A).
\]
The loop space on $\underline{P_{\infty}\{X\}}$ based at a morphism $\psi:P_{\infty}\rightarrow End_X$ is the strictification of the weak simplicial presheaf
\[
\Omega_{\psi}\underline{P_{\infty}\{X\}}:A\mapsto \Omega_{\psi\otimes A}P_{\infty}\otimes A\{X\otimes A\}_{Mod_A}.
\]
One defines similarly the loop space $\Omega_{\psi}\underline{P_{\infty}\{X\}}^{\psi}$ on the formal moduli problem $\underline{P_{\infty}\{X\}}^{\psi}$.

\medskip

Rezk's homotopy pullback theorem \cite{Rez}, generalized to algebras over cofibrant dg props in \cite[Theorem 0.1]{Yal3}, 
states that the simplicial moduli space of $P_{\infty}$-algebras structures on $X$ is the homotopy fiber of a forgetful
map between the Dwyer-Kan classification space of the $\infty$-category of $P_{\infty}$-algebras and the classification space 
of the $\infty$-category of chain complexes, over the base point given by a $P_{\infty}$-algebra structure on $X$. 
This result points towards two crucial ideas about the deformation theory of algebras over a prop, which will be crucial
in our paper but are of course of independent interest as well.

First, in derived algebraic geometry, a Zariski open immersion of derived Artin stacks $F\hookrightarrow G$ induces a weak equivalence between the tangent complex over a given point of $F$ and the tangent complex over its image in $G$ \cite{TV}. We would like a somehow similar statement about the tangent Lie algebras of our formal moduli problems of algebraic structures, when an $\infty$-category of algebras ``embeds'' into another $\infty$-category of algebras. The idea is that \cite[Theorem 0.1]{Yal3} tells us that the formal moduli problem $\underline{P_{\infty}\{X\}}^{\psi}$ is the ``tangent space'' over $(X,\psi)$ to the Dwyer-Kan classification space of the $\infty$-category of $P_{\infty}$-algebras. An ``immersion'' $F:P_{\infty}-Alg\hookrightarrow Q_{\infty}-Alg$ should induce an equivalence between the formal moduli problem of formal deformations of $(X,\psi)$ in $P_{\infty}$-algebras and the formal moduli problem of formal deformations of $F(X,\psi)$ in $Q_{\infty}$-algebras, hence an equivalence between their tangent $L_{\infty}$-algebras. Here the word ``immersion'' has to be understood as ``fully faithful conservative $\infty$-functor''.

Second, the result \cite[Theorem 0.1]{Yal3} implies the existence of a long exact sequence relating the homotopy type of $\underline{P_{\infty}\{X\}}$ to the homotopy type of the homotopy automorphisms $haut_{P_{\infty}}(X,\psi)$ of $(X,\psi)$ inside the $\infty$-category of $P_{\infty}$-algebras. These homotopy automorphisms form a derived algebraic group \cite{Fra} with an associated $L_{\infty}$-algebra $Lie(\underline{haut}_{P_{\infty}}(X,\psi))$, thus there should be a homotopy fiber sequence of $L_{\infty}$-algebras relating $Lie(\underline{haut}_{P_{\infty}}(X,\psi))$ to the tangent $L_{\infty}$-algebra of $\underline{P_{\infty}\{X\}}^{\psi}$.

To formalize properly the two ideas above, we need the following generalization of Rezk's homotopy pullback theorem:
\begin{prop}\label{P: Rezkgen}
Let $P_{\infty}$ be a cofibrant prop and $X$ be a chain complex.

(1) The forgetful functor $P_{\infty}-Alg\rightarrow Ch_{\mathbb{K}}$ induces a homotopy fiber sequence
\[
\underline{P_{\infty}\{X\}}\rightarrow \underline{\mathcal{N}wP_{\infty}-Alg} \rightarrow \underline{\mathcal{N}wCh_{\mathbb{K}}}
\]
of simplicial presheaves over augmented artinian cdgas, taken over the base point $X$.

(2) This homotopy fiber sequence induces an equivalence of derived algebraic groups
\[
\Omega_{\psi}\underline{P_{\infty}\{X\}}\simeq \underline{L^HwP_{\infty}-Alg}(X,\psi).
\]
\end{prop}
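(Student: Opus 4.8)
The plan is to obtain both statements from the properadic version of Rezk's homotopy pullback theorem \cite[Theorem 0.1]{Yal3}, applied over each augmented artinian cdga separately and then assembled into a statement about simplicial presheaves. First, for (1): for every $A\in dgArt_{\mathbb{K}}^{aug}$ the category $Mod_A$ is a cofibrantly generated symmetric monoidal model category over $Ch_{\mathbb{K}}$, so \cite[Theorem 0.1]{Yal3} applies to $P_{\infty}$-algebras in $Mod_A$ and produces, over the basepoint $X\otimes A$, a homotopy fiber sequence
\[
P_{\infty}\otimes A\{X\otimes A\}_{Mod_A}\rightarrow \mathcal{N}wP_{\infty}-Alg(Mod_A)\rightarrow \mathcal{N}wMod_A.
\]
The first task is to promote this to a sequence of simplicial presheaves. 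The three assignments are functorial in $A$ through the base-change functors $-\otimes_A B$, which are symmetric monoidal and hence lift to $P_{\infty}$-algebras; after strictifying the resulting weak presheaves (exactly as in the constructions of $\underline{\mathcal{N}wP_{\infty}-Alg}$ and $\underline{\mathcal{N}wCh_{\mathbb{K}}}$ recalled before the statement) one gets the maps $\underline{P_{\infty}\{X\}}\rightarrow\underline{\mathcal{N}wP_{\infty}-Alg}\rightarrow\underline{\mathcal{N}wCh_{\mathbb{K}}}$. Because homotopy fiber sequences of simplicial presheaves are detected objectwise in the projective structure, the fibrewise statement upgrades to the desired homotopy fiber sequence over the basepoint $X$.

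For (2), I would loop the fiber sequence of (1) at its canonical basepoint. Looping preserves homotopy fiber sequences, so one obtains a fiber sequence of derived algebraic groups
\[
\Omega_{\psi}\underline{P_{\infty}\{X\}}\rightarrow \Omega_{(X,\psi)}\underline{\mathcal{N}wP_{\infty}-Alg}\rightarrow \Omega_X\underline{\mathcal{N}wCh_{\mathbb{K}}}.
\]
The crucial identifications are those of Dwyer-Kan \cite{DK}: the loop space of a classification space at an object is its simplicial monoid of self weak-equivalences. Objectwise this reads $\Omega_{(X\otimes A,\psi\otimes A)}\mathcal{N}wP_{\infty}-Alg(Mod_A)\simeq haut_{P_{\infty}}(X\otimes A,\psi\otimes A)_{Mod_A}$ and $\Omega_{X\otimes A}\mathcal{N}wMod_A\simeq haut_{Mod_A}(X\otimes A)$, so that after strictification the middle term is $\underline{L^HwP_{\infty}-Alg}(X,\psi)$ and the right-hand term is $\underline{haut}(X)$, in accordance with the equivalence $\underline{haut}_{P_{\infty}}(X,\psi)\simeq\underline{L^HwP_{\infty}-Alg}(X,\psi)$ of Definition~\ref{D:haut}.

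The hard part will be controlling this rightmost term in order to pass from the looped fiber sequence to the asserted equivalence. Here I would invoke the key input recorded in Definition~\ref{D:haut}: for every artinian $A$ the canonical map $haut(X)\rightarrow haut_{Mod_A}(X\otimes A)$ is a weak equivalence, so that $\Omega_X\underline{\mathcal{N}wCh_{\mathbb{K}}}$ is equivalent to the constant presheaf $\underline{haut}(X)$; it is precisely this constancy in the deformation directions that must be exploited to identify $\Omega_{\psi}\underline{P_{\infty}\{X\}}$ with $\underline{L^HwP_{\infty}-Alg}(X,\psi)$ as derived algebraic groups, rather than merely as the homotopy fiber of the forgetful map $\underline{L^HwP_{\infty}-Alg}(X,\psi)\rightarrow\underline{haut}(X)$. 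Two further technical points deserve care along the way, and I expect them to carry the real weight of the argument: the strictification of the weak simplicial presheaves defining the loop-space functors, so that the objectwise Dwyer-Kan equivalences are natural in $A$ and assemble into an honest equivalence of presheaves; and the compatibility of the chosen basepoints $\psi$, $(X,\psi)$ and $X$ throughout the looping, which is what lets the objectwise equivalences be upgraded to the level of derived algebraic groups over $dgArt_{\mathbb{K}}^{aug}$.
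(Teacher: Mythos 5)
Your part (1) is essentially the paper's own argument: apply \cite[Theorem 0.1]{Yal3} objectwise in $Mod_A$ (the paper notes this extension holds because the constructions of \cite{Yal3} only use direct sums, suspensions and twisting cochains, all available in $Mod_A$), identify the objectwise fiber with $Map(P_{\infty}\otimes A,End_{X\otimes A}^{Mod_A})\cong \underline{P_{\infty}\{X\}}(A)$, and assemble by strictification. The problem is part (2), and the gap is exactly the step you yourself defer to as ``the hard part''. Looping the fiber sequence of (1) can only ever identify $\Omega_{\psi}\underline{P_{\infty}\{X\}}$ with the \emph{homotopy fiber} of the forgetful map, i.e.\ objectwise with
\[
hofib\Big(L^HwP_{\infty}-Alg(Mod_A)(X\otimes A,\psi\otimes A)\longrightarrow haut_{Mod_A}(X\otimes A)\Big),
\]
and no property of the base presheaf short of objectwise contractibility turns a homotopy fiber into the total space. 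The constancy you invoke ($haut(X)\stackrel{\sim}{\rightarrow}haut_{Mod_A}(X\otimes A)$ for every artinian $A$) says the base does not vary with $A$; it does not say the base is trivial. Since the forgetful map is a map of grouplike simplicial monoids sending identities to identities, its homotopy fiber differs from its source precisely by the base term $\underline{haut}(X)$, which is not contractible in general. So your argument, as written, establishes a fiber-sequence statement of the shape of Proposition~\ref{P: hofibgroups}, not the equivalence asserted in (2).

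The paper closes this distance by a mechanism your route bypasses entirely: it invokes the Dwyer-Kan decomposition $\mathcal{N}wP_{\infty}-Alg\cong\coprod_{[Y,\phi]}BL^HwP_{\infty}-Alg(Y,\phi)$ into classifying spaces of homotopy automorphism monoids, and asserts that the homotopy fiber inherits a decomposition of the same kind, $P_{\infty}\{X\}\cong\coprod_{[X,\psi]}BL^HwP_{\infty}-Alg(X,\psi)$, indexed by quasi-isomorphism classes of $P_{\infty}$-algebras with underlying complex $X$. Looping one such component then returns the whole simplicial monoid $L^HwP_{\infty}-Alg(X,\psi)$ at once, and naturality in $A$ is checked on an explicit commutative square of base-change maps. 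In other words, the entire content of (2) is concentrated in the claim that the moduli space $P_{\infty}\{X\}$ is itself a disjoint union of classifying spaces of homotopy automorphisms; this is an input independent of, and not derivable from, the looped fiber sequence, which is why your approach stalls exactly where you predicted the real work would lie.
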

\begin{proof}
(1) We explain briefly how \cite[Theorem 0.1]{Yal3} can be transposed in the context of simplicial presheaves of cdgas. The identification of the homotopy fiber of the forgetful map
\[
\underline{\mathcal{N}wP_{\infty}-Alg}\rightarrow \underline{\mathcal{N}wCh_{\mathbb{K}}}
\]
with the simplicial presheaf $\underline{P_{\infty}\{X\}}$ follows from the two following facts. First, we can identify it pointwise with $Map(P_{\infty}\otimes A, End_{X\otimes A}^{Mod_A})$, where $End_{X\otimes A}^{Mod_A}$ is the endomorphism prop of $X\otimes A$ in the category of $A$-modules. This comes from the extension of \cite[Theorem 0.1]{Yal3} to $P_{\infty}$-algebras in $A$-modules, which holds true trivially by replacing chain complexes by $A$-modules as target category in the universal functorial constructions of \cite[Section 2.2]{Yal3} ($A$-modules are equipped with exactly the same operations than chain complexes which are needed in this construction: directs sums, suspensions, twisting cochains). Second, there is an isomorphism of simplicial sets
\[
Map(P_{\infty}\otimes A, End_{X\otimes A}^{Mod_A}) \cong \underline{P_{\infty}\{X\}}(A)
\]
(see for instance \cite[Section 3]{Yal2}).

(2) The connected components of $\mathcal{N}wP_{\infty}-Alg$ are the classifying complexes of homotopy automorphisms of $P_{\infty}$-algebras, hence a decomposition
\[
\mathcal{N}wP_{\infty}-Alg \cong \coprod_{[Y,\phi]\in\pi_0\mathcal{N}wP_{\infty}-Alg}BL^HwP_{\infty}-Alg(Y,\phi)
\]
where $[Y,\phi]$ ranges over quasi-isomorphism classes of $P_{\infty}$-algebras. The moduli space $P_{\infty}\{X\}$ thus decomposes into
\[
P_{\infty}\{X\} \cong \coprod_{[X,\psi]\in\pi_0\mathcal{N}wP_{\infty}-Alg}BL^HwP_{\infty}-Alg(X,\psi)
\]
where $[X,\psi]$ ranges over quasi-isomorphism classes of $P_{\infty}$-algebras having $X$ as underlying complex.
This induces a homotopy equivalence
\[
\Omega_{\psi}P_{\infty}\{X\} \simeq L^HwP_{\infty}-Alg(X,\psi)
\]
which sends the constant loop at $\psi$ to $id_{(X,\psi)}$ and any loop to a homotopy automorphism of $(X,\psi)$.
The naturality of this homotopy equivalence follows from the commutativity of the following square for every morphism $f:A\rightarrow B$ of augmented artinian cdgas:
\[
\xymatrix{
\Omega_{\psi}\underline{P_{\infty}\{X\}}(A)\ar[r]^-{\sim}\ar[d]_-{-\otimes_A B} & L^HwP_{\infty}-Alg(Mod_A)(X\otimes A,\psi\otimes A)\ar[d]_-{-\otimes_A B} \\
\Omega_{\psi}\underline{P_{\infty}\{X\}}(B)\ar[r]^-{\sim} & L^HwP_{\infty}-Alg(Mod_B)(X\otimes B,\psi\otimes B)
}
\]

Hence we get the desired equivalence
\[
\Omega_{\psi}\underline{P_{\infty}\{X\}} \simeq \underline{L^HwP_{\infty}-Alg}(X,\psi).
\]
\end{proof}
The relation between $\Omega_{\psi}\underline{P_{\infty}\{X\}}^{\psi}$ and the derived algebraic group of homotopy automorphisms of $(X,\psi)$ is then given by the following homotopy fiber sequence:
\begin{prop}\label{P: hofibgroups}
There is a homotopy fiber sequence of derived algebraic groups
\[
\Omega_{\psi}\underline{P_{\infty}\{X\}}^{\psi}\rightarrow \underline{L^HwP_{\infty}-Alg}(X,\psi) \rightarrow \underline{haut}(X),
\]
hence a homotopy fiber sequence of the associated $L_{\infty}$-algebras
\[
g_{P,X}^{\psi}\rightarrow Lie(\underline{L^HwP_{\infty}-Alg}(X,\psi))\rightarrow Lie(\underline{haut}(X)).
\]
\end{prop}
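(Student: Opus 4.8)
The plan is to derive both fiber sequences from the homotopy fiber sequence of Proposition~\ref{P: Rezkgen}(1) by first passing to formal moduli problems (taking homotopy fibers over the augmentation $A\to\mathbb{K}$) and then applying the based loop functor. Concretely, Proposition~\ref{P: Rezkgen}(1) gives, over the compatible base points $\psi$, $(X,\psi)$ and $X$, a homotopy fiber sequence of simplicial presheaves
\[
\underline{P_{\infty}\{X\}}\to \underline{\mathcal{N}wP_{\infty}-Alg}\to \underline{\mathcal{N}wCh_{\mathbb{K}}}.
\]
Applying the ``completion at the augmentation'' construction $\mathcal{F}\mapsto \mathcal{F}^{\wedge}$, $\mathcal{F}^{\wedge}(A)=hofib(\mathcal{F}(A)\to\mathcal{F}(\mathbb{K}))$, which is a homotopy limit and hence preserves homotopy fiber sequences, I would obtain a homotopy fiber sequence of \emph{formal moduli problems}. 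By definition the left-hand term becomes $\underline{P_{\infty}\{X\}}^{\psi}$, and running the argument of the proof of Proposition~\ref{P: Rezkgen}(2) on the two classification presheaves — whose connected components are the classifying complexes of the relevant derived automorphism groups — identifies the completed middle and right-hand terms with the deloopings of $\underline{L^HwP_{\infty}-Alg}(X,\psi)$ and of $\underline{haut}(X)$, the middle map being induced by the forgetful functor $P_{\infty}-Alg\to Ch_{\mathbb{K}}$.

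Next I would loop this sequence at the canonical base points. Since $\Omega$ preserves homotopy fiber sequences and since $\Omega_{base}BG\simeq G$ for group-like simplicial monoids (here $B$ is the classifying-complex functor used to define the derived algebraic groups), this yields exactly
\[
\Omega_{\psi}\underline{P_{\infty}\{X\}}^{\psi}\to \underline{L^HwP_{\infty}-Alg}(X,\psi)\to \underline{haut}(X),
\]
the stated homotopy fiber sequence of derived algebraic groups. I would emphasize that it is the order of the two operations that matters: looping the Rezk sequence \emph{before} completing produces instead the ``gauge group'' $\Omega_{\psi}\underline{P_{\infty}\{X\}}=hofib(\underline{L^HwP_{\infty}-Alg}(X,\psi)\to\underline{haut}(X))$, which differs from $\Omega_{\psi}\underline{P_{\infty}\{X\}}^{\psi}$ by the constant factor $\underline{P_{\infty}\{X\}}(\mathbb{K})$ coming from the non-contractibility of the classical moduli space; completing first is what pins down the fiber as the formal moduli problem.

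To obtain the second fiber sequence I would pass to tangent $L_{\infty}$-algebras through Lurie's equivalence, which carries homotopy fiber sequences of formal moduli problems (equivalently, of the associated derived formal groups) to homotopy fiber sequences of $L_{\infty}$-algebras. The identification $Lie\,\Omega_{\psi}\underline{P_{\infty}\{X\}}^{\psi}\cong g_{P,X}^{\psi}$ is precisely the content of the preceding proposition computing the tangent $L_{\infty}$-algebra of $\underline{P_{\infty}\{X\}}^{\psi}$, while the remaining two terms are $Lie(\underline{L^HwP_{\infty}-Alg}(X,\psi))$ and $Lie(\underline{haut}(X))$ by definition. This produces
\[
g_{P,X}^{\psi}\to Lie(\underline{L^HwP_{\infty}-Alg}(X,\psi))\to Lie(\underline{haut}(X)),
\]
as desired; here the constant factor distinguishing $\Omega_{\psi}\underline{P_{\infty}\{X\}}$ from $\Omega_{\psi}\underline{P_{\infty}\{X\}}^{\psi}$ is invisible, since a constant presheaf has trivial associated formal moduli problem and hence vanishing tangent $L_{\infty}$-algebra.

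The step I expect to be the main obstacle is the careful bookkeeping of the interplay between the full moduli presheaf and its formal completion, together with the coherence of strictification. One must verify that forming homotopy fibers over the augmentation, delooping/looping, and strictifying the \emph{weak} simplicial presheaves $A\mapsto \mathcal{N}wP_{\infty}-Alg(Mod_A)$ and $A\mapsto \underline{L^HwP_{\infty}-Alg(Mod_A)}(X\otimes A,\psi\otimes A)$ all commute with one another and are functorial in $A$, so that the abstract equivalences of Proposition~\ref{P: Rezkgen} can be upgraded to maps of presheaves realizing the forgetful functor. Once this compatibility is established, the identification of the completed classification presheaves with the FMPs of the derived automorphism groups and the passage to tangent Lie algebras are formal.
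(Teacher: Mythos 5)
Your plan — complete the Rezk fiber sequence of Proposition~\ref{P: Rezkgen}(1) at the augmentation, then loop — is organizationally different from the paper's pointwise argument, but it breaks down at the step where you identify the completed classification presheaves, and that step is where the actual content of the proposition lies. The homotopy fiber of a map of classifying spaces $BG\rightarrow BH$ over the canonical base point is the homotopy quotient of $H$ by $G$ (a homogeneous space), \emph{not} $BG$; it deloops nothing unless the base is contractible. Concretely, the completion of $\underline{\mathcal{N}wP_{\infty}-Alg}$ at $(X,\psi)$ is, at each $A$, $hofib\big(\mathcal{N}wP_{\infty}-Alg(Mod_A)\rightarrow \mathcal{N}wP_{\infty}-Alg\big)$, which is the moduli of deformations of the \emph{object} $(X,\psi)$ — a disjoint union of homotopy quotients of $L^HwP_{\infty}-Alg(X,\psi)$ by the automorphism groups of the various $A$-deformations — and not $B\underline{L^HwP_{\infty}-Alg}(X,\psi)$. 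The cheapest sanity check is $A=\mathbb{K}$: the completed term is the homotopy fiber of an identity map, hence contractible, whereas $BL^HwP_{\infty}-Alg(X,\psi)$ is not. Consequently, looping your completed sequence produces as middle and right terms the \emph{completed} groups $hofib\big(\underline{L^HwP_{\infty}-Alg}(X,\psi)(A)\rightarrow L^HwP_{\infty}-Alg(X,\psi)\big)$ and $hofib\big(haut_{Mod_A}(X\otimes A)\rightarrow haut(X)\big)$, which again differ from the terms $\underline{L^HwP_{\infty}-Alg}(X,\psi)$ and $\underline{haut}(X)$ of the statement already at $A=\mathbb{K}$.

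What is missing is precisely the comparison between these completed automorphism presheaves and the uncompleted ones, and this is the step the paper carries out by hand: it identifies $\underline{P_{\infty}\{X\}}^{\psi}(A)$ with the points of $\underline{P_{\infty}\{X\}}(A)$ whose reduction mod $A$ is equivalent to $\psi$, identifies the homotopy fiber of $-\otimes_A\mathbb{K}:\underline{L^HwP_{\infty}-Alg}(X,\psi)(A)\rightarrow \underline{haut}(X)$ with the automorphisms $f$ satisfying $f\otimes_A\mathbb{K}\sim id_X$, and then checks (via a geometric realization argument on loops) that the equivalence of Proposition~\ref{P: Rezkgen}(2) restricts to an equivalence between these two subobjects. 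Your proposal never performs this matching; it is hidden inside the incorrect delooping claim. On the positive side, your observation that a constant presheaf of groups has vanishing tangent $L_{\infty}$-algebra is correct and does show that your completed variant of the sequence yields the second, $L_{\infty}$-level fiber sequence $g_{P,X}^{\psi}\rightarrow Lie(\underline{L^HwP_{\infty}-Alg}(X,\psi))\rightarrow Lie(\underline{haut}(X))$ — which is the part of the proposition used in the rest of the paper — but it cannot produce the first, space-level sequence as stated. Note also that the obstacle you flag at the end (strictification and functoriality bookkeeping) is not the real one; the real issue is that homotopy fibers of maps of classifying spaces are homogeneous spaces rather than classifying spaces.
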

\begin{proof}
Let $A$ be an augmented artinian cdga.
On the one hand, the simplicial set $\underline{P_{\infty}\{X\}}^{\psi}(A)$ is identified to the Kan subcomplex of $\underline{P_{\infty}\{X\}}(A)$ generated by the set of points $\phi:P_{\infty}\otimes A\rightarrow End_{X\otimes A}^{Mod_A}$ such that their reduction modulo $A$ satisfies $\phi\otimes_A\mathbb{K}\sim \psi$, where $\mathbb{K}$ is an $A$-module via the augmentation $A\rightarrow\mathbb{K}$. That is, the simplicial set $\underline{P_{\infty}\{X\}}^{\psi}(A)$ is the moduli space of $A$-linear deformations of $\psi$.

On the other hand, the homotopy fiber of the map
\[
-\otimes_A\mathbb{K}:\underline{L^HwP_{\infty}-Alg}(X,\psi)(A) \rightarrow \underline{haut(X)},
\]
which is the reduction modulo $A$ induced by the augmentation $A\rightarrow\mathbb{K}$,
is identified with the Kan subcomplex of $\underline{L^HwP_{\infty}-Alg}(X,\psi)(A)=L^HwP_{\infty}-Alg(Mod_A)(X\otimes A,\psi\otimes A)$ generated by the set of points $f:(X\otimes A,\psi\otimes A)\stackrel{\sim}{\leftarrow}\bullet\stackrel{\sim}{\rightarrow}(X\otimes A,\psi\otimes A)$ such that $f\otimes_A\mathbb{K}\sim id_X$ (recall that vertices of the hammock localization are given by finite zigzags of weak equivalences).

Now recall from Proposition~\ref{P: Rezkgen} that there is a simplicial homotopy equivalence
\[
\Omega_{\psi}\underline{P_{\infty}\{X\}}(A)\cong \Omega_{\psi\otimes A}P_{\infty}\otimes A\{X\otimes A\}_{Mod_A}\simeq L^HwP_{\infty}-Alg(Mod_A)(X\otimes A,\psi\otimes A)
\]
sending the constant loop at $\psi\otimes A$ to $id_{(X\otimes A,\psi\otimes A)}$ and generally any loop to a homotopy automorphism of $(X\otimes A,\psi\otimes A)$.
Such a loop is given by a pointed simplicial map $\partial\Delta^1\rightarrow P_{\infty}\otimes A\{X\otimes A\}_{Mod_A}$ sending the point $0$ to $\psi\otimes A$. At the level of geometric realizations of these Kan complexes, this is equivalent to define a pointed topological map $l:S^1\rightarrow |P_{\infty}\otimes A\{X\otimes A\}_{Mod_A}|$ such that $l(0)=\psi\otimes A$, where $0$ is the base point of $S^1$. Here we use the fact that the geometric realization is a Quillen equivalence of model categories between simplicial sets with the Kan-Quillen model structure and topological spaces with the model structure induced by weak homotopy equivalences and Serre fibrations.
The space $\Omega_{\psi\otimes A}|\underline{P_{\infty}\{X\}}^{\psi}(A)|$ is then the subspace of loops $l:S^1\rightarrow |P_{\infty}\otimes A\{X\otimes A\}_{Mod_A}|$ such that for every point $p$ of $S^1$, we have $l(p)\otimes_A\mathbb{K}\sim\psi$.
By the geometric realization of the homotopy equivalence above, such loops $l$ are sent to homotopy automorphisms $f_l:(X\otimes A,\psi\otimes A)\stackrel{\sim}{\leftarrow}\bullet\stackrel{\sim}{\rightarrow}(X\otimes A,\psi\otimes A)$ of $(X\otimes A,\psi\otimes A)$ such that $f_l\otimes_A\mathbb{K}\sim id_X$. That is, we get a homotopy equivalence
\[
\Omega_{\psi\otimes A}|\underline{P_{\infty}\{X\}}^{\psi}(A)|\sim |hofib(-\otimes_A\mathbb{K})|,
\]
hence the simplicial homotopy equivalence above restricts to
\[
\Omega_{\psi\otimes A}\underline{P_{\infty}\{X\}}^{\psi}(A)\sim hofib(-\otimes_A\mathbb{K}).
\]
The fiber sequence of $L_{\infty}$-algebras then follows by taking the Lie algebras of the derived algebraic groups as in \cite{Fra}. Concerning the identification of the fiber with $g_{P,X}^{\psi}$, recall that the Lie algebra of loops over a formal moduli problem gives the tangent Lie algebra of this formal moduli problem.
\end{proof}
\begin{rem}
Let us note that $g_{P,X}^{\psi}$ is thus different from the Lie algebra of the loop space $\Omega_{\psi}\underline{P_{\infty}\{X\}}\simeq \underline{L^HwP_{\infty}-Alg}(X,\psi)$. The later will be described in Section 3.
\end{rem}
Relying on these results, we prove the following comparison result:
\begin{thm}\label{T:fiberseqDefwith+}
Let $F:P_{\infty}-Alg\rightarrow Q_{\infty}-Alg$ be a fully faithful and conservative $\infty$-functor inducing functorially in $A$, for every augmented artinian cdga $A$, a fully faithful and conservative $\infty$-functor $F:P_{\infty}-Alg(Mod_A)\rightarrow Q_{\infty}-Alg(Mod_A)$. Then $F$ induces an equivalence of formal moduli problems
\[
\underline{P_{\infty}\{X\}}^{\psi} \sim  \underline{Q_{\infty}\{F(X)\}}^{F(\psi)},
\]
where $F(\psi)$ is the $Q_{\infty}$-algebra structure on the image $F(X,\psi)$ of $X,\psi$ under $F$, hence an equivalence of the associated $L_{\infty}$-algebras
\[
g_{P,X}^{\psi}\sim g_{Q,F(X)}^{F(\psi)}.
\]
\end{thm}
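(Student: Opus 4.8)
The plan is to compare the two formal moduli problems through their loop spaces---which are homotopy automorphism groups, objects on which $F$ is well behaved---rather than attempting to map the moduli presheaves $\underline{P_{\infty}\{X\}}$ and $\underline{Q_{\infty}\{F(X)\}}$ to one another directly. The reason for this detour is exactly the main obstacle: $F$ is a functor on algebras and need not preserve the underlying chain complex, so it sends $P_{\infty}$-algebra structures on the \emph{fixed} complex $X$ to $Q_{\infty}$-algebras whose underlying complex may vary with the structure. Consequently there is no evident morphism between the moduli presheaves. Homotopy automorphism spaces, on the other hand, are intrinsic to the $\infty$-categories of algebras, and $F$ does act on them, by Lemma~\ref{L: equivhaut}.

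First I would invoke Proposition~\ref{P: Rezkgen}(2), over each augmented artinian cdga $A$, to identify the loop spaces of the moduli presheaves with homotopy automorphism groups:
\[
\Omega_{\psi}\underline{P_{\infty}\{X\}}\simeq\underline{L^HwP_{\infty}-Alg}(X,\psi),\quad \Omega_{F(\psi)}\underline{Q_{\infty}\{F(X)\}}\simeq\underline{L^HwQ_{\infty}-Alg}(F(X),F(\psi)).
\]
Next, for each $A$ I would apply Lemma~\ref{L: equivhaut} to the fully faithful, conservative functor $F:P_{\infty}-Alg(Mod_A)\rightarrow Q_{\infty}-Alg(Mod_A)$ at the object $(X\otimes A,\psi\otimes A)$. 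Using the compatibility of $F$ with base change that is built into its functoriality in $A$, which identifies $F(X\otimes A,\psi\otimes A)$ with $(F(X)\otimes A,F(\psi)\otimes A)$, this produces a weak equivalence of homotopy automorphism spaces, natural in $A$, hence an equivalence of derived algebraic groups
\[
\underline{L^HwP_{\infty}-Alg}(X,\psi)\simeq\underline{L^HwQ_{\infty}-Alg}(F(X),F(\psi)).
\]

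The crux is to descend from this equivalence of loop groups to the formal moduli problems themselves. For this I would use that, by definition, $\underline{P_{\infty}\{X\}}^{\psi}$ is the fibrewise homotopy fiber over $\psi$ of the reduction map from $\underline{P_{\infty}\{X\}}$ to the constant presheaf on its value $\underline{P_{\infty}\{X\}}(\mathbb{K})$ at $\mathbb{K}$. Since looping commutes with homotopy fibers, combining this with the identifications above yields
\[
\Omega_{\psi}\underline{P_{\infty}\{X\}}^{\psi}\simeq hofib\bigl(\underline{L^HwP_{\infty}-Alg}(X,\psi)\rightarrow \underline{L^HwP_{\infty}-Alg}(X,\psi)(\mathbb{K})\bigr),
\]
the map being reduction modulo the maximal ideal and the target being the associated constant presheaf, and likewise on the $Q$-side. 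Because the equivalence of the preceding paragraph is natural in $A$, it commutes with these reduction maps and with their evaluation at $\mathbb{K}$; taking homotopy fibers of the two horizontal reductions then gives
\[
\Omega_{\psi}\underline{P_{\infty}\{X\}}^{\psi}\simeq\Omega_{F(\psi)}\underline{Q_{\infty}\{F(X)\}}^{F(\psi)}.
\]

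Finally I would pass to Lie algebras. Since the Lie algebra of the loop group of a formal moduli problem is its tangent $L_{\infty}$-algebra, the last equivalence reads $g_{P,X}^{\psi}\simeq g_{Q,F(X)}^{F(\psi)}$; and since a formal moduli problem is determined by its tangent $L_{\infty}$-algebra under Lurie's equivalence, this upgrades to $\underline{P_{\infty}\{X\}}^{\psi}\simeq\underline{Q_{\infty}\{F(X)\}}^{F(\psi)}$. Beyond the formal juggling of loop spaces and homotopy fibers, the delicate points to verify are the naturality in $A$ of the equivalence furnished by Lemma~\ref{L: equivhaut}---which rests on the functoriality of $F$ in $A$ and on $F$ commuting with $-\otimes A$---and the compatibility of every identification with the reductions $A\rightarrow\mathbb{K}$ cutting out the formal moduli problems from the full moduli presheaves. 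It is exactly this last compatibility that sidesteps the discrepancy between the underlying complexes: the relevant base of the fiber sequence is the constant presheaf on the $\mathbb{K}$-point, not the automorphisms $\underline{haut}(X)$ of the underlying complex appearing in Proposition~\ref{P: hofibgroups}, so the fact that $\underline{haut}(X)$ and $\underline{haut}(F(X))$ differ causes no trouble.
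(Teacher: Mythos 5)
Your proof is correct, and it shares its skeleton with the paper's: both route the hypotheses through Lemma~\ref{L: equivhaut} to obtain an equivalence, natural in $A$, of derived algebraic groups of homotopy automorphisms, then identify the loop spaces of the two formal moduli problems, and finally conclude by passing to tangent $L_{\infty}$-algebras and invoking Lurie's theorem. Where you genuinely diverge is the middle step. The paper realizes $\Omega_{\psi}\underline{P_{\infty}\{X\}}^{\psi}$ and $\Omega_{F(\psi)}\underline{Q_{\infty}\{F(X)\}}^{F(\psi)}$ as the homotopy fibers of Proposition~\ref{P: hofibgroups}, and compares them via the commutative triangle $\underline{\mathcal{N}wP_{\infty}-Alg}\rightarrow\underline{\mathcal{N}wQ_{\infty}-Alg}\rightarrow\underline{\mathcal{N}wCh_{\mathbb{K}}}$, in which the $P$-side map to chain complexes must be taken to be $U\circ F$ (since $F$ need not preserve underlying complexes, $U_P$ and $U_Q\circ F$ differ); identifying the resulting $P$-side fiber with $\Omega_{\psi}\underline{P_{\infty}\{X\}}^{\psi}$ then requires exactly the care about basepoints and underlying complexes that you flag. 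You instead fiber over the constant presheaf of $\mathbb{K}$-points, which comes for free from the definition $\underline{P_{\infty}\{X\}}^{\psi}=hofib(\underline{P_{\infty}\{X\}}\rightarrow\underline{P_{\infty}\{X\}}(\mathbb{K}))$ together with Proposition~\ref{P: Rezkgen}(2) and the commutation of looping with homotopy fibers. This makes the two fiber sequences strictly parallel --- both bases are automorphism groups over $\mathbb{K}$, on which $F$ itself acts by Lemma~\ref{L: equivhaut} --- so the discrepancy between $\underline{haut}(X)$ and $\underline{haut}(F(X))$ never enters, and the only compatibility to check is that the equivalences of automorphism groups commute with reduction $-\otimes_A\mathbb{K}$, which is precisely the functoriality in $A$ assumed in the statement (and which both proofs also need in order to identify $F(X\otimes A,\psi\otimes A)$ with $(F(X)\otimes A,F(\psi)\otimes A)$). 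In short: the paper runs the comparison inside the global classification-space picture, while yours is a more self-contained fiberwise argument that buys a cleaner handling of the basepoint issue at the cost of no extra input.
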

\begin{proof}
Let $F:P_{\infty}-Alg\rightarrow Q_{\infty}-Alg$ be a fully faithful $\infty$-functor inducing a fully faithful and conservative $\infty$-functor $F:P_{\infty}-Alg(Mod_A)\rightarrow Q_{\infty}-Alg(Mod_A)$ for every augmented artinian cdga $A$.
At the level of homotopy automorphisms, which can be seen equivalently as loops in Dwyer-Kan's classification spaces, this means that the commutative triangle
\[
\xymatrix{
\underline{\mathcal{N}wP_{\infty}-Alg} \ar[rr]^-{\mathcal{N}wF}\ar[dr]_-{\mathcal{N}w(U\circ F)} & & \underline{\mathcal{N}wQ_{\infty}-Alg} \ar[dl]_-{\mathcal{N}wU} \\
 & \underline{\mathcal{N}wCh_{\mathbb{K}}} & },
\]
where $\mathcal{N}w(-)$ is the simplicial nerve of the subcategory of weak equivalences (here quasi-isomorphisms) and $U$ is the forgetful functor, induces a commutative triangle
\[
\xymatrix{
\Omega_{(X,\psi)}\underline{\mathcal{N}wP_{\infty}-Alg} \ar[rr]^-{\sim}\ar[dr] & & \Omega_{F(X,\psi)}\underline{\mathcal{N}wQ_{\infty}-Alg} \ar[dl] \\
 & \Omega_X\underline{\mathcal{N}wCh_{\mathbb{K}}} & }
\]
where the horizontal arrow induced by $\mathcal{N}wF$ is a weak equivalence by Lemma~\ref{L: equivhaut}.

Consequently, we get the morphism of homotopy fiber sequences of derived algebraic groups
\[
\xymatrix{
\Omega_{\psi}\underline{P_{\infty}\{X\}}^{\psi} \ar[d]\ar[rr]^-{\sim} & & \Omega_{F(\psi)}\underline{Q_{\infty}\{F(X)\}}^{F(\psi)}\ar[d] \\
\Omega_{(X,\psi)}\underline{\mathcal{N}wP_{\infty}-Alg} \ar[rr]^-{\sim}\ar[dr] & & \Omega_{F(X,\psi)}\underline{\mathcal{N}wQ_{\infty}-Alg} \ar[dl] \\
 & \Omega_X\underline{\mathcal{N}wCh_{\mathbb{K}}} & },
\]
where the homotopy fibers are given by Proposition~\ref{P: hofibgroups}. The upper horizontal arrow is a weak equivalence of derived algebraic groups
\[
\Omega_{\psi}\underline{P_{\infty}\{X\}}^{\psi} \stackrel{\sim}{\rightarrow} \Omega_{F(\psi)}\underline{Q_{\infty}\{F(X)\}}^{F(\psi)},
\]
hence an equivalence of the associated $L_{\infty}$-algebras, which are the tangent $L_{\infty}$-algebras of the associated formal moduli problems. By Lurie's equivalence theorem, this means that we have a weak equivalence of formal moduli problems
\[
\underline{P_{\infty}\{X\}}^{\psi} \stackrel{\sim}{\rightarrow} \underline{Q_{\infty}\{F(X)\}}^{F(\psi)}
\]
as well.
\end{proof}

In the case of algebras and coalgebras over operads, we can get a similar result for a more general base category:
\begin{prop}\label{P:InvModuliunderWE}
(1) Let $P$ and $Q$ be two dg operads. Let $\mathcal{C}$ be a cofibrantly generated symmetric monoidal model category over $Ch_{\mathbb{K}}$. We assume that the categories of $P$-algebras and $Q$-algebras are equipped with their standard model structure. Let
\[
F:P-Alg(\mathcal{C})\rightarrow Q-Alg(\mathcal{C})
\]
be a fully faithful and conservative $\infty$-functor inducing functorially in $A$, for every augmented artinian cdga $A$, a fully faithful and conservative $\infty$-functor $F:P_{\infty}-Alg(Mod_A(\mathcal{C}))\rightarrow Q_{\infty}-Alg(Mod_A(\mathcal{C}))$, where $Mod_A(\mathcal{C})$ is the category of $A$-modules in $\mathcal{C}$.
Let $X\in\mathcal{C}$ be equipped with a $P$-algebra structure $\varphi:\rightarrow End_X$ and let $\phi:Q\rightarrow End_{F(X)}$
be its image under the functor $F$. Then there exists a homotopy equivalence of formal moduli problems
\[
\underline{P\{X\}_{\mathcal{C}}}^{\varphi} \simeq \underline{Q\{F(X)\}_{\mathcal{C}}}^{\phi}.
\]

(2) Let us now assume that $Q$ satisfies the necessary assumptions so that $Q-Cog(\mathcal{C})$ forms a model category.
Let
\[
F:P-Alg(\mathcal{C})\rightarrow Q-Cog(\mathcal{C})
\]
be a fully faithful $\infty$-functor. Let $X\in\mathcal{C}$ be equipped with a $P$-algebra structure $\varphi:\rightarrow End_X$ and let $\phi:Q\rightarrow coEnd_{F(X)}$
be its image under the functor $F$. Then there exists a homotopy equivalence of formal moduli problems
\[
\underline{P\{X\}_{\mathcal{C}}}^{\varphi} \simeq \underline{Q^{\vee}\{F(X)\}_{\mathcal{C}}}^{\phi},
\]
where $\underline{Q^{\vee}\{F(X)\}_{\mathcal{C}}}$ denotes the simplicial presheaf of $Q$-coalgebra structures on $X$.
\end{prop}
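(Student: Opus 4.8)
The plan is to transpose the proof of Theorem~\ref{T:fiberseqDefwith+} to the operadic setting over the base category $\mathcal{C}$, replacing chain complexes everywhere by $\mathcal{C}$ in part (1) and by $Q$-coalgebras in part (2). The argument rests on three ingredients, each of which I must supply a $\mathcal{C}$-linear (respectively coalgebraic) counterpart of: the generalized Rezk homotopy pullback theorem (Proposition~\ref{P: Rezkgen}), the homotopy fiber sequence of derived algebraic groups relating the loop space of a deformation problem to homotopy automorphisms (Proposition~\ref{P: hofibgroups}), and the homotopy automorphism equivalence attached to a fully faithful conservative functor (Lemma~\ref{L: equivhaut}).

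For part (1), I would first record that, since $\mathcal{C}$ is a cofibrantly generated symmetric monoidal model category over $Ch_{\mathbb{K}}$ and $P,Q$ are operads, the categories $P-Alg(Mod_A(\mathcal{C}))$ and $Q-Alg(Mod_A(\mathcal{C}))$ carry their standard semi-model structures functorially in the augmented artinian cdga $A$ (theorem of Fresse). The universal functorial constructions of \cite{Yal3} use only direct sums, suspensions and twisting cochains, all available in $Mod_A(\mathcal{C})$; hence \cite[Theorem 0.1]{Yal3} transposes verbatim and exhibits $\underline{P\{X\}_{\mathcal{C}}}$ as the homotopy fiber, over $X$, of the forgetful map $\underline{\mathcal{N}wP-Alg(\mathcal{C})}\rightarrow\underline{\mathcal{N}w\mathcal{C}}$, and likewise for $Q$. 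Rewriting the proof of Proposition~\ref{P: hofibgroups} then produces homotopy fiber sequences of derived algebraic groups for $(X,\varphi)$ and for $(F(X),\phi)$, whose fibers are the loop spaces $\Omega_{\varphi}\underline{P\{X\}_{\mathcal{C}}}^{\varphi}$ and $\Omega_{\phi}\underline{Q\{F(X)\}_{\mathcal{C}}}^{\phi}$.

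Next I would apply Lemma~\ref{L: equivhaut} inside each $Mod_A(\mathcal{C})$, which is legitimate because $F$ is fully faithful and conservative functorially in $A$, to obtain a weak equivalence of the middle terms $\underline{L^HwP-Alg(\mathcal{C})}(X,\varphi)\stackrel{\sim}{\rightarrow}\underline{L^HwQ-Alg(\mathcal{C})}(F(X),\phi)$. Because $F$ is a functor, this equivalence is compatible with the forgetful legs, so that the two fiber sequences assemble into the commutative triangle over the classification space of $\mathcal{C}$ exactly as in the proof of Theorem~\ref{T:fiberseqDefwith+}; the induced map on fibers is therefore a weak equivalence of derived algebraic groups
\[
\Omega_{\varphi}\underline{P\{X\}_{\mathcal{C}}}^{\varphi}\stackrel{\sim}{\rightarrow}\Omega_{\phi}\underline{Q\{F(X)\}_{\mathcal{C}}}^{\phi}.
\]
Delooping and invoking Lurie's equivalence, lifted to $L_{\infty}$-algebras through the Quillen equivalence $p_{!}\dashv p^{*}$ recalled earlier, yields the asserted equivalence of formal moduli problems $\underline{P\{X\}_{\mathcal{C}}}^{\varphi}\simeq\underline{Q\{F(X)\}_{\mathcal{C}}}^{\phi}$.

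For part (2), the same three-step scheme applies with $Q-Alg$ replaced by $Q-Cog(\mathcal{C})$, the endomorphism operad by the co-endomorphism operad $coEnd_{F(X)}$, and the moduli presheaf by $\underline{Q^{\vee}\{F(X)\}_{\mathcal{C}}}$ of $Q$-coalgebra structures; here I would note that a fully faithful $\infty$-functor is automatically conservative (an inverse of $Ff$ produced by full faithfulness lifts, again by full faithfulness, to an inverse of $f$), so the stated hypothesis alone suffices to run Lemma~\ref{L: equivhaut}. The genuinely new difficulty, which I expect to be the main obstacle, is the coalgebraic analogue of Proposition~\ref{P: Rezkgen}: one must dualize the classification-space and homotopy-pullback constructions of \cite{Yal3} and verify that the assumed model structure on $Q-Cog(\mathcal{C})$, obtained from the left-induced structures of \cite{HS}, supplies the required universal constructions. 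The delicate points are the absence of a free-coalgebra functor and the more rigid behaviour of coalgebraic model structures under base change along $A\rightarrow\mathbb{K}$; once this dual Rezk theorem and its fiber sequence are established, the loop-space comparison and the appeal to Lurie's equivalence transpose word for word from part (1), giving $\underline{P\{X\}_{\mathcal{C}}}^{\varphi}\simeq\underline{Q^{\vee}\{F(X)\}_{\mathcal{C}}}^{\phi}$.
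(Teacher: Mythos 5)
Your proposal reproduces the formal skeleton of the paper's argument (and of Theorem~\ref{T:fiberseqDefwith+}): set up the two homotopy fiber sequences relating moduli problems to classification spaces of algebras over the classification space of the base, use full faithfulness and conservativity of $F$ to identify the middle terms, deduce an equivalence of fibers, and conclude via Lurie's theorem. The gap is in the load-bearing technical step of part (1): you assert that \cite[Theorem 0.1]{Yal3} ``transposes verbatim'' to $P$-algebras in an arbitrary cofibrantly generated symmetric monoidal model category $\mathcal{C}$ over $Ch_{\mathbb{K}}$, on the grounds that its constructions use only direct sums, suspensions and twisting cochains. The paper makes this ``trivial transposition'' claim only for $Mod_A(Ch_{\mathbb{K}})$ (in the proof of Proposition~\ref{P: Rezkgen}), where the target category has literally the same operations as chain complexes; for a general base $\mathcal{C}$ it does something different, namely it invokes Muro's stack of algebras $\underline{Alg_{\mathcal{C}}}(P)$ over a monoidal model category \cite{Mur} and extends his homotopy pullback theorem \cite[Theorem 4.6]{Mur} from nonsymmetric to symmetric operads, using \cite[Theorem 16.A]{Fre3} to get the required Quillen pair along a weak equivalence of $\Sigma$-cofibrant operads, and the fibrancy of $End_X$ (an operad in $Ch_{\mathbb{K}}$, via the external hom) to drop the fibrancy-cofibrancy hypotheses of Muro's statement. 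This detour is not cosmetic: \cite{Yal3} is a prop-level statement whose chain-level constructions are not known to make sense over a general $\mathcal{C}$ — indeed, if your verbatim transposition were available, the proposition could have been stated for props, whereas the paper deliberately restricts it to operads precisely because the operadic (semi-)model structures and Muro's machinery are what exist over a general base. As written, your step 2 assumes exactly what has to be proved.

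For part (2) the situation is worse by your own admission: you identify the coalgebraic analogue of the Rezk-type fiber sequence as ``the main obstacle'' and leave it unresolved, speculating that one must comonadically dualize the constructions of \cite{Yal3} and worrying about the absence of free coalgebra functors. The paper's point is that no such dualization is needed: a $Q$-coalgebra structure on $X$ is an operad morphism $Q\rightarrow coEnd_X$ into the coendomorphism operad $coEnd_X(n)=Hom_{\mathcal{C}}(X,X^{\otimes n})$, which is again an operad in chain complexes, so the moduli space $\underline{Q^{\vee}\{F(X)\}_{\mathcal{C}}}$ is still a mapping space of dg operads and the same stack-theoretic argument as in part (1) runs with $coEnd_X$ in place of $End_X$ (the model structure on $Q-Cog(\mathcal{C})$ from \cite{HS} being assumed in the statement to make sense of the classification space $\mathcal{N}wQ-Cog(\mathcal{C})$). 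So the difficulty you flag dissolves once the argument is organized around operad morphisms into $coEnd_X$ rather than around a comonadic dual of the algebra machinery; but since your proposal neither supplies that observation nor the dual Rezk theorem it calls for, part (2) is left without proof.
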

\begin{proof}
\noindent
\textbf{Proof of (1).}
The functor $F$ induces a commutative square of categories
\[
\xymatrix{
P-Alg(\mathcal{C})\ar[r]^-{U\circ F}\ar[d]_{F} & \mathcal{C}\ar[d]^-{=} \\
Q-Alg(\mathcal{C})\ar[r]_-{U} & \mathcal{C}
}
\]
where $U:Q-Alg(\mathcal{C})\rightarrow \mathcal{C}$ is the forgetful functor. In \cite{Mur}, Muro builds a stack of algebras over a nonsymmetric operad $\underline{Alg_{\mathcal{C}}}(P)$,
whose evaluation at an algebra $A$ is the classification space of cofibrant $P$-algebras in $A$-modules (that is, the nerve
of the subcategory of weak equivalences between cofibrant objects).
For every algebra $A$, there is a forgetful functor from $P$-algebras
in $A$-modules to $A$-modules, inducing a morphism of simplicial sets $\mathcal{N}wP-Alg(Mod_A)\rightarrow
\mathcal{N}wMod_A$, hence a morphism of stacks
$\underline{Alg_{\mathcal{C}}}(P)\rightarrow \underline{QCoh}$ (where $\underline{QCoh}$ is the stack of
quasi-coherent modules). For operads in chain complexes over a field of characteristic zero, this construction extends to symmetric operads. Thus the commutative square above also induces a commutative square of stacks
\[
\xymatrix{
\underline{\mathcal{N}wP-Alg(\mathcal{C})}\ar[r]\ar[d] & \underline{\mathcal{N}w\mathcal{C}}\ar[d]^-{=} \\
\underline{\mathcal{N}wQ-Alg(\mathcal{C})}\ar[r] & \underline{\mathcal{N}w\mathcal{C}}
}
\]
via this construction, hence a morphism of the induced homotopy fiber sequences of stacks.

In order to relate these homotopy fibers to moduli spaces of algebraic structures, we need a symmetric version of \cite[Theorem 4.6]{Mur}, which consists in a generalization of Rezk's homotopy pullback theorem \cite{Rez} to non-symmetric operads in monoidal model categories.
A key point in the proof of \cite[Theorem 4.6]{Mur} is that extension and restriction of structures along a weak equivalence of admissible non-symmetric operads form a Quillen pair. However, if one restricts to the setting of algebras in a cofibrantly generated symmetric monoidal model category tensored over another cofibrantly generated symmetric monoidal model category, then a weak equivalence of symmetric $\Sigma$-cofibrant operads similarly induces a Quillen pair between the corresponding categories of algebras by \cite[Theorem 16.A]{Fre3}. In this framework, \cite[Theorem 4.6]{Mur} can be extended to the symmetric case by following exactly the same argument line.
When the base category is $Ch_{\mathbb{K}}$, then the endomorphism operad of any object of $\mathcal{C}$, defined by the external dg hom of $\mathcal{C}$, is fibrant because fibrations of operads are defined aritywise and every chain complex over a field of characteristic zero is fibrant. In our situation we can thus drop the fibrancy-cofibrancy condition on $X$ stated in \cite[Theorem 4.6]{Mur}. The stack of $P$-algebra structures on an $A$-module $M$ is the homotopy pullback of the morphism
$\underline{Alg_{\mathcal{C}}}(P)\rightarrow \underline{QCoh}$
along the morphism $\mathbb{R}Spec(A)\rightarrow \underline{QCoh}$ representing $M$ \cite[Section 5]{Mur}.
This gives a stack version of Rezk's homotopy pullback theorem \cite{Rez}. Under our assumptions, this theorem  holds also in the symmetric case.

The proof concludes as follows. The commutative square of stacks
\[
\xymatrix{
\underline{\mathcal{N}wP-Alg(\mathcal{C})}\ar[r]\ar[d] & \underline{\mathcal{N}w\mathcal{C}}\ar[d]^-{=} \\
\underline{\mathcal{N}wQ-Alg(\mathcal{C})}\ar[r] & \underline{\mathcal{N}w\mathcal{C}}
}
\]
induces, for every choice of a basepoint $X\in\mathcal{C}$, a commutative square of derived algebraic groups
\[
\xymatrix{
\Omega_{(X,\varphi)}\underline{\mathcal{N}wP-Alg(\mathcal{C})}\ar[r]^-{\Omega\mathcal{N}w(U\circ F)}\ar[d]_{\Omega\mathcal{N}wF} & \Omega_X\underline{\mathcal{N}w\mathcal{C}}\ar[d]^-{=} \\
\Omega_{(F(X),\phi)}\underline{\mathcal{N}wQ-Alg(\mathcal{C})}\ar[r]_-{\Omega\mathcal{N}wU} & \Omega_X\underline{\mathcal{N}w\mathcal{C}}
}.
\]
The homotopy fibers of $\Omega\mathcal{N}w(U\circ F)$ and $\Omega\mathcal{N}wU$ are identified respectively with the formal moduli problems $\underline{P\{X\}_{\mathcal{C}}}^{\varphi}$ and $\underline{Q\{F(X)\}_{\mathcal{C}}}^{\phi}$.
The map $\Omega\mathcal{N}wF$ is a weak equivalence, so the homotopy fibers are weakly equivalent as well.

\noindent
\textbf{Proof of (2).} The arguments are the same than in the proof of (1), using that $Q$-coalgebra structures on a given object $X$ can be defined by operad morphisms $Q\rightarrow coEnd_X$, where $coEnd_X$ is the coendomorphism operad defined by $coEnd_X(n)=Hom_{\mathcal{C}}(X,X^{\otimes n})$.
\end{proof}

\section{The ``plus'' construction and the corresponding deformation theory}\label{S:Plus}

The plus construction, originally due to Merkulov, will be crucial later in the paper, 
to consider deformation complexes of algebraic structures which also encode compatible deformations of the differential. 
In this section, we explain the homotopical counterpart of this construction, that is, how the corresponding deformation complex 
controls the derived algebraic group of homotopy automorphisms of an algebra over a properad.

Recall from \cite{Mer2} that there is an endofunctor $(-)^+:Prop\rightarrow Prop$ on the category of properads which associates to any dg properad $P$ with presentation $\mathcal{F}(E)/(R)$ and differential $\delta$ a properad $P^+$ with presentation $\mathcal{F}(E^+)/(R)$ and differential $\delta^+$. The $\Sigma$-biobject $E^+$ is defined by $E^+(1,1)=E(1,1)\oplus\mathbb{K}[1]$ and $E^+(m,n)=E(m,n)$ otherwise. This means that we added a generating operation $u$ of degree $1$, with one input and one output. We would like this generator to twist the differential of a complex $X$ when we consider a $P^+$-algebra structure on $X$. For this, the differential $\delta$ is modified accordingly to have in particular $\delta^+(u)=u\otimes u\in E(1,1)\otimes E(1,1)$. Consequently, properad morphisms $\varphi^+:P^+\rightarrow End_{(X,d)}$ for a given complex $X$ with differential $d$ corresponds to properad morphisms $P\rightarrow End_{(X,d-\varphi^+(u))}$ for $X$ equipped with the twisted differential $d-\varphi^+(u)$. In particular, if $X$ is a graded vector space then $P^+$-algebra structures on $X$ equip $X$ simultaneously with a $P$-algebra structure and a compatible differential.
Let us reinterpret this construction by defining the following operad:
\begin{defn}
The operad of differentials $Di$ is the quasi-free operad $Di=(\mathcal{F}(E),\partial)$, where $E(1)=\mathbb{K}\delta$ with $\delta$ a generator of degree $1$, $E(n)=0$ for $n\neq 1$ and $\partial(\delta)=\delta\circ \delta$ is the operadic composition $\circ:Di(1)\otimes Di(1)\rightarrow Di(1)$.
\end{defn}
We will do an abuse of notation and still note $Di$ the properad freely generated by this operad.
\begin{lem}
Let $(V,d_V)$ be a complex.

(1) A $Di$-algebra structure $\phi:Di\rightarrow End_V$ on $V$ is a twisted complex $(V,d_V-\delta_V)$ where $\delta_V$ is the image of the operadic generator $\delta$ under $\phi$.

(2) A morphism of $Di$-algebras $f:(V,d_V-\delta_V)\rightarrow (W,d_W-\delta_W)$ is a chain morphism $f:(V,d_V)\rightarrow (W,d_W)$ which satifies moreover $f\circ (d_V-\delta_V) = (d_W-\delta_W)\circ f$ (it is a morphism of twisted complexes).
\end{lem}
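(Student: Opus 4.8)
The plan is to unwind the definition of a dg-operad morphism $Di\to End_V$ using the quasi-freeness of $Di$, and to read off both assertions from the single defining relation $\partial(\delta)=\delta\circ\delta$. The whole content is a reduction to the generator $\delta$.

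For (1), I would first observe that since $Di=(\mathcal{F}(E),\partial)$ is free as a graded operad on the $\Sigma$-object $E$ concentrated in arity one with $E(1)=\mathbb{K}\delta$, a morphism of underlying graded operads $\phi:Di\to End_V$ is, by the universal property of the free operad, the same datum as a $\Sigma$-equivariant map $E\to End_V$, i.e. a choice of the degree-one endomorphism $\delta_V:=\phi(\delta)\in End_V(1)=Hom(V,V)$. Promoting $\phi$ to a morphism of dg operads amounts to requiring that it commute with the differentials; since the differentials on both sides are derivations (relative to $\phi$) and $\phi$ preserves operadic composition, two such $\phi$-derivations out of the free operad agree as soon as they agree on the generator, so it suffices to impose commutation on $\delta$. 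I would then compute both sides of $\phi(\partial\delta)=\partial_{End_V}(\phi\delta)$: the left-hand side is $\phi(\delta\circ\delta)=\delta_V\circ\delta_V=\delta_V^2$ because $\phi$ preserves composition, while the right-hand side is the internal differential of $Hom(V,V)$ applied to the degree-one element $\delta_V$, namely $d_V\delta_V+\delta_V d_V$. Hence the compatibility condition reads $\delta_V^2=d_V\delta_V+\delta_V d_V$, which, using $d_V^2=0$, is exactly $(d_V-\delta_V)^2=0$. This identifies $Di$-algebra structures on $V$ with degree-one endomorphisms $\delta_V$ for which $d_V-\delta_V$ squares to zero, i.e. with twisted complexes $(V,d_V-\delta_V)$.

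For (2), I would recall that a morphism of $Di$-algebras is by definition a morphism $f\colon(V,d_V)\to(W,d_W)$ in $Ch_{\mathbb{K}}$, that is a degree-zero chain map satisfying $f d_V=d_W f$, which is moreover compatible with the operad actions. As in (1), compatibility on all of $Di$ reduces to compatibility on the single generator $\delta$, which is the intertwining relation $f\circ\delta_V=\delta_W\circ f$. Combining this with $f d_V=d_W f$ yields $f\circ(d_V-\delta_V)=(d_W-\delta_W)\circ f$; conversely, given that $f$ is a chain map for $d_V,d_W$, this last relation forces $f\delta_V=\delta_W f$ back again. Thus morphisms of $Di$-algebras are precisely the chain maps intertwining the twisted differentials, i.e. morphisms of twisted complexes.

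There is no substantial obstacle here: both statements are a direct unwinding of the definitions once quasi-freeness is used to reduce everything to the generator $\delta$. The only point requiring care is the sign in the internal differential of $Hom(V,V)$: because $\delta_V$ is of odd degree, the graded commutator $[d_V,\delta_V]$ becomes the anticommutator $d_V\delta_V+\delta_V d_V$, and it is precisely this that makes the defining relation coincide with $(d_V-\delta_V)^2=0$ (rather than with the vanishing of a commutator), and fixes the sign in the twist $d_V-\delta_V$ rather than $d_V+\delta_V$.
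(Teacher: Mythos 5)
Your proof is correct and follows essentially the same route as the paper: both parts reduce, via quasi-freeness of $Di$, to the single generator $\delta$, with (1) coming from the twisting-cochain equation $\delta_V^2 = d_V\circ\delta_V + \delta_V\circ d_V$ extracted from compatibility with the differentials, and (2) from the intertwining relation on $\delta$ (the paper phrases this last step through the monadic structure map $Di(1)\otimes V\to V$, but the content is identical). Your added remark on the sign/anticommutator is exactly the point the paper's computation also hinges on.
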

\begin{proof}
(1) The morphism $\phi$ is entirely determined by the image of the generator $\delta$. Since
\[
Di(1)\rightarrow Hom(V,V)
\]
is a morphism of complexes, its compatibility with the differentials reads
\[
\phi(\partial(\delta)) = d_V\circ\delta_V  + \delta_V\circ d_V
\]
which gives the equation of twisting cochains
\[
\delta_V^2 = d_V\circ \delta_V + \delta_V\circ d_V,
\]
hence
\[
(d_V-\delta_V)^2 = d_V^2+\delta_V^2 - d_V\circ\delta_V - \delta_V\circ d_V = 0.
\]

(2) A $Di$-algebra structure on $V$ is given by a morphism $Di(V)\rightarrow V$, and a $Di$-algebra morphism $f:V\rightarrow W$
is a chain morphism fitting in the commutative square
\[
\xymatrix{
Di(V)\ar[r]^-{Di(f)}\ar[d] & Di(W)\ar[d] \\
V\ar[r]_-f & W
}.
\]
Since a $Di$-algebra structure is determined by the image of the generator $\delta$ via $Di(1)\otimes V\rightarrow V$, this amounts to the commutativity of the square
\[
\xymatrix{
Di(1)\otimes V\ar[r]^-{Di(1)\otimes f}\ar[d] & Di(1)\otimes W\ar[d] \\
V\ar[r]_-f & W
},
\]
which is exactly saying that $f$ is a morphism of twisted complexes.
\end{proof}
Let us note that $Di$ is a non-negatively graded quasi-free operad, hence a cofibrant operad, so we do not need to take a resolution of it to consider the associated simplicial presheaf of $Di$-algebra structures.
\begin{lem}
The based loop space at a given complex of the simplicial presheaf of $Di$-algebra structures is equivalent to the derived algebraic group of homotopy automorphisms of this complex, that is
\[
\Omega_{(V,d_V)}\underline{Di\{(V,d_V)\}}\sim \underline{haut(V,d_V)}
\]
where the loops are based at the trivial $Di$-algebra structure $Di\stackrel{0}{\rightarrow}End_{(V,d_V)}$.
\end{lem}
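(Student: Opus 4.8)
The plan is to deduce the statement from the general loop-space identification of Proposition~\ref{P: Rezkgen}(2), combined with the invariance of homotopy automorphisms under restriction along a surjection of props (Corollary~\ref{C: surjprop}). First I would record that $Di=(\mathcal{F}(E),\partial)$ is quasi-free and non-negatively graded, hence cofibrant, so that Proposition~\ref{P: Rezkgen}(2) applies with $P_{\infty}=Di$, with $X=(V,d_V)$, and with basepoint the trivial structure $\psi=0$. This yields directly an equivalence of derived algebraic groups
\[
\Omega_{(V,d_V)}\underline{Di\{(V,d_V)\}}\simeq \underline{L^HwDi-Alg}((V,d_V),0).
\]

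The second step is to identify the right-hand side with $\underline{haut(V,d_V)}$. The key observation is that the trivial $Di$-algebra structure is exactly the restriction of $(V,d_V)$ along a surjective morphism of dg props $\varphi:Di\twoheadrightarrow I$ sending the generator $\delta$ to $0$, where $I$ is the unit prop (concentrated in biarity $(1,1)$ with value $\mathbb{K}$) whose algebras are precisely chain complexes. The map $\varphi$ is a well-defined morphism of dg props because $\partial(\delta)=\delta\circ\delta$ is sent to $0$, and in biarity $(1,1)$ it is the canonical augmentation onto $\mathbb{K}$, hence surjective. Under the restriction functor $\varphi^{*}:I-Alg=Ch_{\mathbb{K}}\to Di-Alg$ one has $\varphi^{*}(V,d_V)=(V,d_V,0)$, i.e. precisely the trivial structure appearing as basepoint above.

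Applying Corollary~\ref{C: surjprop} to $\varphi$ and to the $I$-algebra $(V,d_V)$ then produces a weak equivalence
\[
\underline{haut(V,d_V)}=\underline{L^HwCh_{\mathbb{K}}}((V,d_V),(V,d_V))\stackrel{\sim}{\rightarrow}\underline{L^HwDi-Alg}((V,d_V),0),
\]
where the first identification is Definition~\ref{D:haut} together with the Dwyer--Kan description of homotopy automorphisms as self weak-equivalences in the hammock localization. Composing with the equivalence of the first step gives the desired
\[
\Omega_{(V,d_V)}\underline{Di\{(V,d_V)\}}\sim\underline{haut(V,d_V)}.
\]
To obtain the statement at the level of derived algebraic groups rather than of a single mapping space, I would run the same argument in $Mod_A$ for every augmented artinian cdga $A$: the functor $\varphi^{*}$ stays fully faithful and conservative in $Mod_A$ (by Corollary~\ref{C: fullfaithprop} and Lemma~\ref{L: equivhaut}), and all the constructions are natural in $A$ since restriction along $\varphi$ commutes with $-\otimes_A B$.

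The argument is essentially formal once this packaging is in place, so there is no serious computational obstacle. The only points requiring care are verifying that $\varphi$ is a genuine morphism of dg props (so that restriction along it is defined and conservative, and Corollary~\ref{C: surjprop} genuinely applies), and confirming that the naturality in $A$ used to upgrade Corollary~\ref{C: surjprop} from a pointwise equivalence of mapping spaces to an equivalence of simplicial presheaves over $dgArt_{\mathbb{K}}^{aug}$ is compatible with the loop-space identification of Proposition~\ref{P: Rezkgen}(2).
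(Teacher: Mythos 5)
Your proposal is correct and follows essentially the same two-step argument as the paper: both first apply Proposition~\ref{P: Rezkgen}(2) to the cofibrant operad $Di$ to identify $\Omega_{(V,d_V)}\underline{Di\{(V,d_V)\}}$ with $\underline{L^HwDi-Alg}((V,d_V),0)$, and both then compare the latter with $\underline{haut}(V,d_V)$ using the very same functor $Ch_{\mathbb{K}}\to Di-Alg$ sending a complex to itself with trivial twisting. The only difference is one of packaging: the paper verifies by hand that this functor is fully faithful, conservative, weak-equivalence preserving and compatible with $-\otimes A$ (implicitly invoking Lemma~\ref{L: equivhaut}), whereas you realize it as restriction along the surjection of dg props $Di\twoheadrightarrow I$ and quote Corollary~\ref{C: surjprop}, which encapsulates exactly that verification.
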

\begin{proof}
First, recall from Proposition 2.13(2) the equivalence
\[
\Omega_{(V,d_V)}\underline{Di\{(V,d_V)\}}\sim \underline{haut_{Di}(V,d_V)}.
\]
There is a fully faithful functor $i:Ch_{\mathbb{K}}\hookrightarrow Di-Alg$ which sends any chain complex $X$ to itself seen as a twisted complex with twisting $\delta_X=0$, that is, the $Di$-algebra $(X,0)$ for the trivial $Di$-algebra structure $0:Di\rightarrow End_X$. By definition, it sends quasi-isomorphisms of chain complexes to quasi-isomorphisms of complexes with trivial twisting, that is, it preserves weak equivalences. It is clearly conservative as well, and it satisfies $i(X\otimes A)=i(X)\otimes A$ for every augmented artinian cdga $A$, so it induces a weak equivalence
\[
\underline{haut_{Di}(V,d_V)} \sim \underline{haut(V,d_V)}.
\]
\end{proof}
Let us note that the plus construction obviously preserves quasi-isomorphisms of props. Indeed, the only effect of the plus construction on the cohomology of $P$ is to add a new generator of arity $(1,1)$ to $H^*P$ whose square is zero. Let $\varphi:P\stackrel{\sim}{\rightarrow}Q$ be a quasi-isomorphism of dg props whose collections of generators are respectively $E_P$ and $E_Q$, such that $E_P^+(1,1)=E(1,1)\oplus\mathbb{K}u_P$ and $E_Q^+(1,1)=E(1,1)\oplus\mathbb{K}u_Q$. Then $H^*(\varphi^+)$ sends $[u_P]$ to $[u_Q]$ (where $[-]$ denotes the cohomology class) and coincides with $H^*(\varphi)$ on the other generators. The only relations satisfied by $[u_P]$ and $[u_Q]$ are that they are both of square zero so $H^*(\varphi^+)$ is still a prop isomorphism, hence $\varphi^+$ is a quasi-isomorphism.

The functor $(-)^+$ takes quasi-free properads satisfying the conditions of \cite[Corollary 40]{MV2} to quasi-free properads satisfying the same conditions. By \cite[Corollary 40]{MV2}, such properads are cofibrant and by \cite[Theorem 42]{MV2},
every properad admits a cofibrant resolution of this form. So to any dg properad $P$ we can associate a cofibrant properad $P_{\infty}^+$ resolving $P^+$.
\begin{lem}
There is a homotopy cofiber sequence of properads
\[
Di\rightarrow P_{\infty}^+\rightarrow P_{\infty}.
\]
\end{lem}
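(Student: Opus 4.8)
The plan is to exhibit the sequence as the homotopy pushout square
\[
\xymatrix{
Di \ar[r] \ar[d] & P_{\infty}^+ \ar[d] \\
I \ar[r] & P_{\infty}
}
\]
in the model category of dg properads, where $I$ is the unit properad and the left vertical map is the augmentation $\delta\mapsto 0$ of $Di$. First I would pin down the three structure maps. Since $P_{\infty}^+=(P_{\infty})^+$, if $P_{\infty}=(\mathcal{F}(C),\partial)$ is the chosen quasi-free resolution then $P_{\infty}^+=(\mathcal{F}(C^+),\partial^+)$ with $C^+(1,1)=C(1,1)\oplus\mathbb{K}u$ and $\partial^+(u)=u\circ u$. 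The map $Di\to P_{\infty}^+$ sends the generator $\delta$ to $u$; it is a well-defined morphism of properads compatible with differentials precisely because $\partial(\delta)=\delta\circ\delta$ is sent to $u\circ u=\partial^+(u)$, and it identifies $Di$ with the sub-quasi-free properad of $P_{\infty}^+$ generated by $u$. The map $P_{\infty}^+\to P_{\infty}$ is the projection $u\mapsto 0$ that is the identity on $C$; since by construction $\partial^+$ reduces to $\partial$ once the new generator is killed, this is again a morphism of dg properads. Note that the composite $Di\to P_{\infty}^+\to P_{\infty}$ sends $\delta\mapsto u\mapsto 0$, so it factors through $I$, confirming the shape of a cofiber sequence.

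Next I would verify that the square above is a homotopy pushout. Since $I$ is the initial (hence cofibrant) object, $Di$ is quasi-free and therefore cofibrant, and $P_{\infty}^+$ is cofibrant by \cite[Corollary 40]{MV2}, it suffices to show that $Di\to P_{\infty}^+$ is a cofibration; then the strict pushout computes the homotopy pushout by the usual gluing lemma. The key point is that $P_{\infty}^+$ is quasi-free \emph{over} $Di$: it is obtained from $Di$ by freely adjoining the generators $C$, and the cofibrant filtration of $P_{\infty}$ provided by \cite[Corollary 40]{MV2} lifts to an exhaustive filtration of $C^+$ sitting over $Di$ (with $u$ at the bottom), along which $\partial^+$ is triangular. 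This exhibits $Di\to P_{\infty}^+$ as a relative quasi-free extension, hence a cofibration.

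It then remains to compute the strict pushout. Because the free properad functor $\mathcal{F}$ is a left adjoint it preserves colimits, and the relevant pushout of $\Sigma$-biobjects obtained by killing $u$ is $C^+\sqcup_{\mathbb{K}u}0\cong C$; thus the underlying graded properad of the pushout is $\mathcal{F}(C)$. The induced differential is forced by the universal property to be the image of $\partial^+$ under $u\mapsto 0$, which is exactly $\partial$, so the strict pushout is $(\mathcal{F}(C),\partial)=P_{\infty}$ and the comparison map to it is the projection $P_{\infty}^+\to P_{\infty}$ described above. This identifies $P_{\infty}$ with the homotopy cofiber of $Di\to P_{\infty}^+$ and yields the claimed homotopy cofiber sequence.

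I expect the main obstacle to be the cofibration claim, i.e. checking that the plus construction adjoins the generators $C$ to $Di$ along a differential that is triangular with respect to a suitable exhaustive filtration, so that \cite[Corollary 40]{MV2} applies relatively rather than only absolutely. One must also confirm the precise effect of the plus construction on $\partial$---namely that its $u$-correction terms all vanish upon setting $u=0$---in order to be sure that the strict pushout carries exactly the differential $\partial$ of $P_{\infty}$ rather than a twisted variant.
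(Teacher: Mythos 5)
Your proof is correct (granting the same facts about the plus construction that the paper itself invokes) and follows the paper's overall strategy — realize the sequence as a strict cofiber along a cofibration between cofibrant properads — but the decisive step is handled differently. Where you argue that $P_{\infty}^+$ is relatively quasi-free over $Di$ with a triangular differential and then compute the strict pushout by hand, the paper proves a single stronger statement: $P_{\infty}^+\cong P_{\infty}\vee Di$ as dg properads. Since the free properad functor is a left adjoint, $\mathcal{F}(M\oplus\mathbb{K}u)\cong\mathcal{F}(M)\vee\mathcal{F}(\mathbb{K}u)$, and the paper asserts, citing \cite{Mer2}, that $\partial^+$ is exactly the resulting coproduct (``direct sum'') differential. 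With this isomorphism in hand, $i^+:Di\rightarrow P_{\infty}^+$ is the canonical inclusion of a factor into a coproduct of cofibrant objects — hence a cofibration for free — and the cokernel is visibly $P_{\infty}$, so no separate pushout calculation is needed. It is worth noting that this one isomorphism is precisely equivalent to the two ``obstacles'' you flagged at the end: if $\partial^+$ restricted to the old generators equals $\partial$ (no $u$-correction terms), then your filtration with $u$ at the bottom followed by the filtration of $P_{\infty}$ from \cite[Corollary 40]{MV2} is trivially triangular, and the induced differential on the pushout is trivially $\partial$; conversely, if $\partial^+e$ contained correction terms of the form $u\circ e\pm e\circ u$ (which is what the advertised property — that $P^+$-algebra structures on $(X,d)$ are $P$-structures for the twisted differential $d-\varphi^+(u)$ — would lead one to expect), then triangularity over $Di$ would fail because $\partial^+e$ involves the generator $e$ itself, and the paper's coproduct identification would fail for the same reason; only your strict-cofiber computation would survive, since every such correction term dies under $u\mapsto 0$. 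So your proof and the paper's stand or fall on exactly the same input about the plus-construction differential; the paper asserts it outright, while you correctly isolated it as the point needing verification.
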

\begin{proof}
The initial morphism $i:I\rightarrow P_{\infty}$ induces a morphism $i^+:I^+\rightarrow P_{\infty}^+$, and $I^+$ is nothing but $Di$ so we get our first morphism
\[
i^+:Di\rightarrow P_{\infty}^+,
\]
which is a morphism of cofibrant properads. Now we would like to compare $P_{\infty}^+$ and $P_{\infty}\vee Di$, where $\vee$ stands for the coproduct of properads (see \cite[Appendix A.3]{MV2} for its definition). Since the free properad functor $\mathcal{F}$ is a left adjoint, it preserves coproducts and thus comes with natural isomorphisms $\mathcal{F}(M\oplus N)\cong\mathcal{F}(M)\vee\mathcal{F}(N)$. If we consider the coproduct $P_{\infty}\vee Q_{\infty}$ of two quasi-free properads $P_{\infty}=(\mathcal{F}(M),\partial_P)$ and $Q_{\infty}=(\mathcal{F}(M),\partial_Q)$, then via the previous isomorphism we can define a differential on $\mathcal{F}(M\oplus N)$ by taking the derivation associated to
\[
\partial_P|_M\oplus \partial_Q|_N:M\oplus N\rightarrow \mathcal{F}(M)\oplus \mathcal{F}(N)\hookrightarrow \mathcal{F}(M\oplus N)
\]
by universal property of derivations and the fact that this morphism satisfies the twisting cochain equation.
In the case where $Q=Di$, it turns out that the free properad underlying $P_{\infty}^+$ is $\mathcal{F}(M\oplus\mathbb{K}d)$ and the differential on $P_{\infty}^+$ (see \cite{Mer2}) coincides with the one above, yielding a properad isomorphism
\[
P_{\infty}^+\cong P_{\infty}\vee Di.
\]
We deduce that the homotopy fiber of $i^+$ is the homotopy fiber of the canonical map $Di\rightarrow P_{\infty}\vee Di$. This map is a cofibration between cofibrant objects, so its homotopy cofiber is a strict cofiber, hence the cokernel of $i^+$ which is exactly $P_{\infty}$ as expected. 
\end{proof}
Now we fix a diagram of properad morphisms
\[
\xymatrix{
Di \ar@{^{(}->}[r] \ar[dr]^-0 & P_{\infty}^+ \ar@{->>}[r] \ar[d]^-{\psi^+} & P_{\infty} \ar[dl]_-{\psi} \\
 & End_X & }
\]
which induces a homotopy fiber sequence of formal moduli problems
\[
\underline{P_{\infty}\{X\}}^{\psi}\rightarrow \underline{P_{\infty}^+\{X\}}^{\psi^+}\rightarrow \underline{Di\{X\}}^0
\]
because the functor $Map(-,End_X)$ is a simplicial mapping space with fibrant target in a model category, so it sends homotopy colimits to homotopy limits. Equivalently this gives a homotopy fiber sequence of the associated $L_{\infty}$-algebras
\[
g_{P,X}^{\psi}\rightarrow g_{P^+,X}^{\psi^+}\rightarrow g_{Di,X}^0.
\]
We obtain
\begin{thm}\label{T:Def+=hAut}
There is a quasi-isomorphism of $L_{\infty}$-algebras
\[
g_{P^+,X}^{\psi^+}\simeq Lie(\underline{L^HwP_{\infty}-Alg}(X,\psi)),
\]
in particular
\[
g_{P^+,X}^{\psi^+}\simeq Lie(\underline{haut}_{P_{\infty}}(X,\psi))
\]
if $P_{\infty}$ is an operad (where $\underline{haut}_{P_{\infty}}(X,\psi)$ is the derived algebraic group from Definition~\ref{D:haut}).
\end{thm}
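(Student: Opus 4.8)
The plan is to compare two homotopy fiber sequences of $L_{\infty}$-algebras that share the fiber $g_{P,X}^{\psi}$ and to identify their middle terms. The first is the sequence
\[
g_{P,X}^{\psi}\rightarrow g_{P^+,X}^{\psi^+}\rightarrow g_{Di,X}^0
\]
produced just above from the homotopy cofiber sequence of properads $Di\rightarrow P_{\infty}^+\rightarrow P_{\infty}$, using that $Map(-,End_X)$ turns homotopy colimits of properads into homotopy limits of mapping spaces. The second is the sequence
\[
g_{P,X}^{\psi}\rightarrow Lie(\underline{L^HwP_{\infty}-Alg}(X,\psi))\rightarrow Lie(\underline{haut}(X))
\]
furnished by Proposition~\ref{P: hofibgroups}. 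Both have fiber $g_{P,X}^{\psi}$, so the strategy is to identify their base terms, promote this to an equivalence of fiber sequences, and read off the desired equivalence of middle terms.

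First I would identify the two base terms by proving $g_{Di,X}^0\simeq Lie(\underline{haut}(X))$. The operad $Di$ is quasi-free on a single generator $\delta$ of arity $(1,1)$, so the convolution $L_{\infty}$-algebra computing $g_{Di,X}^0=Hom_{\Sigma}(\overline{C},End_X)^0$ collapses to $End_X(1,1)=Hom(X,X)$ equipped with the bracket induced by $\partial\delta=\delta\circ\delta$, namely the commutator. This is precisely the Lie algebra $Lie(\underline{haut}(X))$ of the derived group of homotopy automorphisms of the underlying complex. The same identification is visible geometrically through the $Di$-lemma, which identifies $\Omega_0\underline{Di\{X\}}$ with $\underline{haut}(X)$ (via Proposition~\ref{P: Rezkgen}(2) and the fully faithful conservative functor $i:Ch_{\mathbb{K}}\hookrightarrow Di-Alg$), together with the fact, recorded in the proof of Proposition~\ref{P: hofibgroups}, that the tangent $L_{\infty}$-algebra of a formal moduli problem agrees with the Lie algebra of its based loop space.

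Next I would upgrade the coincidence of fibers and bases to an equivalence of the two fiber sequences. A homotopy fiber sequence of $L_{\infty}$-algebras is determined, up to equivalence, by its connecting morphism to the shifted fiber, and the middle term is recovered as the homotopy fiber of that morphism; hence it suffices to check that the two connecting morphisms $g_{Di,X}^0\to g_{P,X}^{\psi}[1]$ and $Lie(\underline{haut}(X))\to g_{P,X}^{\psi}[1]$ agree under the identification of the previous step. Both arise from the same geometric picture: the map to the base is in each case induced by forgetting the $P_{\infty}$-structure and remembering only the induced deformation of the underlying differential of $X$, while the fiber consists of the deformations that keep the complex $X$ fixed. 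Concretely, using $P_{\infty}^+\cong P_{\infty}\vee Di$ and that the inclusion $Di\hookrightarrow P_{\infty}^+$ is dual, after applying $\Omega_{\bullet}Map(-,End_X)$, to the forgetful map $\underline{L^HwP_{\infty}-Alg}(X,\psi)\to\underline{haut}(X)$ of Proposition~\ref{P: hofibgroups}, one produces a map of fiber sequences restricting to the identity on $g_{P,X}^{\psi}$ and to the equivalence $g_{Di,X}^0\simeq Lie(\underline{haut}(X))$ on the base. Since a map of homotopy fiber sequences that is an equivalence on fiber and base is an equivalence on total spaces, this yields
\[
g_{P^+,X}^{\psi^+}\simeq Lie(\underline{L^HwP_{\infty}-Alg}(X,\psi)).
\]

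Finally, when $P_{\infty}$ is an operad rather than a genuine properad, the homotopy mapping spaces of the $\infty$-category of $P_{\infty}$-algebras and of its Dwyer--Kan hammock localization agree, so $\underline{L^HwP_{\infty}-Alg}(X,\psi)\simeq\underline{haut}_{P_{\infty}}(X,\psi)$ as recorded after Definition~\ref{D:haut}; passing to tangent $L_{\infty}$-algebras gives $g_{P^+,X}^{\psi^+}\simeq Lie(\underline{haut}_{P_{\infty}}(X,\psi))$, as claimed. I expect the main obstacle to be the compatibility step: matching the connecting morphisms requires tracing carefully that the properadic map $Di\hookrightarrow P_{\infty}^+$ induces, after applying $\Omega_{\bullet}Map(-,End_X)$ and the identification of bases, exactly the forgetful map to $\underline{haut}(X)$ used in Proposition~\ref{P: hofibgroups}, and not merely a map with equivalent source and target.
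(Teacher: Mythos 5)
Your overall strategy coincides with the paper's: both proofs compare the fiber sequence $g_{P,X}^{\psi}\rightarrow g_{P^+,X}^{\psi^+}\rightarrow g_{Di,X}^0$ coming from the cofiber sequence $Di\rightarrow P_{\infty}^+\rightarrow P_{\infty}$ with the fiber sequence of Proposition~\ref{P: hofibgroups}, and your computation of the base term ($g_{Di,X}^0$ is $Hom(X,X)$ with the commutator bracket, i.e. $Lie(\underline{haut}(X))$) is correct and even more explicit than the paper. The gap lies in the mechanism converting agreement of fibers and bases into agreement of middle terms. Your primary mechanism---that a homotopy fiber sequence of $L_{\infty}$-algebras is recovered as the homotopy fiber of its connecting morphism $B\rightarrow F[1]$, so that matching connecting morphisms suffices---is false: the $\infty$-category of $L_{\infty}$-algebras (equivalently, by Lurie's theorem, of formal moduli problems) is not stable, and an extension is not determined by its fiber, base and connecting map; one also needs the action of the base on the fiber. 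Concretely, the abelian Lie algebra $\mathbb{K}^2$ and the two-dimensional non-abelian Lie algebra with bracket $[y,x]=x$ both sit in split extensions $\mathbb{K}x\rightarrow E\rightarrow \mathbb{K}y$, hence with the same fiber, the same base and null connecting morphism, yet the middle terms are not quasi-isomorphic.

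Your fallback mechanism (produce an actual morphism of fiber sequences inducing the identity on fibers and your equivalence on bases, then conclude by the five lemma) is sound as a principle, but the required morphism is never constructed, and constructing it is precisely where the entire content of the paper's proof lies. What your sketch elides is the difference between loops on a \emph{formal moduli problem} and loops on the \emph{full} moduli space: $g_{P^+,X}^{\psi^+}$ is the Lie algebra of $\Omega_{\psi^+}\underline{P_{\infty}^+\{X\}}^{\psi^+}$, whose $A$-points are self-equivalences reducing to the identity modulo $m_A$, whereas $\underline{L^HwP_{\infty}-Alg}(X,\psi)$ is the full derived automorphism group; applying $\Omega_{\bullet}Map(-,End_X)$ to $Di\hookrightarrow P_{\infty}^+$ and invoking Proposition~\ref{P: Rezkgen}(2) (or the $Di$-lemma, which concerns $\Omega_0\underline{Di\{X\}}$, \emph{not} $\Omega_0\underline{Di\{X\}}^0$) only identifies loops of the full moduli spaces with automorphism groups, so it does not yield the comparison map between your two sequences. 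The paper closes exactly this gap by a device absent from your proposal: it maps both $\underline{L^HwP_{\infty}-Alg}(X,\psi)$ and $\underline{haut}(X)$ to the \emph{discrete} presheaf $\underline{Aut}(H_*X)$ via homology, identifies the homotopy fibers of these two maps with $\Omega_{\psi^+}\underline{P_{\infty}^+\{X\}}^{\psi^+}$ and $\Omega_0\underline{Di\{X\}}^0$ respectively, and uses discreteness of the base (its loop space is contractible, so fiber inclusion into total space is an equivalence) to obtain the equivalences $\Omega_{\psi^+}\underline{P_{\infty}^+\{X\}}^{\psi^+}\simeq \underline{L^HwP_{\infty}-Alg}(X,\psi)$ and $\Omega_0\underline{Di\{X\}}^0\simeq \underline{haut}(X)$, from which the theorem follows. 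Either this argument, or a proof that the canonical map from formal-moduli-problem loops to the full automorphism group becomes an equivalence after applying $Lie(-)$, must be supplied; without one of them your comparison does not close.
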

\begin{proof}
We have to build a comparison quasi-isomorphism between the homotopy fiber sequence
\[
g_{P,X}^{\psi}\rightarrow g_{P^+,X}^{\psi^+}\rightarrow g_{Di,X}^0
\]
and the homotopy fiber sequence
\[
g_{P,X}^{\psi}\rightarrow Lie(\underline{L^HwP_{\infty}-Alg}(X,\psi))\rightarrow Lie(\underline{haut}(X)).
\]
of Proposition 2.14.
For this, we consider the following commutative triangle
\[
\xymatrix{
\underline{L^HwP_{\infty}-Alg}(X,\psi)\ar[rr]^-{U}\ar[dr]_-{H_*\circ U} & & \underline{haut}(X)\ar[dl]^-{H^*} \\
 & \underline{Aut}(H_*X) & 
}
\]
where $U$ is the map induced by the forgetful functor, $H_*$ is the map induced by the homology functor and $\underline{Aut}(H_*X)$ is the constant functor $A\mapsto Aut(H_*X)$.
This triangle induces a morphism at the level of homotopy fiber sequences
\[
\xymatrix{
hofib_1 \ar[d]\ar[rr] & & hofib_2 \ar[d] \\
\underline{L^HwP_{\infty}-Alg}(X,\psi)\ar[rr]^-{U}\ar[dr]_-{H_*\circ U} & & \underline{haut}(X)\ar[dl]^-{H^*} \\
 & \underline{Aut}(H_*X) & 
}.
\]
Using the same arguments as in Proposition 2.14, one relates the homotopy automorphisms to loops over the appropriate moduli problems
\[
hofib_1\simeq \Omega_{\psi^+}\underline{P_{\infty}^+\{X\}}^{\psi^+}
\]
and
\[
hofib_2\simeq \Omega_0\underline{Di\{X\}}^0.
\]
Moreover, the base of these homotopy fiber sequences is a discrete space, which implies that the homotopy fiber and the total space have the same homotopy type: indeed, a homotopy fiber sequence $F\rightarrow E\rightarrow B$ induces a homotopy fiber sequence $\Omega B\rightarrow F\rightarrow E$. If $B$ is discrete, then $\Omega B$ is contractible, which implies that the map $F\rightarrow E$ is a homotopy equivalence.

The resulting commutative square
\[
\xymatrix{
\Omega_{\psi^+}\underline{P_{\infty}^+\{X\}}^{\psi^+}\ar[r]\ar[d]_-{\sim} & \Omega_0\underline{Di\{X\}}^0\ar[d]^-{\sim} \\
\underline{L^HwP_{\infty}-Alg}(X,\psi)\ar[r]_-U & \underline{haut}(X)
}
\]
induces a morphism of homotopy fiber sequences
\[
\xymatrix{
hofib\ar[d]\ar[r] & \Omega_{\psi^+}\underline{P_{\infty}^+\{X\}}^{\psi^+}\ar[r]\ar[d]_-{\sim} & \Omega_0\underline{Di\{X\}}^0\ar[d]^-{\sim} \\
\Omega_{\psi}\underline{P_{\infty}\{X\}}^{\psi}\ar[r] & \underline{L^HwP_{\infty}-Alg}(X,\psi)\ar[r]_-U & \underline{haut}(X)
}
\]
where the bottom fiber sequence is the one of Proposition 2.14. The upper homotopy fiber $hofib$ can be identified with those loops in
$\Omega_{\psi^+}\underline{P_{\infty}^+\{X\}}^{\psi^+}$ which preserves the trivial $Di$-algebra structure $0:Di\rightarrow End_X$ on $X$, that is, preserving the differential on $X$. So $hofib$ is nothing but $\Omega_{\psi}\underline{P_{\infty}\{X\}}^{\psi}$. We thus get the desired quasi-isomorphism of homotopy fiber sequences.
\end{proof}
A useful corollary for us will be the following identification of the tangent complex $T_A$ of
an $E_n$-algebra (Definition~\ref{D:HochTangent}):
\begin{cor}\label{L:gE2+=TA}
The $E_n$-Hochschild tangent complex $T_A$ of an $E_n$-algebra $A$ is naturally weakly equivalent as an $L_\infty$-algebra   to $g_{E_n^+,A}^{\psi^+}$:
\[
T_A\simeq Lie(\underline{haut}_{E_n}(A, \psi))\simeq g_{E_n^+,A}^{\psi^+},
\]
where $\psi^+$ is the $E_n^+$-algebra structure on $A$ trivially induced by its $E_n$-algebra structure $\psi:E_n\rightarrow End_A$ as above, and $\underline{haut}_{E_n}(A)$ is the derived algebraic group of homotopy automorphisms of $A$ as an $E_n$-algebra.
\end{cor}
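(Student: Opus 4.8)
The plan is to treat the two equivalences of the statement in turn: the right-hand one is an immediate instance of Theorem~\ref{T:Def+=hAut}, while the left-hand one amounts to recognizing Francis's tangent complex as the Lie algebra of the homotopy automorphism group.

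First I would note that $E_n$ is an operad and admits a cofibrant resolution as an operad, so the operadic case of Theorem~\ref{T:Def+=hAut}, taken with $P = E_n$, applies without modification and yields the equivalence of $L_\infty$-algebras
\[
g_{E_n^+,A}^{\psi^+}\simeq Lie(\underline{haut}_{E_n}(A,\psi)),
\]
with $\psi^+$ the trivial $E_n^+$-structure induced by $\psi$. This disposes of the second $\simeq$ with no further work.

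It then remains to identify $Lie(\underline{haut}_{E_n}(A,\psi))$ with the tangent complex $T_A$. The derived algebraic group $\underline{haut}_{E_n}(A,\psi)$ has as $A'$-points the homotopy self-equivalences of $A\otimes A'$ in $E_n$-algebras over $Mod_{A'}$, and its associated $L_\infty$-algebra is the tangent at the identity self-equivalence. Following Francis~\cite{Fra}, the infinitesimal self-equivalences of an $E_n$-algebra are precisely its derived derivations, so this tangent Lie algebra is naturally $\mathbb{R}Der_{E_n}(A,A)$, which by Definition~\ref{D:HochTangent} is the tangent complex $T_A$. To see that this identification respects the $L_\infty$-structures and not merely the underlying complexes, I would argue that both $T_A$ and $Lie(\underline{haut}_{E_n}(A,\psi))$ are the tangent Lie algebra of one and the same formal moduli problem, namely the deformation problem of $A$ as an $E_n$-algebra up to self-equivalence, whose tangent is $g_{E_n^+,A}^{\psi^+}$ by the previous step; Lurie's equivalence theorem~\cite{Lur0} then forces them to agree as $L_\infty$-algebras, and naturally in $A$.

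The main obstacle I anticipate is precisely this passage from an equivalence of complexes to an equivalence of $L_\infty$-algebras: one must match the bracket on $Lie(\underline{haut}_{E_n}(A,\psi))$ induced by the group multiplication with the natural commutator bracket on the derived derivations $\mathbb{R}Der_{E_n}(A,A)$. Rather than comparing these brackets by hand, the efficient route is the uniqueness of tangent Lie algebras of formal moduli problems just invoked. The single point that demands care is verifying that Francis's tangent complex genuinely represents the deformation functor underlying the derived group $\underline{haut}_{E_n}(A,\psi)$, which is where the results of \cite{Fra} enter and which closes the argument.
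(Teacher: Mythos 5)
Your proposal is correct and follows essentially the same route as the paper: the right-hand equivalence is Theorem~\ref{T:Def+=hAut} applied in the operadic case $P=E_n$, and the left-hand equivalence is exactly the content of Francis's identification (cited in the paper as \cite[Lemma 4.31]{Fra}) of $Lie(\underline{haut}_{E_n}(A,\psi))$ with $T_A$. Your discussion of matching the $L_\infty$-structures via uniqueness of tangent Lie algebras of formal moduli problems is simply an unpacking of that same citation, so there is no substantive difference.
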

\begin{proof}
According to \cite[Lemma 4.31]{Fra}, the homotopy Lie algebra of homotopy automorphisms $Lie(\underline{haut}_{E_n}(A, \psi))$
is equivalent to the tangent complex $T_A$ of $A$. Hence Theorem~\ref{T:Def+=hAut} implies the corollary.
\end{proof}
\begin{rem} Theorem~\ref{T:Def+=hAut} shows that the $+$ construction is crucial to study 
deformation of dg-algebras and not just deformations of algebraic structures on a fixed complex.
Let us illustrate the difference between these two moduli problems in a standard example: dg associative algebras.   We recall that the operad $E_1$ is weakly equivalent to the operad $Ass$ of associative dg algebras.
By  Lemma~\ref{L:gE2+=TA}, we have that $g_{Ass^+,A}^{\psi^+} \cong T_A$ and by Francis~\cite{Fra}, the upper fiber sequence of Theorem~\ref{T:Defcomplexesallagree} shows
that $T_A \cong Hom ( A^{\otimes >0}, A) [1]$ where the right hand side is a sub-complex of the standard Hochschild cochain complex~\cite{LV} shifted down by 1, with its standard Lie algebra structure (due to Gerstenhaber). On the other hand, a computation similar to the one of Corollary~\ref{L:gE2+=TA} using the operad of associative algebras instead of $E_n$ shows that 
$$g_{Ass,A}^{\psi} \cong Hom ( A^{\otimes >1}, A)[1]$$,
where the right hand side is just the subcomplex of the previous shifted Hochschild cochain complex where we have removed 
the $Hom(A,A)$ component. 

In general, if $\mathcal{O}$ is an operad, and $X$ is a non-graded $\mathcal{O}$-algebra,  
$\underline{\mathcal{O}_{\infty}\{X\}}^{\psi}$ is the moduli space of all $\mathcal{O}$-algebra structure
on $X$, while $\underline{\mathcal{O}_{\infty}^+\{X\}}^{\psi^+}$ is the moduli space of  $\mathcal{O}$-algebra structures
on $X$ up to automorphisms (if $X$ is a chain complex, then the same is true with dg-algebras structures and self-quasi-isomorphisms instead).
\end{rem}

\section{Bialgebras versus iterated coalgebras}

The main purpose of this section is to prove Theorem 0.1(1). Along the way, we establish general results about triple coresolutions and totalizations for dg coalgebras which dualize the ones obtained in \cite{Fre6} and are of independent interest.
Part (2) of Theorem 0.1 as well as Corollary 0.2 follows from an adaptation of Part (1) to the context of pointed algebras and will be proved in Section 5.

\subsection{Bar-cobar adjunction}

We recall the construction of bar-cobar adjunctions from \cite{FG} for algebras and coalgebras in a stable symmetric monoidal $(\infty,1)$-category $\mathcal{C}$.
Let $O$ be an operad with an augmentation $\epsilon:O\rightarrow I$, then the associated functor
\[
\epsilon^*=triv_O:\mathcal{C}\rightarrow O-Alg(\mathcal{C})
\]
has a left adjoint
\[
\mathcal{B}_O:O-Alg(\mathcal{C})\rightarrow\mathcal{C}.
\]
It follows from Barr-Beck-Lurie's theorem \cite[Theorem 4.7.4.5]{Lur2} that this bar construction can be enhanced in
\[
\xymatrix{
 & (\mathcal{B}_O\circ triv_O)-Cog(\mathcal{C}) \ar[dr]^-{oblv_O} & \\
O-Alg(\mathcal{C}) \ar[ur]^-{\mathcal{B}_O^{enh}} \ar[rr]_-{\mathcal{B}_O} & & \mathcal{C}
}
\]
where $oblv_O$ is the forgetful functor and $(\mathcal{B}_O\circ triv_O)-Cog(\mathcal{C})$ is the category of coalgebras over the comonad $\mathcal{B}_O\circ triv_O$. Now, by \cite[Lemma 3.3.4]{FG} there is a morphism of comonads
\[
\mathcal{B}_O\circ triv_O\stackrel{\sim}{\rightarrow} \overline{F^c_{BO}},
\]
where $BO$ is the operadic bar construction on $O$ and $\overline{F^c_{BO}}$ is the comonad whose category of coalgebras is the category $BO-Cog^{conil}(\mathcal{C})$ of conilpotent $BO$-coalgebras,  inducing a new commutative triangle (see \cite[Corollary 3.3.5]{FG})
\[
\xymatrix{
 & BO-Cog^{conil}(\mathcal{C}) \ar[dr]^-{oblv_{BO}} & \\
O-Alg(\mathcal{C}) \ar[ur]^-{\mathcal{B}_O^{enh}} \ar[rr]_-{\mathcal{B}_O} & & \mathcal{C}
}.
\]

In the dual situation, given a cooperad $P$ with a coaugmentation $\eta:P\rightarrow I$, there is an adjunction
\[
\eta^*=triv_P:\mathcal{C}\leftrightarrows P-Cog^{conil}(\mathcal{C}): \Omega_P
\]
which can be enhanced by Barr-Beck-Lurie's theorem in a commutative triangle
\[
\xymatrix{
 & (\Omega_P\circ triv_P)-Alg(\mathcal{C}) \ar[dr]^-{oblv_P} & \\
P-Cog^{conil}(\mathcal{C}) \ar[ur]^-{\Omega_P^{enh}} \ar[rr]_-{\Omega_P} & & \mathcal{C}
}
\]
where $oblv_P$ is the forgetful functor and $(\Omega_P\circ triv_P)-Alg(\mathcal{C})$ is the category of algebras over the monad $\Omega_P\circ triv_P$. By \cite[Lemma 3.3.9]{FG} there is a morphism of monads
\[
\overline{F_{\Omega P}}\rightarrow \Omega_P\circ triv_P,
\]
where $\Omega P$ is the operadic cobar construction on $P$ and $\overline{F_{\Omega P}}$ is the monad whose category of algebras is the category $\Omega P - Alg(\mathcal{C})$ of $\Omega P$-algebras, inducing a new commutative triangle (see \cite[Corollary 3.3.11]{FG})
\[
\xymatrix{
 & \Omega P - Alg(\mathcal{C}) \ar[dr]^-{oblv_{\Omega P}} & \\
P-Cog^{conil}(\mathcal{C}) \ar[ur]^-{\Omega_P^{enh}} \ar[rr]_-{\Omega_P} & & \mathcal{C}
}.
\]

If the operad $O$ is derived Koszul in the sense that the canonical map $\Omega\circ B (O)\stackrel{\sim}{\rightarrow}O$ is a weak equivalence (something always true in our setting), then the two constructions above return an adjunction of $\infty$-categories
\[
\mathcal{B}_O^{enh}:O-Alg(\mathcal{C})\rightleftarrows BO-Cog^{conil}(\mathcal{C}):\Omega_O^{enh}.
\]
Let us note that if the operad $O$ is Koszul in the usual sense, for instance when $O$ is an $E_n$-operad \cite{Fre4},
then one can replace $BO$ by the Koszul dual operad $O^{!}$ in the adjunction above.
One then wonders whether such an adjunction is an equivalence of $\infty$-categories. Such a result cannot be true in full generality, however, it is conjectured \cite[Conjecture 3.4.5]{FG} that it holds true when one restricts $O$-algebras to the subcategory of nilpotent $O$-algebras.
An $O$-algebra $A$ is nilpotent if there exists an integer $N$ such that the morphism
\[
P(n)\rightarrow Hom_{\mathcal{C}}(A^{\otimes n},A),
\]
which is the arity $n$ component of the operad morphism $P\rightarrow End_A$ defining the $P$-algebra structure of $A$,
is homotopic to the zero map. However, in our framework we will prove a version of this conjecture that holds for the notion of $0$-connected dg algebra, that is, an algebra whose underlying complex is concentrated in positive degrees.

\subsection{Plan of the proof of Theorem 0.1(1)}

We are going to use the following dual version of \cite[Theorem 4.7.4.5]{Lur2} in the comonadic setting:
\begin{thm}\label{T:BarBeckdual}
Let $\mathcal{C}$ be an $\infty$-category.

(1) An adjunction of $\infty$-categories
\[
F:\mathcal{C}\rightleftarrows\mathcal{D}:G
\]
induces a commutative triangle
\[
\xymatrix{
 & (F\circ G)-Cog(\mathcal{D}) \ar[dr]^-{oblv} & \\
 \mathcal{C} \ar[ur]^-{F^{enh}} \ar[rr]_-F & & \mathcal{D}
}
\]
where $(F\circ G)-Cog(\mathcal{D})$ is the category of coalgebras over the comonad $F\circ G$ in $\mathcal{D}$ and $oblv$ is the forgetful functor.

(2) Let us suppose that $\mathcal{C}$ admits totalizations $Tot:\mathcal{C}^{\Delta}\rightarrow\mathcal{C}$. If $F$ is conservative and preserves totalizations, then $F^{enh}$ is an equivalence of $\infty$-categories.
\end{thm}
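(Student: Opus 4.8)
The plan is to deduce both parts from the monadic Barr--Beck--Lurie theorem \cite[Theorem 4.7.4.5]{Lur2} by passing to opposite $\infty$-categories, since the comonadic statement above is exactly the monadic one read in $\mathcal{C}^{op}$ and $\mathcal{D}^{op}$.

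For part (1), I would first record that the adjunction $F\dashv G$ produces a comonad $F\circ G$ on $\mathcal{D}$: its counit is the counit $\epsilon:F\circ G\to\mathrm{id}_{\mathcal{D}}$ of the adjunction, and its comultiplication is $F\eta G:F\circ G\to F\circ G\circ F\circ G$, where $\eta:\mathrm{id}_{\mathcal{C}}\to G\circ F$ is the unit. In the $\infty$-categorical setting this is the dual of \cite[\S 4.7.3]{Lur2}: an adjunction exhibits $F\circ G$ as a coalgebra object of the monoidal $\infty$-category $\mathrm{Fun}(\mathcal{D},\mathcal{D})$ of endofunctors under composition, hence as a comonad. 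For every $c\in\mathcal{C}$ the map $F(\eta_c):F(c)\to F\circ G\circ F(c)$ endows $F(c)$ with the structure of an $(F\circ G)$-coalgebra, and these assemble coherently into the comparison functor $F^{enh}:\mathcal{C}\to (F\circ G)\text{-}\mathrm{Cog}(\mathcal{D})$. By construction $oblv\circ F^{enh}=F$, which gives the commutative triangle.

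For part (2), I would pass to opposite categories. The adjunction $F\dashv G$ becomes the adjunction $G^{op}\dashv F^{op}$, with left adjoint $G^{op}:\mathcal{D}^{op}\to\mathcal{C}^{op}$ and right adjoint $F^{op}:\mathcal{C}^{op}\to\mathcal{D}^{op}$. The associated monad on $\mathcal{D}^{op}$ is $F^{op}\circ G^{op}=(F\circ G)^{op}$, and there is a canonical identification of $\infty$-categories
\[
\mathrm{Mod}_{(F\circ G)^{op}}(\mathcal{D}^{op})\simeq \big((F\circ G)\text{-}\mathrm{Cog}(\mathcal{D})\big)^{op}
\]
under which the monadic comparison functor of $G^{op}\dashv F^{op}$ is identified with $(F^{enh})^{op}$. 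Thus $F^{enh}$ is an equivalence if and only if the monadic comparison functor for $G^{op}\dashv F^{op}$ is, and I would invoke \cite[Theorem 4.7.4.5]{Lur2}. Its hypotheses ask that the right adjoint $F^{op}$ be conservative, which is equivalent to $F$ being conservative, and that $\mathcal{C}^{op}$ admit geometric realizations of $F^{op}$-split simplicial objects with $F^{op}$ preserving them. Geometric realizations in $\mathcal{C}^{op}$ are totalizations in $\mathcal{C}$, and $F^{op}$-split simplicial objects correspond to $F$-split cosimplicial objects; since we assume that $\mathcal{C}$ admits all totalizations and that $F$ preserves them, these conditions hold a fortiori, and the theorem yields that $F^{enh}$ is an equivalence.

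The hard part will be the clean identification $\mathrm{Mod}_{(F\circ G)^{op}}(\mathcal{D}^{op})\simeq\big((F\circ G)\text{-}\mathrm{Cog}(\mathcal{D})\big)^{op}$ together with the matching of the two comparison functors. In a strict $1$-categorical setting this is immediate, but in the $\infty$-categorical framework one must check that the cosimplicial totalizations defining coalgebras over the comonad $F\circ G$ dualize correctly to the bar-resolution simplicial colimits defining modules over the monad $(F\circ G)^{op}$, tracking all the coherence data. This follows from the fact that both the comonadic and the monadic $\infty$-categories are defined as (co)limits of canonical (co)simplicial diagrams that are interchanged by $(-)^{op}$; once this duality is set up carefully, the remainder is a direct translation of Lurie's conditions.
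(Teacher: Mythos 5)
Your proof is correct and takes essentially the same approach as the paper: the paper in fact gives no proof of this theorem, presenting it simply as the dual version of \cite[Theorem 4.7.4.5]{Lur2} in the comonadic setting, which is exactly the opposite-category reduction to the monadic Barr--Beck--Lurie theorem that you carry out. Your write-up just makes explicit the dualization that the paper leaves implicit, namely the identification $\mathrm{Mod}_{(F\circ G)^{op}}(\mathcal{D}^{op})\simeq\big((F\circ G)\text{-}\mathrm{Cog}(\mathcal{D})\big)^{op}$ matching the two comparison functors, together with the translation of conservativity and of split-simplicial geometric realizations into split-cosimplicial totalizations.
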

\begin{rem}
As we will explain with more details in the next subsection, cosimplicial objects in any model category admits a totalization functor, so the theorem above applies in particular to $\infty$-categories associated to model categories.
\end{rem}
Since we know that the comonads $\mathcal{B}_O\circ triv_O$ and $\overline{F^c_{BO}}$ are weakly equivalent,
the associated $\infty$-categories of coalgebras are equivalent as well, and the later is nothing but the $\infty$-category of conilpotent $BO$-coalgebras. Thus it remains to prove that $\mathcal{B}_O$ is conservative and commutes with totalizations up to weak equivalence. Our goal is to prove such a result in the case where $\mathcal{C}=dgCog^{conil}$ is the category of conilpotent coassociative coalgebras in non-negatively graded chain complexes.

In order to get explicit models for the bar construction and the totalization, we work in the framework of model categories.
If $\mathcal{E}$ is a cofibrantly generated symmetric monoidal model category over a base category $\mathcal{C}$ and $O$ is a $\Sigma$-cofibrant operad in $\mathcal{C}$, then the category of $O$-algebras form a cofibrantly generated semi-model category, a slight weakening of the notion of model category sufficient for our purposes (see \cite[Section 12.1]{Fre3} for the definition and properties of semi-model categories, and \cite[Theorem 12.3.A]{Fre3} for the proof of this result). Moreover, the functor $\eta^*$ is a right Quillen functor and his left adjoint $\eta_{!}=\mathcal{B}_O$ is consequently a left Quillen functor \cite[Theorem 16.A]{Fre3}. In this context, one can show that $\mathcal{B}_O$ is weakly equivalent to the derived functor of indecomposables (see for instance \cite[Chapter 12]{LV}). Here we would like to consider dg operads acting on conilpotent dg coalgebras. For this, we need the following result:
\begin{prop}
The category $dgCog^{conil}$ is a cofibrantly generated closed symmetric monoidal model category.
\end{prop}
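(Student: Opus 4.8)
The plan is to build the closed symmetric monoidal structure on top of the model structure recalled above, in which the cofibrations are exactly the degreewise injections and the weak equivalences are the quasi-isomorphisms, and then to check the compatibility axioms MM0 and MM1. First I would equip $dgCog^{conil}$ with the standard tensor product of coassociative coalgebras: the underlying complex of $C\otimes D$ is the tensor product $C\otimes D$ of underlying complexes, with comultiplication obtained by composing $\Delta_C\otimes\Delta_D$ with the symmetry exchanging the two inner tensor factors, and with unit the ground field $\mathbb{K}$ regarded as a coalgebra concentrated in degree $0$. One checks that conilpotency is stable under this tensor product, so that it restricts to a genuine symmetric monoidal structure on $dgCog^{conil}$, and that the forgetful functor $U:dgCog^{conil}\to Ch_{\mathbb{K}}^{\geq 0}$ is strong symmetric monoidal.

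For the closed structure, I would argue abstractly rather than exhibit an explicit formula. The forgetful functor $U$ is a left adjoint (its right adjoint is the cofree conilpotent coalgebra functor along which the model structure is transferred) and is comonadic, so colimits in $dgCog^{conil}$ are created by $U$, i.e. computed on underlying complexes. Since the tensor product of non-negatively graded complexes preserves colimits in each variable, so does $\otimes$ on $dgCog^{conil}$. As $dgCog^{conil}$ is locally presentable (it is cofibrantly generated with a locally presentable underlying category, as follows from the fundamental theorem of coalgebras), each functor $-\otimes D$ is a colimit-preserving functor between presentable categories and hence admits a right adjoint $[D,-]$ by the adjoint functor theorem. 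These right adjoints assemble into an internal hom bifunctor, showing that the category is closed symmetric monoidal.

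It then remains to verify MM1 and MM0. Because $U$ creates colimits and is strong monoidal, for two coalgebra maps $i$ and $j$ the underlying map of their pushout-product is precisely the pushout-product of $U(i)$ and $U(j)$ in $Ch_{\mathbb{K}}^{\geq 0}$; moreover a coalgebra map is a cofibration (resp.\ acyclic cofibration) exactly when its underlying chain map is a degreewise injection (resp.\ a degreewise injection with acyclic cokernel). Thus MM1 reduces to a statement about degreewise injections of complexes over a field. Choosing degreewise splittings of $i:A\rightarrowtail B$ and $j:C\rightarrowtail D$ identifies the cokernel of the pushout-product map with $\mathrm{coker}(i)\otimes\mathrm{coker}(j)$; this cokernel is degreewise injective into $B\otimes D$, so the pushout-product is a cofibration, and if either $i$ or $j$ is a quasi-isomorphism then the corresponding cokernel is acyclic, whence $\mathrm{coker}(i)\otimes\mathrm{coker}(j)$ is acyclic by the Künneth formula and the pushout-product is an acyclic cofibration. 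Finally, MM0 is automatic: the initial object of $dgCog^{conil}$ is the zero coalgebra, $0\to X$ is always a degreewise injection, so every object is cofibrant; in particular the unit $\mathbb{K}$ is cofibrant and one may take the identity as its cofibrant resolution.

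The main obstacle is the closedness, since internal homs of coalgebras are not given by any naive formula; the cleanest route is the presentability argument above, whose only delicate input is the local presentability of $dgCog^{conil}$. Once closedness is secured, the axioms MM0 and MM1 are comparatively soft, precisely because all objects are cofibrant and because both the monoidal structure and the colimits involved are created by the forgetful functor to chain complexes over a field, where the pushout-product computation is elementary.
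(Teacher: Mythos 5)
Your proposal is correct, and for most of the proposition it follows the same route as the paper: both arguments rest on the facts that the forgetful functor $U\colon dgCog^{conil}\to Ch_{\mathbb{K}}$ creates small colimits and, because the chain tensor product of two conilpotent coalgebras is again conilpotent, also creates the tensor product, so that the pushout-product axiom is inherited directly from chain complexes (your explicit cokernel/K\"unneth verification of MM1 essentially re-proves that non-negatively graded complexes over a field form a symmetric monoidal model category, which the paper simply takes as known). The genuine divergence is in the closedness, which is the delicate point of the statement. The paper obtains the internal hom from Anel--Joyal's Sweedler theory: it asserts that the construction of \cite[Theorem 2.5.1]{AJ} applies verbatim, the real content being that \cite[Proposition 2.1.10]{AJ} and \cite[Proposition 2.1.12]{AJ} restrict from all dg coalgebras to the conilpotent ones (via \cite{Yal0} or \cite[Proposition 1.20]{GJ}). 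You instead argue abstractly: $dgCog^{conil}$ is locally presentable by the fundamental theorem of coalgebras, $-\otimes D$ preserves colimits since $U$ creates them, is strong monoidal and conservative, hence a right adjoint $[D,-]$ exists by the adjoint functor theorem. This is shorter and bypasses Sweedler theory entirely, at the price of being non-constructive: it yields no description of the hom object, whereas the paper's reference produces the explicit Sweedler/measuring hom restricted to conilpotent coalgebras, which is the form in which it can be used later.

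Two caveats. First, the stability of conilpotency under $\otimes$, which you state as ``one checks'', is exactly the point the paper outsources to \cite[Lemma 1.3.36]{AJ}; it deserves a citation or a proof (it follows since the $n$-fold iterated coproduct of $C\otimes D$ is, up to a shuffle permutation and Koszul signs, $\Delta_C^{(n)}\otimes\Delta_D^{(n)}$). Second, your treatment of the unit is internally inconsistent: if $dgCog^{conil}$ consists of non-counital conilpotent coalgebras, which is what ``the initial object is the zero coalgebra'' presupposes, then the grouplike coalgebra $\mathbb{K}$, with $\Delta(1)=1\otimes 1$, is \emph{not} conilpotent and cannot serve as the unit; if instead one works with coaugmented counital conilpotent coalgebras (the setting of \cite{AJ}), then $\mathbb{K}$ is the unit but it is then also the initial object rather than $0$. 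In either convention every object is still cofibrant, so MM0 and your conclusion are unaffected, but the two statements cannot hold simultaneously; this convention wrinkle is glossed over in the paper as well, and is worth fixing in your write-up.
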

\begin{proof}
We know that $dgCog^{conil}$ forms a cofibrantly generated model category with colimits, weak equivalences and cofibrations created by the forgetful functor. We also know from \cite[Lemma 1.3.36]{AJ} that the chain tensor product of two conilpotent coalgebras is again a conilpotent coalgebra, so that the forgetful functor creates also the tensor product of $dgCog^{conil}$.
Since $Ch_{\mathbb{K}}$ satisfies the pushout-product axiom, which is by definition made out of pushout, coproduct, cofibrations and weak equivalences, this implies that $dgCog^{conil}$ satisfies the pushout-product axiom as well.
The existence of an internal hom bifunctor follows verbatim from the argument line of \cite[Theorem 2.5.1]{AJ}, given that:
\begin{itemize}
\item \cite[Proposition 2.1.10]{AJ} restricts to the full subcategory $dgCog^{conil}$ of $dgCog$;
\item \cite[Proposition 2.1.12]{AJ} restricts to the nilpotent case, either by following verbatim the proof of \cite[Section 2.1]{Yal0}, or by \cite[Proposition 1.20]{GJ} (since we work over a field of characteristic zero, every cooperad is exact);
\item the forgetful functor creates small colimits;
\item the forgetful functor creates the tensor product of conilpotent coalgebras.
\end{itemize}
This concludes the proof.
\end{proof}
Taking $\mathcal{E}=dgCog^{conil}$, $\mathcal{C}=Ch_{\mathbb{K}}$ and $O$ an $E_1$-operad,
we get a bar-cobar adjunction
\[
\mathcal{B}_{E_1}^{enh}:E_1-Alg(dgCog^{conil})\rightleftarrows E_1-Cog^{conil}(dgCog^{conil}):\Omega_{E_1}^{enh}
\]
which forms a Quillen adjunction of semi-model categories.
Our main goal is now to prove the following properties of the (not enhanced) bar construction functor:
\begin{thm}\label{T: BarConsTot}
The functor $\mathcal{B}_{E_1}$ is conservative when restricted to $0$-connected $E_1$-algebras, and commutes with totalizations up to weak equivalence when restricted to the essential image of a functorial fibrant resolution.
\end{thm}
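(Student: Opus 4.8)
The plan is to establish the two assertions separately, reducing both to explicit chain-level statements through the forgetful functor $dgCog^{conil}\rightarrow Ch_{\mathbb{K}}$, which, as established above, creates colimits, tensor products and weak equivalences. In particular the underlying complex of $\mathcal{B}_{E_1}(A)$ is the classical reduced bar complex $\bigoplus_{n\geq 0}A[1]^{\otimes n}$ of the underlying non-unital $E_1$-algebra, with bar differential $d=d_{int}+d_{bar}$, where $d_{int}$ is the internal differential and $d_{bar}$ is induced by the multiplication. The key structural input throughout is the \emph{weight} (tensor-length) filtration $F_p=\bigoplus_{n\leq p}A[1]^{\otimes n}$: since $A$ is $0$-connected, $A[1]$ lies in degrees $\geq 2$, so $A[1]^{\otimes n}$ lies in degrees $\geq 2n$, and in each total degree only finitely many weights contribute. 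Hence the associated spectral sequence converges.

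For conservativity, I would use the (derived) Koszulness of $E_1$ recalled above: the counit $\Omega_{E_1}^{enh}\mathcal{B}_{E_1}^{enh}A\rightarrow A$ of the bar--cobar adjunction is a weak equivalence for every $0$-connected $A$ (the convergence of the weight filtration is exactly what guarantees this in our range). Let $f$ be a morphism of $0$-connected $E_1$-algebras with $\mathcal{B}_{E_1}(f)$ a weak equivalence. The coaugmentation coideals of $\mathcal{B}_{E_1}A$ and $\mathcal{B}_{E_1}A'$ sit in degrees $\geq 2$, so these are simply connected conilpotent coalgebras and the cobar functor preserves the weak equivalence $\mathcal{B}_{E_1}(f)$. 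Applying the two-out-of-three property to the naturality square of the counit then forces $f$ to be a weak equivalence. This proves that $\mathcal{B}_{E_1}$ is conservative on $0$-connected algebras.

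For the statement about totalizations, restricting to the essential image of a functorial fibrant resolution guarantees that the cosimplicial objects at hand are levelwise fibrant, so that $\mathrm{Tot}$ computes the homotopy limit and both sides of the desired comparison are homotopy invariant. The first step is to produce a natural morphism $\mathcal{B}_{E_1}(\mathrm{Tot}\,A^{\bullet})\rightarrow \mathrm{Tot}(\mathcal{B}_{E_1}\,A^{\bullet})$, assembled weight by weight from the Eilenberg--Zilber (shuffle) maps relating $(\mathrm{Tot}\,A^{\bullet})^{\otimes n}$ and $\mathrm{Tot}((A^{\bullet})^{\otimes n})$ and compatible with the bar differentials. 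Filtering both sides by weight and comparing the two spectral sequences, the problem splits into two independent claims: that each tensor-power functor $A\mapsto A[1]^{\otimes n}$ commutes with $\mathrm{Tot}$ up to weak equivalence, and that the direct sum $\bigoplus_n$ commutes with $\mathrm{Tot}$ up to weak equivalence.

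The second claim follows from the connectivity estimate above: because $A[1]^{\otimes n}$ lives in degrees $\geq 2n$, in each fixed total degree the sum over $n$ is finite and therefore commutes with the homotopy limit $\mathrm{Tot}$. The first claim is the technical heart, and is the dual, in the setting of conilpotent dg coalgebras, of the realization/totalization results of \cite{Fre6}: the Eilenberg--Zilber map is a weak equivalence precisely because the uniform lower bound on the connectivity of $A^{\bullet}$ makes the totalization spectral sequence of $(A^{\bullet})^{\otimes n}$ converge with abutment the $n$-fold tensor power of $\mathrm{Tot}\,A^{\bullet}$ (here we use that we work over a field, so tensor powers are exact). I expect this Eilenberg--Zilber-type comparison for tensor powers under totalization to be the main obstacle. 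The conceptual difficulty is that $\mathcal{B}_{E_1}$ is a left adjoint, hence a colimit-type construction, whereas $\mathrm{Tot}$ is a limit, so there is no formal reason for them to commute; the entire content of the theorem is that $0$-connectedness forces the relevant sums and tensor powers to be degreewise finite, which is exactly what lets the bar functor pass through the totalization.
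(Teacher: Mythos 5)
Your conservativity argument is a genuinely different route from the paper's, and it is essentially sound once one imprecision is repaired. The paper does not argue via the bar--cobar counit directly: it uses Fresse's theory of right modules over operads \cite{Fre3} to produce a natural weak equivalence $\mathcal{B}_{E_1}^{\mathbb{K}}\circ\pi^*\sim Bar$ along the projection $\pi:E_1\stackrel{\sim}{\rightarrow}Ass$, transports the known conservativity of the classical bar construction on $0$-connected associative algebras \cite[Chapter 11]{LV} across the Quillen equivalence $\pi_!\dashv\pi^*$, and then lifts the statement to $dgCog^{conil}$ using $U\circ\mathcal{B}_{E_1}=\mathcal{B}_{E_1}^{\mathbb{K}}$. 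Your counit-plus-two-out-of-three argument is, in substance, the proof of the classical fact the paper cites, so it buys a more self-contained argument; but it rests on your opening assertion that the underlying complex of $\mathcal{B}_{E_1}(A)$ \emph{is} the classical reduced bar complex $\bigoplus_{n}A[1]^{\otimes n}$. For a general $E_1$-operad this is false on the nose: $\mathcal{B}_{E_1}$ is defined as a left adjoint (indecomposables, computed by a reflexive coequalizer), and it agrees with the classical bar complex only up to the natural weak equivalence that the right-module formalism supplies. That identification must be proved, not asserted; it is exactly the content of the paper's reduction to $Ass$.

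The totalization half has a genuine gap, and it sits at the crux of the theorem. You reduce everything to chain level on the grounds that the forgetful functor $dgCog^{conil}\rightarrow Ch_{\mathbb{K}}$ creates colimits, tensor products and weak equivalences. All true --- but totalization is a \emph{limit} (an end), and the forgetful functor from conilpotent coalgebras does \emph{not} create limits: the product $\wedge$ in $dgCog^{conil}$ is not the underlying product of complexes, so the totalization of a cosimplicial object of $E_1-Alg(dgCog^{conil})$ is not the totalization of its underlying cosimplicial complex. Your weight filtration, Eilenberg--Zilber maps and convergence estimates therefore compare the wrong objects; the actual difficulty is computing the coalgebra-level $Tot$, not interchanging $\bigoplus$, $\otimes$ and a chain-level $Tot$. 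This is also where your reading of ``essential image of a functorial fibrant resolution'' as mere levelwise fibrancy misses the point: in the paper the resolution is the triple coresolution $Res^{\bullet}(C)=F^c(K^{\bullet}(C))$, whose levels are \emph{cofree} coalgebras (Proposition~\ref{P: simpframe} and the surrounding results), and cofreeness is what makes the comparison of Proposition~\ref{P: barvstot} possible: since $F^c$ is a right adjoint it commutes with limits, so the coalgebra totalization is computed explicitly as $Tot_{Ass^{op}}(Res^{\bullet}(C))\cong F^c(\overline{N^*}K^{\bullet}(C))=\overline{N^*}Res^{\bullet}(C)$, while on the other side $F^c$ commutes with the reflexive coequalizer defining the bar construction, giving $\mathcal{B}_{E_1}\circ F^c=F^c\circ\mathcal{B}_{E_1}^{\mathbb{K}}$; the statement then collapses to a chain-level comparison between conormalization and reduced cobar construction. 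Without this cofree structure (or some substitute for computing homotopy limits of coalgebras), your comparison map never sees the true totalization; moreover the levels of $Res^{\bullet}(C)$ have no connectivity increasing with the cosimplicial degree, so the Bousfield--Kan-type convergence you invoke for the tensor-power and direct-sum claims is not available either --- the paper's corresponding step (following \cite[Lemma B.8]{FW}) instead filters by the weight decomposition of $F^c$ and compares with the bar--cobar resolution.
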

Since we wants to apply the comonadic Barr-Beck-Lurie theorem, we work in an $\infty$-categorical setting where the $\infty$-category of fibrant objects of a model category is equivalent to the $\infty$-category associated to this model category, hence the assumptions of Theorem~\ref{T: BarConsTot}. More precisely, the inclusion of the full subcategory of fibrant objects into the model category induces a Dwyer-Kan equivalence of simplicial localizations \cite[Proposition 5.2]{DK}.

\noindent
\subsubsection{Conservativity.} We note
\[
\mathcal{B}_{E_1}^{\mathbb{K}}:E_1-Alg(Ch_{\mathbb{K}})\rightarrow Ch_{\mathbb{K}}
\]
the left adjoint of the functor $\eta^*$ induced by the augmentation $\eta:E_1\rightarrow I$ at the level of dg algebras, to distinguish it from the functor
\[
\mathcal{B}_{E_1}:E_1-Alg(dgCog^{conil})\rightarrow dgCog^{conil}
\]
induced by the augmentation $\eta:E_1\rightarrow I$ at the level of dg algebras in $dgCog^{conil}$.
Recall that this left adjoint is by construction weakly equivalent to the derived functor of the indecomposables, which is the Quillen homology complex computing Quillen homology of algebras over the corresponding operad \cite[Section 12.1]{LV}.

First we want to prove the conservativity of $\mathcal{B}_{E_1}^{\mathbb{K}}$ on $0$-connected $E_1$-algebras.
For this, we use the formalism of right modules over operads thoroughly developed in \cite{Fre3}, characterizing in particular bar constructions as functors naturally associated to right modules. We prove that checking the conservativity of $\mathcal{B}_{E_1}^{\mathbb{K}}$ boils down to the conservativity of the classical bar construction of dg associative algebras.

Given the projection morphism $\pi:E_1\stackrel{\sim}{\rightarrow}Ass$, according to \cite[Theorem 7.2.2]{Fre3},
there is a natural isomorphism
\[
S_{E_1}(B_{E_1},-)\circ\pi^*\cong S_{Ass}(\pi_{!}B_{E_1},-)
\]
where $\pi^*$ is the restriction of structures fitting in the adjunction
\[
\pi_{!}:E_1-Alg\rightleftarrows Ass-Alg:\pi^*
\]
and $\pi_{!}$ is the extension of structures fitting in the adjunction
\[
\pi_{!}:E_1-Mod\rightleftarrows Ass-Mod:\pi^*
\]
between right $Ass$-modules and right $E_1$-modules.
Since $\pi$ is a weak equivalence, by \cite[Theorem 16.B]{Fre3} the adjunction above between right modules is a Quillen
equivalence, so the weak equivalence of right $E_1$-modules $B_{E_1}\stackrel{\sim}{\rightarrow}\pi^*B_{Ass}$ corresponds by adjunction to a weak equivalence of right $Ass$-modules $\pi_{!}B_{E_1}\stackrel{\sim}{\rightarrow}B_{Ass}$.
Moreover, since $\pi_{!}$ is a left Quillen functor, the right module $\pi_{!}B_{E_1}$ is still cofibrant, so this is a weak equivalence of cofibrant right $Ass$-modules. By \cite[Theorem 15.1.A]{Fre3}, it induces consequently a natural weak equivalence
\[
S_{E_1}(B_{E_1},-)\circ\pi^*\cong S_{Ass}(\pi_{!}B_{E_1},-)\stackrel{\sim}{\rightarrow}S_{Ass}(B_{Ass},-),
\]
hence
\[
\mathcal{B}_{E_1}^{\mathbb{K}}\circ\pi^*\sim Bar
\]
where $\pi^*$ is the restriction of structures and $Bar$ is the operadic bar construction on dg associative algebras.
The functor $Bar$ is conservative on $0$-connected associative algebras (alternately, the classical bar construction is conservative on connected associative algebras), see for instance \cite[Chapter 11]{LV}.
By the natural weak equivalence above, this implies that $\mathcal{B}_{E_1}^{\mathbb{K}}\circ\pi^*$ is conservative on $0$-connected associative algebras as well.
By the Quillen equivalence between dg associative algebras and $E_1$-algebras, the conservativity of $\mathcal{B}_{E_1}^{\mathbb{K}}\circ\pi^*$ implies that the functor $\mathcal{B}_{E_1}^{\mathbb{K}}$ itself is conservative when restricted to $0$-connected $E_1$-algebras. Now recall from \cite[Section 3.3.5]{Fre3} that $\mathcal{B}_{E_1}$, as an extension functor adjoint to the restriction functor $\eta^*$, is obtained by a certain reflexive coequalizer
\[
E_1(A)\rightrightarrows A \rightarrow \mathcal{B}_{E_1}(A)
\]
in $dgCog^{conil}$ for every $A\in ob(E_1-Alg(dgCog^{conil}))$, where $E_1(A)$ is the free $E_1$-algebra on $A$ determined by the formula
\[
E_1(A)=\bigoplus_{n\geq 1}(E_1(n)\otimes A^{\otimes n})_{\Sigma_n}.
\]
Here $(-)_{\Sigma_n}$ denotes the coinvariants under the diagonal action of the symmetric group $\Sigma_n$.
Since the forgetful functor $U:dgCog^{conil}\rightarrow Ch_{\mathbb{K}}$ creates the tensor product and the small colimits,
we actually have
\[
U\circ\mathcal{B}_{E_1}=\mathcal{B}_{E_1}^{\mathbb{K}}.
\]
Weak equivalences in $E_1-Alg(dgCog^{conil})$ and $dgCog^{conil}$ are created in chain complexes, so the conservativity of $\mathcal{B}_{E_1}^{\mathbb{K}}$ on $E_1-Alg^{0-con}(Ch_{\mathbb{K}})$, where the upper script $0-con$ stands for $0$-connected algebras, implies the conservativity of $\mathcal{B}_{E_1}$ on $E_1-Alg^{0-con}(dgCog^{conil})$.

\noindent
\subsubsection{Commuting with totalizations.}
We need the conormalization of a cosimplicial coalgebra structure to be compatible with an $E_1$-algebra structure. The conormalization of cosimplicial coalgebras forms a lax monoidal functor (see \cite[Section 2]{SS}) for a good review of its properties in the simplicial case) that consequently lifts to cosimplicial $E_1$-algebras in dg coalgebras.
Let us note
\[
Tot_{E_1-Ass^{op}}:E_1-Alg(dgCog^{conil})^{\Delta}\rightarrow E_1-Alg(dgCog^{conil})
\]
and
\[
Tot_{Ass^{op}}:(dgCog^{conil})^{\Delta}\rightarrow dgCog^{conil}
\]
the totalization functors, which exist since we work with model categories). Let us also note $Res^{\bullet}(C)\in E_1-Alg(dgCog^{conil})^{\Delta}$, for $C\in E_1-Alg(dgCog^{conil})$, a certain functorial cosimplicial resolution of $C$ which will be defined in the next section and whose totalization gives a functorial fibrant resolution of $C$. Our aim is to prove the following properties of the bar construction with respect to these functors:
\begin{prop}\label{P: barvstot}

(1) There is an isomorphism
\[
Tot_{E_1-Ass^{op}}(Res^{\bullet}(C))\stackrel{\cong}{\rightarrow}Tot_{Ass^{op}}(Res^{\bullet}(C))
\]
in $E_1-Alg(dgCog^{conil})$.

(2) There is an isomorphism
\[
\overline{N^*}Res^{\bullet}(C)\stackrel{\sim}{\cong} Tot_{Ass^{op}}(Res^{\bullet}(C))
\]
in $E_1-Alg(dgCog^{conil})$, where $\overline{N^*}$ is the reduced conormalization.

(3) There is a weak equivalence
\[
\mathcal{B}_{E_1}(\overline{N^*}Res^{\bullet}(C))\stackrel{\sim}{\rightarrow} \overline{N^*}\mathcal{B}_{E_1}(Res^{\bullet}(C))
\]
in $dgCog^{conil}$.

\end{prop}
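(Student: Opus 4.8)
The plan is to handle the three parts in sequence, pushing the genuine work into part (3). For part (1), I would use that the functor $Tot$ is a limit-type construction (an end built from products and cotensors with the standard cosimplicial frame). The functor forgetting only the $E_1$-multiplication, $U:E_1\text{-}Alg(dgCog^{conil})\to dgCog^{conil}$, is the forgetful functor of algebras over the operad $E_1$ in the symmetric monoidal category $dgCog^{conil}$, hence it creates limits. Therefore the underlying conilpotent coalgebra of $Tot_{E_1-Ass^{op}}(Res^{\bullet}(C))$ is computed as the coalgebra totalization $Tot_{Ass^{op}}(Res^{\bullet}(C))$, the latter inherits the $E_1$-structure, and the canonical comparison map is an isomorphism in $E_1\text{-}Alg(dgCog^{conil})$.

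For part (2), I would invoke the dual Dold–Kan correspondence: for a cosimplicial object in non-negatively graded chain complexes the totalization is computed by the reduced conormalization $\overline{N^*}$, and for the specific functorial resolution $Res^{\bullet}(C)$ this comparison is a natural isomorphism of underlying complexes. To promote it to $E_1\text{-}Alg(dgCog^{conil})$, I would use that $\overline{N^*}$ is lax monoidal via the Alexander–Whitney map \cite{SS}, so that it carries the cosimplicial $E_1$-algebra $Res^{\bullet}(C)$ to an $E_1$-algebra; comparing this $E_1$-structure with the one supplied by the totalization in part (1) on the same underlying object yields the asserted isomorphism.

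Part (3) is the heart of the matter. Using $U\circ\mathcal{B}_{E_1}=\mathcal{B}_{E_1}^{\mathbb{K}}$ I reduce the underlying comparison to chain complexes and write the reduced bar complex $\mathcal{B}_{E_1}(A)\cong\bigoplus_{n\ge 0}\overline{A}^{\otimes n}$ with its bar differential, filtered by the bar weight $n$ (the internal differential preserves weight, the multiplication lowers it by one). The natural comparison map respects this filtration, and on the associated graded it is, in weight $n$, the iterated Alexander–Whitney map
\[
(\overline{N^*}\,\overline{Res^{\bullet}(C)})^{\otimes n}\longrightarrow \overline{N^*}\big((\overline{Res^{\bullet}(C)})^{\otimes n}\big),
\]
which is a quasi-isomorphism by the Eilenberg–Zilber theorem. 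A comparison of the induced spectral sequences then delivers the weak equivalence in $dgCog^{conil}$.

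The main obstacle is the convergence of this filtration argument, that is, the interchange of the bar-weight sum with conormalization. The $0$-connectivity hypothesis is exactly what makes it work: since $\overline{Res^m(C)}$ is concentrated in strictly positive degrees, the tensor power $(\overline{Res^{\bullet}(C)})^{\otimes n}$ becomes increasingly connected with $n$, so in each fixed total degree only finitely many weights contribute. Hence the weight filtration is bounded in each degree, the spectral sequences converge, and $\overline{N^*}$ commutes with the weight direct sum on the nose. This degreewise finiteness—guaranteed precisely by restricting to $0$-connected $E_1$-algebras and to the fibrant resolution $Res^{\bullet}(C)$—is what upgrades the weightwise Eilenberg–Zilber equivalences to the global weak equivalence asserted in (3), and together with (1) and (2) it establishes that $\mathcal{B}_{E_1}$ commutes with totalizations up to weak equivalence, as required for Theorem~\ref{T: BarConsTot}.
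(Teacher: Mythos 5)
Your proposal misses the central mechanism that makes all three statements work: the levelwise cofreeness of $Res^{\bullet}(C)=F^c(K^{\bullet}(C))$. In parts (1)--(2) you treat the totalization in $dgCog^{conil}$ as if it were computed on underlying complexes, but the forgetful functor $dgCog^{conil}\rightarrow Ch_{\mathbb{K}}$ is a \emph{left} adjoint (its right adjoint is the cofree functor $F^c$), so it creates colimits, not limits; in particular the end defining $Tot_{Ass^{op}}$ and your ``dual Dold--Kan'' identification of it with $\overline{N^*}$ of the underlying cosimplicial complex have no reason to hold for a general cosimplicial coalgebra. The paper's route is: (i) exhibit the explicit simplicial frame $Res^{\bullet}(C)^{\Delta^m}=F^c(K^{\bullet}(C)\otimes\overline{N^*}(\Delta^m))$ (Proposition~\ref{P: simpframe}); (ii) show this same object is a simplicial frame in $E_1-Alg(dgCog^{conil})$ as well --- this uses the Barratt--Eccles algebra structure on $\overline{N^*}(\Delta^m)$ to put an $E_1$-structure on $K^n(C)\otimes\overline{N^*}(\Delta^m)$, and the fact that limits of $E_1$-algebras are created by the forgetful functor --- so that both totalizations are the \emph{same} end and literally coincide; your ``$U$ creates limits'' observation handles the end but not the frame, and with unrelated frames one only gets a weak equivalence, not the asserted isomorphism; and (iii) use that $F^c$, being a right adjoint, commutes with the end, giving $Tot_{Ass^{op}}(Res^{\bullet}(C))\cong F^c\bigl(\int_{\underline{n}\in\Delta}K^n(C)\otimes\overline{N^*}(\Delta^n)\bigr)\cong F^c(\overline{N^*}K^{\bullet}(C))=\overline{N^*}Res^{\bullet}(C)$. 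None of this appears in your argument, and without cofreeness the isomorphisms in (1) and (2) are simply not available.

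Part (3) has two further problems. First, $\mathcal{B}_{E_1}$ here is the point-set left adjoint to $triv_{E_1}$, computed by a reflexive coequalizer $E_1(A)\rightrightarrows A\rightarrow\mathcal{B}_{E_1}(A)$; it is \emph{not} the bar complex $\bigoplus_n\overline{A}^{\otimes n}$, which only models its derived functor on cofibrant algebras --- and the levelwise algebras $Res^n(C)$ and $\overline{N^*}Res^{\bullet}(C)$ are not known to be cofibrant, so your filtration/Eilenberg--Zilber argument computes a possibly different object. Second, even granting that model, your comparison map is constructed and checked only at the chain level, whereas the statement asserts a weak equivalence in $dgCog^{conil}$, so you must produce a morphism of conilpotent coalgebras. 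The paper settles both points again via cofreeness: $F^c$ commutes with reflexive coequalizers of $E_1$-algebras (because $E_1$ is a Hopf operad and such coequalizers are created in $Ch_{\mathbb{K}}$), whence the exact identity $\mathcal{B}_{E_1}\circ F^c=F^c\circ\mathcal{B}_{E_1}^{\mathbb{K}}$; both sides of (3) are then $F^c$ of explicit complexes, and the desired weak equivalence is $F^c$ applied to a chain-level quasi-isomorphism built from the classical comparison between the reduced conormalization $\overline{N^*}K^{\bullet}(-)$ and the reduced cobar construction $\overline{\Omega}(-)$, together with the identity $\mathcal{B}_{E_1}^{\mathbb{K}}(\overline{\Omega}(C))=\overline{\Omega}(\mathcal{B}_{E_1}^{\mathbb{K}}(C))$. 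Incidentally, the $0$-connectivity you invoke for convergence plays no role in this part of the paper: it is needed only for the conservativity half of Theorem~\ref{T: BarConsTot}, not for the commutation with totalizations.
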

This gives the commutation of the bar construction $\mathcal{B}_{E_1}$ with the totalizations, up to weak equivalence, on the image of a functorial fibrant resolution on $E_1-Alg(dgCog^{conil})$.
The two next subsections are devoted to define explicit models for totalizations and cosimplicial resolutions that will allow us to prove this proposition.

\subsection{Triple coresolution and totalization for dg coalgebras}

The crux of our method here is to adapt to dg coalgebras a dual version of the operadic arguments of \cite[Appendix B]{Fre5}, \cite{Fre6} and \cite[Lemma B.8]{FW}.

The adjunction
\[
U:dgCog^{conil}\rightleftarrows Ch_{\mathbb{K}}:F^c
\]
between the forgetful functor $U$ and the cofree coalgebra functor $F^c$ gives rise to a comonad $\overline{F^c}=U\circ F^c$
over $Ch_{\mathbb{K}}$ equipped with a coproduct $\nu:\overline{F^c}\rightarrow\overline{F^c}\circ\overline{F^c}$, a counit $\epsilon:\overline{F^c}\rightarrow Id$ and a coaction $\rho:F^c\rightarrow F^c\circ\overline{F^c}$ of $F^c$ on $\overline{F^c}$.
For every $C\in dgCog^{conil}$, the triple coresolution $Res^{\bullet}(C)\in (dgCog^{conil})^{\Delta}$ is defined by
\begin{eqnarray*}
Res^n(C) & = & F^c\circ\underbrace{\overline{F^c}\circ\dots\circ\overline{F^c}}_{n}(U(C)) \\
 & = & F^c(K^n(C)) \\
\end{eqnarray*}
where
\[
K^n(C)= \underbrace{\overline{F^c}\circ\dots\circ\overline{F^c}}_{n}(U(C)).
\]
The cofaces
\[
d^i:Res^{n-1}(C)\rightarrow Res^n(C)
\]
are induced by $\rho$ for $i=0$, by $\nu$ on the $i^{th}$ factor for $1\leq i\leq n-1$ and by the coalgebra structure $U(C)\rightarrow F^c(U(C))$ of $C$ for $i=n$.
The codegeneracies
\[
s^j:Res^{n+1}(C)\rightarrow Res^n(C)
\]
are induced by $\epsilon$ on the $(j+i)^{th}$ factor.

Now recall that for every model category $\mathcal{C}$, the category of cosimplicial objects $\mathcal{C}^{\Delta}$ can be equipped with the Reedy model structure. The existence of simplicial frames in any model category (see \cite[Chapter 16]{Hir}) ensures that for any cosimplicial object $C^{\bullet}\in \mathcal{C}^{\Delta}$, we can pick a simplicial frame
$(C^{\bullet})^{\Delta^{\bullet}}$ which forms a simplicial object in $\mathcal{C}^{\Delta}$. This allows us to define the totalization of $C^{\bullet}$ by the end formula
\[
Tot(C^{\bullet})=\int_{\underline{n}\in\Delta}(C^n)^{\Delta^n}.
\]
When $C^{\bullet}$ is Reedy fibrant, any simplicial frame on $C^{\bullet}$ is a simplicial resolution of $C^{\bullet}$,
and the properties of such resolutions implies that the totalization of $C^{\bullet}$ is a fibrant object of $\mathcal{C}$ which is invariant, up to weak equivalence, under a change of simplicial frame.
\begin{prop}
For any dg coalgebra $C$, the cosimplicial dg coalgebra $Res^{\bullet}(C)$ is Reedy fibrant.
\end{prop}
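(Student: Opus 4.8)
The plan is to verify Reedy fibrancy straight from the definition: for every $n\geq 0$ I must show that the matching map
\[
\mu^n:Res^n(C)\longrightarrow M^n\big(Res^\bullet(C)\big)
\]
is a fibration in $dgCog^{conil}$, where $M^n$ denotes the $n$-th matching object assembled from the codegeneracies $s^j$. The first point I would record is that the cofree coalgebra functor $F^c$ is right Quillen. Indeed, the model structure on $dgCog^{conil}$ is set up so that the forgetful functor $U$ creates cofibrations (degreewise injections) and weak equivalences (quasi-isomorphisms); hence $U$ preserves cofibrations and acyclic cofibrations, so $U$ is left Quillen and its right adjoint $F^c$ preserves fibrations and acyclic fibrations. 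Being a right adjoint, $F^c$ also preserves all limits, in particular the finite limits defining the matching objects.

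Next I would exploit the explicit shape $Res^n(C)=F^c(K^n(C))$ with $K^n(C)=\overline{F^c}^{\,\circ n}(U(C))$. The key structural remark is that all codegeneracies $s^j$ are induced by the counit $\epsilon$ of the comonad $\overline{F^c}$ applied to an \emph{internal} $\overline{F^c}$-factor, and therefore never touch the outermost copy of $F^c$; concretely each $s^j$ equals $F^c(\tilde s^j)$ for a chain map $\tilde s^j:K^n(C)\to K^{n-1}(C)$. Since the matching object is a finite limit over the diagram of codegeneracies and $F^c$ preserves limits, this yields a canonical identification
\[
M^n\big(Res^\bullet(C)\big)\cong F^c\big(M^n(K^\bullet(C))\big),
\]
under which $\mu^n=F^c(\kappa^n)$, where $\kappa^n:K^n(C)\to M^n(K^\bullet(C))$ is the matching map of the cosimplicial chain complex $K^\bullet(C)$ in $Ch_{\mathbb{K}}$. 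Because $F^c$ preserves fibrations, it then suffices to prove that $\kappa^n$ is a fibration in $Ch_{\mathbb{K}}$, i.e. (for the projective model structure) a degreewise surjection.

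To establish this last surjectivity I would invoke the standard splitting argument, which is precisely the dual of the statement that a triple (bar) resolution is Reedy \emph{cofibrant}, as developed operadically in \cite[Appendix B]{Fre5}, \cite{Fre6} and \cite[Lemma B.8]{FW}. The cofree functor $\overline{F^c}$ carries a natural weight decomposition (dual to the tensor-length decomposition of the free functor that splits the latching maps of the bar resolution in Fresse's argument), and correspondingly the comonad counit $\epsilon$ furnishes a contracting codegeneracy exhibiting $K^\bullet(C)=\overline{F^c}^{\,\bullet}(U(C))$ as a split cosimplicial complex. Splitness forces each matching map $\kappa^n$ to admit a section, hence to be a split epimorphism; in the additive category $Ch_{\mathbb{K}}$ a split epimorphism is a degreewise surjection, and therefore a fibration. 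Applying $F^c$ we conclude that $\mu^n$ is a fibration for all $n$, which is exactly the asserted Reedy fibrancy.

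The main technical obstacle I anticipate lies in the second step, namely the careful verification that every codegeneracy is genuinely internal to $K^\bullet(C)$ (so that the matching-object computation descends through $F^c$ and $\mu^n=F^c(\kappa^n)$), together with the compatibility of the contracting codegeneracy with the comonad coproduct $\nu$, the coaction $\rho$, and the counit $\epsilon$ used to define the remaining cosimplicial structure. Once this splitting is set up correctly the fibrancy conclusion is formal; the substance of the argument is really the transport of Fresse's operadic Reedy-cofibrancy statements into the comonadic, coalgebraic framework developed in this subsection.
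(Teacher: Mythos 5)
Your proposal is correct and takes essentially the same route as the paper: both reduce the matching map of $Res^{\bullet}(C)$ to $F^c$ applied to the chain-level matching map of $K^{\bullet}(C)$ (using that the codegeneracies never touch the outer $F^c$ and that $F^c$, being a right adjoint and right Quillen, preserves limits and fibrations), and both obtain the chain-level splitting from the dual of Fresse's tree-decomposition argument. The paper merely packages the final step differently, writing $K^n(C)=M^nK^{\bullet}(C)\oplus N^nK^{\bullet}(C)$ with $N^nK^{\bullet}(C)$ the degree $n$ part of the conormalization, transporting this through $F^c$ to the product decomposition $Res^n(C)=M^nRes^{\bullet}(C)\wedge F^c(N^nK^{\bullet}(C))$, and concluding by pullback-stability of fibrations instead of your (equally valid) direct use of the fact that $F^c$ preserves fibrations.
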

\begin{proof}
The cofaces and codegeneracies of $Res^{\bullet}(C)$ restrict to $K^{\bullet}(C)$ except for $d^0$, so we set $d^0=0$
to get a full cosimplicial structure on $K^{\bullet}(C)$ which induces the cosimplicial structure of $Res^{\bullet}(C)$
by applying the cofree coalgebra functor. Then the remaining part of the proof is essentially a dual version of the proof of\cite[Proposition 2.2]{Fre6}. The tree-decomposition argument of \cite[Proposition 2.2]{Fre6} dualizes to the coalgebra setting by using trees with the converse orientation (one single input and several outputs), so that the $n$-simplices $K^n(C)$ decompose into a direct sum
\[
K^n(C)=M^nK^{\bullet}(C)\oplus N^nK^{\bullet}(C)
\]
where $M^nK^{\bullet}(C)$ is the $n^{th}$ matching object of $K^{\bullet}(C)$ and $N^nK^{\bullet}(C)$ is the degree $n$ part
of the conormalization of $C$ (for the graduation induced by the cosimplicial structure of $K^{\bullet}(C)$).
Moreover, we have $M^nRes^{\bullet}(C) = F^c(M^nK^{\bullet}(C))$ because $F^c$ commutes with small limits and the codegeneracies of $Res^{\bullet}(C)$ are induced by those of $K^{\bullet}(C)$ under $F^c$, hence
\[
Res^n(C)=M^nRes^{\bullet}(C)\wedge F^c(N^nK^{\bullet}(C))
\]
where $\wedge$ stands for the product in $dgCog^{conil}$.
The matching morphisms
\[
\mu_n:Res^n(C)\rightarrow M^nRes^{\bullet}(C)
\]
are the projections on the first factor defined by the pullbacks
\[
\xymatrix{
Res^n(C)\ar[r]\ar[d]^-{\mu_n} & F^c(N^nK^{\bullet}(C))\ar[d] \\
M^nRes^{\bullet}(C)\ar[r] & 0.
}
\]
By definition of the model structure on $dgCog^{conil}$, the cofree coalgebra functor $F^c$ is a right Quillen functor so $F^c(N^nK^{\bullet}(C))$ is fibrant (every chain complex over a field is fibrant in the projective model structure). Since fibrations are stable under pullbacks, the maps $\mu_n$ are fibrations for every integer $n$, which exactly means that $Res^{\bullet}(C)$ is Reedy fibrant.
\end{proof}

Our goal is now to prove that the totalization of the triple coresolution gives a (functorial) fibrant resolution in $dgCog^{conil}$. For this, we give an explicit model of this totalization by chosing an appropriate simplicial frame.
Recall that for any cosimplicial set $K^{\bullet}$, its conormalization $N^*(K^{\bullet})$ forms an augmented simplicial associative algebra, and dually its normalization $N_*(K_{\bullet})$ forms a coaugmented cosimplicial coassociative coalgebra
(they have actually a richer structure, respectively that of an algebra and a coalgebra over the Barratt-Eccles operad \cite{BF}). The coaugmentation ideal of $N_*(K_{\bullet})$, that is, its reduced normalization $\overline{N_*}(K_{\bullet})$, hence forms an object of $dgCog^{conil}$.
\begin{prop}\label{P: simpframe}
The collection $Res^{\bullet}(C)^{\Delta^{\bullet}}=\{F^c(K^n(C)\otimes \overline{N^*}(\Delta^m))\}_{n,m\in\mathbb{N}}$ forms a simplicial frame of $Res^{\bullet}(C)$ in $(dgCog^{conil})^{\Delta}$. 
\end{prop}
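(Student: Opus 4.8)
The plan is to verify the three conditions characterising a simplicial frame on the object $Res^{\bullet}(C)$ of the Reedy model category $(dgCog^{conil})^{\Delta}$: that the $0$-th term recovers $Res^{\bullet}(C)$, that the canonical augmentation to the constant simplicial object is a Reedy weak equivalence, and that the simplicial object is Reedy fibrant in the frame direction $m$ relative to the Reedy structure in the cosimplicial direction $n$. Throughout I use two structural facts. First, since the model structure on $dgCog^{conil}$ is created by the forgetful functor $U$, the cofree coalgebra functor $F^c$ is right Quillen; as every object of $Ch_{\mathbb{K}}$ is fibrant, $F^c$ preserves all weak equivalences and, being a right adjoint, commutes with all limits, in particular with matching objects. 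Second, over the field $\mathbb{K}$ the functor $K^n(C)\otimes -$ is exact, hence preserves the finite limits computing matching objects. I also record the relevant properties of $\overline{N^*}(\Delta^{\bullet})$: it is a simplicial object in $dgCog^{conil}$ with $\overline{N^*}(\Delta^0)=\mathbb{K}$, each collapse map $\overline{N^*}(\Delta^m)\xrightarrow{\sim}\mathbb{K}$ is a weak equivalence (contractibility of the simplex), and its matching maps in the frame direction, induced by the boundary inclusions $\partial\Delta^m\hookrightarrow\Delta^m$ exactly as for the standard cosimplicial simplex, are degreewise surjections, hence fibrations in $Ch_{\mathbb{K}}$.

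First I would treat the two ``resolution'' conditions. In frame degree $m=0$ we have $\overline{N^*}(\Delta^0)=\mathbb{K}$, so $F^c(K^n(C)\otimes\mathbb{K})=F^c(K^n(C))=Res^n(C)$, naturally in $n$; thus the $0$-simplices form exactly the cosimplicial coalgebra $Res^{\bullet}(C)$. For homotopical constancy, the collapse $\overline{N^*}(\Delta^m)\xrightarrow{\sim}\mathbb{K}$ together with exactness of $K^n(C)\otimes-$ gives a weak equivalence $K^n(C)\otimes\overline{N^*}(\Delta^m)\xrightarrow{\sim}K^n(C)$ in $Ch_{\mathbb{K}}$; applying the weak-equivalence-preserving functor $F^c$ yields $F^c(K^n(C)\otimes\overline{N^*}(\Delta^m))\xrightarrow{\sim}Res^n(C)$ for every $n$ and $m$. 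Since Reedy weak equivalences in $(dgCog^{conil})^{\Delta}$ are detected objectwise, this shows that the augmentation of $Res^{\bullet}(C)^{\Delta^{\bullet}}$ to the constant simplicial object on $Res^{\bullet}(C)$ is a Reedy weak equivalence. The compatibility of the two structure maps is routine: the simplicial structure in $m$ is transported from $\overline{N^*}(\Delta^{\bullet})$ through the functor $F^c(K^n(C)\otimes-)$, while the cosimplicial structure in $n$ is the one of $Res^{\bullet}(C)$ induced by the comonad coproduct, counit and coaction on $K^{\bullet}(C)$; these commute because tensoring by $\overline{N^*}(\Delta^m)$ and applying $F^c$ are natural in $K^n(C)$.

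The main obstacle is the combined Reedy fibrancy, that is, showing that the $m$-direction matching maps of $Res^{\bullet}(C)^{\Delta^{\bullet}}$ are Reedy fibrations in the cosimplicial category $(dgCog^{conil})^{\Delta}$. Because $F^c$ commutes with limits and $K^n(C)\otimes-$ preserves the finite matching limits, the matching object in the frame direction is $F^c\big(K^n(C)\otimes M_m\overline{N^*}(\Delta^{\bullet})\big)$, and the matching map is $F^c$ applied to $K^n(C)\otimes\big(\overline{N^*}(\Delta^m)\to M_m\overline{N^*}(\Delta^{\bullet})\big)$. Here one must take the relative matching map with respect to both directions at once: I would combine the frame-direction decomposition coming from $\partial\Delta^m\hookrightarrow\Delta^m$ with the cosimplicial-direction tree decomposition $K^n(C)=M^nK^{\bullet}(C)\oplus N^nK^{\bullet}(C)$ established in the proof that $Res^{\bullet}(C)$ is Reedy fibrant. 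The relative matching map is then obtained by applying $F^c$ to a surjection of chain complexes assembled, via the tensor product, from the pushout-product of the two corresponding surjections; since the pushout-product of surjections in $Ch_{\mathbb{K}}$ is again a surjection, hence a fibration, and $F^c$ preserves fibrations, the relative matching map is a fibration in $dgCog^{conil}$. This is precisely the assertion that the frame-direction matching maps are Reedy fibrations in $(dgCog^{conil})^{\Delta}$, establishing Reedy fibrancy.

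Combining the three verifications shows that $Res^{\bullet}(C)^{\Delta^{\bullet}}$ is a Reedy fibrant, homotopically constant simplicial object of $(dgCog^{conil})^{\Delta}$ with $0$-simplices $Res^{\bullet}(C)$, built functorially from $F^c$ and $\overline{N^*}(\Delta^{\bullet})$, which is exactly a simplicial frame of $Res^{\bullet}(C)$. I expect the delicate point to be the bookkeeping in the third paragraph: disentangling the interaction between the matching object in the frame direction, governed by the boundary of the simplex, and the matching object in the Reedy direction, governed by the dual tree decomposition, and checking that the relevant surjectivity survives once the reduced conormalization convention is pinned down precisely. The remaining verifications are formal consequences of $F^c$ being a limit-preserving right Quillen functor and of the exactness of the tensor product over a field.
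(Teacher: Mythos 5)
Your overall skeleton (identification of the $0$-simplices, homotopical constancy, Reedy fibrancy) is the correct list of things to check, and two of your three verifications are essentially sound. The constancy argument via contractibility of the simplex, exactness of $\otimes$ over $\mathbb{K}$, and the fact that $F^c$ is right Quillen with every object of $Ch_{\mathbb{K}}$ fibrant, works (and is more direct than the paper, which instead invokes levelwise fibrancy of $Res^{\bullet}(C)$). Your fibrancy argument is, modulo terminology, the paper's own reduction: the combined relative matching map is $F^c$ applied to a corner map of chain complexes built from the two surjections $K^r(C)\twoheadrightarrow M^rK^{\bullet}(C)$ and $\overline{N^*}(\Delta^m)\twoheadrightarrow \overline{N^*}(\partial\Delta^m)$. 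Note, however, that this is a \emph{pullback-corner} map, not a pushout-product: over a field its surjectivity can indeed be checked by hand as you suggest, whereas the paper converts $-\otimes\overline{N^*}(\Delta^m)$ into $Hom_{dg}(\overline{N^*}(\Delta^m),-)$ by duality (finite dimensionality of the simplex cochains), turning one of the two surjections into a cofibration, and then invokes the dual pushout-product axiom MM1$'$. Both routes are valid in this setting.

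The genuine gap is in your second paragraph, where you declare the compatibility of the two directions ``routine \dots because tensoring by $\overline{N^*}(\Delta^m)$ and applying $F^c$ are natural in $K^n(C)$''. Naturality of the functor $X\mapsto F^c(X\otimes\overline{N^*}(\Delta^m))$ on $Ch_{\mathbb{K}}$ only transports those structure maps of $Res^{\bullet}(C)$ that are of the form $F^c(f)$ for a chain map $f$ between the $K^n(C)$'s. The codegeneracies and the cofaces $d^i$, $i\geq 1$, are of this form, but the coface $d^0$ is \emph{not}: it is induced by the coaction $\rho:F^c\rightarrow F^c\circ\overline{F^c}$, i.e.\ the whiskered unit of the adjunction $U\dashv F^c$, a coalgebra morphism whose corestriction to $\overline{F^c}(K^{n-1}(C))$ is the identity; it cannot be written as $F^c(f)$ for any chain map $f$. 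Consequently your proposal never actually constructs the cosimplicial structure on $\{F^c(K^n(C)\otimes\overline{N^*}(\Delta^m))\}_n$, i.e.\ the object whose frame properties you then verify. This is precisely what the first half of the paper's proof supplies: the adjunction
\[
Mor_{dgCog^{conil}}\bigl(C, F^c(X\otimes\overline{N^*}(\Delta^m))\bigr)\cong Mor_{dgCog^{conil}}\bigl(C\otimes\overline{N_*}(\Delta^m), F^c(X)\bigr)
\]
exhibits $F^c(X\otimes\overline{N^*}(\Delta^m))$ as a cotensor of the cofree coalgebra $F^c(X)$, hence as a construction functorial in \emph{all} coalgebra morphisms between cofree coalgebras; applied to $d^0=\rho$ this produces the missing structure map and the commutation of the two directions. (Equivalently one can define $d^0$ on the frame by hand, but this requires the dg algebra structure of $\overline{N^*}(\Delta^m)$, via the maps $(K\otimes N)^{\otimes j}\cong K^{\otimes j}\otimes N^{\otimes j}\rightarrow K^{\otimes j}\otimes N$ — an input nowhere present in your argument.) Once this construction step is added, the rest of your verification goes through, since matching objects involve only codegeneracies, which do come from $K^{\bullet}(C)$ under $F^c$.
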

\begin{proof}
We first show that the functor $(-)\otimes \overline{N^*}(\Delta^{\bullet})$ defines a simplicial frame on the full subcategory $F^c(Ch_{\mathbb{K}})$ of $dgCog^{conil}$ formed by cofree coalgebras. Let $C\in dgCog^{conil}$ and $X\in Ch_{\mathbb{K}}$, we have a sequence of natural isomorphisms
\begin{eqnarray*}
Mor_{dgCog^{conil}}(C,F^c(X\otimes\overline{N^*}(\Delta^m))) & \stackrel{\cong}{\rightarrow} & Mor_{Ch_{\mathbb{K}}}(C,X\otimes\overline{N^*}(\Delta^m)) \\
 & \stackrel{\cong}{\rightarrow} & Mor_{Ch_{\mathbb{K}}}(C\otimes\overline{N_*}(\Delta^m),X)\\
 & \stackrel{\cong}{\rightarrow} & Mor_{dgCog^{conil}}(C\otimes\overline{N_*}(\Delta^m),F^c(X))\\
\end{eqnarray*}
where the first line follows from the universal property of the free-forgetful adjunction,
the second line follows from the dualization of the conormalization,
and the third line from the free-forgetful adjunction.
Let us note that the third line makes sense because the reduced normalization is a coalgebra and the tensor product of coalgebra is defined in chain complexes. We deduce that the functor
\[
(-)^{\Delta^m}:F^c(X)\mapsto F^c(X\otimes\overline{N^*}(\Delta^m))
\]
is right adjoint to the functor
\[
(-)\otimes \Delta^m:C\mapsto C\otimes\overline{N_*}(\Delta^m)
\]
on the full subcategory $F^c(Ch_{\mathbb{K}})$ of $dgCog^{conil}$. Consequently, the functor $(-)^{\Delta^{\bullet}}$ defines a simplicial frame on $F^c(Ch_{\mathbb{K}})$ if and only if $(-)\otimes \Delta^{\bullet}$ defines a cosimplicial frame.
Since the forgetful functor $dgCog^{conil}\rightarrow Ch_{\mathbb{K}}$ defines small colimits, cofibrations and weak equivalences, the later property follows from the fact that $(-)\otimes\overline{N_*}(\Delta^{\bullet})$ is a cosimplicial frame on $Ch_{\mathbb{K}}$.

Finally, the fact that $(-)^{\Delta^{\bullet}}$ is a simplicial frame on $F^c(Ch_{\mathbb{K}})$ implies that
$Res^{\bullet}(C)^{\Delta^{\bullet}}$ is a simplicial frame of $Res^{\bullet}(C)$.
Indeed, the morphism
\[
\epsilon^*:Res^{\bullet}(C)^{\Delta^{0}}\rightarrow Res^{\bullet}(C)^{\Delta^{n}}
\]
induced by the constant map $\epsilon:\{0,\cdot,n\}\rightarrow\{0\}$ defines a weak equivalence in each dimension because $Res^{\bullet}(C)$ is fibrant in each dimension (since it is Reedy fibrant). By definition of weak equivalences in the Reedy model structure of $(dgCog^{conil})^{\Delta}$, this means that $\epsilon^*$ is a weak equivalence of $(dgCog^{conil})^{\Delta}$ for every integer $n$. The simplicial frame construction $Res^{\bullet}(C)^{\Delta^{\bullet}}$ can be extended to an arbitrary simplicial set $K\in sSet$ to give a simplicial-cosimplicial object $Res^{\bullet}(C)^K$, and checking the fibration condition in the definition of a simplicial frame amounts to check that for every integer $n$, the map
\[
i^*:Res^{\bullet}(C)^{\Delta^n}\rightarrow Res^{\bullet}(C)^{\partial\Delta^n}
\]
induced by the inclusion $i^*:\partial\Delta^n\hookrightarrow \Delta^n$ forms a fibration in the base category for any $n>0$.
Here the base category is $(dgCog^{conil})^{\Delta}$, which means that $i^*$ has to be a Reedy fibration of cosimplicial objects.
By definition, we thus have to prove that the pullback-corner map
\[
(i^*,\mu^*):Res^r(C)^{\Delta^n}\rightarrow M_rRes^{\bullet}(C)^{\Delta^n}\times_{M_rRes^{\bullet}(C)^{\partial\Delta^n}}Res^r(C)^{\partial\Delta^n}
\]
induced by the matching map $\mu:Res^r(C)\rightarrow M_rRes^{\bullet}(C)$ and by $i$ is a fibration of dg coalgebras.
The cofree coalgebra functor $F^c$ commutes with limits, so the pullback-corner map above is actually given by
\begin{eqnarray*}
(i^*,\mu^*):F^c(K^r(C)\otimes\overline{N_*}(\Delta^n)) & \rightarrow & \\
F^c\left(M_rK^{\bullet}(C)\otimes\overline{N_*}(\Delta^n)\times_{M_rK^{\bullet}(C)\otimes\overline{N_*}(\partial\Delta^n)} K^r(C)\otimes\overline{N_*}(\partial\Delta^n)\right) & &
\end{eqnarray*}
which is exactly the image under $F^c$ of the pullback-corner map of complexes induced by $i$ and the matching map of $K^{\bullet}(C)$. Since $F^c$ preserves fibrations between fibrant objects as a right Quillen functor, and since every chain complex is fibrant, the proof boils down to check that the map
\begin{eqnarray*}
(i^*,\mu^*):K^r(C)\otimes\overline{N_*}(\Delta^n) & \rightarrow & \\
M_rK^{\bullet}(C)\otimes\overline{N_*}(\Delta^n)\times_{M_rK^{\bullet}(C)\otimes\overline{N_*}(\partial\Delta^n)} K^r(C)\otimes\overline{N_*}(\partial\Delta^n) & &
\end{eqnarray*}
is a fibration of chain complexes. Dualizing the normalization functor in complexes, this map is, up to isomorphism, given by
\begin{eqnarray*}
(i^*,\mu^*):Hom_{dg}(\overline{N^*}(\Delta^n),K^r(C)) \rightarrow & & \\
Hom_{dg}(\overline{N^*}(\Delta^n),M_rK^{\bullet}(C))\times_{Hom_{dg}(\overline{N^*}(\partial\Delta^n),M_rK^{\bullet}(C))} Hom_{dg}(\overline{N^*}(\partial\Delta^n),K^r(C)) & &
\end{eqnarray*}.
We conclude that this is a fibration by applying the dual pushout-product axiom, which holds true in $Ch_{\mathbb{K}}$.
\end{proof}
\begin{prop}
The coaugmentation $\eta:C\rightarrow Tot(Res^{\bullet})$ induced by the counit $\epsilon(C):C\rightarrow Res^0(C)=Res^0(C)^{\Delta^0}$ is a quasi-isomorphism of dg coalgebras.
\end{prop}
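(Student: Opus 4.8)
The plan is to reduce the statement to a contractibility statement about the underlying cosimplicial chain complex, exploiting that the forgetful functor $U:dgCog^{conil}\to Ch_{\mathbb{K}}$ creates (hence detects) weak equivalences. Thus it suffices to prove that $U(\eta):U(C)\to U(Tot(Res^{\bullet}(C)))$ is a quasi-isomorphism. The first observation is that, on underlying complexes, the coaugmented cosimplicial object $U(C)\to U(Res^{\bullet}(C))$ is precisely the standard comonadic coresolution $U(C)\to \overline{F^c}^{\,\bullet+1}(U(C))$ associated to the comonad $\overline{F^c}=U\circ F^c$: the coaugmentation is $U(\eta_C)$ for the structure map $\eta_C:C\to F^c(U(C))=Res^0(C)$ into the cofree coalgebra, and the cofaces and codegeneracies are the ones built from $\nu$, $\rho$ and $\epsilon$ already described for $Res^{\bullet}(C)$.

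Next I would produce the contraction. The comonad counit $\epsilon:\overline{F^c}\to Id$, applied on the outermost factor of $\overline{F^c}^{\,n+1}(U(C))$, furnishes an extra codegeneracy $s^{-1}:U(Res^{n}(C))\to U(Res^{n-1}(C))$ satisfying, together with the coaugmentation, the identities of a split augmented cosimplicial object. Hence $U(C)\to \overline{F^c}^{\,\bullet+1}(U(C))$ is contractible, so the alternating-sum differential of its conormalization carries an explicit contracting homotopy built from $\epsilon$, and the coaugmentation becomes a quasi-isomorphism after conormalization (equivalently, totalization) at the level of complexes. This is exactly the dual of the operadic contraction arguments of \cite[Appendix B]{Fre5}, \cite{Fre6} and \cite[Lemma B.8]{FW}.

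The main obstacle is transferring this contraction to the \emph{coalgebra} totalization, because the extra codegeneracy $s^{-1}$ is a morphism of complexes only (the counit $\epsilon$ is not a coalgebra map), and $U$, being a left adjoint, preserves homotopy colimits rather than the homotopy limit computed by $Tot$; so one cannot simply commute $U$ past $Tot$. I would circumvent this by running the contraction on an explicit complex computing $U(Tot(Res^{\bullet}(C)))$ rather than by commuting functors. Since all the codegeneracies of $Res^{\bullet}(C)$ restrict to $K^{\bullet}(C)$ and are obtained by applying $F^c$ (only $d^0$, coming from $\rho$, does not restrict), and since the simplicial frame of Proposition~\ref{P: simpframe} is $F^c$ applied degreewise to frames on the $K^{\bullet}(C)$, the reduced conormalization model of the totalization from Proposition~\ref{P: barvstot} expresses $U(Tot(Res^{\bullet}(C)))$ as $\overline{F^c}$ applied to the conormalized complex of $K^{\bullet}(C)$, with a differential twisted by the $d^0$ term. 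Because $F^c$ preserves the kernels defining the conormalization, the homotopy coming from $\epsilon$ acts compatibly on this total complex and witnesses $U(\eta)$ as a quasi-isomorphism. Finally, as $Res^{\bullet}(C)$ is Reedy fibrant, $Tot$ does compute the homotopy limit, and conservativity of $U$ upgrades the underlying quasi-isomorphism to the asserted quasi-isomorphism $\eta:C\to Tot(Res^{\bullet}(C))$ of dg coalgebras.
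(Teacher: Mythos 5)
Your reduction to underlying complexes and your identification of $U(Tot(Res^{\bullet}(C)))$ with $\overline{F^c}(\overline{N^*}K^{\bullet}(C))$ (the cofree coalgebra on the conormalization of $K^{\bullet}(C)$, carrying a differential twisted by the $d^0$-term) are both correct, and you correctly located the obstacle. But the step you propose to get around it fails, exactly at the phrase ``because $F^c$ preserves the kernels defining the conormalization.'' The functor $F^c$, being a right adjoint, preserves kernels \emph{computed in $dgCog^{conil}$}; it is not true that the underlying complex of such a kernel is the kernel of underlying complexes. Concretely, over a field one has $\ker(s^{\otimes k})=\sum_{i} V^{\otimes(i-1)}\otimes\ker(s)\otimes V^{\otimes(k-i)}$, which strictly contains $(\ker s)^{\otimes k}$, so that
\[
U\bigl(Tot(Res^{\bullet}(C))\bigr)\;\cong\;\overline{F^c}\bigl(\overline{N^*}K^{\bullet}(C)\bigr)\;\subsetneq\;\overline{N^*}\bigl(U Res^{\bullet}(C)\bigr):
\]
the chain-level conormalization of the underlying cosimplicial complex contains ``mixed'' tensor words that are absent from $U(Tot)$. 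The extra codegeneracy built from the counit $\epsilon$, and hence the contracting homotopy it generates, lives on the larger complex $\overline{N^*}(U Res^{\bullet}(C))$ and does not restrict to the subcomplex $\overline{F^c}(\overline{N^*}K^{\bullet}(C))$; the weight-mixing $d^0$-part of the twisted differential (deconcatenation coproducts) is precisely what prevents it from doing so. So your contraction proves acyclicity of the wrong complex, and the statement you actually need --- that the inclusion of $U(Tot)$ into that contractible complex is a quasi-isomorphism, or some substitute for it --- is the genuine content of the proposition and is left unproved.

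This is why the paper does not argue by contraction. It instead builds a comparison map $f:Tot(Res^{\bullet}(C))\rightarrow Bar\overline{\Omega}(C)$, obtained from the canonical projection, a formal isomorphism onto $\overline{N^*}K^{\bullet}(C)$, and the projection onto maximal simplices landing in the reduced cobar construction $\overline{\Omega}(C)$, then lifted to the bar construction. Filtering by the weight decomposition $F^c=\bigoplus_{s\geq 1}F^c_s$ yields a spectral sequence on whose $E^0$-page the twisted part of the differential drops out, and $E^0f$ becomes $F^c$ applied to the standard quasi-isomorphism $\overline{N^*}K^{\bullet}(C)\rightarrow\overline{\Omega}(C)$; hence $f$ is a quasi-isomorphism, and one concludes by two-out-of-three using the classical bar-cobar resolution $C\stackrel{\sim}{\rightarrow}Bar\overline{\Omega}(C)$. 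If you wish to salvage your route, you would have to prove that the inclusion $\overline{F^c}(\overline{N^*}K^{\bullet}(C))\hookrightarrow\overline{N^*}(U Res^{\bullet}(C))$ is a quasi-isomorphism compatibly with the twisted differentials, and any proof of that again requires a weight-filtration spectral sequence of the same kind, so nothing would be gained over the paper's argument.
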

\begin{proof}
This proof follows an argument line similar to the one of \cite[Lemma B.8]{FW}.
The totalization of $Res^{\bullet}(C)$ is defined by the end
\[
Tot(Res^{\bullet}(C)) = \int_{\underline{n}\in\Delta}F^c(K^n(C)\otimes\overline{N^*}(\Delta^n)).
\]
We consider the following composition of maps
\begin{eqnarray*}
\int_{\underline{n}\in\Delta}F^c(K^n(C)\otimes\overline{N^*}(\Delta^n)) & \underbrace{\rightarrow}_{(1)} &
\int_{\underline{n}\in\Delta}K^n(C)\otimes\overline{N^*}(\Delta^n) \\
 & \underbrace{\stackrel{\cong}{\rightarrow}}_{(2)} & \overline{N^*}K^{\bullet}(C)\\
 & \underbrace{\rightarrow}_{(3)} & \overline{\Omega}(C)\\
\end{eqnarray*}
where $(1)$ is the canonical projection, $(2)$ is a formal isomorphism and $(3)$ is the projection on maximal simplices in $K^{\bullet}(C)$ (see \cite[Appendix C.2.16]{Fre5} for the operadic case) from the reduced conormalized complex of the cosimplicial object $K^{\bullet}(C)$ to the reduced cobar construction $\overline{\Omega}(C)$ of $C$.
The composite $(1-3)$ lifts to a morphism $f:Tot(Res^{\bullet}(C))\rightarrow Bar\overline{\Omega}(C)$, where $Bar$ is the usual bar construction on augmented dg algebras. Using the weight decomposition of the cofree coalgebra functor $F^c=\bigoplus_{s\geq 1}F^c_s$, we apply the argument of \cite[Lemma B.8]{FW} to get a morphism of spectral sequences associated to the filtration of $f$ induced by this decomposition, such that on the $E_0$ page, the morphism of graded objects induced by $f$ is given by
\[
E^0f = F^c(\int_{\underline{n}\in\Delta}K^n(C)\otimes\overline{N^*}(\Delta^n)\underbrace{\rightarrow}_{(2-3)} \overline{\Omega}(C)).
\]
The arrow $(2)$ is a formal isomorphism, and the arrow $(3)$ is a quasi-isomorphism (see \cite{Fre1} for a comparison between reduced conormalization and reduced cobar construction in the operadic case), so $E^0f$ is a quasi-isomorphism, hence $f$ is a quasi-isomorphism as well. We conclude that $\eta$ is a quasi-isomorphism via the commutative triangle
\[
\xymatrix{
Tot(Res^{\bullet}(C)) \ar[rr]^-{f} & & \mathcal{B}\overline{\Omega}(C) \\
 & C\ar[ul]^-{\eta} \ar[ur]_-{\sim} & \\
}.
\]
\end{proof}
We conclude:
\begin{thm}
The functor $Tot(Res^{\bullet}(-))$ defines a functorial fibrant resolution on $dgCog^{conil}$.
\end{thm}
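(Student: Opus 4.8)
The plan is to assemble the three propositions established above into the single statement, since all the substantive work has already been carried out. The construction $Res^{\bullet}(-)$ is manifestly functorial, being produced from the comonad $\overline{F^c}=U\circ F^c$ together with the coaction of $F^c$; likewise the simplicial frame $Res^{\bullet}(C)^{\Delta^{\bullet}}$ and the end defining the totalization depend functorially on $C$. Hence functoriality of $Tot(Res^{\bullet}(-))$ and naturality of the coaugmentation will come for free once the pointwise statements are in hand, and I can concentrate on verifying, for each fixed $C$, that the totalization is fibrant and that the coaugmentation is a weak equivalence.

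First I would establish fibrancy. By the proposition asserting Reedy fibrancy, the cosimplicial dg coalgebra $Res^{\bullet}(C)$ is Reedy fibrant for every $C$, and by Proposition~\ref{P: simpframe} the collection $\{F^c(K^n(C)\otimes\overline{N^*}(\Delta^m))\}_{n,m}$ is a simplicial frame on it. As recalled before the Reedy fibrancy proposition, a simplicial frame on a Reedy fibrant cosimplicial object is a simplicial resolution, and the totalization computed with such a frame is then a fibrant object of the ambient model category, invariant up to weak equivalence under a change of frame. Since $dgCog^{conil}$ is a genuine (cofibrantly generated, closed symmetric monoidal) model category, this general machinery applies verbatim, and I conclude that
\[
Tot(Res^{\bullet}(C))=\int_{\underline{n}\in\Delta}F^c(K^n(C)\otimes\overline{N^*}(\Delta^n))
\]
is fibrant in $dgCog^{conil}$.

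It then remains to exhibit the resolution map, which is precisely the coaugmentation $\eta:C\rightarrow Tot(Res^{\bullet}(C))$ induced by the counit $\epsilon(C):C\rightarrow Res^0(C)$; the last proposition shows it to be a quasi-isomorphism of dg coalgebras. Combining this with the previous display, $\eta$ is a natural weak equivalence whose target is fibrant, so $Tot(Res^{\bullet}(-))$ is a functorial fibrant resolution, as claimed. The only point requiring genuine care is that the abstract simplicial-frame and totalization formalism of Hirschhorn really does apply to $dgCog^{conil}$; this is guaranteed by the earlier verification that the category is a closed symmetric monoidal model category, so I expect no further obstacle beyond this bookkeeping.
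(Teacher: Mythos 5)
Your proposal is correct and follows exactly the paper's (implicit) argument: the theorem is stated there as an immediate consequence of the three preceding propositions — Reedy fibrancy of $Res^{\bullet}(C)$, the explicit simplicial frame of Proposition~\ref{P: simpframe}, and the quasi-isomorphism property of the coaugmentation $\eta$ — combined with the general fact, recalled from Hirschhorn, that the totalization of a Reedy fibrant cosimplicial object computed with any simplicial frame is fibrant. Your additional remark on functoriality (coming from the comonadic nature of $Res^{\bullet}(-)$ and the naturality of the frame and the end) is exactly the bookkeeping the paper leaves implicit.
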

We can thus restrict ourselves to prove the desired properties of $\mathcal{B}_{E_1}$ on cosimplicial coalgebras of the form $Res^{\bullet}(C)$.

\subsection{Totalization versus bar construction}

In this section we prove Proposition~\ref{P: barvstot}.
\begin{lem}
For any $C\in E_1-Alg(dgCog^{conil})$, there is an $E_1$-algebra structure on $Tot_{Ass^{op}}(Res^{\bullet}(C))$ compatible with its coalgebra structure so that we can chose $Tot_{Ass^{op}}(Res^{\bullet}(C))$ as a model for $Tot_{E_1-Ass^{op}}(Res^{\bullet}(C))$.
\end{lem}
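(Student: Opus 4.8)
The plan is to obtain the $E_1$-structure on $Tot_{Ass^{op}}(Res^{\bullet}(C))$ from the lax symmetric monoidality of the reduced conormalization, and then to identify the resulting object with $Tot_{E_1-Ass^{op}}(Res^{\bullet}(C))$ by exploiting that the forgetful functor to $dgCog^{conil}$ is right Quillen and therefore preserves the relevant homotopy limit. First I would recall from the previous subsection that, through the simplicial frame $Res^{\bullet}(C)^{\Delta^{\bullet}}=\{F^c(K^n(C)\otimes\overline{N^*}(\Delta^m))\}$ of Proposition~\ref{P: simpframe} and the end formula, the coalgebra totalization is computed as
\[
Tot_{Ass^{op}}(Res^{\bullet}(C))=\int_{\underline{n}\in\Delta}F^c(K^n(C)\otimes\overline{N^*}(\Delta^n)),
\]
and that, as in the proof that $\eta$ is a quasi-isomorphism, this agrees through a formal isomorphism with the reduced conormalization $\overline{N^*}$ of the cosimplicial coalgebra $Res^{\bullet}(C)$. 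Since $Res^{\bullet}(C)$ is, by construction, a cosimplicial object of $E_1-Alg(dgCog^{conil})$, the task reduces to showing that totalization, equivalently reduced conormalization through this frame, is lax symmetric monoidal internally to $dgCog^{conil}$.

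This lax monoidality is the conceptual heart of the argument. The reduced conormalization $\overline{N^*}$ is lax symmetric monoidal via the Alexander–Whitney maps, dualizing the shuffle structure recalled in \cite{SS}; because the frame $(-)^{\Delta^{\bullet}}=F^c(-\otimes\overline{N^*}(\Delta^{\bullet}))$ is built from the dg algebra $\overline{N^*}(\Delta^{m})$ (the cochains of the standard simplex with their cup product) and the comonoidal data carried through $F^c$, the induced functor $Tot_{Ass^{op}}$ inherits a natural lax symmetric monoidal structure with values in $dgCog^{conil}$. A lax symmetric monoidal, $Ch_{\mathbb{K}}$-linear functor preserves algebras over a $\Sigma$-cofibrant operad, so composing the coherence maps with the $E_1$-action on $Res^{\bullet}(C)$ yields structure maps
\[
E_1(n)\otimes Tot_{Ass^{op}}(Res^{\bullet}(C))^{\otimes n}\rightarrow Tot_{Ass^{op}}(Res^{\bullet}(C))
\]
in $dgCog^{conil}$. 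This equips $Tot_{Ass^{op}}(Res^{\bullet}(C))$ with an $E_1$-algebra structure that is by construction internal to, hence compatible with, its conilpotent coalgebra structure.

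It then remains to identify this object as a model for $Tot_{E_1-Ass^{op}}(Res^{\bullet}(C))$. The forgetful functor $U:E_1-Alg(dgCog^{conil})\rightarrow dgCog^{conil}$ is right Quillen, being right adjoint to the free functor and creating fibrations and weak equivalences; consequently it preserves homotopy limits, in particular the totalization of the Reedy-fibrant cosimplicial object $Res^{\bullet}(C)$ produced in the previous subsection. Thus the canonical comparison
\[
U\, Tot_{E_1-Ass^{op}}(Res^{\bullet}(C))\stackrel{\sim}{\rightarrow} Tot_{Ass^{op}}(Res^{\bullet}(C))
\]
is a weak equivalence of conilpotent coalgebras; since it arises from a lax monoidal natural transformation it is moreover a morphism of $E_1$-algebras, so $Tot_{Ass^{op}}(Res^{\bullet}(C))$ with the structure above is weakly equivalent, as an $E_1$-algebra in $dgCog^{conil}$, to $Tot_{E_1-Ass^{op}}(Res^{\bullet}(C))$ and may be chosen as a model for it. This also sets up the subsequent identification of Proposition~\ref{P: barvstot}.

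The step I expect to be the main obstacle is the coherence packaged in the second paragraph: one must verify that the \emph{specific} frame of Proposition~\ref{P: simpframe} promotes totalization to a genuinely lax symmetric monoidal functor, i.e. that the Alexander–Whitney comparison maps are compatible both with the end over $\Delta$ and with the cofree coalgebra functor $F^c$, so that the comparison $U\,Tot_{E_1-Ass^{op}}\to Tot_{Ass^{op}}\,U$ is truly a map of $E_1$-algebras and not merely of underlying coalgebras. Once this monoidal compatibility is established, the identification becomes formal, following from $U$ being right Quillen (equivalently, from the forgetful functor creating the limits defining the totalization).
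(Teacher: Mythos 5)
Your strategy---transfer the levelwise $E_1$-structure through monoidal properties of the totalization, then compare with the totalization in $E_1$-algebras via the forgetful functor---is in the right spirit, but it breaks down exactly at the step you yourself flag as the main obstacle, and that step is where the paper's proof uses a device absent from your argument. The conormalization (equivalently the totalization computed through the frame $F^c(K^n(C)\otimes\overline{N^*}(\Delta^m))$) is lax monoidal via the Alexander--Whitney maps, but it is \emph{not} lax \emph{symmetric} monoidal: the AW maps are associative but not $\Sigma_n$-equivariant, and the cup product on $\overline{N^*}(\Delta^n)$ is commutative only up to coherent homotopy. Since $E_1$ is a symmetric operad (a $\Sigma$-cofibrant resolution of $Ass$), your appeal to ``a lax symmetric monoidal, $Ch_{\mathbb{K}}$-linear functor preserves algebras over a $\Sigma$-cofibrant operad'' starts from a false premise; with a merely lax monoidal functor the transferred structure maps $E_1(n)\otimes Tot^{\otimes n}\rightarrow Tot$ need not be equivariant, so you do not get an $E_1$-algebra. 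The paper resolves precisely this point by invoking the Barratt--Eccles operad $BE$ (an $E_\infty$-operad, \cite{BF}): $\overline{N^*}(\Delta^n)$ is a $BE$-algebra, hence each $K^n(C)\otimes\overline{N^*}(\Delta^n)$ is an $E_1\otimes_H BE$-algebra, and restriction along the operad morphism $E_1\rightarrow E_1\otimes_H BE$ makes every term $F^c(K^n(C)\otimes\overline{N^*}(\Delta^n))$ of the end an $E_1$-algebra in $dgCog^{conil}$, with all the structure maps of the end being $E_1$-maps.

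Your identification step is also weaker than what the lemma asserts and than what the paper proves. You produce (at best, after a frame-independence argument you do not spell out) a zigzag of weak equivalences of underlying coalgebras between $U\,Tot_{E_1-Ass^{op}}(Res^{\bullet}(C))$ and $Tot_{Ass^{op}}(Res^{\bullet}(C))$, and then assert it is a morphism of $E_1$-algebras ``since it arises from a lax monoidal natural transformation''; this is unjustified---the comparison comes from the universal property of the end and a choice of simplicial frames, and its compatibility with the (still to be constructed) $E_1$-structure on the target is exactly the content of the lemma, so the argument is circular at this point. The paper avoids any such comparison: once each term of the frame is known to be an $E_1$-algebra in coalgebras, it checks that the \emph{same} collection $F^c(K^{\bullet}(C)\otimes\overline{N^*}(\Delta^{\bullet}))$ is a simplicial frame in $E_1-Alg(dgCog^{conil})^{\Delta}$ (by rerunning the proof of Proposition~\ref{P: simpframe}), and then uses that limits in $E_1-Alg(dgCog^{conil})$ are \emph{created} by the forgetful functor, so the end computed in $E_1$-algebras is literally the same object as $Tot_{Ass^{op}}(Res^{\bullet}(C))$---an isomorphism, not a structured weak equivalence that needs checking. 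So the gap is real: you must first solve the symmetry/coherence problem (via Barratt--Eccles or an equivalent $E_\infty$-device), and then make the identification on the nose through levelwise structures, a frame in the algebra category, and creation of limits, rather than through an up-to-homotopy comparison whose compatibility with the algebra structures is left unproved.
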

\begin{proof}
Recall that since $E_1-Alg(dgCog^{conil})^{\Delta}$ is a Reedy model category, we can define a totalization functor
\[
Tot_{E_1-Ass^{op}}(C^{\bullet}) = \int_{\underline{n}\in\Delta}^{E_1-Alg(dgCog^{conil})}(C^n)^{\Delta^n}_{E_1-Ass^{op}}
\]
where $C^{\bullet}\in E_1-Alg(dgCog^{conil})^{\Delta}$, $(-)^{\Delta^{\bullet}}_{E_1-Ass^{op}}$ is a simplicial frame in $E_1-Alg(dgCog^{conil})^{\Delta}$, and the limit $\int_{\underline{n}\in\Delta}^{E_1-Alg(dgCog^{conil})}$ is taken in $E_1-Alg(dgCog^{conil})$.

The cofree coalgebra functor is defined by the reduced tensor coalgebra $F^c(C)=\bigoplus_{n\geq 1}C^{\otimes n}$ equipped with the deconcatenation product, and comes with a diagonal morphism
\[
F^c(C\otimes D)=\bigoplus_{n\geq 1}C^{\otimes n}\otimes D^{\otimes n} \rightarrow \bigoplus_{m,n\geq 1}C^{\otimes m}\otimes D^{\otimes n} = F^c(C)\otimes F^c(D)
\]
(recall that the tensor product of coalgebras is defined in chain complexes)
natural in $C$ and $D$, so that $F^c$ is an oplax monoidal functor. Moreover, the forgetful functor $U$ is monoidal by construction of the tensor product in $dgCog^{conil}$, so the Quillen adjunction between $U$ and $F^c$ lifts to a Quillen adjunction
\[
U:E_1-Alg(dgCog^{conil})\rightleftarrows E_1-Alg(Ch_{\mathbb{K}}):F^c
\]
(see \cite[Proposition 12.3.3]{Fre3}).
This implies that if $C$ is an object of $E_1-Alg(dgCog^{conil})$, then its $E_1$-algebra structure is compatible with the cosimplicial constructions $K^{\bullet}(C)$ and $Res^{\bullet}(C)$ so that these cosimplicial coalgebras are actually cosimplicial $E_1$-algebras in coalgebras.

Now, recall that the reduced conormalization $\overline{N^*}(\Delta^n)$ is an algebra over the Barratt-Eccles operad $BE$ (see \cite{BF}). For any $C\in E_1-Alg(dgCog^{conil})$, the chain tensor product $K^n(C)\otimes \overline{N^*}(\Delta^n)$ thus forms an $E_1\otimes_H BE$-algebra in $Ch_{\mathbb{K}}$, where $\otimes_H$ is the arity-wise tensor product of operads (also called the Hadamard tensor product \cite[Section 5.3.2]{LV}).
Since $E_1$, like any operad over a field of characteristic zero, is equipped with an operad morphism $E_1\rightarrow E_1\otimes_H BE$, the tensor product $K^n(C)\otimes \overline{N^*}(\Delta^n)$ is actually an $E_1$-algebra in $Ch_{\mathbb{K}}$. This implies, in turn, that $F^c(K^n(C)\otimes \overline{N^*}(\Delta^n))$ belongs to $E_1-Alg(dgCog^{conil})$.
Moreover, limits in $E_1-Alg(dgCog^{conil})$ are determined by the forgetful functor, so
\begin{eqnarray*}
\int_{\underline{n}\in\Delta}^{E_1-Alg(dgCog^{conil})}F^c(K^n(C)\otimes \overline{N^*}(\Delta^n)) & = &
\int_{\underline{n}\in\Delta}^{dgCog^{conil}}F^c(K^n(C)\otimes \overline{N^*}(\Delta^n)) \\
 & = & Tot_{Ass^{op}}(Res^{\bullet}(C))
\end{eqnarray*}
where the last line follows from Proposition~\ref{P: simpframe}, and the totalization of $Res^{\bullet}(C)$ in coalgebras inherits a compatible $E_1$-algebra structure. We conclude by noticing that the argument line of the proof of Proposition~\ref{P: simpframe} can be literally transposed here: the simplicial-cosimplicial object $F^c(K^{\bullet}(C)\otimes \overline{N^*}(\Delta^{\bullet}))$ defines a simplicial frame of $Res^{\bullet}(C)$ in $E_1-Alg(dgCog^{conil})^{\Delta}$ because for any $X\in E_1-Alg(Ch_{\mathbb{K}})$, the simplicial object $F^c(X\otimes \overline{N^*}(\Delta^{\bullet}))$ defines a simplicial frame of $F^c(X)$ in $E_1-Alg(dgCog^{conil})$.
\end{proof}

\begin{lem}
For any $C\in E_1-Alg(dgCog^{conil})$, there is an isomorphism
\[
Tot_{Ass^{op}}(Res^{\bullet}(C))\stackrel{\cong}{\rightarrow} N^*Res^{\bullet}(C)
\]
in $E_1-Alg(dgCog^{conil})$.
\end{lem}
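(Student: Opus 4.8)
The plan is to plug in the explicit simplicial frame constructed in Proposition~\ref{P: simpframe} and then recognize the resulting end as a conormalization, pushing everything through the right adjoint $F^c$. First I would recall that with the simplicial frame $Res^{\bullet}(C)^{\Delta^{\bullet}}=F^c(K^{\bullet}(C)\otimes\overline{N^*}(\Delta^{\bullet}))$ the totalization is computed by the end
\[
Tot_{Ass^{op}}(Res^{\bullet}(C))=\int_{\underline{n}\in\Delta}F^c(K^n(C)\otimes\overline{N^*}(\Delta^n)),
\]
the limit being taken in $dgCog^{conil}$; by the previous lemma this end carries a compatible $E_1$-algebra structure and realizes $Tot_{E_1-Ass^{op}}(Res^{\bullet}(C))$, so it suffices to identify its underlying coalgebra with $N^*Res^{\bullet}(C)$ in a way that respects this structure.

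Next, since $F^c$ is right adjoint to the forgetful functor $U$, it preserves all small limits and hence commutes with this end, which gives
\[
Tot_{Ass^{op}}(Res^{\bullet}(C))\cong F^c\Bigl(\int_{\underline{n}\in\Delta}K^n(C)\otimes\overline{N^*}(\Delta^n)\Bigr),
\]
where the inner end is now taken in $Ch_{\mathbb{K}}$. The inner end is precisely the classical (dual Dold--Kan) identification of a cosimplicial chain complex with its conormalization — the same formal isomorphism used in the proof that the coaugmentation $\eta$ is a quasi-isomorphism — so that
\[
\int_{\underline{n}\in\Delta}K^n(C)\otimes\overline{N^*}(\Delta^n)\cong\overline{N^*}K^{\bullet}(C).
\]
Composing the two displays yields $Tot_{Ass^{op}}(Res^{\bullet}(C))\cong F^c(\overline{N^*}K^{\bullet}(C))$.

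Finally I would identify $F^c(\overline{N^*}K^{\bullet}(C))$ with $N^*Res^{\bullet}(C)$. Since the codegeneracies of $Res^{\bullet}(C)$ are obtained by applying $F^c$ to those of $K^{\bullet}(C)$, and since the conormalization in each degree is an intersection of kernels of codegeneracies, a limit that the right adjoint $F^c$ preserves, the underlying graded object of $N^*Res^{\bullet}(C)$ is $F^c$ applied to the conormalization of $K^{\bullet}(C)$; the remaining bookkeeping is that the coface $d^0$ induced by the coaction $\rho$ assembles the total differential correctly and accounts for the passage between the reduced and unreduced conormalizations. The main obstacle, and the only content beyond this bookkeeping, is to check that the chain of isomorphisms above is one of $E_1$-algebras in $dgCog^{conil}$ rather than merely of underlying coalgebras. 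This I would obtain by tracking the oplax monoidal structure on $F^c$ together with the lax monoidal structure on the conormalization coming from the Barratt--Eccles operad (both already used in the previous lemma): these make the frame $F^c(K^{\bullet}(C)\otimes\overline{N^*}(\Delta^{\bullet}))$ and each of the maps above compatible with the $E_1\otimes_H BE$-action, hence with the $E_1$-structure after restriction along the canonical morphism $E_1\to E_1\otimes_H BE$.
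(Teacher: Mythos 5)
Your proposal is correct and follows essentially the same route as the paper's proof: both express the totalization as the end $\int_{\underline{n}\in\Delta}F^c(K^n(C)\otimes\overline{N^*}(\Delta^n))$ via the simplicial frame, pull $F^c$ out of the end by limit-preservation, identify the inner end with $\overline{N^*}K^{\bullet}(C)$ by the formal (dual Dold--Kan) isomorphism, and finally identify $F^c(\overline{N^*}K^{\bullet}(C))$ with $\overline{N^*}Res^{\bullet}(C)$ using that the cosimplicial structure of $Res^{\bullet}(C)$ is induced under $F^c$. The only cosmetic difference is that the paper carries out the chain of identifications inside $E_1-Alg(Ch_{\mathbb{K}})$ from the start (using that limits of $E_1$-algebras are created in $Ch_{\mathbb{K}}$) and then applies $F^c$, whereas you verify the $E_1$-compatibility at the end via the oplax/lax monoidal structures — the same ingredients in a slightly different order.
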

\begin{proof}
There is an isomorphism
\[
\int_{\underline{n}\in\Delta}^{E_1-Alg(Ch_{\mathbb{K}})}K^n(C)\otimes \overline{N^*}(\Delta^n)
=
\int_{\underline{n}\in\Delta}^{Ch_{\mathbb{K}}}K^n(C)\otimes \overline{N^*}(\Delta^n)
\stackrel{\cong}{\rightarrow} \overline{N^*}K^{\bullet}(C)
\]
in $E_1-Alg(Ch_{\mathbb{K}})$, inducing an isomorphism
\[
F^c(\int_{\underline{n}\in\Delta}^{Ch_{\mathbb{K}}}K^n(C)\otimes \overline{N^*}(\Delta^n))
\stackrel{\cong}{\rightarrow} F^c(\overline{N^*}K^{\bullet}(C))
\]
in $E_1-Alg(dgCog^{conil})$.
We have equalities
\[
F^c(\int_{\underline{n}\in\Delta}^{Ch_{\mathbb{K}}}K^n(C)\otimes \overline{N^*}(\Delta^n))
=
\int_{\underline{n}\in\Delta}^{dgCog^{conil}}F^c(K^n(C)\otimes \overline{N^*}(\Delta^n)))
\]
because $F^c$ commutes with limits and
\[
F^c(\overline{N^*}K^{\bullet}(C)) = \overline{N^*}Res^{\bullet}(C)
\]
because $Res^{\bullet}(C)=F^c(K^{\bullet}(C))$ and the cosimplicial structure of $Res^{\bullet}(C)$ is induced by the one of $K^{\bullet}(C)$ under $F^c$, hence an isomorphism
\[
\int_{\underline{n}\in\Delta}^{dgCog^{conil}}F^c(K^n(C)\otimes \overline{N^*}(\Delta^n)))
\stackrel{\cong}{\rightarrow} \overline{N^*}Res^{\bullet}(C).
\]
The source of this isomorphism is nothing but $Tot_{Ass^{op}}(Res^{\bullet}(C))$.
\end{proof}

\begin{lem}
For any $C\in E_1-Alg(dgCog^{conil})$, there is a weak equivalence
\[
\mathcal{B}_{E_1}(\overline{N^*}Res^{\bullet}(C))\stackrel{\sim}{\rightarrow} \overline{N^*}\mathcal{B}_{E_1}(Res^{\bullet}(C))
\]
in $dgCog^{conil}$.
\end{lem}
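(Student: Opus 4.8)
The plan is to obtain the displayed morphism as the canonical comparison between $\mathcal{B}_{E_1}$ applied to a limit and the limit of the values of $\mathcal{B}_{E_1}$, and then to prove it is a quasi-isomorphism by a weight-filtration spectral sequence argument, in the style of the preceding totalization computation and of \cite[Lemma B.8]{FW}. First I would invoke the two preceding lemmas (parts (1) and (2) of Proposition~\ref{P: barvstot}) to identify $\overline{N^*}Res^{\bullet}(C)$ with the end $\int_{\underline{n}\in\Delta}F^c(K^n(C)\otimes\overline{N^*}(\Delta^n))$, so that it is a genuine limit in $dgCog^{conil}$. Since for any functor there is a canonical morphism from its value on a limit to the limit of its values, applying $\mathcal{B}_{E_1}$ to the projections of this end produces the desired natural map
\[
\mathcal{B}_{E_1}(\overline{N^*}Res^{\bullet}(C))\longrightarrow \overline{N^*}\mathcal{B}_{E_1}(Res^{\bullet}(C)),
\]
whose target is the totalization (reduced conormalization) of the cosimplicial coalgebra obtained by applying $\mathcal{B}_{E_1}$ levelwise to $Res^{\bullet}(C)$.

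To show this map is a weak equivalence, I would filter both sides by the weight $s$ of the cofree coalgebra decomposition $F^c=\bigoplus_{s\geq 1}F^c_s$, exactly as in the proof that $\eta:C\to Tot(Res^{\bullet}(C))$ is a quasi-isomorphism. The comparison map is compatible with these filtrations, and on the associated graded ($E^0$-page) the bar differential and the internal differential of $\mathcal{B}_{E_1}$ separate from the cosimplicial differential carried by the $\overline{N^*}(\Delta^{\bullet})$ factors. In each fixed weight the relevant functor reduces to a fixed tensor power, a polynomial functor built from $\otimes$, for which the interchange of $\overline{N^*}$ with tensor powers is governed by the Eilenberg--Zilber equivalence. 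The induced map on $E^0$ is therefore the image, under the (co)free/tensor structure, of the comparison between the reduced conormalization and the reduced cobar construction, which is a quasi-isomorphism (cf. \cite{Fre1} and the map $(3)$ used above). Hence $E^0$ of the comparison map is a quasi-isomorphism.

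The main obstacle is the convergence of these spectral sequences: $\mathcal{B}_{E_1}$ is a left adjoint and so preserves only colimits, so commuting it past the limit defining $\overline{N^*}$ is not formal and requires that the weight filtration be exhaustive and bounded below on both sides. This is precisely where the restriction to $0$-connected $E_1$-algebras (equivalently, to the essential image of the functorial fibrant resolution $Tot(Res^{\bullet}(-))$) is used: $0$-connectivity forces the cosimplicial degree and the weight to grow together, so that in each total degree only finitely many weights contribute and both filtrations converge. A comparison of the two resulting convergent spectral sequences then upgrades the $E^0$-quasi-isomorphism to the asserted weak equivalence in $dgCog^{conil}$, completing the proof of Proposition~\ref{P: barvstot}(3) and hence, together with parts (1) and (2), of Theorem~\ref{T: BarConsTot}.
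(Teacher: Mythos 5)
There is a genuine gap in your proposal: it never establishes (nor uses) the fact on which the paper's entire proof rests, namely that the bar construction commutes with the cofree coalgebra functor,
\[
\mathcal{B}_{E_1}\circ F^c \;=\; F^c\circ \mathcal{B}_{E_1}^{\mathbb{K}},
\]
which holds because $\mathcal{B}_{E_1}$ is computed by a reflexive coequalizer of $E_1$-algebras and $F^c=\bigoplus_{n\geq 1}(-)^{\otimes n}$ preserves such coequalizers (by \cite[Proposition 3.3.1]{Fre3} and \cite[Proposition 1.2.3]{Fre3}). Without this identity your argument cannot even get started: the universal-property construction of your comparison map requires identifying $\overline{N^*}\mathcal{B}_{E_1}(Res^{\bullet}(C))$ with a totalization, and the analogue of Proposition~\ref{P: barvstot}(2) for the cosimplicial object $\mathcal{B}_{E_1}(Res^{\bullet}(C))$ is only available once you know its levels $\mathcal{B}_{E_1}(F^c(K^n(C)))=F^c(\mathcal{B}_{E_1}^{\mathbb{K}}(K^n(C)))$ are again cofree; likewise your ``weight filtration of both sides'' presupposes that the source $\mathcal{B}_{E_1}(\overline{N^*}Res^{\bullet}(C))$ is of the form $F^c(-)$. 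Once the commutation identity is in hand, however, no limit-commutation and no spectral sequence is needed at all: the paper's proof is purely formal. One writes $\mathcal{B}_{E_1}(\overline{N^*}Res^{\bullet}(C))=F^c(\mathcal{B}_{E_1}^{\mathbb{K}}(\overline{N^*}K^{\bullet}(C)))$ and $\overline{N^*}\mathcal{B}_{E_1}(Res^{\bullet}(C))=F^c(\overline{N^*}\mathcal{B}_{E_1}^{\mathbb{K}}(K^{\bullet}(C)))$, reducing the lemma to a chain-level quasi-isomorphism; this follows from the conormalization-to-cobar map $\overline{N^*}K^{\bullet}(C)\stackrel{\sim}{\rightarrow}\overline{\Omega}(C)$ being a quasi-isomorphism of $E_1$-algebras (lax monoidality of $\overline{N^*}$ and the Milgram map), to which one applies $\mathcal{B}_{E_1}^{\mathbb{K}}$, together with the identities $\overline{N^*}\mathcal{B}_{E_1}^{\mathbb{K}}(K^{\bullet}(C))=\overline{N^*}K^{\bullet}(\mathcal{B}_{E_1}^{\mathbb{K}}(C))$ and $\mathcal{B}_{E_1}^{\mathbb{K}}(\overline{\Omega}(C))=\overline{\Omega}(\mathcal{B}_{E_1}^{\mathbb{K}}(C))$, both of which hold because the constructions involved are colimits created in chain complexes. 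You correctly identified the conormalization-versus-cobar comparison as the analytic heart of the matter, but the framework in which you try to deploy it (commuting a left adjoint past a limit) is exactly what the paper's reductions are designed to avoid.

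Your proposed resolution of the convergence problem is also wrong on its face: the lemma is stated for an \emph{arbitrary} $C\in E_1-Alg(dgCog^{conil})$, with no $0$-connectivity hypothesis. In Theorem~\ref{T: BarConsTot}, $0$-connectedness enters only in the conservativity statement; the commutation with totalization is instead restricted to cosimplicial objects in the image of the resolution functor $Res^{\bullet}(-)$, and what that restriction buys is levelwise \emph{cofreeness} (exploited through the commutation identity above), not any bound forcing cosimplicial degree and weight to grow together. So the hypothesis you lean on to ensure that ``only finitely many weights contribute in each total degree'' is not available here, and it is not what makes the statement true.
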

\begin{proof}
We start by proving that $F^c$ commutes with reflexive coequalizers of $E_1$-algebras.
Let $I$ be the small category whose associated diagram is given by
\[
\xymatrix{0 \ar@<1ex>[r]\ar@<-1ex>[r] & 1 \ar@/_{1pc}/[l]}
\]
and $\{X_i\}_{i\in ob(I)}$ be an $I$-diagram
\[
\xymatrix{X_0\ar@<1ex>[r]^-{d_0}\ar@<-1ex>[r]_-{d_1} & X_1 \ar@/_{1.5pc}/[l]_{s_0}}
\]
in $E_1-Alg(Ch_{\mathbb{K}})$ such that $s_0\circ d_0=s_0\circ d_1=id_{X_0}$.
The colimit $colim_i X_i$ of such a diagram is called a reflexive coequalizer.
We have the following series of isomorphisms in $E_1-Alg(dgCog^{conil})$:
\begin{eqnarray*}
F^c(colim_i X_i) & = & \bigoplus_{n\geq 1}(colim_i X_i)^{\otimes n} \\
 & \cong & \bigoplus_{n\geq 1} colim_i (X_i)^{\otimes n} \\
 & \cong &  colim_i\bigoplus_{n\geq 1} X_i^{\otimes n}\\
 & = & colim_i F^c(X_i).
\end{eqnarray*}
The first line is by definition of $F^c$ as the reduced tensor coalgebra. Since $E_1$ is a Hopf operad, $E_1-Alg(Ch_{\mathbb{K}})$ forms a symmetric monoidal category for the chain tensor product. Moreover, according to \cite[Proposition 3.3.1]{Fre3}, reflexive coequalizers of $E_1$-algebras are also determined in chain complexes, and according to \cite[Proposition 1.2.3]{Fre3} the chain tensor power functors preserves reflexive coequalizers, hence the second line. The third line is just the commutation of two colimits of $E_1$-algebras, and the last line follows from the fact that colimits of coalgebras are determined by the forgetful functor.

Now, recall from 4.2.1 that $\mathcal{B}_{E_1}$ is constructed as a reflexive coequalizer of $E_1$-algebras, so that
\[
F^c\circ\mathcal{B}_{E_1}^{\mathbb{K}} = \mathcal{B}_{E_1}\circ F^c.
\]
We have
\begin{eqnarray*}
\mathcal{B}_{E_1}(\overline{N^*}Res^{\bullet}(C)) & = & \mathcal{B}_{E_1}(F^c(\overline{N^*}K^{\bullet}(C))) \\
 & = & F^c(\mathcal{B}_{E_1}^{\mathbb{K}}(\overline{N^*}K^{\bullet}(C)))
\end{eqnarray*}
by construction of the cosimplicial structures of $K^{\bullet}(C)$ and $Res^{\bullet}(C)$ and the commutation rule above, and
\[
\overline{N^*}\mathcal{B}_{E_1}(Res^{\bullet}(C)) = F^c(\overline{N^*}\mathcal{B}_{E_1}^{\mathbb{K}}(K^{\bullet}(C)))
\]
by the commutation of $F^c$ with $\overline{N^*}$ and $\mathcal{B}_{E_1}$ as explained above.
Consequently, to build a weak equivalence
\[
\mathcal{B}_{E_1}(N^*Res^{\bullet}(C))\stackrel{\sim}{\rightarrow} N^*\mathcal{B}_{E_1}(Res^{\bullet}(C))
\]
in $dgCog^{conil}$ amounts to build a weak equivalence of cofree coalgebras
\[
F^c(\mathcal{B}_{E_1}^{\mathbb{K}}(\overline{N^*}K^{\bullet}(C))) \stackrel{\sim}{\rightarrow} F^c(\overline{N^*}\mathcal{B}_{E_1}^{\mathbb{K}}(K^{\bullet}(C))),
\]
thus a quasi-isomorphism of chain complexes
\[
\mathcal{B}_{E_1}^{\mathbb{K}}(\overline{N^*}K^{\bullet}(C))\stackrel{\sim}{\rightarrow} \overline{N^*}\mathcal{B}_{E_1}^{\mathbb{K}}(K^{\bullet}(C)).
\]

On the one hand, as explained before, the functors $U$ and $F^c$ commute with reflexive coequalizers of $E_1$-algebras, in particular with the bar construction $\mathcal{B}_{E_1}^{\mathbb{K}}$, hence the equality
\[
\overline{N^*}\mathcal{B}_{E_1}^{\mathbb{K}}(K^{\bullet}(C)) = \overline{N^*}K^{\bullet}(\mathcal{B}_{E_1}^{\mathbb{K}}(C)).
\]
By the same argument, the coalgebra structure map $C\rightarrow F^c(C)$ of $C$ induces a map $\mathcal{B}_{E_1}^{\mathbb{K}}(C)\rightarrow \mathcal{B}_{E_1}^{\mathbb{K}}(F^c(C))=F^c(\mathcal{B}_{E_1}^{\mathbb{K}}(C))$, so $\mathcal{B}_{E_1}^{\mathbb{K}}(C)$ is a coalgebra and we get the canonical quasi-isomorphism
\[
\overline{N^*}K^{\bullet}(\mathcal{B}_{E_1}^{\mathbb{K}}(C))
\stackrel{\sim}{\rightarrow}\overline{\Omega}\mathcal{B}_{E_1}^{\mathbb{K}}(C)
\]
between the reduced conormalization and the reduced cobar construction.
On the other hand, the $E_1$-algebra structure of $C$ is compatible with the quasi-isomorphism
\[
\overline{N^*}K^{\bullet}(C)\stackrel{\sim}{\rightarrow}\overline{\Omega}(C)
\]
so that this is actually a quasi-isomorphism of $E_1$-algebras. The compatibility of the $E_1$-algebra structures with this map follows, once again, from the lax monoidality of the reduced conormalization, and from the lax monoidality of the cobar construction functor of coalgebras via the Milgram map \cite{Mil}. In turn, this quasi-isomorphism of $E_1$-algebras induces a quasi-isomorphism
\[
\mathcal{B}_{E_1}^{\mathbb{K}}(\overline{N^*}K^{\bullet}(C))\stackrel{\sim}{\rightarrow}\mathcal{B}_{E_1}^{\mathbb{K}}(\overline{\Omega}(C)).
\]
Finally, the equality
\[
\mathcal{B}_{E_1}^{\mathbb{K}}(\overline{\Omega}(C)) = \overline{\Omega}(\mathcal{B}_{E_1}^{\mathbb{K}}(C))
\]
holds true for any $E_1$-algebra in coalgebras $C$. Indeed, on the one hand, the cobar functor $\overline{\Omega}$ is a left adjoint, so it commutes with colimits of chain complexes. On the other hand, the bar construction $\mathcal{B}_{E_1}^{\mathbb{K}}$ is a reflexive coequalizer of $E_1$-algebras, and such reflexive coequalizers are created in chain complexes.

This provides consequently the desired quasi-isomorphism between $\mathcal{B}_{E_1}^{\mathbb{K}}(\overline{N^*}K^{\bullet}(C))$ and $\overline{N^*}\mathcal{B}_{E_1}^{\mathbb{K}}(K^{\bullet}(C))$.
\end{proof}

\section{Bialgebras versus $E_2$-algebras}

\subsection{(Co)units and (co)augmentations}

We refer the reader to \cite{AJ} for a detailed exposition about associative algebras and coassociative coalgebras in the pointed setting. A pointed complex is a complex $X$ equipped with two maps $e:\mathbb{K}\rightarrow X$ and $\epsilon:X\rightarrow\mathbb{K}$ such that $\epsilon\circ e = Id_{\mathbb{K}}$. Pointed complexes form a category $Ch_{\mathbb{K}}^{pt}$, with morphisms defined by chain morphisms commuting with these maps. The two functors
\[
(-)_{-}:X\in Ch_{\mathbb{K}}^{pt}\mapsto X_{-}=ker(\epsilon)\in Ch_{\mathbb{K}}
\]
and
\[
(-)_{+}:X\in Ch_{\mathbb{K}}\mapsto X_{+}=X\oplus\mathbb{K}\in Ch_{\mathbb{K}}^{pt}
\]
form an equivalence of categories
\[
(-)_+:Ch_{\mathbb{K}}\rightleftarrows Ch_{\mathbb{K}}^{pt}:(-)_-.
\]
This equivalence induces on $Ch_{\mathbb{K}}^{pt}$ the structure of a cofibrantly generated symmetric monoidal model category tensored over $Ch_{\mathbb{K}}$, with the following features:
\begin{itemize}
\item the coproduct (and also product) of two pointed complexes $X$ and $Y$ is given by
\[
X\vee Y = (X_-\oplus Y_-)_+;
\]
\item the tensor product of two pointed complexes $X$ and $Y$ is given by
\[
X\wedge Y = (X_-\otimes Y_-)_+;
\]
\item the internal hom is given by
\[
Hom_{pt}(X,Y)=Hom(X_-,Y_-)_+;
\]
\item the external tensor product of a complex $X$ with a pointed complex $Y$ is given by
\[
X\otimes_e Y = X_+\wedge Y (=(X\otimes Y_-)_+ );
\]
\item the external hom is given by
\[
Hom_{dg}(X,Y)=Hom(X_-,Y_-),
\]
because of the sequence of equalities
\begin{eqnarray*}
Mor_{pt}(X\otimes_eY,Z)=Mor_{pt}((X\otimes Y_-)_+,Z) & \cong  & Mor(X\otimes Y_-,Z_-)\\
 & \cong  & Mor(X,Hom(Y_-,Z_-));
\end{eqnarray*}
\item weak equivalences are quasi-isomorphisms and fibrations are degreewise surjections.
\end{itemize}

Now let $P$ be an augmented dg operad (i.e with an operad morphism $P\rightarrow I$) presented by $P=\mathcal{F}(\mathbb{K}\mu\oplus E)/(R)$, where $\mu$ is a generator of arity $2$ (a product). We suppose moreover that $P$ is equipped with an operad morphism $P\rightarrow Com$, so that $\mathbb{K}$ with its usual commutative associative product is a $P$-algebra.
This includes associative, commutative and Lie algebras (here $E$ is the zero $\Sigma$-object), as well as $n$-Poisson algebras (here $E$ is defined by a generator of degree $1-n$ in arity $2$ giving the shifted Poisson bracket), and $E_n$-algebras via the sequence of operad morphisms
\[
E_1\hookrightarrow ...\hookrightarrow E_n\hookrightarrow ...\hookrightarrow E_{\infty}\stackrel{\sim}{\rightarrow}Com.
\]
\begin{defn}
(1) A $P$-algebra $A$ is unitary if it is equipped with a chain morphism $e:\mathbb{K}\rightarrow A$ satisfying the unitarity relations
\[
\mu(e(1_{\mathbb{K}}),-)=\mu(-,e(1_{\mathbb{K}}))=id_A.
\]
Unitary $P$-algebras are $uP$-algebras, where $uP$ is obtained from $P$ by adding a generator of arity $1$ for the unit and adding the unitarity relations to $R$.

(2) A $P$-algebra $A$ is augmented if it is equipped with a $P$-algebra morphism $\epsilon:A\rightarrow\mathbb{K}$,
where $\mathbb{K}$ is equipped with the $P$-algebra structure induced by the morphism $P\rightarrow Com$ and the standard commutative algebra structure of $\mathbb{K}$.
Such an algebra $A$ is connected if its augmentation ideal is $0$-connected, equivalently if the degree zero part of $A$ is isomorphic to $\mathbb{K}$.

(3) A $P$-algebra $A$ is pointed if it is equipped with a unit $e$ and an augmentation $\epsilon$ satisfying
\[
\epsilon\circ e = id_{\mathbb{K}}.
\]

(4) A strictly unitary $P_{\infty}$-algebra, or $suP_{\infty}$-algebra, is a $P_{\infty}$-algebra (where $P_{\infty}$ is a cofibrant resolution of $P$) equipped with a strict unit with respect to the arity $2$ generator of $P_{\infty}$ inducing the product $\mu$ in homology. Similarly, we get the notions of augmented $P_{\infty}$-algebra and pointed $P_{\infty}$-algebra.

(5) Dually, one can define counitary, coaugmented and pointed $P$-algebras and $P_{\infty}$-algebras.
\end{defn}
\begin{rem}
An augmented $uP$-algebra is an $uP$-algebra $A$ with a morphism of $uP$-algebras $\epsilon:A\rightarrow\mathbb{K}$.
The compatibility of this morphism with the unit $e:\mathbb{K}\rightarrow A$ of $A$ and the unit $id_{\mathbb{K}}$ of the $uP$-algebra $\mathbb{K}$ is exactly the relation $\epsilon\circ e = id_{\mathbb{K}}$, so augmented $uP$-algebras are exactly pointed $P$-algebras. Dually, coaugmented $uP$-coalgebras are exactly pointed $P$-coalgebras.
The same identification holds for $suP_{\infty}$-algebras and $suP_{\infty}$-coalgebras.
\end{rem}

\noindent
\textit{Notations.}
To conclude this section of definitions, let us fix some notations for the next part. When writing our various categories of algebras and coalgebras, we use the superscripts $aug$ for augmented algebras, $coaug$ for coaugmented coalgebras, $con$ for connected (co)algebras, $0-con$ for $0$-connected (co)algebras, and $pt$ for pointed (co)algebras.

\subsection{Proofs of Theorem 0.1(2) and Corollary 0.2}\label{S:proofCorollary0.2}

Part (2) of Theorem 0.1 follows from the following lemma:
\begin{lem}\label{L: equivpoint}
(1) There is an equivalence of $\infty$-categories
\[
(-)_+:E_1-Cog^{conil}(dgCog^{conil})\rightleftarrows E_1-Cog^{conil,pt}(dgCog^{conil}):(-)_-.
\]

(2) There is an equivalence of $\infty$-categories
\[
(-)_+:E_1-Alg^{0-con}(dgCog^{conil})\rightleftarrows E_1-Alg^{aug,con}(dgCog^{conil}):(-)_-.
\]
\end{lem}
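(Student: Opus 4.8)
The plan is to realise both statements as the internal, $E_1$-structured shadow of the strict symmetric monoidal equivalence $(-)_+ : Ch_{\mathbb{K}} \rightleftarrows Ch_{\mathbb{K}}^{pt} : (-)_-$ recalled above, and then to invoke Lemma~\ref{L: stricteq}. First I would record that this base equivalence is \emph{strong} symmetric monoidal: from $(X_+)_- = X$ one obtains $X_+ \wedge Y_+ = ((X_+)_- \otimes (Y_+)_-)_+ = (X\otimes Y)_+$ together with $\mathbb{K}_+ = 1_{Ch_{\mathbb{K}}^{pt}}$, so that $(-)_+$, and hence $(-)_-$, intertwines the internal tensor product with the smash product. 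Because the tensor product and the unit $\mathbb{K}$ are the only data entering the definition of an $E_1$-algebra or $E_1$-coalgebra internal to $dgCog^{conil}$, the very same ``add $\mathbb{K}$'' and reduction constructions can be performed inside the symmetric monoidal category $dgCog^{conil}$, now relative to its unit object $\mathbb{K}$; this is what the symbols $(-)_\pm$ denote in the statement.

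For part (2) I would take $(-)_+$ to be the unitalisation $X \mapsto X \oplus \mathbb{K}$ and $(-)_-$ the augmentation ideal $A \mapsto \ker(\epsilon)$, both formed in $dgCog^{conil}$. The $E_1$-algebra structure on $X_+$ is assembled from the given structure on $X$ and the distinguished summand $\mathbb{K}$, the action on the copies of $\mathbb{K}$ being governed by the morphism $E_1 \to Com$ that endows $\mathbb{K}$ with its $E_1$-algebra structure; by the Remark above identifying augmented $uP$-algebras with pointed $P$-algebras, $X_+$ is thereby a connected augmented $E_1$-algebra whose augmentation ideal is $X$. The decorations then match on the nose: if $X$ is concentrated in positive degrees then $X_+$ has degree-zero part exactly $\mathbb{K}$, so it is connected and augmented, while conversely the augmentation ideal of a connected augmented $E_1$-algebra is $0$-connected. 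The splittings $A \cong \ker(\epsilon) \oplus \mathbb{K}$ and $(X\oplus\mathbb{K})_- \cong X$ show that the unit and counit of the adjunction $(-)_+ \dashv (-)_-$ are isomorphisms, so $(-)_\pm$ is an adjoint equivalence of ordinary categories.

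The same bookkeeping, read for comonoids, settles part (1): $(-)_+$ adjoins a counit and a coaugmentation while $(-)_-$ passes to the coaugmentation coideal, the reduced coproduct being unchanged by addition or deletion of the grouplike summand $\mathbb{K}$, so that conilpotence is preserved in both directions and conilpotent $E_1$-coalgebras correspond exactly to pointed conilpotent $E_1$-coalgebras. In either part both functors visibly preserve quasi-isomorphisms, since they only add or remove a constant summand; they are therefore functors of relative categories whose adjunction has invertible, and in particular pointwise weakly equivalent, unit and counit, and Lemma~\ref{L: stricteq} promotes the adjoint equivalence to an equivalence of the associated $\infty$-categories.

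The genuine difficulty is the verification carried out internally to $dgCog^{conil}$ rather than in $Ch_{\mathbb{K}}$: one must check that adjoining $\mathbb{K}$ produces a homotopy-associative $E_1$-structure compatible simultaneously with the ambient conilpotent coalgebra structure of the base and with the distinguished summand $\mathbb{K}$, and that unitalisation and the augmentation ideal (respectively coaugmentation and coaugmentation coideal) are mutually inverse as \emph{strict} functors, not merely up to homotopy. The morphism $E_1 \to Com$ is exactly what controls the $E_1$-action on the adjoined copy of $\mathbb{K}$, while the connectivity and conilpotence hypotheses are precisely what force the two families of decorations to correspond without loss.
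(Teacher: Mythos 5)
Your proposal is correct and follows essentially the same route as the paper: establish that $(-)_+$ and $(-)_-$ form a strict adjoint equivalence of ordinary categories (unitalization/augmentation ideal, respectively coaugmentation/coaugmentation coideal, performed internally to $dgCog^{conil}$ with $\mathbb{K}$ serving as the pointing since it is a bialgebra), observe that both functors preserve quasi-isomorphisms, and conclude by Lemma~\ref{L: stricteq}. The only cosmetic difference is that the paper obtains the strict equivalence by citing the classical $A_\infty$-(co)algebra statement and lifting it to $dgCog^{conil}$ via the fact that the relevant (co)limits are created in chain complexes, whereas you verify the internal lift directly through the strong symmetric monoidality of $(-)_\pm$; both verifications amount to the same bookkeeping.
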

\begin{proof}
Recall that the functors $(-)_+$ and $(-)_-$ defines equivalences of categories
\[
(-)_+:E_1-Alg\rightleftarrows E_1-Alg^{pt}:(-)_-
\]
between non-unitary $A_{\infty}$-algebras and augmented (strictly) unitary $A_{\infty}$-algebras (see for instance \cite{Kel})
and dually
\[
(-)_+:E_1-Cog^{conil}\rightleftarrows E_1-Alg^{conil,pt}:(-)_-
\]
between conilpotent non-counitary $A_{\infty}$-coalgebras and conilpotent coaugmented (strictly) counitary $A_{\infty}$-algebras.
The functor $(-)_+$ is defined by a direct sum and the functor $(-)_-$ is defined by a cokernel (cokernel of the coaugmentation). Since colimits in conilpotent dg coalgebras are created in chain complexes, these functors lift to the equivalence of categories of part (2). The same argument applies for part (1), combined with the fact that this equivalence restricts to an equivalence between $0$-connected algebras and connected augmented algebras. In both cases, the base field $\mathbb{K}$ gives a well defined pointing because it is an associative and coassociative bialgebra.
Moreover, in both cases the functors preserve quasi-isomorphisms by definition, so we conclude by applying Lemma~\ref{L: stricteq}.
\end{proof}
According to Lemma~\ref{L: equivpoint} and to part (1) of Theorem 0.1, the composite adjunction
\[
(\mathcal{B}^{enh,pt}_{E_1}(-)_-)_+: E_1-Alg^{aug,con}(dgCog^{conil})\rightleftarrows E_1-Cog^{conil,pt}(dgCog^{conil}):-(\Omega^{enh,pt}_{E_1}(-)_-)_+
\]
defines an equivalence of $\infty$-categories.

We now build explicitly the fully faithful $\infty$-functor of Corollary 0.2:
\begin{thm}\label{T: tildomega}
There is a fully faithful $\infty$-functor
\[
\tilde{\Omega}:E_1-Alg^{aug,con}(dgCog^{conil})\hookrightarrow E_2-Alg^{aug}
\]
from pointed homotopy associative conilpotent dg bialgebras to augmented dg $E_2$-algebras.
\end{thm}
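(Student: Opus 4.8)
The plan is to write $\tilde{\Omega}$ as a composite in which full faithfulness is isolated in a single cobar-type functor, all other factors being equivalences. The first factor is the left adjoint $(\mathcal{B}^{enh,pt}_{E_1}(-)_-)_+$ of Theorem~\ref{T:Barenhanced}(2): by that theorem, combined with the equivalences of Lemma~\ref{L: equivpoint}, it is an equivalence of $\infty$-categories
\[
L: E_1-Alg^{aug,con}(dgCog^{conil}) \stackrel{\sim}{\longrightarrow} E_1-Cog^{conil,pt}(dgCog^{conil}).
\]
Concretely, $L$ bars the internal $E_1$-algebra structure of a pointed connected bialgebra $B$ while retaining the ambient conilpotent coproduct, so $L(B)$ is a pointed conilpotent object equipped with two compatible conilpotent coproducts: one inherited from the coalgebra part of $B$, one produced by the bar construction out of its algebra part.

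Next I would apply a cobar construction to this ``double coalgebra''. Cobarring the conilpotent coproduct inherited from $B$ produces, by the classical bar--cobar equivalence between conilpotent coalgebras and augmented algebras, an augmented homotopy associative algebra; the second, bar-induced coproduct is compatible with the first and hence descends to a second, compatible multiplication. As in Kadeishvili \cite{Kad} --- equivalently, after dualizing the Boardman--Vogt additivity $E_1\otimes E_1\simeq E_2$ of \cite{FV}, \cite{Lur2} and invoking the iterated-cobar model $Cobar^{(2)}$ of $E_2$-coalgebras from the Notations --- these two compatible multiplications assemble into an $E_2$-algebra structure. This defines a functor
\[
\Omega^{E_2}: E_1-Cog^{conil,pt}(dgCog^{conil}) \longrightarrow E_2-Alg^{aug},
\]
and I set $\tilde{\Omega} := \Omega^{E_2}\circ L$. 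Availability of the symmetric monoidal model structure on $dgCog^{conil}$ (established in Section 4) is exactly what makes the inner cobar and the $E_2$-lift well defined at the chain level, and compatible with the re-pointing functors $(-)_{\pm}$.

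It then remains to prove that $\Omega^{E_2}$ is fully faithful. I would deduce this from bar--cobar (Koszul) duality for $E_2$: since $E_2$ is Koszul \cite{Fre4}, there is an adjunction $Bar^{(2)}\dashv Cobar^{(2)}$ whose derived unit $C\to Bar^{(2)}Cobar^{(2)}(C)$ is a weak equivalence on conilpotent $E_2$-coalgebras. This is the conilpotent instance of the comonadic recognition Theorem~\ref{T:BarBeckdual}, whose hypotheses (conservativity and preservation of totalizations of the relevant bar functor) are verified exactly as for the inner bar in Section 4. A functor whose derived unit is a weak equivalence induces weak equivalences on derived mapping spaces, hence is fully faithful as an $\infty$-functor by Proposition~\ref{P: fullfaith}. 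Composing with the equivalence $L$ then gives that $\tilde{\Omega}$ is fully faithful and lands in $E_2-Alg^{aug}$.

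The main obstacle will be carrying out the $E_2$-recognition of $\Omega^{E_2}$ rigorously in the pointed conilpotent setting: one must check that the two compatible coproducts emerging from $L$ genuinely organize into a \emph{single} operadic $E_2$-coalgebra (the coalgebraic additivity, up to coherent homotopy rather than strictly), and that $Cobar^{(2)}$ is fully faithful on precisely this conilpotent, pointed subcategory rather than on all $E_2$-coalgebras. Both hinge on the symmetric monoidal model structure on $dgCog^{conil}$ and on the compatibility of $(-)_{\pm}$ with bar, cobar and the monoidal structure, which is where Lemma~\ref{L: equivpoint} and the bookkeeping of Section 5.1 enter; verifying these additivity coherences up to homotopy is the genuinely delicate point, whereas the model-categorical analysis needed to make $L$ itself an equivalence was already supplied in Section 4.
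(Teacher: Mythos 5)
Your global decomposition (the bar-equivalence $L$ followed by a cobar-type functor) has the same shape as the paper's, and you have correctly flagged the delicate point; but the mechanism you propose for that point does not work, and you have misplaced the source of full faithfulness. The object $L(B)$ is an $suE_1\otimes uAss$-coalgebra: the bar construction produces an internal coproduct with a \emph{strict} counit, and the ambient coproduct is counital as well. Dunn additivity, $uE_2\simeq uE_1\otimes uE_1\simeq uE_1\otimes uAss$ (\cite{FV}, \cite{Lur2}), is a statement about \emph{homotopy-unital} operads; there is no direct identification of $suE_1\otimes uAss$-coalgebras with $E_2$-coalgebras, so the claim that ``the two compatible coproducts assemble into a single operadic $E_2$-coalgebra'' cannot be established the way you suggest. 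The paper resolves exactly this by inserting the restriction functor along the operad surjection $uE_1\otimes uAss\twoheadrightarrow suE_1\otimes uAss$, which embeds strictly counital objects into homotopy-counital ones; by Corollary~\ref{C: fullfaithprop} this functor is fully faithful but \emph{not} an equivalence, and it is the step responsible for the word ``embedding'' in the statement of Theorem~\ref{T: tildomega}. Only after this embedding does Dunn additivity (via $\phi^*$, a weak equivalence of $\Sigma$-cofibrant operads) give an equivalence with pointed conilpotent $uE_2$-coalgebras, after which $(-)_-$ and $Cobar^{(2)}$ apply.

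By contrast, you locate the failure of invertibility in $Cobar^{(2)}$; but on conilpotent $E_2$-coalgebras versus augmented $E_2$-algebras, $Cobar^{(2)}$ is a Quillen equivalence (Koszul duality for $E_2$, \cite{Fre4}), hence an equivalence of $\infty$-categories, not merely a fully faithful functor. Consequently, in your decomposition every factor except the unproved ``additivity coherence'' step would be an equivalence, so the entire content of the theorem — full faithfulness without essential surjectivity — rests on precisely the step you leave open. The gap is therefore genuine: the missing idea is the strict-versus-homotopy counit comparison (the surjection $uE_1\otimes uAss\twoheadrightarrow suE_1\otimes uAss$ together with Corollary~\ref{C: fullfaithprop}), which simultaneously legitimizes the $E_2$-recognition and accounts for why $\tilde{\Omega}$ is an embedding rather than an equivalence.
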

\begin{proof}
The equivalence with the $\infty$-category of nilpotent $E_2$-algebras goes through the following sequence of $\infty$-functors:
\begin{eqnarray*}
E_1-Alg^{aug,con}(dgCog^{conil}) & \substack{\stackrel{\sim}{\rightarrow} \\ (\mathcal{B}_{E_1}^{enh}(-)_-)_+} & E_1-Cog^{conil,pt}(dgCog^{conil}) \\
 & = & suE_1\otimes uAss-Cog^{conil,coaug} \\
 & \hookrightarrow & uE_1\otimes uAss-Cog^{conil,coaug} \\
 & \substack{\stackrel{\sim}{\rightarrow} \\ \phi^*}  & uE_2-Cog^{conil,coaug} = E_2-Cog^{conil,pt} \\
 & \substack{\stackrel{\sim}{\rightarrow} \\ (-)_-}  & E_2-Cog^{conil} \\
 & \substack{\stackrel{\sim}{\rightarrow} \\ Cobar^{(2)}} & E_2-Alg^{aug}.
\end{eqnarray*}
The equivalence of the first line is the functor $(\mathcal{B}_{E_1}^{enh}(-)_-)_+$ given by Theorem 0.1(2).
The second line holds by definition of the Boardman-Vogt tensor product $\otimes$ of operads. Indeed, recall that we have
\[
E_1-Cog^{conil,pt}(dgCog^{conil}) = suE_1-Cog^{conil,coaug}(dgCog^{conil}).
\]
The operad $suE_1\otimes uAss$ encodes $suE_1$-coalgebras in $uAss$-coalgebras, and the pointing of any coalgebra $A\in E_1-Cog^{conil,pt}(dgCog^{conil})$ fixes a pointing on $A$ as an object of $dgCog^{conil}$ as well, hence the equality
\[
suE_1-Cog^{conil}(uAss-Cog^{conil})= suE_1\otimes uAss-Cog^{conil}
\]
restricts to
\begin{eqnarray*}
E_1-Cog^{conil,pt}(dgCog^{conil}) & = & suE_1-Cog^{conil,coaug}(dgCog^{conil}) \\
 & = & suE_1\otimes uAss-Cog^{conil,coaug}.
\end{eqnarray*}

The third line is the fully faithful functor induced by the operad morphism
\[
uE_1\otimes uAss\rightarrow suE_1\otimes uAss,
\]
where $suE_1\otimes uAss$ parametrizes coalgebras whose $E_1$-structure is strictly counitary, and $uE_1\otimes uAss$ parametrizes coalgebras whose $E_1$-structure is counitary up to homotopy in order to use a cofibrant model of $uE_1$ (noted $E_1$ in \cite{FV}). At the level of the corresponding categories of coalgebras, this embeds coalgebras with a strict counit into coalgebras with a homotopy counit. This is a surjection of operads, which induces consequently a fully faithful $\infty$-functor by Corollary~\ref{C: fullfaithprop}.

The functor $\phi^*$ is defined as follows. First, the precomposition of any $uE_1\otimes uAss$-algebra structure map with the operad morphism
\[
\phi:uE_2\stackrel{\sim}{\rightarrow} uE_1\otimes uE_1 \stackrel{\sim}{\rightarrow} uE_1\otimes uAss
\]
gives a left Quillen functor
\[
\phi^*:uE_1\otimes uAss-Cog^{conil}\rightarrow uE_2-Cog^{conil},
\]
where the model structures are given by Theorem 1.16 (weak equivalences and cofibrations are quasi-isomorphisms and degreewise injections).
According to the results of \cite{Lur2} and \cite{FV} this is a composite of weak equivalences of $\Sigma$-cofibrant operads,
so this functor is actually an equivalence of $\infty$-categories. According to \cite[Proposition 3.7]{FV}, we have $uE_1\otimes uAss-Cog^{conil}=u(E_1\otimes Ass)-Cog^{conil}$ (the units of two coalgebra structures related by an interchange law coincide). So this is actually an equivalence
\[
\phi^*:u(E_1\otimes Ass)-Cog^{conil}\stackrel{\sim}{\rightarrow} uE_2-Cog^{conil}.
\]
This equivalence induces, in turn, an equivalence of the slice $\infty$-categories
\[
\phi^*:\mathbb{K}\setminus u(E_1\otimes Ass)-Cog^{conil}\stackrel{\sim}{\rightarrow} \phi^*(\mathbb{K})\setminus uE_2-Cog^{conil}.
\]
By definition, these slice categories are respectively $\mathbb{K}\setminus u(E_1\otimes Ass)-Cog^{conil}=uE_1\otimes uAss-Cog^{conil,coaug}$ and $ \phi^*(\mathbb{K})\setminus uE_2-Cog^{conil} = uE_2-Cog^{conil,coaug}$.

The functor $(-)_-$ defines an equivalence of categories between pointed $E_2$-coalgebras and $E_2$-coalgebras which preserves quasi-isomorphisms, hence an equivalence of $\infty$-categories by Lemma~\ref{L: stricteq}.

Finally, the functor $Cobar^{(2)}$ is the twice iterated cobar construction on conilpotent $E_2$-coalgebras, which gives the Koszul duality functor associated to $E_2$ and forms a Quillen equivalence when restricted to conilpotent $E_2$-coalgebras and augmented $E_2$-algebras, hence an equivalence of $\infty$-categories.
\end{proof}
The functor $\tilde{\Omega}$ is thus defined, for every pointed homotopy associative conilpotent dg bialgebra $B$, by
\begin{eqnarray*}
\tilde{\Omega}(B) & = & Cobar^{(2)}(\phi^*(\mathcal{B}^{enh}_{E_1}(B_-)_+)_-) \\
 & = & Cobar^{(2)}\phi^*\mathcal{B}^{enh}_{E_1}(B_-).
\end{eqnarray*}
\begin{example}\label{Ex:Sym}
Our main working example in the last sections of the paper is the symmetric bialgebra $Sym(V)$ over a chain complex $V$.
It is actually a bicomplex, with a homological grading induced by the one of $V$ and a weight grading defined by the symmetric powers. In the remaining part we will consider the total complex associated to this bicomplex, thus we will consider $Sym(V)$ as a dg bialgebra with degree given by the total degree (sum of the homological degree and the weight), and with differential induced by the differential of $V$. This differential does not change the weight but decreases the homological degree by $1$, so it decreases the total degree by $1$. There is a decomposition
\[
Sym(V)=\mathbb{K}\oplus Sym^{\geq 1}(V)
\]
where $Sym^{\geq 1}(V)$ is the part of weight greater or equal to $1$, hence the part of total degree greater or equal to $1$. The base field $\mathbb{K}=Sym^0(V)$ is the part of total degree $0$ in $Sym(V)$, making $Sym(V)$ a pointed conilpotent bialgebra. Hence it makes sense to study $\tilde{\Omega}(Sym(V)) = Cobar^{(2)}\phi^*\mathcal{B}^{enh}_{E_1}(Sym^{\geq 1}(V))$.
\end{example}

\subsection{Proof of Theorem 0.3}

Let us recall the following consequence of Section 2:
\begin{thm}\label{T:EquivalentModuligivesEquivalentDefComplexex}
Let $P_{\infty}$ and $Q_{\infty}$ be two properads and $X$ and $Y$ be two complexes. Let $\varphi:P_{\infty}\rightarrow End_X$ be a $P_{\infty}$-algebra structure on $X$ and $\psi:P_{\infty}\rightarrow End_Y$ be a $Q_{\infty}$-algebra structure on $Y$.
Let us suppose that there is a homotopy equivalence of formal moduli problems.
\[
\underline{P_{\infty}\{X\}}^{\varphi}\simeq \underline{Q_{\infty}\{Y\}}^{\psi}.
\]
Then there exists a zigzag of quasi-isomorphisms
\[
Der_{\varphi}(P_{\infty},End_X)\underbrace{\stackrel{\sim}{\leftarrow}\bullet\stackrel{\sim}{\rightarrow}}_{L_{\infty}} Der_{\psi}(Q_{\infty},End_Y)
\]
of $L_{\infty}$-algebras.
\end{thm}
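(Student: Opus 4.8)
The plan is to reduce the statement to Lurie's equivalence \cite{Lur0} between formal moduli problems and $L_{\infty}$-algebras, using the identification of the tangent $L_{\infty}$-algebra of a deformation problem with a complex of derivations. Write $P_{\infty}=(\mathcal{F}(s^{-1}C),\partial)$ and $Q_{\infty}=(\mathcal{F}(s^{-1}D),\partial)$ for the minimal models. The Proposition of Section~\ref{S:ModuliProblems} shows that the tangent $L_{\infty}$-algebra of $\underline{P_{\infty}\{X\}}^{\varphi}$ is the convolution $L_{\infty}$-algebra twisted by $\varphi$, namely $g_{P,X}^{\varphi}=Hom_{\Sigma}(\overline{C},End_X)^{\varphi}$, and likewise that of $\underline{Q_{\infty}\{Y\}}^{\psi}$ is $g_{Q,Y}^{\psi}=Hom_{\Sigma}(\overline{D},End_Y)^{\psi}$. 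By the isomorphism $g_{P,X}^{\varphi}\cong Der_{\varphi}(\Omega(C),End_X)=Der_{\varphi}(P_{\infty},End_X)$ recorded in Section~\ref{S:ModuliProblems} (coming from \cite[Theorem 12]{MV2}), these tangent $L_{\infty}$-algebras are precisely the derivation complexes appearing in the statement.

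Next I would invoke Lurie's equivalence theorem \cite{Lur0}, upgraded in the Recollections to an equivalence of $\infty$-categories $\tilde{\psi}:L_{\infty}-Alg\stackrel{\sim}{\rightarrow}FMP_{\mathbb{K}}$. Under this equivalence the formal moduli problems $\underline{P_{\infty}\{X\}}^{\varphi}$ and $\underline{Q_{\infty}\{Y\}}^{\psi}$ correspond, up to equivalence, to their tangent $L_{\infty}$-algebras $g_{P,X}^{\varphi}$ and $g_{Q,Y}^{\psi}$. A homotopy equivalence of formal moduli problems is by definition an equivalence in $FMP_{\mathbb{K}}$, so its preimage under $\tilde{\psi}$ is an equivalence in $L_{\infty}-Alg$ between $g_{P,X}^{\varphi}$ and $g_{Q,Y}^{\psi}$.

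Finally, since $L_{\infty}-Alg$ is presented by a model category (equivalently, by the simplicial localization of $L_{\infty}$-algebras at quasi-isomorphisms, via the Quillen equivalence $p_{!}:L_{\infty}-Alg\leftrightarrows dgLie:p^*$ discussed in the excerpt), an equivalence in this $\infty$-category between two objects is realized concretely by a zigzag of $L_{\infty}$-quasi-isomorphisms. Translating back through the identification of the first step yields the asserted zigzag
\[
Der_{\varphi}(P_{\infty},End_X)\underbrace{\stackrel{\sim}{\leftarrow}\bullet\stackrel{\sim}{\rightarrow}}_{L_{\infty}} Der_{\psi}(Q_{\infty},End_Y).
\]
The main point to verify carefully is precisely this last step: one must know that $\tilde{\psi}$ is induced by (a zigzag of) Quillen equivalences, so that an abstract $\infty$-categorical equivalence descends to a genuine zigzag of weak equivalences of $L_{\infty}$-algebras rather than remaining only homotopy-coherent data. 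Everything else is a matter of unwinding the identifications, so I expect no further obstacle once the model-categorical presentation underlying $\tilde{\psi}$ is in place.
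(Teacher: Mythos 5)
Your proposal is correct and follows essentially the same route as the paper: the paper's proof likewise combines Lurie's equivalence theorem (in its $L_{\infty}$-upgraded form $\tilde{\psi}$ from Section 2.1) to convert the equivalence of formal moduli problems into a zigzag of $L_{\infty}$-quasi-isomorphisms of tangent algebras, with the identification of those tangent $L_{\infty}$-algebras as the derivation complexes $g_{P,X}^{\varphi}\cong Der_{\varphi}(P_{\infty},End_X)$ via the convolution-algebra Proposition of Section 2.3 and \cite[Theorem 12]{MV2}. Your extra care about the model-categorical presentation underlying $\tilde{\psi}$ (so that an abstract equivalence is realized by a genuine zigzag of weak equivalences) is exactly what the paper's appeal to Section 2.1 provides.
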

\begin{proof}
By Lurie's equivalence generalized to $L_{\infty}$-algebras (see Section 2.1), a homotopy equivalence of formal moduli problems induces a zigzag of quasi-isomorphisms of their tangent $L_{\infty}$-algebras. In the case of formal moduli problems of algebraic structures, the tangent $L_{\infty}$-algebra can be identified with a derivation complex (see Section 2.2).
\end{proof}

To get Theorem 0.3(1), one first applies Theorem~\ref{T:fiberseqDefwith+} to the functor $\tilde{\Omega}$.
In the sequence of $\infty$-functors used to define $\tilde{\Omega}$, there are equivalences plus a functor induced by precomposition with a surjection of operads. This functor is a fully faithful and conservative $\infty$-functor, so $\tilde{\Omega}$ gives a fully faithful and conservative $\infty$-functor. Moreover, the functor induced by precomposition with an operad morphism extends functorially by definition to algebras in $A$-modules, and still gives a fully faithful and conservative $\infty$-functor (it does not affect the underlying $A$-module structure of an algebra, and weak equivalences of algebras in $A$-modules are still the quasi-isomorphisms).
The construction of the equivalences in Theorem~\ref{T: tildomega} extends readily to algebras in $A$-modules as well. Indeed, the properties of chain complexes needed to build $\mathcal{B}_{E_1}^{enh}$ and $Cobar^{(2)}$ are the same in $A$-modules, which form a cofibrantly generated symmetric monoidal dg category with weak equivalences defined by quasi-isomorphisms, and these two functors are by construction compatible with linear extensions of the form $-\otimes_A B$.
Second, we have a fully faithful and conservative $\infty$-functor
\[
E_1-Alg^{aug,con}(dgCog^{conil})\hookrightarrow Bialg_{\infty}-Alg
\]
induced by the projection $Bialg_{\infty}\twoheadrightarrow (E_1,Ass^{\vee})$, where $(E_1,Ass^{\vee})$ is the properad encoding $E_1$-algebras in dg coalgebras. So the associated tangent formal moduli problems are equivalent, which concludes the proof, since the formal moduli problem of deformations of $B\in E_1-Alg^{aug,con}(dgCog^{conil})$ as a homotopy bialgebra is exactly $\underline{Bialg_{\infty}}\{B\}^{\varphi}$.

The proof of part (2) of Theorem 0.3 is completely similar for the following reasons. 
The functor $(-)^+$ takes quasi-free properads satisfying the conditions of \cite[Corollary 40]{MV2} to quasi-free properads satisfying the same conditions. By \cite[Corollary 40]{MV2}, such properads are cofibrant and by \cite[Theorem 42]{MV2},
every properad admits a cofibrant resolution of this form. Since our formal moduli problems 
are homotopy invariant under the choice of a resolution, we can always choose a resolution of this form, 
and \cite[Proposition 43]{MV2} ensures that the corresponding deformation complexes are also invariant, up to quasi-isomorphism,
under the choice of a resolution. So that all we have to do is to replace props $P_{\infty}$ by $P_{\infty}^+$ in the constructions above to transpose our results from $P_{\infty}$-algebras to $P_{\infty}^+$-algebras. We consequently get,
for any $Bialg^+_{\infty}$-algebra $\varphi^+:Bialg_{\infty}^+\rightarrow End_B$ 
and the corresponding $E_2^+$-algebra structure $\psi^+:E_2^+\rightarrow End_{\tilde{\Omega}B}$,
a zigzag of quasi-isomorphisms of $L_{\infty}$-algebras
\[
g_{Bialg_{\infty}^+,B}^{\varphi^+}\stackrel{\sim}{\leftarrow}\bullet\stackrel{\sim}{\rightarrow}g_{E_2^+,\tilde{\Omega}B}^{\psi^+}.
\]
The left-hand complex is identified with the Gerstenhaber-Schack complex $C^*_{GS}(B,B)$ (see \cite{Mer2}). In other words we
have a zigzag of quasi-isomorphisms
\begin{equation}\label{eq:gBialg=gE_2+} 
C^*_{GS}(B,B)\stackrel{\sim}{\leftarrow}\bullet\stackrel{\sim}{\rightarrow}g_{E_2^+,\tilde{\Omega}B}^{\psi^+},
\end{equation}
so it remains to identify the right-hand complex with the $E_2$-Hochschild complex $CH^{(\bullet > 0)}_{E_2}(\tilde{\Omega}B,\tilde{\Omega}B)$. This is precisely given by Lemma~\ref{L:gE2+=TA}.

\begin{rem}
As explained in \cite{Mer2}, the reason why we need the ``plus'' construction to recover the Gerstenhaber-Schack complex is that, 
one the one hand
\[
C^*_{GS}(B,B)\cong \prod_{m,n\geq 1}Hom_{dg}(B^{\otimes m},B^{\otimes n})[2-m-n],
\]
but on the other hand
\[
g_{Bialg_{\infty},B}^{\varphi}\cong \prod_{m,n\geq 1,m+n\geq 3}Hom_{dg}(B^{\otimes m},B^{\otimes n})[2-m-n]
\]
is an $L_{\infty}$-algebra with differential also given by the Gerstenhaber-Schack differential, but without the term $Hom_{dg}(B,B)$.
\end{rem}

\section{Identification of Deformation complexes with higher Hochschild, Gerstenhaber-Schack and Tamarkin complexes of $Pois_n$-algebras}

In this section we identify the deformation complex $g_{E_2^+,\tilde{\Omega}B}^{\psi^+}$ and prove Corollary~\ref{C:GS=HH}. More generally we identify the underlying $L_\infty$-structures of several complexes related to the deformation theory of $Pois_n$, $E_n$ and bialgebra structures.

\subsection{Tamarkin deformation complexes of $Pois_n$-algebras}
We have already mentioned the higher Hochschild complexes controlling the deformation theory of $E_2$-algebras in Section~\ref{S:EnHoch} and their identification with deformation complexes and 
tangent complexes (ee Corollary~\ref{L:gE2+=TA}).
We now introduce  Tamarkin deformation complexes of a $Pois_n$-algebra~\cite{Ta-deformationofd-algebra}
and prove that these complexes \emph{do control deformations} of (dg-)$Pois_n$-algebras. Relying on the formality of $E_n$-operads, we will use these complexes to simplify the computations of the deformation
complex of symmetric bialgebras in Section~\ref{S:GSofSym}. We only need the case $n=2$, but the proofs are just as easy for a general $n$ so we do it in this generality.

\smallskip

We denote by $Pois_n$ the operad of $Pois_n$-algebras and $uPois_n$ the operad of unital $Pois_n$-algebras.

\smallskip

Let $A$ be a dg $Pois_n$-algebra, with structure morphism $\psi:Pois_n\rightarrow End_A$.  We denote by $CH_{Pois_n}^*(A,A)$ its $Pois_n$-Hochschild cochain complex, also referred to 
as its $Pois_n$-deformation complex as defined by Tamarkin~\cite{Ta-deformationofd-algebra} and Kontsevich~\cite{Ko1}.
Following
Calaque-Willwacher~\cite{CaWi}, we note that this complex is given by the suspension
\begin{equation}\label{eq:CHPoisDef}
 CH_{Pois_n}^{*}(A,A) \, := \, Hom_{\Sigma}(u {Pois_n}^* \{n\}, End_A)[-n] 
\end{equation}
of the underlying chain complex of the convolution Lie algebra. 
Here  $(-)^*$ is the linear dual and $\{n\}$ is the operadic $n$-iterated suspension.
The inclusion of $Pois_n$ in $uPois_n$ induces a splitting (as a graded space)
\begin{equation}\label{eq:CHPoisDefSplit}
CH_{Pois_n}^{*}(A,A) \, \cong \, A \oplus Hom_{\Sigma}({Pois_n}^* \{n\}, End_A)[-n]
\end{equation}
and also gives rise to 
the \emph{truncated} deformation complex
\begin{equation}\label{eq:CHPoisDefTruncated} CH_{Pois_n}^{(\bullet>0)}(A,A) =  Hom_{\Sigma}({Pois_n}^* \{n\}, End_A)[-n] 
\end{equation}
obtained by deleting the \lq\lq{}unit part\rq\rq{} $A$,
which is more relevant to deformations of $Pois_n$-algebras\footnote{as opposed to deformation of categories of modules}, 
see Lemma~\ref{L:gPoiss=Tam}. Note that both complexes are naturally bigraded with respect to the internal grading of $A$ 
and the \lq\lq{}operadic\rq\rq{} grading coming from $u {Pois_n}^*$.
The notation $CH_{Pois_n}^{(\bullet>0)}(A,A)$ is there to suggest that we are taking the 
subcomplex with positive weight with respect to the operadic grading.

\smallskip

The suspensions $CH_{Pois_n}^{*}(A,A) [n]$ and $CH_{Pois_n}^{(\bullet>0)}(A,A)[n] $ have \emph{canonical 
$L_\infty$-structures} since they are convolution algebras, and $CH_{Pois_n}^{(\bullet>0)}(A,A)[n]$ is canonically a sub $L_{\infty}$-algebra of $CH_{Pois_n}^{*}(A,A)[n]$.
Tamarkin~\cite{Ta-deformationofd-algebra}
(see also~\cite{Ko1, CaWi}) proved that \emph{the complex $CH_{Pois_n}^{*}(A,A)$ actually inherits a (homotopy) $Pois_{n+1}$-algebra 
structure} lifting this $L_{\infty}$-structure. 
Further, by~\eqref{eq:CHPoisDefSplit} we have an exact sequence of cochain complexes
\begin{equation}
 \label{eq:fibersequencePoisn}
 0\longrightarrow CH_{Pois_n}^{(\bullet>0)}(A,A)  \longrightarrow CH_{Pois_n}^{*}(A,A) \longrightarrow A \longrightarrow 0
\end{equation}
which yields after suspending the exact triangle
\begin{equation}
 \label{eq:fibersequencePoisnasLie}
  A[n-1] \stackrel{\partial_{Pois_n}[n-1]}\longrightarrow CH_{Pois_n}^{(\bullet>0)}(A,A)[n]  \longrightarrow CH_{Pois_n}^{*}(A,A)[n]. 
\end{equation}
\begin{rem}\label{R: hofibPois}
The map $\partial_{Pois_n}: A \subset CH_{Pois_n}^{*}(A,A)\to CH_{Pois_n}^{(\bullet>0)}(A,A)$ is the part of the 
differential in the cochain complex
$CH_{Pois_n}^{*}(A,A) =A \oplus CH_{Pois_n}^{(\bullet>0)}(A,A)$ which comes from the operadic structure. 
That is $\partial_{Pois_n}(x) \in Hom(A,A)$ is the map $a\mapsto \pm [x,a]$ where the bracket is the bracket of the $Pois_n$-algebra.
The Jacobi identity for the Lie algebra $A[n-1]$ implies that the sequence~\eqref{eq:fibersequencePoisnasLie} is a sequence of
$L_\infty$-algebras.
\end{rem}
\begin{rem}
 The operad $Pois_n$ is denoted $e_n$ in \cite{CaWi, Ta-deformationofd-algebra} and the complex
 $CH_{Pois_n}^{*}(A,A)$ is simply denoted $def(A)$ in Tamarkin~\cite{Ta-deformationofd-algebra}. We prefer to use the notations we have introduced by analogy with (operadic) Hochschild complexes.
\end{rem}
The next Lemma compares the $L_\infty$-algebra structure of the truncated $Pois_n$ Hochschild complex and the 
one associated to the derived algebraic group of homotopy automorphisms of a $Pois_n$-algebra:
\begin{lem}\label{L:gPoiss=Tam}
Let $A$ be a dg $Pois_n$-algebra with structure map $\psi:Pois_n\rightarrow End_A$. There is an equality of dg Lie algebras
$$ g_{Pois_n^+,A}^{\psi^+} = CH_{Pois_n}^{(\bullet>0)}(A,A) $$
where the right hand side is the truncated cochain complex of a $Pois_n$-algebra defined by Tamarkin as above.
\end{lem}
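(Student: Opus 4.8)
The plan is to prove the equality directly, by unwinding both sides as twisted convolution Lie algebras over the same underlying data, rather than through a homotopy-invariant comparison; this is what yields an \emph{equality} (not merely a quasi-isomorphism) of dg Lie algebras. Recall from \S\ref{S:ModuliProblems} that for a Koszul operad $P$ with minimal model $P_{\infty}=(\mathcal{F}(s^{-1}C),\partial)\xrightarrow{\sim}P$ and structure map $\psi$, one has $g_{P,A}^{\psi}=Hom_{\Sigma}(\overline{C},End_A)^{\psi}$, the convolution dg Lie algebra twisted by the Maurer--Cartan element $\psi$. Since $Pois_n$ is Koszul and self-dual up to operadic suspension ($Pois_n^{!}\cong Pois_n\{n\}$), the Koszul dual cooperad $C$ is identified, in arities $\geq 2$, with $Pois_n^{*}\{n\}$ (up to the shift $[-n]$ recorded in the cobar/minimal-model conventions). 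Hence $g_{Pois_n,A}^{\psi}$ is, as a dg Lie algebra, exactly the weight $\geq 2$ truncation $CH_{Pois_n}^{(\bullet>1)}(A,A)$ of \eqref{eq:CHPoisDefTruncated}.

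First I would determine the effect of the plus construction on this convolution algebra. By \S\ref{S:Plus} (the isomorphism $P_{\infty}^{+}\cong P_{\infty}\vee Di$), a cofibrant resolution of $Pois_n^{+}$ is $\mathcal{F}(s^{-1}C\oplus\mathbb{K}\delta)$ with $\delta$ the single arity-$(1,1)$ generator of $Di$; equivalently its generating co-object $C^{+}$ satisfies $\overline{C^{+}}=\overline{C}\oplus\mathbb{K}\cdot s\delta$, the extra summand sitting in arity $1$. Consequently, as graded vector spaces,
\[
g_{Pois_n^{+},A}^{\psi^{+}}=Hom_{\Sigma}(\overline{C^{+}},End_A)^{\psi^{+}}\cong Hom(A,A)\oplus Hom_{\Sigma}(\overline{C},End_A),
\]
and the new summand $Hom_{\Sigma}(\mathbb{K},End_A)=End_A(1)=Hom(A,A)$ is precisely the weight-$1$ part $Hom_{\Sigma}(Pois_n^{*}\{n\}_{(1)},End_A)[-n]$. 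Adjoining it to the weight $\geq 2$ part yields the full weight $\geq 1$ complex, so that $g_{Pois_n^{+},A}^{\psi^{+}}=CH_{Pois_n}^{(\bullet>0)}(A,A)$ as graded spaces. By contrast, the \emph{unital} operad $uPois_n$ would adjoin an arity-$0$, i.e. weight-$0$, generator, recovering the extra summand $A$ of \eqref{eq:CHPoisDefSplit} and the full complex $CH_{Pois_n}^{*}(A,A)$.

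It then remains to match the dg Lie structures. The Lie bracket on $g_{Pois_n^{+},A}^{\psi^{+}}$ is the antisymmetrised convolution product built from the cooperad structure of $C^{+}$ and the operad structure of $End_A$; since $CH_{Pois_n}^{(\bullet>0)}(A,A)$ is, by definition \eqref{eq:CHPoisDef}, the convolution Lie algebra on $Hom_{\Sigma}(Pois_n^{*}\{n\},End_A)$, the two brackets are given by the same formula and coincide. For the differentials, the part induced by the internal differential of $End_A$ matches tautologically, and the remaining part is the twist by the Maurer--Cartan element $\psi^{+}$. On the weight $\geq 2$ part this twist is exactly the operadic differential of $CH_{Pois_n}^{(\bullet>1)}(A,A)$, while the coproduct of $C^{+}$ couples the new arity-$1$ cogenerator to the $Pois_n$-cooperations, so that the twist by $\psi$ sends it into $Hom(A,A)$ by $x\mapsto\pm[x,-]$, which is precisely the operadic differential $\partial_{Pois_n}$ of Remark~\ref{R: hofibPois} restricted to weight $1$. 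This identifies the two differentials and completes the proof.

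The main obstacle I anticipate is the bookkeeping of operadic suspensions and signs in the Koszul self-duality of $Pois_n$, needed to pin down that $\overline{C^{+}}$ is \emph{exactly} $Pois_n^{*}\{n\}$ (in arities $\geq 1$, after the shift $[-n]$) and that the twist contributed by the plus generator reproduces Tamarkin's operadic differential on the nose rather than up to a sign or a homotopy. As a consistency check one can compare with Theorem~\ref{T:Def+=hAut} and Lemma~\ref{L:gE2+=TA}: the former identifies $g_{Pois_n^{+},A}^{\psi^{+}}$ with $Lie(\underline{haut}_{Pois_n}(A,\psi))$, which corroborates that the weight-$1$ piece $Hom(A,A)$ must be present and that the resulting $L_{\infty}$-structure is the expected one.
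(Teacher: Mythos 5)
Your proof is correct and is essentially the paper's own argument: unwind the plus construction on the Koszul-dual/cobar side, note that the extra arity-one generator supplies exactly the identity component $I$ (the weight-one piece $Hom(A,A)$) missing from $\overline{Pois_n^{*}\{n\}}$, and conclude that $g_{Pois_n^+,A}^{\psi^+}=Hom_{\Sigma}(\overline{Pois_n^{*}\{n\}}\oplus I,End_A)^{\psi}$ is the twisted convolution dg Lie algebra $Conv(Pois_n^{*}\{n\},End_A)$ of Calaque--Willwacher, i.e.\ Tamarkin's truncated complex. One slip to fix: the differential on the weight-one part is not ``$\partial_{Pois_n}$ of Remark~\ref{R: hofibPois} restricted to weight $1$'' (that map is the weight-$0\to 1$ component $x\mapsto\pm[x,-]$, which only exists in the full complex $CH_{Pois_n}^{*}(A,A)$); rather, the twist by $\psi$ sends $f\in Hom(A,A)$ to the weight-two element measuring its failure to be a derivation of the $Pois_n$-structure, which is indeed what the coproduct coupling of the arity-one cogenerator that you invoke produces.
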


\begin{proof}
According to the definition of the plus construction $(-)^+$ given in Section~\ref{S:Plus}, we have
\[
Pois_{n\infty}^+=\Omega(Pois_n^*\{n\})^+=(\mathcal{F}(\overline{Pois_n^*\{n+1\}}^+),\partial^+)
\]
where $Pois_{n\infty}$ is the minimal model of $Pois_n$, $(-)^*$ is the linear dual, $\{n\}$ is the operadic $n$-iterated suspension, $\Omega$ is the operadic cobar construction and $\overline{-}$ is the coaugmentation ideal of a coaugmented cooperad.
Recall that the collection of generators $\overline{Pois_n^*\{n+1\}}^+$ is given by
\[
\overline{Pois_n^*\{n+1\}}^+(1)=\overline{Pois_n^*\{n+1\}}(1)\oplus \mathbb{K}[1]=\overline{Pois_n^*\{n+1\}}(1)\oplus \mathbb{K}d
\]
where $d$ is a generator of degree $1$
and
\[
\overline{Pois_n^*\{n+1\}}^+(r)=\overline{Pois_n^*\{n+1\}}(r)
\]
for $r>1$. The restriction of the differential $\partial^+$ on the generators decomposes into $\partial^+=\partial + \delta$
where $\partial$ is the differential of the minimal model and $\delta$ is defined by $\delta(d)=d\otimes d$ and zero when evaluated on the other generators (note that, by the Koszul sign rule and for degree reasons, we have $\delta^2(d)=0$ so we get a differential indeed). Now let $\psi^+:Pois_{n\infty}^+\rightarrow End_A$ be the operad morphism induced by $\psi$, thus a Maurer-Cartan element of the convolution graded Lie algebra $g_{Pois_n^+,A}$. We twist this Lie algebra by $\psi$ to get a dg Lie algebra $g_{Pois_n^+,A}^{\psi^+}$ with the same Lie bracket and whose differential is defined by
\[
\pm (d_A)_*+[\psi,-]
\]
where $(-)_*$ denotes the post-composition, $d_A$ is the differential on $End_A$ induced by the differential of $A$, the $\pm$ sign is defined according to the Koszul sign rule and $[-,-]$ is the convolution Lie bracket. Note here that the Koszul dual cooperad has no internal differential. We refer the reader to \cite[Chapter 12]{LV} for more details about such convolution Lie algebras. Now let us point out that
\[
\overline{Pois_n^*\{n+1\}}^+(1)=\overline{Pois_n^*\{n+1\}}(1)\oplus \mathbb{K}[1]=(\overline{Pois_n^*\{n\}}(1)\oplus \mathbb{K})[1],
\]
which implies that
\[
g_{Pois_n^+,A}^{\psi^+}=Hom_{\Sigma}(\overline{Pois_n^*\{n\}}\oplus I,End_A)^{\psi}=Conv(Pois_n^*\{n\},End_A)
\]
where $Conv(Pois_n^*\{n\},End_A)$ is the convolution Lie algebra of \cite[Section 2.2]{CaWi}.
This is an equality of dg Lie algebras, because the convolution bracket is defined by the action of the infinitesimal cooperadic coproduct on the coaugmentation ideal,  so is the same on both sides.
\end{proof}
\begin{rem}\label{R:VariousDefPoisn}
Lemma~\ref{L:gPoiss=Tam} together with Theorem~\ref{T:Def+=hAut} implies that 
the truncated Tamarkin deformation complex $CH_{Pois_n}^{(\bullet>0)}(A,A)$ controls deformations
of $A$ into dg $Pois_n$-algebras, in other words is the tangent Lie algebra of the derived algebraic group $\underline{haut}_{Pois_{n\infty}}(A)$, where $Pois_{n\infty}$ is a cofibrant resolution of $Pois_n$. 

\smallskip

The proof of Lemma~\ref{L:gPoiss=Tam} also shows that the deformation complex 
$g_{Pois_n,A}^{\psi}$ of the formal moduli problem $\underline{{Pois_{n}}_{\infty} \{A \}}^{\psi}$ is given by the $L_\infty$-algebra 
$CH_{Pois_n}^{(\bullet>1)}(A,A)[n]$, which is the kernel  
\begin{equation}\label{eq:DDefPoisnPas+} CH_{Pois_n}^{(\bullet>1)}(A,A)[n] := \ker\Big(CH_{Pois_n}^{(\bullet>0)}(A,A)[n]\twoheadrightarrow Hom (A, A)[n]\Big) 
\end{equation}
and is thus a even further truncation of $CH_{Pois_n}^*(A,A)$.
The situation is thus similar to what happens in deformation theory of associative algebras.

\smallskip

One can also wonder which deformation problem controls the full complex 
 $CH_{Pois_n}^*(A,A)$. In  view of Theorem~\ref{T:Defcomplexesallagree} below and classical 
 results on deformation theory of $E_n$-algebras (\cite{KellerLowen, Preygel, Fra}), we can conjecture
 that $CH_{Pois_n}^*(A,A)$ shall control deformations of categories of modules over  $Pois_n$-algebras into  $E_{|n-1|}$-monoidal dg-categories, with some shift on the linear enrichment of the category when $n\leq 1$ according to the red shift trick~\cite{Kap-TFT, Toen-ICM, Toe}.
\end{rem}

We now compare the deformation complexes of $Pois_n$-algebras with those of $E_n$-algebras.
Let us fix a formality morphism $\varphi:E_n\stackrel{\sim}{\rightarrow}Pois_n$ ($n\geq 2$). 
This allows to see any dg $Pois_n$-algebra 
as an $E_n$-algebra. 
\begin{lem}\label{L:gE2+=gPois2+} Let $A$ be a dg $Pois_n$-algebra with structure map $\psi:Pois_n\rightarrow End_A$. Then, there is a quasi-isomorphism of dg Lie algebras 
$$CH_{Pois_n}^{(\bullet>0)}(A,A)[n] = g_{Pois_n^+,A}^{\psi^+}\stackrel{\sim}{\rightarrow} g_{E_n^+, A}^{(\psi\circ\varphi)^+}  $$
where the left hand side is the truncated cohomology complex~\eqref{eq:CHPoisDefTruncated} of a $Pois_n$-algebra defined by Tamarkin~\cite{Ta-deformationofd-algebra}.
\end{lem}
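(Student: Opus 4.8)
The equality $CH_{Pois_n}^{(\bullet>0)}(A,A)[n] = g_{Pois_n^+,A}^{\psi^+}$ is Lemma~\ref{L:gPoiss=Tam}, so the only task is to produce a quasi-isomorphism $g_{Pois_n^+,A}^{\psi^+}\stackrel{\sim}{\rightarrow}g_{E_n^+,A}^{(\psi\circ\varphi)^+}$ induced by the formality morphism $\varphi$. The plan is to obtain this map by functoriality of the plus construction and of the convolution Lie algebras computing these deformation complexes, and then to check that it is a quasi-isomorphism using that $\mathbb{K}$ has characteristic zero.

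As recalled in Section~\ref{S:Plus}, the plus construction preserves quasi-isomorphisms of operads, so $\varphi\colon E_n\stackrel{\sim}{\rightarrow}Pois_n$ induces a weak equivalence $\varphi^+\colon E_n^+\stackrel{\sim}{\rightarrow}Pois_n^+$, and by functoriality of $(-)^+$ one has $(\psi\circ\varphi)^+=\psi^+\circ\varphi^+$. Both deformation complexes are convolution Lie algebras of the form $Hom_{\Sigma}(\overline{C},End_A)$ twisted by the relevant Maurer-Cartan element, exactly as in the proof of Lemma~\ref{L:gPoiss=Tam}: for $Pois_n^+$ the cogenerators are (the plus-modification of) the Koszul dual cooperad $Pois_n^*\{n\}$, and for $E_n^+$ they are (the plus-modification of) the bar construction $\overline{C_{E_n}}\simeq B E_n$. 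The morphism $\varphi$ induces a morphism of cooperads $C_{E_n}\to Pois_n^*\{n\}$ (the map of bar constructions, equivalently of Koszul dual cooperads), a weak equivalence since $\varphi$ is and the bar construction preserves weak equivalences of operads over a field of characteristic zero; precomposition with it defines the comparison map $g_{Pois_n^+,A}^{\psi^+}\to g_{E_n^+,A}^{(\psi\circ\varphi)^+}$. It is a morphism of dg Lie algebras because a cooperad morphism intertwines the infinitesimal coproducts defining the convolution brackets, and it is compatible with the twisted differentials since it carries $\psi^+$ to $(\psi\circ\varphi)^+$ by the factorization above.

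Finally I would argue this map is a quasi-isomorphism. Since $\mathbb{K}$ has characteristic zero, every $\Sigma_n$-module is projective by Maschke's theorem, so $Hom_{\Sigma}(-,End_A)$ is exact and sends the weak equivalence of cogenerators induced by $\varphi$ to a quasi-isomorphism of the untwisted convolution complexes. Both convolution algebras are complete for the weight (arity) filtration, for which the comparison map is filtered and the Maurer-Cartan elements strictly raise weight; a spectral sequence comparison on this filtration then upgrades the untwisted quasi-isomorphism to the twisted one. I expect this last step — passing from the untwisted to the twisted statement — to be the main obstacle, since it is exactly where completeness of the convolution algebras and the weight-compatibility of $\varphi$ must be used. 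Alternatively, one can bypass the explicit map entirely and invoke the invariance of deformation complexes under quasi-isomorphism of cofibrant resolutions \cite[Proposition 43]{MV2}: lifting $Pois_{n\infty}^+\stackrel{\sim}{\rightarrow}Pois_n^+$ through $\varphi^+$ produces a single cofibrant resolution computing both sides with structure maps related by $\psi^+\circ\varphi^+=(\psi\circ\varphi)^+$, giving the quasi-isomorphism at once.
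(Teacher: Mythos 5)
Your proposal is correct and follows essentially the same route as the paper: both transport the deformation complex along the formality morphism at the level of Koszul-dual cogenerators, i.e.\ via a tangent (cogenerator-level) quasi-isomorphism of resolutions, compatibly with the plus construction and the identity $(\psi\circ\varphi)^+=\psi^+\circ\varphi^+$. The only difference is that where you re-prove the key invariance statement by hand (characteristic-zero exactness of $Hom_{\Sigma}(-,End_A)$ for the untwisted complexes, then the weight-filtration spectral sequence to pass to the twisted ones --- which you rightly identify as the crux), the paper verifies that $\varphi_{\infty}^+:E_{n\infty}^+\stackrel{\sim}{\rightarrow}Pois_{n\infty}^+$ is a tangent quasi-isomorphism and cites \cite[Theorem 7]{MV2}, whose proof is precisely your filtration argument.
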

\begin{proof}
The formality quasi-isomorphism $\varphi$ induces a quasi-isomorphism of cofibrant resolutions
\[
\varphi_{\infty}:E_{n\infty}\stackrel{\sim}{\rightarrow}Pois_{n\infty}
\]
fitting in a commutative diagram
\[
\xymatrix{
E_{n\infty}=\Omega(E_n^{\textrm{!`}})\ar[d]_-i^-{\sim}\ar[r]^-{\sim} & Pois_{n\infty}=\Omega(Pois_n^{\textrm{!`}}) \\
\Omega\mathcal{B}(E_n)\ar[r]_-{\Omega\mathcal{B}(\varphi)}^-{\sim} & \Omega\mathcal{B}(Pois_n)\ar[u]_-{\Omega(\pi)}^-{\sim} \\
}
\]
where $\mathcal{B}$ is the operadic bar construction, $\Omega$ is the operadic cobar construction,
$(-)^{\textrm{!`}}$ is the Koszul dual cooperad given by $H^0\mathcal{B}(-)$ (see \cite[Chapter 7]{LV}), 
$i$ is the inclusion and $\pi:\mathcal{B}(Pois_n)\twoheadrightarrow H^0\mathcal{B}(Pois_n)$ is the projection. 
This construction implies that $\varphi_{\infty}$ is a tangent quasi-isomorphism of dg operads in the sense of \cite[Theorem 7]{MV2},
that is, the composite map
\[
E_n^{\textrm{!`}}\{1\}\hookrightarrow E_{n\infty}\stackrel{\sim}{\rightarrow}Pois_{n\infty}\twoheadrightarrow Pois_n^{\textrm{!`}}\{1\}
\]
is a quasi-isomorphism. Given that, according to Section 1.2.2, the plus construction $(-)^+$ preserves quasi-isomorphisms of dg props,
this implies that the construction above works with the plus construction as well and induces a tangent quasi-isomorphism
\[
\varphi_{\infty}^+:E_{n\infty}^+\stackrel{\sim}{\rightarrow}Pois_{n\infty}^+,
\]
which by \cite[Theorem 7]{MV2} induces the desired quasi-isomorphism of dg Lie algebras
\[
(\varphi_{\infty}^+)^*:g_{Pois_n^+,A}^{\psi^+} \stackrel{\sim}{\rightarrow} g_{E_n^+, A}^{(\psi\circ\varphi)^+}.
\]
\end{proof}

\begin{cor} \label{C:ModE2+=ModPos2+}
Let $A$ be a $Pois_n$-algebra with structure map $\psi:Pois_n\rightarrow End_A$.
  The formality map $\varphi$ induces an equivalence of formal moduli problems 
 $$ \varphi_{\infty}^*:\underline{{Pois_n^{+}}_{\infty}}^{\psi^+}\{A\}\stackrel{\simeq}\longrightarrow \underline{{E_n^+}_{\infty}}\{\varphi^*A\}^{(\psi\circ\varphi)^+}$$
(where $\varphi^*A$ is the $E_n$-algebra obtained from the $Pois_n$-algebra $A$ by restriction of structures along $\varphi$).
\end{cor}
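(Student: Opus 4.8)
The plan is to deduce this equivalence of formal moduli problems directly from the quasi-isomorphism of tangent $L_\infty$-algebras established in Lemma~\ref{L:gE2+=gPois2+}, by invoking Lurie's equivalence theorem in the form recalled in Section 2.1. First I would recall from the discussion following Theorem~\ref{T:Yal2} that each of the two moduli problems is identified with the Maurer--Cartan functor of its tangent $L_\infty$-algebra, namely
\[
\underline{{Pois_n^{+}}_{\infty}}^{\psi^+}\{A\}\;\simeq\;\underline{MC_{\bullet}^{fmp}}\bigl(g_{Pois_n^+,A}^{\psi^+}\bigr),\qquad \underline{{E_n^+}_{\infty}}\{\varphi^*A\}^{(\psi\circ\varphi)^+}\;\simeq\;\underline{MC_{\bullet}^{fmp}}\bigl(g_{E_n^+,A}^{(\psi\circ\varphi)^+}\bigr),
\]
where both deformation complexes are the convolution $L_\infty$-algebras attached to the cofibrant resolutions $Pois_{n\infty}^+$ and $E_{n\infty}^+$, which exist of the required quasi-free form by the discussion of Section~\ref{S:Plus}.

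Next I would observe that restriction of structures along the tangent quasi-isomorphism $\varphi_\infty^+\colon E_{n\infty}^+ \stackrel{\sim}{\rightarrow} Pois_{n\infty}^+$ built in the proof of Lemma~\ref{L:gE2+=gPois2+} is precisely the morphism of moduli functors $\varphi_\infty^*$: precomposition sends a $Pois_{n\infty}^+$-structure $Pois_{n\infty}^+\to End_A\otimes R$ to the $E_{n\infty}^+$-structure obtained by composing with $\varphi_\infty^+$, carrying the base point $\psi^+$ to $(\psi\circ\varphi)^+$. Under the identification above, this morphism corresponds to the map of Maurer--Cartan functors induced by the convolution-algebra morphism $(\varphi_\infty^+)^*$. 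Since the latter is a quasi-isomorphism of complete $L_\infty$-algebras by Lemma~\ref{L:gE2+=gPois2+}, and since the simplicial Maurer--Cartan construction preserves quasi-isomorphisms of complete $L_\infty$-algebras (see Section 2.1), applying $\underline{MC_{\bullet}^{fmp}}$ yields the desired equivalence of formal moduli problems. Equivalently, one appeals to Lurie's equivalence theorem: a morphism of formal moduli problems is an equivalence precisely when the induced morphism of tangent $L_\infty$-algebras is a quasi-isomorphism.

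The one genuinely non-formal point---and where I expect to spend the most care---is the compatibility asserted in the second paragraph: that the morphism $\varphi_\infty^*$ of moduli functors really does correspond, under Theorem~\ref{T:Yal2}, to the convolution map $(\varphi_\infty^+)^*$ on tangent complexes. This is a naturality statement for the dictionary of Theorem~\ref{T:Yal2} with respect to morphisms of cofibrant (pro)perads: one checks that the twisting procedure and the identification $g_{P,X}^{\psi} \cong Der_\psi(\Omega(C),End_X)$ are functorial in the resolution, which is exactly the functoriality underlying the use of \cite[Theorem 7]{MV2} in Lemma~\ref{L:gE2+=gPois2+}. Granting this naturality, the corollary follows with no further computation; the substantive work was already carried out in constructing the tangent quasi-isomorphism $\varphi_\infty^+$ in Lemma~\ref{L:gE2+=gPois2+}.
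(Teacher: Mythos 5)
Your proposal is correct and follows essentially the same route the paper intends: the paper states this corollary without a separate proof precisely because it is the immediate consequence of the tangent quasi-isomorphism $(\varphi_\infty^+)^*:g_{Pois_n^+,A}^{\psi^+}\stackrel{\sim}{\rightarrow}g_{E_n^+,A}^{(\psi\circ\varphi)^+}$ of Lemma~\ref{L:gE2+=gPois2+}, combined with the dictionary of Section 2 (Theorem~\ref{T:Yal2} and Lurie's equivalence theorem) identifying each moduli problem with the Maurer--Cartan functor of its tangent $L_\infty$-algebra. Your attention to the naturality of this dictionary under precomposition with $\varphi_\infty^+$ is exactly the point implicitly carried by the use of \cite[Theorem 7]{MV2}, so no further argument is needed.
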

Further, Lemma~\eqref{L:gE2+=gPois2+} and Corollary~\eqref{L:gE2+=TA} combine to give a quasi-isomorphism of $L_{\infty}$-algebras
\begin{equation}\label{eq:gE2+=gPois2+}
CH_{Pois_n}^{(\bullet > 0)}(A,A)[n] \stackrel{\simeq}\longrightarrow T_A 
\end{equation}
when $A$ is dg-$Pois_n$-algebra. Here $T_A$ is the tangent complex of $A$ viewed as an $E_n$-algebra using the chosen formality $\varphi$ (so strictly speaking it shall be noted $T_{\varphi^*A}$).

\smallskip

Recall now that if $A$ is a $Pois_n$-algebra, then $A[n-1]$ is a dg Lie algebra, in particular an $L_\infty$-algebra.
We relate its deformation complexes (the full and the truncated) as a $Pois_n$-algebra with its deformation complexes as an $E_n$-algebra: 
\begin{thm}\label{T:Defcomplexesallagree} Let $A$ be a $Pois_n$-algebra ($n\geq 2$).

 (1) The sequence~\eqref{eq:fibersequencePoisnasLie} fits into a morphism of fiber sequences of $L_\infty$-algebras
 \[\xymatrix{ A [n-1]\ar[r]  & T_A \ar[r] &  CH_{E_n}^{*}(A,A)[n]\\ 
 A[n-1] \ar[u]^-{\simeq} \ar[r] & CH_{Pois_n}^{(\bullet>0)}(A,A)[n] \ar[u]_-{\simeq}  \ar[r] & CH_{Pois_n}^{*}(A,A)[n] \ar[u]_-{\simeq}  }\]
 whose vertical arrows are $L_\infty$-quasi-isomorphisms and the middle vertical arrow is~\eqref{eq:gE2+=gPois2+}.
 
 (2) The $L_\infty$-structure on $CH_{Pois_n}^{(\bullet>0)}(A,A)[n]$ and $T_A$ is the one controling the derived algebraic group
 $\underline{haut}_{Pois_{n\infty}}(A)$ of homotopy automorphisms of $A$.
\end{thm}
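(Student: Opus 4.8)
The plan is to realize the displayed diagram as a morphism of (co)fiber sequences of $L_\infty$-algebras in which, once the left and middle columns are shown to be equivalences, the right column is forced to be one as well. First I would record that both rows are genuine distinguished triangles. The bottom row is exactly the suspension \eqref{eq:fibersequencePoisnasLie} of the short exact sequence \eqref{eq:fibersequencePoisn}, so that $CH_{Pois_n}^{*}(A,A)[n]$ is the cofiber of $A[n-1]\xrightarrow{\partial_{Pois_n}[n-1]} CH_{Pois_n}^{(\bullet>0)}(A,A)[n]$. The top row is the rotation of Francis' fiber sequence $T_A[-n]\to CH^*_{E_n}(A,A)\to A$ recalled in \S\ref{S:EnHoch}: rotating once and suspending by $[n]$ gives $A[n-1]\to T_A\to CH^*_{E_n}(A,A)[n]$, exhibiting $CH^*_{E_n}(A,A)[n]$ as the cofiber of $A[n-1]\to T_A$. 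In both cases the passage from the truncated complex to the full one amounts to adjoining the weight-zero summand $A$ of the splitting \eqref{eq:CHPoisDefSplit}.

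The middle vertical arrow is the already-constructed equivalence \eqref{eq:gE2+=gPois2+}, obtained by composing the tangent quasi-isomorphism $(\varphi_\infty^+)^*$ of Lemma~\ref{L:gE2+=gPois2+} with the identification of Corollary~\ref{L:gE2+=TA}; the left vertical arrow is the identity of $A[n-1]$. The heart of the argument, and the step I expect to be the main obstacle, is to check that the left-hand square commutes up to coherent homotopy, i.e. that \eqref{eq:gE2+=gPois2+} intertwines $\partial_{Pois_n}$ with Francis' connecting map $A[n-1]\to T_A$. By Remark~\ref{R: hofibPois} the map $\partial_{Pois_n}$ is the adjoint action $x\mapsto \pm[x,-]$ for the Poisson bracket making $A[n-1]$ a dg Lie algebra, and under the convolution description of Lemma~\ref{L:gPoiss=Tam} it is precisely the component of the twisted differential landing in the weight-one part $Hom(A,A)$, produced by pairing the binary cogenerator of $Pois_n^*\{n\}$ with $\psi$. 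Francis' map is the analogous adjoint action for the shifted Lie bracket underlying the $E_n$-structure. Since the formality $\varphi$ sends the binary generator of $E_n$ to the Poisson bracket, the tangent quasi-isomorphism $\varphi_\infty^+$ built in Lemma~\ref{L:gE2+=gPois2+} (via the factorization through $\Omega\mathcal{B}(\varphi)$) identifies these two binary operations up to homotopy; tracing this through that factorization on the weight-zero summand $A$ yields the required homotopy commuting the left square.

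Granting this, the two rows become the horizontal maps of a square of $L_\infty$-algebras whose two left vertical arrows are an identity and the equivalence \eqref{eq:gE2+=gPois2+}. The commuting left square induces a canonical $L_\infty$-morphism on cofibers, $CH_{Pois_n}^{*}(A,A)[n]\to CH^*_{E_n}(A,A)[n]$, completing the morphism of triangles. To see it is an equivalence I would pass to underlying cochain complexes, which form honest distinguished triangles (the bottom from \eqref{eq:fibersequencePoisn}, the top from Francis): since the first two vertical maps are quasi-isomorphisms of the underlying complexes, the five lemma applied to the long exact sequences in cohomology shows the third is a quasi-isomorphism, and an $L_\infty$-morphism that is a quasi-isomorphism of underlying complexes is an $L_\infty$-quasi-isomorphism. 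This establishes part~(1).

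For part~(2) I would combine Lemma~\ref{L:gPoiss=Tam}, which identifies $CH_{Pois_n}^{(\bullet>0)}(A,A)[n]$ with the plus-deformation complex $g_{Pois_n^+,A}^{\psi^+}$ as $L_\infty$-algebras, with Theorem~\ref{T:Def+=hAut}. Because $Pois_n$ is an operad, that theorem gives $g_{Pois_n^+,A}^{\psi^+}\simeq Lie(\underline{haut}_{Pois_{n\infty}}(A))$, so the convolution $L_\infty$-structure on the truncated complex is exactly the one controlling the derived algebraic group of homotopy automorphisms. Transporting along the middle equivalence of part~(1) then yields the same statement for $T_A$.
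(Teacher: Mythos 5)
Your reduction of part~(2) to Lemma~\ref{L:gPoiss=Tam} and Theorem~\ref{T:Def+=hAut} is exactly the paper's argument, and your chain-level bookkeeping (both rows arise from \eqref{eq:fibersequencePoisn} and from Francis' fiber sequence) is correct. The gap is the central step of part~(1): \lq\lq{}the commuting left square induces a canonical $L_\infty$-morphism on cofibers.\rq\rq{} The rows of the diagram are \emph{fiber} sequences of $L_\infty$-algebras, and the $\infty$-category of $L_\infty$-algebras is not stable: the forgetful functor to cochain complexes preserves limits but not colimits, so the mapping cone of an $L_\infty$-morphism carries no canonical $L_\infty$-structure, and the genuine $L_\infty$-cofiber of $A[n-1]\to CH_{Pois_n}^{(\bullet>0)}(A,A)[n]$ (a pushout along the zero map) has underlying complex very different from $CH_{Pois_n}^{*}(A,A)[n]$. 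Concretely, $CH_{Pois_n}^{(\bullet>0)}(A,A)[n]$ sits inside $CH_{Pois_n}^{*}(A,A)[n]$ as a sub-$L_\infty$-algebra, and an $L_\infty$-map on this subalgebra together with a compatible map on the fiber $A[n-1]$ does not determine, even up to homotopy, an $L_\infty$-map on the ambient algebra: one would need to prescribe where the weight-zero summand $A$ of the splitting \eqref{eq:CHPoisDefSplit} goes and how it interacts with all higher brackets. So the right-hand vertical arrow cannot be produced formally; constructing it \emph{as an $L_\infty$-map} compatible with the Deligne-conjecture structure is precisely the hard content of the theorem.

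This is exactly where the paper spends its effort. It first produces the right vertical arrow only as a chain-level quasi-isomorphism, via the comparison of bar constructions $Bar^{(n)}\circ\varphi^*\sim Bar^{(n)}_{Pois_n}$ (Lemma~\ref{L:Cobarn=Pois}) and the identification of both sides with Andr\'e-Quillen complexes; transferring the $L_\infty$-structure of $CH_{Pois_n}^{*}(A,A)[n]=g_{suPois_n^+,A}^{\psi^+}$ across this map makes it tautologically an $L_\infty$-quasi-isomorphism, but onto the \emph{transferred} structure. The longest part of the proof then shows that this transferred structure agrees with the one coming from the higher Deligne conjecture on $CH_{E_n}^*(A,A)[n]$: it uses the equivalence of centralizers $\varphi^*\big(\mathfrak{z}_{suPois_n}(A)\big)\cong\mathfrak{z}_{suE_n}(\varphi^*(A))$, Tamarkin's Hopf-algebraic description \eqref{eq:centerasHopfequiv} of the bar construction of the centralizer, and a comparison of the two formal moduli problems \eqref{eq:ModuliforTamarkinstruct} and \eqref{eq:ModuliforTamarkinascenter} that the two $L_\infty$-structures represent, concluding by Lurie's classification theorem. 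None of this is addressed in your proposal; your five-lemma step only yields that the underlying chain map is a quasi-isomorphism, which was never the issue. (Incidentally, the commutativity of the left square, which you single out as the main obstacle, comes essentially for free in the paper from the strict commutative square \eqref{eq:diagEnPoisnandU+} of convolution algebras with and without units, whose horizontal fibers are both $A[n-1]$.)
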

\begin{proof} We saw that the equivalence 
$\varphi_{\infty}:E_{n\infty}\stackrel{\sim}{\rightarrow}Pois_{n\infty}$ induced by the formality morphism $\varphi$
induces the weak-equivalence $\varphi_{\infty}^{+}:E_{n\infty}^{+}\stackrel{\sim}{\rightarrow}Pois_{n\infty}^{+}$. Adding a generator for strict units gives us also a weak-equivalence
$su\varphi_{\infty}^{+}:suE_{n\infty}^{+}\stackrel{\sim}{\rightarrow}suPois_{n\infty}^{+}$.
Let $\psi_\infty: Pois_{n\infty}\to End_A$ be a $Pois_n$-algebra structure on $A$ (seen as a homotopy $Pois_n$-algebra structure),
and $\psi^+_\infty:Pois_{n\infty}^{+}\twoheadrightarrow Pois_{n\infty}\stackrel{\psi_\infty}\to End_A$ be the map obtained by first sending the generator $\delta$ to $0$.
In the proof of Lemma~\ref{L:gE2+=gPois2+}, we saw that $\varphi^+_\infty$ is a tangent quasi-isomorphism, hence inducing an $L_\infty$-algebra quasi-isomorphism at the level of the associated deformation complexes. Following readily the proof of Lemma~\ref{L:gE2+=gPois2+}, we get that $su\varphi^+_\infty$ is a tangent quasi-isomorphism as well, thus
\[
(\varphi^+_\infty)^*:g_{Pois_n^+,A}^{\psi^+_\infty}\stackrel{\sim}{\longrightarrow}g_{E_{n}^{+},A}^{(\psi\circ \varphi)^{+}}
\] and
\[
(su\varphi^+_\infty)^*:g_{suPois_n^+,A}^{\psi^+_\infty}\stackrel{\sim}{\longrightarrow}g_{suE_{n}^{+},A}^{(\psi\circ \varphi)^{+}}
\]
are both $L_\infty$-algebra quasi-isomorphisms fitting in a commutative diagram
\begin{equation}\label{eq:diagEnPoisnandU+}
\xymatrix{ g_{suE_{n}^{+},A}^{(\psi\circ \varphi)^{+}} 
& g_{E_{n}^{+},A}^{(\psi\circ \varphi)^{+}}\ar@{_{(}->}[l]   \\ 
g_{suPois_n^+,A}^{\psi^+_\infty}\ar[u]^-{(su\varphi^+_\infty)^*}_-{\simeq}& g_{Pois_n^+,A}^{\psi^+_\infty} \ar@{_{(}->}[l]\ar[u]_-{(\varphi^+_\infty)^*}^-{\simeq}}.
\end{equation}
As in Remark~\ref{R: hofibPois}, the homopy fibers of these inclusions are both $A[n-1]$ equipped with its Lie structure defined by shifting the Poisson bracket of $A$. We thus get a quasi-isomorphism of homotopy fiber sequences
\begin{equation}
\xymatrix{ g_{suE_{n}^{+},A}^{(\psi\circ \varphi)^{+}} 
& g_{E_{n}^{+},A}^{(\psi\circ \varphi)^{+}}\ar@{_{(}->}[l] & A[n-1]\ar[l]  \\ 
 g_{suPois_n^+,A}^{\psi^+_\infty}\ar[u]^-{(su\varphi^+_\infty)^*}_-{\simeq}& g_{Pois_n^+,A}^{\psi^+_\infty} \ar@{_{(}->}[l]\ar[u]_-{(\varphi^+_\infty)^*}^-{\simeq} & A[n-1]\ar[l]\ar[u]^-{\sim}}.
\end{equation}
Following readily the proof of Lemma~\ref{L:gPoiss=Tam}, we see that $g_{suPois_n^+,A}^{\psi^+_\infty}=CH_{Pois_n}^{*}(A,A)[n]$: the generator of arity zero in $suPois_n^+$ (defining the unit) is of degree $0$, hence of degree $n$ in $\overline{Pois_n^*\{n\}}$. In the associated convolution Lie algebra $g_{suPois_n^+,A}^{\psi^+_\infty}$, it gives an additional term $A[n]$ in operadic degree $0$ which corresponds to the part of weight $0$ in $CH_{Pois_n}^*(A,A)[n]$. The inclusion
$g_{Pois_n^+,A}^{\psi^+_\infty}\hookrightarrow g_{suPois_n^+,A}^{\psi^+_\infty}$ is thus nothing but the inclusion
$CH_{Pois_n}^{(\bullet>0)}(A,A)[n] \hookrightarrow  CH_{Pois_n}^{*}(A,A)[n]$. This means that we have a quasi-isomorphism of homotopy fiber sequences
\begin{equation}
\xymatrix{ g_{suE_{n}^{+},A}^{(\psi\circ \varphi)^{+}} 
& g_{E_{n}^{+},A}^{(\psi\circ \varphi)^{+}}\ar@{_{(}->}[l] & A[n-1]\ar[l]  \\ 
 CH_{Pois_n}^{*}(A,A)[n]\ar[u]^-{(su\varphi^+_\infty)^*}_-{\simeq}& CH_{Pois_n}^{(\bullet>0)}(A,A)[n] \ar@{_{(}->}[l]\ar[u]_-{(\varphi^+_\infty)^*}^-{\simeq} & A[n-1]\ar[l]\ar[u]^-{\sim}}.
\end{equation}

Now we want to prove that the full Poisson complex $CH_{Pois_n}^{*}(A,A)$ is quasi-isomorphic to the full $E_n$-Hochschild complex $CH_{E_n}^*(A,A)$. For this, we use the fact (proved later in Lemma~\ref{L:Cobarn=Pois}) that the formality morphism
\[
\varphi:E_n\stackrel{\sim}{\rightarrow}Pois_n
\]
induces a natural quasi-isomorphism
\[
Bar^{(n)}\circ\varphi^*\stackrel{\sim}{\rightarrow} Bar^{(n)}_{Pois_n}, 
\]
where $Bar^{n}$ is the operadic bar construction for $E_n$-algebras, $Bar^{(n)}_{Pois_n}$ is the one for $Pois_n$-algebras and $\varphi^*$ is the restriction of structures from $Pois_n$-algebras to $E_n$-algebras. So, for a given $n$-Poisson algebra $A$, we have a quasi-isomorphism
\[
Bar^{(n)}(\varphi^*A)\stackrel{\sim}{\rightarrow} Bar^{(n)}_{Pois_n}(A)
\]
inducing a quasi-isomorphism
\[
Hom_{dg}(Bar^{(n)}_{Pois_n}(A),A) \stackrel{\sim}{\rightarrow} Hom_{dg}(Bar^{(n)}(\varphi^*A),A)
\]
between the complexes computing respectively the operadic $Pois_n$-cohomology of $A$ and the operadic $E_n$-cohomology of $\varphi^*A$ (see \cite[Section 12.4]{LV} for the general construction of operadic cohomology).
By \cite[Theorem 12.4.5]{LV}, the operadic complex of an algebra over an operad is quasi-isomorphic to its André-Quillen complex so we get a quasi-isomorphism of André-Quillen complexes
\[
C_{AQ}^{Pois_n}(A,A)\stackrel{\sim}{\rightarrow}C_{AQ}^{E_n}(\varphi^*A,\varphi^*A).
\]
By definition, the André-Quillen complex $C_{AQ}^{E_n}(\varphi^*A,\varphi^*A)$ is the right derived functor, applied to $\varphi^*A$, of the hom functor $Hom_A^{E_n}(-,\varphi^*A)$ in operadic $A$-modules over $E_n$, that is, exactly $ CH_{E_n}^*(\varphi^*A,\varphi^*A)$. Similarly, we have $C_{AQ}^{Pois_n}(A,A)=CH_{Pois_n}^{*}(A,A)$. Tensoring the map above by $\mathbb{K}[n]$, we get a quasi-isomorphism
\[
CH_{Pois_n}^{*}(A,A)[n]\stackrel{\sim}{\rightarrow} CH_{E_n}^*(A,A)[n].
\]
The equality  $g_{suPois_n^+,A}^{\psi^+_\infty}=CH_{Pois_n}^{*}(A,A)[n]$ gives to $CH_{Pois_n}^{*}(A,A)[n]$ an $L_{\infty}$-algebra structure that we can transfer to $CH_{E_n}^*(A,A)[n]$ by the quasi-isomorphism above to make it a quasi-isomorphism of $L_{\infty}$-algebras.

We now compare this structure with the one given by the solution to the Deligne conjecture. The $E_n$-Hochschild cohomology
has a natural (and unital) $E_{n+1}$-structure (see~\cite{Lur2, Fra, GTZ}) given by the $suE_1\otimes suE_n$-algebra 
structure\footnote{The centralizer is canonically endowed with a structure of $E_1$-algebras in $E_n$-algebras.
By Dunn additivity theorem~\cite{Lur2}, this is equivalent to an $E_{n+1}$-structure. }
on the $E_n$-centralizer 
$\mathfrak{z}_{suE_n}(\varphi^*(A)):=\mathfrak{z}_{suE_n}(id_{\varphi^*(A)})$ of $A$, seen as an $E_n$-algebra using our chosen formality map. 
By \cite{Fra}, the upper horizontal sequence of chain complexes in claim (1) of the theorem is a natural sequence of $L_\infty$-algebras. Here, 
by the theory of formal moduli problems~\cite{Lur0}
and deformation theory of $E_n$-algebras~\cite{Lur2, Fra},  
$CH_{E_n}^*(A,A)[n]$
is endowed with the underlying $L_\infty$-algebra structure characterized by the commutative formal moduli problem defined by
\begin{equation}
 \label{eq:FMPCenterEn}  Map_{E_{n+1}-Alg^{aug}}(\mathcal{D}^{(n+1)}\circ I^{(n+1)} (-), 
 \mathfrak{z}_{suE_n}(\varphi^*(A)))
\end{equation}
where $I^{(n+1)}:cdga^{aug}\to  E_n^{aug}$ is the canonical functor sending a cdga to the $E_n$-algebra with trivial higher products, and  $\mathcal{D}^{(n+1)}$ is the appropriate
Koszul duality $\infty$-functor as used in \cite{Lur0}. Note that  $\mathcal{D}^{(n+1)}$ can be identified with the centralizer of the augmentation $\mathfrak{z}_{suE_n}(R\to \mathbb{K})$,
hence with the dual of the $n+1$-fold bar construction, see~\cite{Lur0, GTZ, AF}.

\smallskip

Let us denote $ \mathfrak{z}_{suPois_n}(A)$ the centralizer of the identity map $A\to A$ in the $\infty$-category
of $Pois_n$-algebras; it is a (unital) $E_1$-algebra in $Pois_n$-algebras.
Similarly as above, the formality map $\varphi$ induces an equivalence
of $E_1$-algebras in $E_n$-algebras (hence of unital $E_{n+1}$-algebras):
\begin{equation} \varphi^*\big(\mathfrak{z}_{suPois_n}(A)\big) \, \cong \, \mathfrak{z}_{suE_n}(\varphi^*(A)). \end{equation}
By~\cite{Ta-deformationofd-algebra} (also see~\cite{CaWi, GTZ}), the $E_1\otimes Pois_n$-algebra structure of $ \mathfrak{z}_{suPois_n}(A)$ 
is represented by an explicit 
compatible\footnote{the compatibility means that we have an $E_1\otimes Pois_{n \infty}$-algebra structure} dg algebra
structure on the dg $Pois_n$-coalgebra 
$Bar^{(n)}_{Pois_n}(\mathfrak{z}_{suPois_n}(A)))$ whose differential encodes its canonical $Pois_n\infty$-algebra structure\footnote{
 Note that the (non-unital) 
centralizer and is corresponding dg $Pois_n$-coalgebra  are denoted $def(A)[-n]$ and  $\underline{Hom}^{Id}(A,A)$
in~\cite{Ta-deformationofd-algebra}}. 

\smallskip

We wish to rewrite the formal moduli problem~\eqref{eq:FMPCenterEn}. To do so, following~\cite[Sections 2.3 and  3.3]{Lur2}, we note 
that we have an adjunction of $\infty$-categories $C_{CE}^*: dgLie  \rightleftarrows dgArt_{\mathbb{K}}^{aug} :\mathcal{D}_{Lie}$, where
$C_{CE}^*$ is the $\infty$-functor induced by the Chevalley-Eilenberg cochain algebra functor, and $\mathcal{D}_{Lie}$ is an equivalence onto its essential image in  $ dgLie$ (with inverse given by $C_{CE}^*$).
Furthermore, by \cite[Corollary 4.2.2]{AF}, there is an equivalence 
\begin{equation}\label{eq:DnUn} \mathcal{D}^{(n+1)}\circ U_{n+1} \big(\mathcal{D}_{Lie}(R)\big) \, 
 \cong \, C_{CE}^* \big(\mathcal{D}_{Lie}(R)\big)
\end{equation}
where $U_{n+1}$ is the the $E_{n+1}$-envelopping algebra functor; in particular we have an adjunction
$U_{n+1}: E_{n+1}-Alg^{aug} \leftrightarrows dgLie: [n] $. Since $\mathcal{D}^{(n+1)}$ is self-dual, for an artinian augmented cdga $R$, we get from~\eqref{eq:DnUn} and~\cite[Theorem 4.4.5]{Lur0}, an equivalence 
$$ U_{n+1} \big(\mathcal{D}_{Lie}(R)\big) \, 
 \cong \, \mathcal{D}^{(n+1)}\circ C_{CE}^* \big(\mathcal{D}_{Lie}(R)\big).$$
Combining all the above  we get an equivalence of formal moduli problems given, for every artinian augmented cdga $R$, by
\begin{multline}
 Map_{E_{n+1}-Alg^{aug}}\Big(\mathcal{D}^{(n+1)}\circ I^{(n+1)} (R), 
 \mathfrak{z}_{suE_n}(\varphi^*(A))\Big)   \\
 \cong \,Map_{E_{n+1}-Alg^{aug}}\Big(\mathcal{D}^{(n+1)}\circ  C_{CE}^*(\mathcal{D}_{Lie}(R)), \,
 \varphi^*\big(\mathfrak{z}_{suPois_n}(A))\Big) \\
 \cong \, Map_{E_{n+1}-Alg^{aug}}\Big(U_{n+1} (\mathcal{D}_{Lie}(R)), \,
 \varphi^*\big(\mathfrak{z}_{suPois_n}(A))\Big) \\ 
 \cong \, Map_{dgLie}\Big(\mathcal{D}_{Lie}(R),\mathfrak{z}_{suPois_n}(A)[n]\Big).
 \end{multline}
This identifies the $L_{\infty}$-algebra structure underlying $CH_{E_n}^*(A,A)\cong \mathfrak{z}_{suE_n}(\varphi^*(A))$
with the one obtained by $n$-fold desuspension of 
$\mathfrak{z}_{suPois_n}(A)$. It remains to prove that this structure on the center coincides with the one on 
$CH_{Pois_n}^{*}(A,A)[n]=g_{suPois_n^+,A}^{\psi^+_\infty}$; so far we have proved that they are quasi-isomorphic as chain complexes. 
Tamarkin~\cite[Section 5]{Ta-deformationofd-algebra} has proved that $CH_{Pois_n}^{*}(A,A)[n]$ has the underlying $L_\infty$-algebra structure of a certain $Pois_{n+1 \infty}$-algebra structure on $\mathfrak{z}_{suPois_n}(A)\cong CH_{Pois_n}^{*}(A,A)[n]$. This  $Pois_{n+1 \infty}$-algebra structure is given by the differential on
$Bar^{(n+1)}_{Pois_{n+1}}(\mathfrak{z}_{suPois_n}(A))$ and we use the later notation (or simply 
$CH_{Pois_n}^{*}(A,A)[n]$) to refer to this precise homotopy $Pois_{n+1}$-algebra
structure on $\mathfrak{z}_{suPois_n}(A)$, in order to distinguish it from the one given by the centralizer.
Further Tamarkin showed that this structure induces a cocommutative Hopf bialgebra structure on
$Bar^{(n)}_{Pois_n}(\mathfrak{z}_{suPois_n}(A))$, an additional compatible degree $n$ Lie cobracket on its primitive part  and 
finally an isomorphism of dg $Pois_{n+1}$-coalgebras:
\begin{equation}\label{eq:centerasHopfequiv}
 Bar^{(n+1)}_{Pois_{n+1}}(\mathfrak{z}_{suPois_n}(A)) \cong C^{CE}_*(Prim (Bar^{(n)}_{Pois_n}(\mathfrak{z}_{suPois_n}(A))))
\end{equation}
where $C^{CE}_*$ is the standard Chevalley-Eilenberg commutative coalgebra functor (whose associated derived functor computes 
$\mathbb{K}\otimes_{U(-)}^{\mathbb{L}} \mathbb{K}$).

As above, Tamarkin $L_\infty$-structure on  $CH_{Pois_n}^{*}(A,A)[n]$ is characterized by the following 
equivalence of moduli functors
\begin{multline} \label{eq:ModuliforTamarkinstruct}
 Map_{dgLie}\big(\mathcal{D}_{Lie}(R),CH_{Pois_n}^{*}(A,A)[n]\Big) \\
 \cong Map_{Pois_{n+1}-Alg^{aug}}\Big(\mathcal{D}_{Pois_{n+1}}^{(n+1)}\circ I^{(n+1)} (R), 
CH_{Pois_n}^{*}(A,A))\Big) \\
\cong Map_{Pois_{n+1}-Cog}\Big(I_{(n+1)} (R'), 
Bar^{(n+1)}_{Pois_{n+1}}(\mathfrak{z}_{suPois_n}(A))\Big) \\
\end{multline}
where $I_{(d)}:  Com-Cog \to Pois_{d}-Cog$ is the canonical functor and $R'$ is the (linear) dual of the artinian algebra $R$. 
The last equivalence follows from the bar-cobar equivalence and the definition of $Pois_{n+1\infty}$-morphisms. 

\smallskip

Recall that the centralizer $\mathfrak{z}_{suPois_n}(A)$ is canonically an $E_1$-algebra in $Pois_{n\infty}$-algebras and 
this structure is given by a compatible algebra structure on $Bar^{(n)}_{Pois_n}(\mathfrak{z}_{suPois_n}(A))$. In particular
it is canonically isomorphic as a cocommutative Hopf algebra to the envelopping algebra of its primitive elements $
Prim(Bar^{(n)}_{Pois_n}(\mathfrak{z}_{suPois_n}(A)))$. Also if $C$ is a dg cocommutative coalgebra, then its cobar
construction $coBar^{(1)}(C)$ is canonically a cocommutative Hopf algebra. Using moreover
the natural equivalence $Bar^{(1)}\circ U(-) \cong C^{CE}_*(-)$ of dg cocommutative coalgebras 
and the adjunction between $Bar^{(1)}$ and $coBar^{(1)}$,
we get an equivalence of moduli problems
\begin{multline}\label{eq:ModuliforTamarkinascenter}
 Map_{dgLie}\Big(\mathcal{D}_{Lie}(R),\mathfrak{z}_{suPois_n}(A)[n]\Big)\\
 \cong Map_{Pois_{n+1}-Alg^{aug}}\Big(\mathcal{D}_{Pois_{n+1}}^{(n+1)}\circ I^{(n+1)} (R), 
\mathfrak{z}_{suPois_n}(A)\Big) \\
\cong Map_{E_1\otimes Pois_{n}-Cog}\Big( I_{(n)} \circ coBar^{(1)} ( R'), 
Bar^{(n)}_{Pois_n}(\mathfrak{z}_{suPois_n}(A))\Big) \\
\cong Map_{Pois_{n+1}-Cog}\Big(I_{(n+1)}(R'), Bar^{(1)}\circ Bar^{(n)}_{Pois_n}(\mathfrak{z}_{suPois_n}(A))\Big)\\
\cong Map_{Pois_{n+1}-Cog}\Big( I_{(n+1)}(R'), C^{CE}_*(Prim (Bar^{(n)}_{Pois_n}(\mathfrak{z}_{suPois_n}(A))))\Big).
\end{multline}
Therefore the isomorphism~\eqref{eq:centerasHopfequiv} implies that the two moduli functors~\eqref{eq:ModuliforTamarkinstruct} 
and~\eqref{eq:ModuliforTamarkinascenter} are equivalent. Consequently, the two $L_\infty$-algebras that they represent are equivalent as well by Lurie's equivalence theorem, which yields the right vertical equivalence of $L_\infty$-algebras in Claim~(1).

Moreover, by Corollary~\ref{L:gE2+=TA} we have an equivalence of $L_{\infty}$-algebras 
$g_{E_n^+,A}^{(\psi\circ\varphi)^+}\stackrel{\sim}{\rightarrow}T_A$ and
we finally get the commutative diagram of $L_\infty$-algebras
\[\xymatrix{ CH_{E_n}^*(A,A)[n] &T_A \ar@{_{(}->}[l]\\ 
CH_{Pois_n}^{*}(A,A)[n] \ar[u]^-{\simeq} & CH_{Pois_n}^{(\bullet>0)}(A,A)[n] \ar[u]_{\simeq}\ar@{_{(}->}[l]}.\] 
Hence both lines have quasi-isomorphic fibers as well which are $A[n-1]$ according to \eqref{eq:fibersequencePoisnasLie} and~\cite{Fra}.

 \smallskip
 
Now Claim (2) follows from Lemma~\ref{L:gPoiss=Tam}.
\end{proof}
\begin{rem}
Claim (2) can be proved directly by using the equivalence $
(\varphi_{\infty}^+)^*:\underline{{Pois_n}_{\infty}}^+\{A\}\stackrel{\sim}{\rightarrow} \underline{{E_n}_{\infty}}^+\{\varphi^*A\}$ of Corollary~\ref{C:ModE2+=ModPos2+}.
Indeed,
applying Theorem 3.5 gives us directly a quasi-isomorphism of $L_{\infty}$-algebras
\[
CH_{Pois}^{*>0}(A,A)[n]= g_{Pois_n^+,A}^{\psi^+}\stackrel{\sim}{\rightarrow} T_{\varphi^*A}.
\]
\end{rem}

\begin{rem} In particular, we also get out of Theorem~\ref{T:Defcomplexesallagree} that the
  $L_\infty$-algebra $  g_{Pois_n,A}^{\psi} $ controlling the moduli space  $\underline{{Pois_n}_{\infty}}\{A\}$ is thus isomorphic to a sub-$L_\infty$-algebra of $T_{A}$.
\end{rem}
\begin{rem} Let  $\tilde{C}_{(d)}: Pois_{d}-Cog\to Com-Cog$ be the right adjoint of the canonical functor 
$I_{(d)}:Com-Cog\to Pois_{d}-Cog$.  The moduli functor~\eqref{eq:ModuliforTamarkinascenter}
encoding the $L_\infty$-structure on $CH_{Pois}(A,A)[n]$ is then equivalent to
\begin{multline*}
 Map_{Pois_{n+1}-Alg^{aug}}\Big(\mathcal{D}_{Pois_{n+1}}^{(n+1)}\circ I^{(n+1)} (R), 
CH_{Pois_n}^{*}(A,A))\Big) \\
\cong Map_{Pois_{n+1}-Cog}\Big(I_{(n+1)}(R'), Bar^{(1)}\circ Bar^{(n)}_{Pois_n}(\mathfrak{z}_{suPois_n}(A))\Big)\\
\cong Map_{E_1\otimes Com-Cog}\Big( coBar^{(1)}(R'), \tilde{C}_{(n)}(Bar^{(n)}_{Pois_n}(\mathfrak{z}_{suPois_n}(A)))\Big)\\
\cong  Map_{E_1\otimes Com-Cog}\Big( coBar^{(1)}(R'), U(\mathfrak{z}_{suPois_n}(A)[n])\Big).
\end{multline*}
wher the last line is \cite[Corollary 4.7]{Ta-deformationofd-algebra}. By~\eqref{eq:ModuliforTamarkinstruct}, this is also equivalent to
 \begin{multline*} Map_{Pois_{n+1}-Cog}\Big(I^{(n+1)} (R'), 
Bar^{(n+1)}_{Pois_{n+1}}(\mathfrak{z}_{suPois_n}(A))\Big) \\
\cong Map_{Com-Cog}\Big(R', 
\tilde{C}_{(n+1)} (Bar^{(n+1)}_{Pois_{n+1}}(\mathfrak{z}_{suPois_n}(A)))\Big).
\end{multline*}
\end{rem}

Let $\eta: B \to k$ be an augmented unital cdga and $\overline{B}:= \ker(\eta)$ be its augmentation ideal, so that 
$B \cong k\oplus \overline{B}$. Both $B$ and $\overline{B}$ are cdgas, hence (non-unital) $E_n$-algebras for any $n$. 
In particular, $B$ and $\overline{B}$ are canonically $E_n$-$B$-modules as well as $E_n$-$\overline{B}$-modules. Let us denote 
$\iota: \overline{B}\to B$ the canonical inclusion.
\begin{lem}\label{L:reducedHochCochain} The canonical maps 
 \[CH^*_{E_n}(B,B) \stackrel{\iota^{*}}\to CH^*_{E_n}(\overline{B}, B), \quad 
 CH^*_{E_n}({B}, \overline{B}) \stackrel{\iota^{*}}\to  CH^*_{E_n}(\overline{B}, \overline{B})\]
 are quasi-isomorphisms and the following diagram
 \[\xymatrix{ & CH^*_{E_n}(B,B) \ar[rd]^{\iota^{*}}_{\simeq} & \\ 
CH^*_{E_n}(B,\overline{B}) \ar[rd]^{\iota^{*}}_{\simeq}  \ar[ru]_{\iota_*} & & CH^*_{E_n}(\overline{B}, B)  \\
& CH^*_{E_n}(\overline{B},\overline{B}) \ar[ru]^{\iota_{*}} &  } \]
 is a commutative diagram of $E_{n+1}$-algebras. 
\end{lem}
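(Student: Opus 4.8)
The plan is to reduce both quasi-isomorphism statements to the invariance of the derived $E_n$-cotangent complex under adjunction of a unit, and then to read off the $E_{n+1}$-algebra structures and their compatibility from Lurie's theory of centralizers. Recall (Definition~\ref{D:HochTangent} and Francis~\cite{Fra}) that for a non-unital $E_n$-algebra $A$ and an operadic $A$-module $M$ the complex $CH^*_{E_n}(A,M)=\mathbb{R}Hom^{E_n}_A(A,M)$ fits in a fiber sequence $\mathbb{R}Der(A,M)[-n]\to CH^*_{E_n}(A,M)\to M$, generalizing the sequence $T_A[-n]\to CH^*_{E_n}(A,A)\to A$ to arbitrary coefficients, with $\mathbb{R}Der(A,M)=\mathbb{R}Hom^{E_n}_A(L_A,M)$. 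This sequence is natural in the algebra variable for restriction of scalars: the map $\iota^*$ is the identity on the coefficient term $M$ and is, on fibers, the restriction map $\mathbb{R}Der(B,M)[-n]\to\mathbb{R}Der(\overline{B},M)[-n]$ induced by $\iota$. Hence for each of the two maps in the statement (taking $M=B$, resp. $M=\overline{B}$) it suffices to prove that $\iota^*\colon\mathbb{R}Der(B,M)\to\mathbb{R}Der(\overline{B},M)$ is a quasi-isomorphism.

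The second step establishes this derivation invariance. Since we work over a field of characteristic zero and $B=\mathbb{K}\oplus\overline{B}$ with $\overline{B}$ an ideal, any derivation $D$ of $B$ satisfies $D(1)=D(1\cdot 1)=2D(1)$, so $D(1)\simeq 0$; thus every derivation factors coherently through $\overline{B}$, and conversely a derivation of $\overline{B}$ extends to $B$ by setting $D(1)=0$. Homotopically, this says that the relative cotangent complex $L_{B/\overline{B}}$ of the unitalization $\overline{B}\to B$ is acyclic, so $\iota$ induces an equivalence $L_{\overline{B}}\stackrel{\sim}{\rightarrow}\iota^*L_B$ of $\overline{B}$-modules and therefore a quasi-isomorphism $\mathbb{R}Der(B,M)\stackrel{\sim}{\rightarrow}\mathbb{R}Der(\overline{B},M)$ for every $M$. (For $n=1$ this specializes to the classical identification of normalized with unnormalized Hochschild cochains, which is a useful sanity check.) Combined with the first step, this shows that both maps $\iota^*$ of the statement are quasi-isomorphisms.

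For the $E_{n+1}$-structures and the commutativity of the diagram, I would invoke Lurie's centralizers~\cite{Lur2}: $CH^*_{E_n}(B,B)\simeq\mathfrak{z}_{E_n}(\mathrm{id}_B)$ and $CH^*_{E_n}(\overline{B},\overline{B})\simeq\mathfrak{z}_{E_n}(\mathrm{id}_{\overline{B}})$ are $E_{n+1}$-algebras, while $CH^*_{E_n}(\overline{B},B)\simeq\mathfrak{z}_{E_n}(\iota)$ is the $E_{n+1}$-centralizer of the morphism $\iota$. Functoriality of the centralizer construction supplies the canonical $E_{n+1}$-maps $\mathfrak{z}_{E_n}(\mathrm{id}_B)\to\mathfrak{z}_{E_n}(\iota)\leftarrow\mathfrak{z}_{E_n}(\mathrm{id}_{\overline{B}})$, which are exactly the two arrows $\iota^*$ and $\iota_*$ into $CH^*_{E_n}(\overline{B},B)$, the first being the equivalence just obtained. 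The fourth vertex $CH^*_{E_n}(B,\overline{B})$ acquires its $E_{n+1}$-structure by transport along the equivalence $\iota^*\colon CH^*_{E_n}(B,\overline{B})\stackrel{\sim}{\rightarrow}\mathfrak{z}_{E_n}(\mathrm{id}_{\overline{B}})$, and since $CH^*_{E_n}(B,B)\stackrel{\sim}{\rightarrow}\mathfrak{z}_{E_n}(\iota)$ is an equivalence, the remaining arrow $\iota_*\colon CH^*_{E_n}(B,\overline{B})\to CH^*_{E_n}(B,B)$ is forced to be the unique $E_{n+1}$-map making the square commute. At the level of plain cochain complexes the commutativity is just the bifunctoriality $\iota^*\circ\iota_*=\iota_*\circ\iota^*$ of $\mathbb{R}Hom$ (restriction in the algebra variable commutes with pushforward in the coefficient variable), and the preceding identifications promote this to a commutative diagram of $E_{n+1}$-algebras.

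The main obstacle I anticipate is the second step executed with full homotopical care: one must verify that ``derivations annihilate the unit'' holds coherently for $\mathbb{R}Der$ in the $E_n$ setting, i.e. that $L_{B/\overline{B}}\simeq 0$, and not merely for strict derivations, and likewise that the fiber sequence of the first step is honestly natural in $A$ along the restriction maps $\iota^*$. The promotion of the off-diagonal vertex $CH^*_{E_n}(B,\overline{B})$ to an $E_{n+1}$-algebra compatible with both of its structure maps is a secondary subtlety, but it is dictated by transport of structure once the two equivalences are in place.
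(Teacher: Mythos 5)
Your proposal has a genuine gap, and it sits exactly at the step you yourself flag as ``the main obstacle'': the claim that $L_{B/\overline{B}}\simeq 0$, i.e.\ that restriction along $\iota$ induces an equivalence $\mathbb{R}Der(B,M)\stackrel{\sim}{\rightarrow}\mathbb{R}Der(\overline{B},M)$. The strict computation $D(1)=D(1\cdot 1)=2D(1)$ does not establish this: first, it only kills $D(1)$ when $1$ acts as the identity on $M$, i.e.\ for \emph{unital} module structures (for a general operadic module over the non-unital algebra $B$, the element $1$ acts by an idempotent that need not be the identity, and $Der(B,M)$ genuinely differs from $Der(\overline{B},M)$ --- already for $\overline{B}=0$, $B=\mathbb{K}$, $n=1$ one sees the two sides disagree for non-unital coefficients); second, and more seriously, $\mathbb{R}Der$ is computed on a cofibrant resolution of $B$ as a \emph{non-unital} $E_n$-algebra, which carries no strict unit to annihilate, so the strict argument does not pass to the derived setting. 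Since the lemma is, in essence, precisely this coherent unitalization-invariance statement, deferring it means the proof has no content at its core. There is also a circularity risk in your first step: the augmentation map $CH^*_{E_n}(A,M)\rightarrow M$ in Francis's fiber sequence is ``evaluation at the unit'', so for the non-unital algebra $\overline{B}$ its very construction (and likewise Lurie's centralizer $\mathfrak{z}_{E_n}(\iota)$, which is developed for unital algebras) already goes through the unitalization $\overline{B}^+=B$ --- the comparison you are trying to prove.

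For contrast, the paper's proof avoids cotangent complexes entirely and instead exploits that $B$ is a \emph{cdga}: it identifies $CH^*_{E_n}(B,M)$ with Pirashvili higher Hochschild cochains $Hom_B(B^{\otimes S^n_\bullet},M)$ over a simplicial model of the $n$-sphere, and then invokes cosimplicial normalization (restriction to non-degenerate cochains), which yields an explicit quasi-isomorphism $q_*$ onto $Hom_B(B\otimes\overline{B}^{\otimes S^n_\bullet\setminus\{*\}},M)\cong CH^*_{E_n}(\overline{B},M)$ with $q_*\circ\iota^*=\mathrm{id}$; hence $\iota^*$ is a quasi-isomorphism, and functoriality of this model in $E_n$-algebra maps gives both the $E_{n+1}$-structures and the commutativity of the diagram. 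Note that your route never uses commutativity of $B$, whereas the paper's model only exists for commutative ($E_\infty$) algebras; a proof of the lemma that works for arbitrary augmented $E_n$-algebras would be strictly stronger, which is a warning sign that all the difficulty has been absorbed into the unproved coherence claim. To salvage your approach you would need to actually prove the derived normalization statement --- for instance by exhibiting the non-unital operadic cotangent complex of $B$ via the $E_n$-bar construction and comparing it with that of $\overline{B}$ --- at which point you would have reproved the lemma by other means rather than reduced it to known facts.
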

\begin{proof}
 The $E_n$-Hochschild
 cohomology of a cdga  $B$ with value in a bimodule $M$ can be computed using Pirashvili type higher Hochschild cochains (see~\cite{GTZ}):
 \begin{eqnarray*} CH^*_{E_n}(B, M) \,\cong \, 
  CH^{S^n_\bullet}( B, M) \,\cong\, Hom_{B}(B^{\otimes S^n_\bullet}, M)
 \end{eqnarray*}
 for any simplicial set model $S^n_\bullet$ of the $n$-sphere. Being obtained as a cosimplicial cochain complex, the later one 
 is naturally quasi-isomorphic to its non-degenerate cochains, which amounts to quotienting
 by the submodule generated by the unit (see~\cite{GTZ1}).
  Hence, denoting $*$ the base-point of $S^n_\bullet$, since $B$ is augmented, we have a quasi-isomorphism 
  \[q_*:Hom_{B}(B^{\otimes S^n_\bullet}, M) \stackrel{\simeq}\to Hom_{B}(B\otimes \overline{B}^{\otimes S^n_\bullet\setminus \{*\}}, M) 
  \cong CH_{E_n}^{*}(\overline{B},M).\]
  Since $q_* \circ \iota^*$ is the identity, we obtain that $\iota^*$ is a quasi-isomorphism as well, hence the first claim by taking $M=B$
  or $\overline{B}$. The commutativity of the diagram follows from the same identification with higher Hochschild cochains and 
  a straightforward computation.
  
 Since all modules structures are induced by maps of cdgas,  the same identification with higher Hochschild cochains implies 
 that all complexes in the diagram above
 are canonically $E_{n+1}$-algebras, this structure being functorial with respect to maps of $E_n$-algebras, see~\cite{GTZ}; this also follows 
 from \cite{Fra} using moduli problems.  
\end{proof}
From Lemma~\ref{L:reducedHochCochain}, we get a canonical (zigzag of) cochain maps 
\begin{equation}\label{eq:mapaugmentationtofull} \iota: \, CH^*_{E_n}(\overline{B},\overline{B})
\simeq CH^*_{E_n}({B},\overline{B})\stackrel{\iota_*}\longrightarrow CH^*_{E_n}(B,B). \end{equation} 
Note that we endow $CH^*_{E_n}({B},\overline{B})$ with its $E_{n+1}$-algebra structure given by transfer along 
$\iota^*:CH^*_{E_n}(B,B) \stackrel{\iota^{*}}\to CH^*_{E_n}(\overline{B}, B)$.
\begin{prop}\label{P:augmentedtononaugmentedDeligne} Assume $n\geq 2$ and $B$ is an augmented cdga  essentially of finite type.
 The canonical map~\eqref{eq:mapaugmentationtofull}
 $\iota: CH^*_{E_n}(\overline{B}, \overline{B}) \to CH^*_{E_n}(B,B)$ is a (non-unital) 
 $E_{n+1}$-algebra homomorphism. 
 Further, we have a cofiber sequence of cochain complexes
 \[CH^*_{E_n}(\overline{B}, \overline{B}) \stackrel{\iota}\longrightarrow CH^*_{E_n}(B,B) \stackrel{\pi_*}\longrightarrow CH^*_{E_n}(B,\mathbb{K}).  
 \]
\end{prop}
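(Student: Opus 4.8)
The plan is to derive both assertions from Lemma~\ref{L:reducedHochCochain} together with the Pirashvili-type higher Hochschild model $CH^*_{E_n}(B,M)\cong Hom_B(B^{\otimes S^n_\bullet},M)$ of \cite{GTZ}, so that essentially no homological work is needed beyond controlling the coefficient variable $M$.

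For the claim that $\iota$ is a (non-unital) $E_{n+1}$-algebra homomorphism, I would observe that by its very definition~\eqref{eq:mapaugmentationtofull} the map $\iota$ is the composite
\[
CH^*_{E_n}(\overline{B},\overline{B})\xleftarrow{\iota^*,\,\simeq}CH^*_{E_n}(B,\overline{B})\xrightarrow{\iota_*}CH^*_{E_n}(B,B),
\]
read in the homotopy category by inverting the quasi-isomorphism $\iota^*$ supplied by Lemma~\ref{L:reducedHochCochain}. With the transfer $E_{n+1}$-structure on $CH^*_{E_n}(B,\overline{B})$ fixed before the statement, both legs of this zigzag are $E_{n+1}$-morphisms, and $\iota^*$ is moreover an $E_{n+1}$-equivalence; hence $\iota$ is a morphism of (non-unital) $E_{n+1}$-algebras. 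The essential point is that Lemma~\ref{L:reducedHochCochain} already exhibits the full commutative diamond as a diagram of $E_{n+1}$-algebras, and $\iota$ is just one edge of that diamond.

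For the cofiber sequence, I would start from the short exact sequence of $E_n$-$B$-modules
\[
0\longrightarrow\overline{B}\stackrel{\iota}{\longrightarrow}B\stackrel{\pi}{\longrightarrow}\mathbb{K}\longrightarrow 0
\]
induced by the augmentation, and apply $CH^*_{E_n}(B,-)$. Using the model $Hom_B(B^{\otimes S^n_\bullet},-)$, after reduction along the basepoint of $S^n_\bullet$ the functor becomes, in each cosimplicial degree, a $\mathbb{K}$-linear hom functor of the form $Hom_{\mathbb{K}}(\overline{B}^{\otimes(\cdot)},-)$; since we work over a field these are exact in $M$, so the totalization carries the sequence above to a short exact, hence (co)fiber, sequence of cochain complexes
\[
CH^*_{E_n}(B,\overline{B})\xrightarrow{\iota_*}CH^*_{E_n}(B,B)\xrightarrow{\pi_*}CH^*_{E_n}(B,\mathbb{K}).
\]
Finally, Lemma~\ref{L:reducedHochCochain} identifies $CH^*_{E_n}(B,\overline{B})$ with $CH^*_{E_n}(\overline{B},\overline{B})$ via $\iota^*$, and under this identification the first map $\iota_*\circ(\iota^*)^{-1}$ is precisely the map $\iota$ of~\eqref{eq:mapaugmentationtofull}, which yields the stated cofiber sequence.

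The main obstacle is not homological but the coherent bookkeeping of the $E_{n+1}$-structures: one must verify that the transfer $E_{n+1}$-structure on $CH^*_{E_n}(B,\overline{B})$ is simultaneously compatible with $\iota^*$ and $\iota_*$, which is exactly the content of the $E_{n+1}$-diamond of Lemma~\ref{L:reducedHochCochain} and rests on the functoriality, with respect to maps of $E_n$-algebras (here the non-unital inclusion $\iota$ and the augmentation $\pi$), of the higher Hochschild $E_{n+1}$-structure of \cite{GTZ}. The hypotheses $n\geq 2$ and $B$ essentially of finite type serve precisely to make these higher Hochschild models and their $E_{n+1}$-structures available and functorial in the required generality.
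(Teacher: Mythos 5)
Your proposal is essentially correct given the paper's own lemmas, but it takes a genuinely different route from the paper's proof. The paper does \emph{not} deduce the proposition from Lemma~\ref{L:reducedHochCochain}: it replaces $B$ by a cofibrant Sullivan model $(Sym(W),d)$, invokes the higher HKR equivalence $CH^*_{E_n}(A,A)\simeq \widehat{Sym}_A\big(Der(A,A)[-n]\big)$ of \cite{CaWi} and \cite{GTZ} (an equivalence of $E_{n+1}$-algebras obtained through the formality of $E_n$, which is where $n\geq 2$ enters), and then identifies $\iota$ with the \emph{strict} inclusion of dg $Pois_{n+1}$-algebras $\widehat{Sym}\big((W)^*\big)\otimes Sym^{\geq 1}(W)\hookrightarrow \widehat{Sym}\big((W)^*\big)\otimes Sym(W)$; the finite-type hypothesis is what permits the identification $Der(Sym(W),Sym(W))\cong (W)^*\otimes Sym(W)$. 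Both claims then fall out of this explicit picture: claim (1) because a strict $Pois_{n+1}$-morphism is in particular an $E_{n+1}$-morphism under the chosen formality, and claim (2) because the cofiber of the displayed inclusion is visibly $\widehat{Sym}\big((W)^*\big)\simeq CH^*_{E_n}(B,\mathbb{K})$. Your argument is softer: claim (1) is read off the $E_{n+1}$-diamond of Lemma~\ref{L:reducedHochCochain}, and claim (2) from exactness of $Hom_B(B^{\otimes S^n_\bullet},-)$ over a field applied to $0\to\overline{B}\to B\to\mathbb{K}\to 0$; your proof of the cofiber sequence in particular is more elementary and more general than the paper's, and is a nice observation. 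What your route costs is this: claim (1) then rests \emph{entirely} on the assertion—only sketched in the proof of Lemma~\ref{L:reducedHochCochain}—that the GTZ $E_{n+1}$-structure exists canonically on $CH^*_{E_n}(B,\overline{B})$ (non-unital coefficients) and is functorial for the non-unital map $\iota$; this is precisely the delicate point the paper's HKR computation is designed to substantiate concretely, and the paper's choice to \emph{define} the structure on $CH^*_{E_n}(B,\overline{B})$ by transfer, rather than appeal to canonicity, signals that the authors did not want the proposition to lean on that sketch. Moreover the paper's computational proof produces the diagram~\eqref{eq:HKRforAugmentationEn}, which is reused in the proof of Theorem~\ref{T:IdentificationGSforSym(V)}, so the explicit identification is not incidental. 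Finally, your closing attribution of the hypotheses is slightly off: $n\geq 2$ and essential finite type are not needed to make the higher Hochschild models ``available and functorial''—they enter through the $E_n$-formality and the dualization of derivations in the paper's HKR argument, and your own argument for claim (2) needs neither.
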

\begin{proof} Taking a cofibrant resolution of $B$ as a cdga, we may assume $B = \big(Sym(W), d\big)$ is an augmented cofibrant
cdga and each $W$ has finite dimensional cohomology; in particular 
$d (W)\subset Sym^{\geq 1}(W)$ and $\overline{B}= Sym^{\geq 1}(W)$. Being cdgas, these algebras are canonically 
$Pois_{n}$-algebras, and this structure is compatible with the induced $E_n$-algebra structure. 
By \cite{CaWi}, \cite[Corollary 6.39]{GTZ}\footnote{beware that one uses an homological grading in \emph{loc. cit.}
while we are using a  cohomological grading}, we have, for any  \emph{cofibrant} cdga $A$,
an equivalence of $E_{n+1}$-algebras 
\begin{equation} \label{eq:CHEnSymshifted} CH^*_{E_n}(A, A) \simeq \widehat{Sym}_A(Der(A,A)[-2]) \end{equation}
where the shifted Lie bracket is given by the Lie bracket on derivations and the Leibniz rule. Here $Der(A,A)$ is
the cochain complex of derivations and the $E_{n+1}$-algebra structure on the right hand side of~\eqref{eq:CHEnSymshifted} is induced by formality 
from its canonical $Pois_{n+1}$-structure. 
By Theorem~\ref{T:Defcomplexesallagree}, we can replace $CH^*_{E_n}(A, A) $ by 
$CH_{Pois_n}^*(A,A)= Hom_{\Sigma}(u {Pois_n}^* \{n\}, End_A)[-n] $ in  the above equivalence.

Since $B = \big(Sym(W), d\big)$, we have an isomorphism of complexes:
\[ \iota^*: Der(\overline{B}, B)  \;\cong\; Der(B,B)\]
as well as  canonical quasi-isomorphisms
\[(W)^*\otimes B \stackrel{\simeq}\to Der(B,B), \quad  (W)^*\otimes \overline{B} \stackrel{\simeq}\to Der(\overline{B},
\overline{B})\] identifying the canonical map $\iota_*: Der(\overline{B},
\overline{B})\hookrightarrow Der(B,B)$ (extending a derivation on $\overline{B}$ 
to a derivation on $B$) with the map $id_{(W)^*}\otimes \iota$.  
The equivalence~\eqref{eq:CHEnSymshifted} is given by the map
\[ \widehat{Sym}_A(Der(A,A)[-n]\subset Hom_{\Sigma}(u {Com}^* \{n\}, End_A)[-n] \hookrightarrow Hom_{\Sigma}(u {Pois_n}^* \{n\}, End_A)[-n]\] 
where the right map is induced by the canonical operad morphism $Com\to Pois_n$. Hence we have a commutative diagram
\begin{equation}\label{eq:HKRforAugmentationEn}
 \xymatrix{ \widehat{Sym}\big((W)^*\big)\otimes Sym^{\geq 1}\big(W\big) \ar[r]^{\cong} \ar@{^{(}->}[d]
 &\widehat{Sym}_B\big(Der(\overline{B},\overline{B})[-n]\big)\ar[d]_{Sym_B(\iota_*)} 
 \ar[r]^{\qquad\simeq} & CH_{E_n}^*(\overline{B}, \overline{B}) \ar[d]^{\iota} \\
 \widehat{Sym}\big((W)^*\big)\otimes Sym\big(W\big) \ar[r]^{\cong} & \widehat{Sym}_B\big(Der(B,B)[-n]\big) 
 \ar[r]^{\qquad\simeq} & CH_{E_n}^*({B}, {B}) .}
\end{equation}
Since the left vertical map is a map of dg $Pois_{n+1}$-algebras and the horizontal maps are equivalences of $E_{n+1}$-algebras, 
the first claim follows. The second claim follows from the fact that the left hand side is a cofiber sequence 
with cofiber $\widehat{Sym}\big((W)^*\big)\otimes k$.
\end{proof}

\subsection{Gerstenhaber-Schack cochain complexes for bialgebras and proof of Corollary~\ref{C:GS=HH}}\label{SS:GScomplex}

We now precisely relate Gerstenhaber-Schack complexes~\cite{GS} and $E_2$-Hochschild cohomology. 
As for $Pois_n$ and $E_n$-algebras, there are 
several more or less truncated complexes one can consider and encounter in the literature\footnote{beware that, unfortunately, 
the terminology, notations or
degree shifting are not consistent in the literature.},
each related to different deformation problems. Let us  fix some notations.
What we call the \emph{Gerstenhaber-Schack complex} is the total complex of a bicomplex, defined by
\begin{equation}\label{eq:DefCGS}
C_{GS}^*(B,B)\cong \prod_{m,n\geq 1}Hom_{dg}(B^{\otimes m},B^{\otimes n})[-m-n].
\end{equation}
The horizontal differential is defined, for every $n$, by the Hochschild differential associated to the Hochschild complex of $B$ seen as an associative algebra with coefficients in the $B$-bimodule $B^{\otimes n}$.
The vertical differential is defined, for every $m$, by the co-Hochschild differential associated to the co-Hochschild complex of $B$ seen as a coassociative coalgebra with coefficients in the $B$-bicomodule $B^{\otimes m}$.
The compatibility between these differentials, which gives us a well defined bicomplex,
follows from the distributive law relating the product and the coproduct of the bialgebra $B$ (see~\cite{GS, Mer2} for details).
This is the complex relevant for Etingof-Kazdhan quantization in Section~\ref{S:EKQ} because 
\begin{prop}[\cite{Mer2}]The Gerstenhaber-Schack complex is quasi-isomorphic to the deformation complex of dg-bialgebras 
(up to isomorphisms):
$$C_{GS}^*(B,B) \cong  g_{Bialg_{\infty}^+,B}^{\varphi^+}.$$
\end{prop}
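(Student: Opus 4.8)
The plan is to identify both sides with the same explicit bigraded complex, using the general description of properadic deformation complexes recalled in Section~\ref{S:ModuliProblems} together with the plus-construction fiber sequence of Section~\ref{S:Plus}. First I would unwind the left-hand side via the isomorphism $g_{P,X}^{\psi} = Hom_{\Sigma}(\overline{C},End_X)^{\psi} \cong Der_{\psi}(\Omega(C),End_X)$ applied to $P=Bialg$, $X=B$ and $\psi=\varphi$. Since $Bialg$ is homotopy Koszul \cite[Proposition 41]{MV1}, it admits a minimal model $(\mathcal{F}(s^{-1}C),\partial)$ with $C$ the Koszul dual homotopy coproperad. The key combinatorial input, due to Merkulov, is that the convolution complex $Hom_{\Sigma}(\overline{C},End_B)$ is isomorphic, as a bigraded vector space, to $\prod_{m,n\geq 1,\,m+n\geq 3}Hom_{dg}(B^{\otimes m},B^{\otimes n})[2-m-n]$, the weight grading being given by the bi-arity $(m,n)$. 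This identifies $g_{Bialg,B}^{\varphi}$ with the Gerstenhaber-Schack complex having its single weight-$(1,1)$ term $Hom(B,B)$ removed, exactly as recorded in the Remark preceding this statement.

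Next I would restore the missing $Hom(B,B)$ summand by passing from $Bialg$ to $Bialg^+$. By the homotopy fiber sequence of $L_{\infty}$-algebras $g_{Bialg,B}^{\varphi}\rightarrow g_{Bialg^+,B}^{\varphi^+}\rightarrow g_{Di,B}^{0}$ established in Section~\ref{S:Plus}, together with the identification $g_{Di,B}^{0}\cong Hom(B,B)$ coming from the description of $Di$-algebra structures as twistings of the differential, the plus construction adds precisely one copy of $Hom(B,B)$ in weight $(1,1)$. As a bigraded vector space this yields $g_{Bialg^+,B}^{\varphi^+}\cong \prod_{m,n\geq 1}Hom_{dg}(B^{\otimes m},B^{\otimes n})[2-m-n]$, which is the underlying space of $C_{GS}^{*}(B,B)$ up to the overall degree convention relating the cochain normalization of \eqref{eq:DefCGS} to the $L_{\infty}$ shift.

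It then remains to match the differentials. The twisted differential on $Hom_{\Sigma}(\overline{C},End_B)^{\varphi^+}$ is $\pm(d_B)_{*}+[\varphi^+,-]$. I would check that the bracket with the multiplication component of $\varphi$ reproduces the horizontal (Hochschild) differential, the bracket with the comultiplication component reproduces the vertical (co-Hochschild) differential, the term $(d_B)_{*}$ gives the differential induced by $B$, and the relation $\delta^+(u)=u\otimes u$ encodes the coupling of the new generator $u$ with the internal differential. The compatibility of the horizontal and vertical differentials, i.e.\ that they assemble into a genuine bicomplex, is forced by the distributive (bialgebra) law relating $\mu$ and $\Delta$, which is precisely the relation imposed in $Bialg$ and recalled after \eqref{eq:DefCGS}.

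The main obstacle is exactly this last step: pinning down the explicit Koszul dual homotopy coproperad $C$ of $Bialg$ and verifying, with all suspensions and Koszul signs in place, that the properadic convolution bracket and differential coincide term-by-term with the Gerstenhaber-Schack differentials; in particular one must control the cross terms linking the newly added $Hom(B,B)$ summand to the higher-weight components. This bookkeeping is the technical heart of \cite{Mer2}, which I would invoke for the precise identification rather than reprove in full.
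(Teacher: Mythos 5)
Your proposal is correct and takes essentially the same route as the paper, which offers no independent proof of this proposition but cites Merkulov [Mer2] for the technical identification — exactly as you do for the Koszul dual homotopy coproperad of $Bialg$ and the sign bookkeeping. Your skeleton (the convolution complex of $Bialg_{\infty}$ giving the Gerstenhaber-Schack complex minus its $Hom_{dg}(B,B)$ summand, the plus construction restoring that summand via the $Di$-fiber sequence, and the twisted differential $\pm(d_B)_{*}+[\varphi^+,-]$ reproducing the Hochschild and co-Hochschild differentials) is precisely the explanation the paper itself records in the remark following the proof of Theorem 0.3, the only cosmetic caveat being that the graded splitting adding $Hom_{dg}(B,B)$ is best read off directly from the definition of $(-)^+$ (which adds one generator in biarity $(1,1)$ to the coproperad of generators) rather than from the homotopy fiber sequence alone.
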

Note here that in the definition of $C_{GS}^*(B,B) $ we use truncated versions of 
Hochschild and co-Hochschild complexes. We can also use their full versions and define an extended Gerstenhaber-Schack complex
\begin{equation}
 \label{eq:DefCGStilde}
\tilde{C}_{GS}^*(B,B)\cong \prod_{m\geq 0, n\geq 1}Hom_{dg}(B^{\otimes m},B^{\otimes n})[-m-n],
\end{equation}
which sits inside the fully extended Gerstenhaber-Schack complex
\[
C_{GS}^{full}(B,B)\cong \prod_{m,n\geq 0}Hom_{dg}(B^{\otimes m},B^{\otimes n})[-m-n].
\]
The differentials are defined as for $C_{GS}^*(B,B)$, which is naturally a (non-split) subcomplex of each of the preceding ones 
(\cite{GS}). 
\begin{cor}\label{C:DefcomplexesGS=Hoch}  Let $B$ be a conilpotent dg-bialgebra. 
 There is a commutative diagram of $L_\infty$-algebras
 \[\xymatrix{ \tilde{\Omega}B [1] \ar[r] & T_{\tilde{\Omega}B} \ar[r] &  CH_{E_2}^{*}( \tilde{\Omega}B, \tilde{\Omega}B)[2] \\ 
  \tilde{\Omega}B[1] \ar[u]^{\simeq} \ar[r] & C_{GS}^{*}(B,B)[2] \ar[u]^{\simeq} \ar[r] & \tilde{C}_{GS}^*(B,B))[2] \ar[u]_{\simeq} }\]
 whose vertical arrows are $L_\infty$-quasi-isomorphisms and the middle vertical arrow is given by Lemma~\ref{L:gE2+=TA} and
 the quasi-isomorphism~\eqref{eq:gBialg=gE_2+}.
 The horizontal arrows are further fiber sequences.
\end{cor}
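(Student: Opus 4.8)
The plan is to deduce the whole diagram from a single fiber sequence---Francis' $E_2$-Hochschild fiber sequence for the augmented $E_2$-algebra $\tilde{\Omega}B$---by transporting it along equivalences we already control and then recognising the transported terms as Gerstenhaber--Schack complexes. The top row is exactly Francis' sequence $T_A[-2]\to CH^*_{E_2}(A,A)\to A$ of \S\ref{S:EnHoch}, taken for $A=\tilde{\Omega}B$: shifting by $[2]$ and rotating once produces the fiber sequence of $L_\infty$-algebras $\tilde{\Omega}B[1]\to T_{\tilde{\Omega}B}\to CH^*_{E_2}(\tilde{\Omega}B,\tilde{\Omega}B)[2]$, just as the upper line of Theorem~\ref{T:Defcomplexesallagree} was obtained from \eqref{eq:fibersequencePoisnasLie}; no Poisson structure on $\tilde{\Omega}B$ is needed here. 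The middle vertical equivalence is handed to us: \eqref{eq:gBialg=gE_2+} followed by Corollary~\ref{L:gE2+=TA} gives $C^*_{GS}(B,B)[2]\simeq g_{E_2^+,\tilde{\Omega}B}^{\psi^+}\simeq T_{\tilde{\Omega}B}$, the shift $[2]$ only reconciling the two normalisations of $C^*_{GS}$ used in \S\ref{SS:GScomplex} and in the remark following Theorem~\ref{T:EquivDef}. It therefore remains to build the right-hand vertical arrow and to identify the bottom row.

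For the right-hand arrow I would rerun the proof of Theorem~\ref{T:EquivDef}(2) once more, now for the strictly unital enhancements of the plus constructions. Each functor composing $\tilde{\Omega}$ respects strict units: the strict unit of the product of $B$ is sent by $\mathcal{B}^{enh}_{E_1}$ to the coaugmentation of the bar coalgebra and then by $Cobar^{(2)}$ to the strict $E_2$-unit of $\tilde{\Omega}B$. Hence the fully faithful, conservative $\infty$-functor underlying $\tilde{\Omega}$ extends to these properads and, by the argument of Theorem~\ref{T:EquivDef}(2) (functorial in $A\in dgArt^{aug}_{\mathbb{K}}$, equivalence of formal moduli problems, then Lurie's theorem), yields a quasi-isomorphism of $L_\infty$-algebras $g_{(Bialg^{\mathrm{u}})^+,B}^{\varphi^+}\simeq g_{suE_2^+,\tilde{\Omega}B}^{\psi^+}$, where $Bialg^{\mathrm{u}}$ denotes the prop of bialgebras whose product carries a strict unit (the coproduct staying non-counital). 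Two bookkeeping identifications close this step. On the bialgebra side the new arity-$(0,1)$ generator contributes precisely the $m=0$ column $\prod_{n\ge1}Hom_{dg}(\mathbb{K},B^{\otimes n})[-n]$ of the convolution complex, so $g_{(Bialg^{\mathrm{u}})^+,B}^{\varphi^+}\cong \tilde{C}^*_{GS}(B,B)[2]$. On the $E_2$ side the computation of Theorem~\ref{T:Defcomplexesallagree} applies verbatim: the arity-$0$ generator of $suE_2^+$ sits in weight $0$ and contributes the summand $\tilde{\Omega}B[2]$, so that $g_{suE_2^+,\tilde{\Omega}B}^{\psi^+}$ is the extension of $\tilde{\Omega}B[2]$ by $g_{E_2^+,\tilde{\Omega}B}^{\psi^+}\simeq T_{\tilde{\Omega}B}$ carried by the $E_2$-bracket---that is, exactly $CH^*_{E_2}(\tilde{\Omega}B,\tilde{\Omega}B)[2]$ via the André--Quillen identification used in Theorem~\ref{T:Defcomplexesallagree}. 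This defines the right vertical arrow.

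With the middle and right verticals now $L_\infty$-equivalences and the top a fiber sequence of $L_\infty$-algebras, the bottom row follows by transport. The inclusions $g_{Bialg^+,B}^{\varphi^+}\hookrightarrow g_{(Bialg^{\mathrm{u}})^+,B}^{\varphi^+}$ and $g_{E_2^+,\tilde{\Omega}B}^{\psi^+}\hookrightarrow g_{suE_2^+,\tilde{\Omega}B}^{\psi^+}$ fit into a commutative square under $\tilde{\Omega}$ of the type \eqref{eq:diagEnPoisnandU+}, which identifies the transported map $C^*_{GS}(B,B)[2]\to \tilde{C}^*_{GS}(B,B)[2]$ with the canonical subcomplex inclusion of the Gerstenhaber--Schack bicomplex; hence the bottom is a fiber sequence of $L_\infty$-algebras. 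Its homotopy fiber is the desuspended cofiber of that inclusion, namely the $m=0$ column $\prod_{n\ge1}B^{\otimes n}[1-n]$ equipped with the co-Hochschild (cobar) differential. This is, up to the shift $[1]$, the reduced cobar complex of the coalgebra underlying $B$, which is precisely the underlying complex of $\tilde{\Omega}B$---one of the two cobar functors in $Cobar^{(2)}$ cancelling the bar functor $\mathcal{B}^{enh}_{E_1}$---so the fiber is canonically $\tilde{\Omega}B[1]$ and the left vertical is this identification (an equivalence also forced by two-out-of-three). Commutativity of both squares is automatic, every vertical map being induced by $\tilde{\Omega}$ and its strictly unital plus extension and every horizontal map being a canonical structural inclusion preserved by these functors.

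The step I expect to be the main obstacle is the construction and matching of the strictly unital plus properads: one must check that $Bialg^{\mathrm{u}}$ and $suE_2^+$ (and the half-unital resolutions through which $\tilde{\Omega}$ factors) satisfy the hypotheses making Theorem~\ref{T:EquivDef}(2) apply verbatim, and---more delicately---that the single unit generator added on each side corresponds under $\tilde{\Omega}$ to the \emph{same} new piece, so that the contribution of the $E_2$-unit (the weight-$0$ part of $CH^*_{E_2}$) is matched with the $m=0$ column of $\tilde{C}^*_{GS}$ and not with the counital ($n=0$) enhancement, which instead governs the passage to $C_{GS}^{full}$. This matching must be verified at the level of the full $L_\infty$-structures, not merely of the underlying complexes; once it is in place, everything else is a formal consequence of rotating Francis' sequence and of the two-out-of-three property for fiber sequences of $L_\infty$-algebras.
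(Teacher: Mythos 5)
Your proposal follows essentially the same route as the paper's proof: Francis' sequence for the top row, the middle vertical from the quasi-isomorphism~\eqref{eq:gBialg=gE_2+} together with Corollary~\ref{L:gE2+=TA} (i.e.\ Theorem~\ref{T:EquivDef}(2)), the right vertical obtained by rerunning that equivalence for the strictly unital plus-constructions so as to get the commutative square of deformation complexes $g_{suBialg_{\infty}^+,B}^{\varphi^+}\simeq g_{suE_2^+,\tilde{\Omega}B}^{\psi^+}$ identified with $\tilde{C}_{GS}^*(B,B)[2]$ and $CH^*_{E_2}(\tilde{\Omega}B,\tilde{\Omega}B)[2]$ exactly as in Theorem~\ref{T:Defcomplexesallagree}, and the left vertical deduced from the resulting morphism of fiber sequences. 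Your extra attempt to compute the bottom fiber directly as a cobar complex is not needed (and glosses over the product-versus-sum and reduced-versus-unreduced discrepancies), but since you also invoke the morphism-of-fiber-sequences/two-out-of-three argument — which is all the paper uses — this does not affect the validity of the proof.
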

\begin{proof}
 We endow $\tilde{\Omega}B$ with its $E_2$-algebra structure given by Corollary~\ref{C:Barenhanced}.
 The upper horizontal sequence 
 is then given by the main result of~\cite{Fra}. The middle vertical equivalence is assertion (2) in Theorem~\ref{T:EquivDef}.
 A proof similar to the one  of Theorem~\ref{T:Defcomplexesallagree} gives a commutative diagram of $L_\infty$-algebras \begin{equation}
\xymatrix{ g_{suE_{2}^{+},\tilde{\Omega}B}^{(\psi)^{+}} 
& g_{E_{2}^{+},\tilde{\Omega}B}^{(\psi)^{+}}\ar@{_{(}->}[l]   \\ 
 g_{suBialg_{\infty}^+,B}^{\varphi^+}\ar[u]_-{\simeq}& g_{Bialg_{\infty}^+,B}^{\varphi^+}
 \ar@{_{(}->}[l]\ar[u]^-{\simeq} & }.
\end{equation}
, which induces a morphism of fiber sequences,
and an identification of $g_{suBialg_{\infty}^+,B}^{\varphi^+}$ with $C_{GS}^{*}(B,B)[2]$ similar to the one of 
$g_{suPois_{2}^{+},A}^{(\psi)^{+}}$ with $CH^*_{Pois_2}(A,A)$.
 The upper line has been identified with $T_{\tilde{\Omega}B} \longrightarrow  CH_{E_2}^{*}( \tilde{\Omega}B, \tilde{\Omega}B)[2]$
 in the proof of Theorem~\ref{T:Defcomplexesallagree}. The diagram being a diagram of fiber sequences of $L_\infty$-algebras, we 
 get that the fibers are also isomorphic as $L_\infty$-algebras.
\end{proof}

 Let $B$ be a conilpotent dg-bialgebra. Then $\tilde{\Omega}B$ is an $E_2$-algebra by Corollary~\ref{C:Barenhanced} and 
 the sequence of $L_\infty$-algebras 
 \[ \xymatrix{\tilde{\Omega}B [1] \ar[r] & T_{\tilde{\Omega}B} \ar[r] &  CH_{E_2}^{*}( \tilde{\Omega}B, \tilde{\Omega}B)[2]}\]
 lits to a sequence of (non-unital) $E_3$-algebras (after shifting it down by $2$) 
 by the solution to (Kontsevich) higher Deligne conjecture, and more precisely the main result of Francis in~\cite{Fra} (also see~\cite{Lur2, GTZ} for 
 the relationship with Pirashvili Hochschild cohomology and derived centralizers). 
 
 Finally we improved Corollary~\ref{C:DefcomplexesGS=Hoch} into
\begin{cor}[Gerstenhaber-Schack conjecture] \label{C:GSconjecture}
(1) The $E_3$-algebra structures of $CH_{E_2}^{(\bullet>0)}(\tilde{\Omega}B, \tilde{\Omega}B)\cong T_{\tilde{\Omega}B}[-2]$ and 
$CH_{E_2}^{*}( \tilde{\Omega}B, \tilde{\Omega}B)$
induce $E_3$-algebra structures on $C^*_{GS}(B,B)$ and $\tilde{C}_{GS}(B,B))$ 
such that the following diagram  \[\xymatrix{ \tilde{\Omega}B [-1] \ar[r] & T_{\tilde{\Omega}(B)} \ar[r] &  CH_{E_2}^{*}( \tilde{\Omega}B, \tilde{\Omega}B) \\ 
  \tilde{\Omega}B[-1] \ar@{=}[u] \ar[r] & C_{GS}^{*}(B,B) \ar[u]^{\simeq} \ar[r] & \tilde{C}_{GS}(B,B) \ar[u]_{\simeq} }\] 
  is 
a commutative diagram of non-unital $E_3$-algebras with vertical arrows being equivalences. 

(2) The $E_3$-algebra structure on $C^*_{GS}(B,B)$ is
 a refinement of its $L_{\infty}$-algebra structure controlling the deformation theory of the bialgebra $B$.
\end{cor}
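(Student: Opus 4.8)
The plan is to produce the $E_3$-structures on the Gerstenhaber--Schack complexes entirely by transport of structure along the $L_\infty$-equivalences of Corollary~\ref{C:DefcomplexesGS=Hoch}, feeding in the solution to the higher Deligne conjecture to supply the $E_3$-structures on the $E_2$-Hochschild side. First I would fix the top row. Since $\tilde\Omega B$ is an augmented $E_2$-algebra (Corollary~\ref{C:Barenhanced}), the higher Deligne conjecture in the form proved by Francis~\cite{Fra} (see also~\cite{Lur2,GTZ}) shows that the upper sequence of $L_\infty$-algebras of Corollary~\ref{C:DefcomplexesGS=Hoch}, shifted down by $2$, lifts to a fiber sequence of non-unital $E_3$-algebras
\[ \tilde\Omega B[-1]\longrightarrow T_{\tilde\Omega B}\longrightarrow CH^*_{E_2}(\tilde\Omega B,\tilde\Omega B), \]
in which the first map is a morphism of $E_3$-algebras. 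This is exactly the top row of the diagram together with its $E_3$-structure.

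Next I would transport these structures to the lower row. Corollary~\ref{C:DefcomplexesGS=Hoch} furnishes, after the same global shift by $2$, the lower sequence $\tilde\Omega B[-1]\to C^*_{GS}(B,B)\to \tilde C_{GS}(B,B)$ together with vertical $L_\infty$-quasi-isomorphisms onto the top row; in particular the vertical maps are quasi-isomorphisms of underlying chain complexes. Because $E_3$ is a $\Sigma$-cofibrant operad and every chain complex over $\mathbb{K}$ is fibrant, the forgetful $\infty$-functor from $E_3$-algebras to chain complexes detects equivalences, so any equivalence in $Ch_{\mathbb{K}}$ with a chosen $E_3$-lift of its target lifts, up to a contractible space of choices, to an equivalence of $E_3$-algebras; concretely this is the homotopy transfer theorem (cf.~\cite{LV}). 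The essential move is to run this transport not object by object but over the whole diagram at once: regarding the pair consisting of the top row of $E_3$-algebras and the vertical chain equivalences as a lift to $E_3$-algebras of the lower diagram in $Ch_{\mathbb{K}}$, functoriality of transport along equivalences equips $C^*_{GS}(B,B)$ and $\tilde C_{GS}(B,B)$ with $E_3$-structures for which the horizontal maps become $E_3$-morphisms, the lower row becomes a fiber sequence of non-unital $E_3$-algebras, the vertical maps become $E_3$-equivalences, and the whole square commutes in the $\infty$-category of $E_3$-algebras. This establishes assertion (1), with the $E_3$-structures on the Gerstenhaber--Schack complexes induced from the $E_2$-Hochschild ones as claimed.

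For assertion (2), I would argue that the transported $E_3$-structure refines the correct $L_\infty$-structure. By~\cite{Fra}, the $E_3$-structure on $T_{\tilde\Omega B}$ refines, through the shifted bracket $Lie\{2\}\to H^*E_3\cong P_3$, its tangent $L_\infty$-algebra structure, namely the one controlling the deformations of $\tilde\Omega B$ as an $E_2$-algebra. Transport of structure is compatible with the forgetful functor extracting this bracket, so the underlying $L_\infty$-structure of the $E_3$-structure transported to $C^*_{GS}(B,B)$ agrees, via the same quasi-isomorphism, with that tangent $L_\infty$-structure. By Corollary~\ref{C:DefcomplexesGS=Hoch} (equivalently Theorem~\ref{T:EquivDef} with the identification~\eqref{eq:gBialg=gE_2+} and Lemma~\ref{L:gE2+=TA}) the latter is $L_\infty$-quasi-isomorphic to $g_{Bialg_{\infty}^+,B}^{\varphi^+}\cong C^*_{GS}(B,B)$, the $L_\infty$-algebra controlling the deformation theory of the bialgebra $B$. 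Hence the transported $E_3$-structure refines this deformation-controlling $L_\infty$-structure, which is (2).

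The main obstacle is not the mere existence of an $E_3$-structure on each Gerstenhaber--Schack complex --- that is immediate from transfer --- but its simultaneous compatibility with the horizontal fiber sequence and with the specific deformation-controlling $L_\infty$-structure. The first forces the transport to be carried out coherently over the entire diagram rather than termwise, which is why I favor the $\infty$-categorical formulation (lifting an equivalence of diagrams in $Ch_{\mathbb{K}}$ to $E_3$-algebras along a fixed lift of the target) over an ad hoc invocation of homotopy transfer. The second requires matching the underlying $L_\infty$-structure of the transported $E_3$-structure with the Gerstenhaber--Schack bracket, which rests on the naturality of transfer with respect to the forgetful functor to $L_\infty$-algebras and on Francis's refinement statement.
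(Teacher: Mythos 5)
Your proposal is correct and follows essentially the same route as the paper: the $E_3$-structures on $C^*_{GS}(B,B)$ and $\tilde{C}_{GS}(B,B)$ are obtained by transferring Francis's $E_3$-structure on the top fiber sequence along the $L_\infty$-equivalences of Corollary~\ref{C:DefcomplexesGS=Hoch}, and claim (2) follows because Theorem~\ref{T:EquivDef} identifies the underlying $L_\infty$-structure of $T_{\tilde{\Omega}(B)}$ with the deformation complex $g_{Bialg_{\infty}^+,B}^{\varphi^+}\cong C^*_{GS}(B,B)$ of the bialgebra. Your write-up merely makes explicit the coherent (diagram-level) nature of the transfer, which the paper leaves implicit.
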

\begin{proof}
The diagram and the existence of the lifts of the $E_3$-structures are obtained by transfer of the structure of the first line~\cite{Fra}
along the equivalences given by 
Corollary~\ref{C:DefcomplexesGS=Hoch}. 
We further know that the underlying $L_\infty$-algebra structure of $T_{\tilde{\Omega}(B)}$ is equivalent to the 
one of the deformation complexes of Lie bialgebra $g_{Bialg_{\infty}^+,B}^{\varphi^+}\cong C_{GS}^*(B,B)$ 
by Theorem~\ref{T:EquivDef}. Claim (2) follows.
\end{proof}
\begin{rem}
 We conjecture that the underlying $L_\infty$-algebra structure of the full Gerstenhaber-Schack complex $C_{GS}^{full}(B,B)$ 
 controls deformations as a monoidal dg category of the dg category of representations of $B$. 
\end{rem}
\begin{rem}[\emph{Relevance of the choice of $E_3$-lift}] In view of Theorem~\ref{T:Defcomplexesallagree}, we can also endow the Gerstenhaber-Schack complexes $C_{GS}^*(B,B))$ 
and $C_{GS}^{full}(B,B))$ with $E_3$-algebra structures given by the ones on Tamarkin deformation complexes $CH_{Pois_n}^{\bullet >0}(-,-)$, 
$CH_{Pois_n}^*(-,-)$ (\cite{Ta-deformationofd-algebra, CaWi}), which carry the same underlying $L_\infty$-structure. 
In view of Theorem~\ref{T:EquivDef}, our choice of solutions to the Gerstehaber-Schack conjecture seems more natural. 

With respect to applications to quantization of Lie bialgebras, this choice is however \emph{not} important: 
any $E_3$-algebra structure inducing the correct $L_\infty$-structure will be good enough to compute the deformation complex of bialgebras; the additional datum of the $E_3$-structure is in fact used to simplify computations of deformation complexes by adding more rigidity, and gives in quantization the independence from the choice of an associator. In our case of interest for Etingof-Kazdhan quantization in \S~\ref{S:EKQ}, we are in fact
in a case where both $E_3$-structures are the same. We actually believe that the diagram of Theorem~\ref{T:Defcomplexesallagree} is 
in fact a commutative diagram of $E_3$-algebras in general so that the aforementioned two $E_3$-structures are \emph{always} the same. 
\end{rem}

As a consequence, the Gerstenhaber-Schack complex $C_{GS}^*(B,B)$ inherits a homotopy associative multiplication
(that is an $E_1$-algebra structure obtained through the canonical map $E_1\to E_3$ of operads). 
There is a standard dg associative multiplication on the Gerstenhaber-Schack  complex given by the cup-product~\cite{GS},
which is a model for the Yoneda extension product~\cite{Taillefer-HopfBimod}.
On the other hand, following Corollary~\ref{C:GSconjecture}, the $E_3$-algebra structure on $C_{GS}^*(B,B)$ 
is given by its identification with the derived center 
$\mathbb{R}Hom^{E_2}_{\tilde{\Omega}(B)}(\tilde{\Omega}(B),\tilde{\Omega}(B))$
of $\tilde{\Omega}(B)$. By definition, its  $E_1$-algebra structure is given by composition of $E_2$-$\tilde{\Omega}(B)$-modules
endomorphisms which is the same as derived homomorphisms of left modules over factorization homology
$\int_{S^1}\tilde{\Omega}(B)$ (\cite{Fra, GTZ}); that is, 
it models the Yoneda extension product for $E_2$-$\tilde{\Omega}(B)$-modules. Then, from Theorem~\ref{T:EquivDef}, one can deduce:
\begin{prop} The $E_1$-structure induced by Corollary \ref{C:GSconjecture} on  $C_{GS}^*(B,B)$ is equivalent to the $E_1$-algebra 
structure induced  by the cup-product.
\end{prop}

\section{The $E_3$-formality Theorem} \label{S:GSofSym}

\subsection{The Gerstenhaber-Schack complexes of $Sym(V)$}
This section is devoted to the proof of our $E_3$-formality theorem (Theorem 0.5). Corollary 0.4 tells us two facts.
First, we can choose the higher Hoschchild complex $CH^{(\bullet > 0)}_{E_2}(\tilde{\Omega}B,\tilde{\Omega}B)$ as a model for the deformation complex
$Def(B)$ of a dg bialgebra $B$. By (Kontsevich) higher Deligne conjecture~\cite{Fra, Lur2, GTZ}, the $L_\infty$-structure
controlling these deformations has a lift to an $E_3$-algebra structure (and an $E_3$-moduli problem). 
Then, the Gerstenhaber-Schack cohomology groups $H_{GS}^*(B,B)$  of $B$ inherits a $3$-Poisson algebra structure,
that is, an algebra over the homology $H_*(E_3)\cong Pois_3$ of the little $3$-disks operad, so that
$H^*Def(B)\cong H^*(T_{\tilde{\Omega}(B)})$ and $H_{GS}^*(B,B)$ are isomorphic as $3$-Poisson algebras.

The formality of the little $3$-disks operad then gives 
an $E_3$-algebra structure on $H_{GS}^*(B,B)$. 
The main question of interest here is to know whether $C_{GS}^*(B,B)$ and $H_{GS}^*(B,B)$
are then quasi-isomorphic as $E_3$-algebras.
We cannot expect such a result to holds true in full generality, however, for our purposes it will be sufficient to prove it in the case
where $B=Sym(V)$ is the symmetric bialgebra over a $\mathbb{Z}$-graded cochain complex $V$ with cohomology of finite dimension in each degree.
The general strategy is very similar to the famous obstruction theoretic approach used by Tamarkin to prove the $E_2$-formality
of the Lie algebra of polyvector fields on an affine space (\cite{Hin},\cite{Tam1}, \cite{Tam2}).

Now let $(V,d)$ be a ($\mathbb{Z}$-graded) cochain complex with cohomology of finite dimension in each degree.
Its symmetric algebra $Sym(V)$ equipped with the induced differential 
(extending $d: V^\bullet \to V^{\bullet+1}\subset Sym(V)$ 
into a derivation) is a conilpotent dg bialgebra (see Example~\ref{Ex:Sym}) so that we can apply Theorem~\ref{T:EquivDef}.  

\smallskip

Furthermore, there is a natural (strict) dg $Pois_3$-algebra structure on (possibly completed) symmetric powers of 
$\big(V\oplus (V)^*\big)[-1]$ mimicking the Poisson structure of vector fields. 
\begin{defn}\label{D:strictPois3onHGS} Let $V$ be a cochain complex. We define a dg $Pois_3$-algebra structure on
\[\widehat{Sym}(V[-1]\oplus 
(V)^*[-1]) \cong \prod_{m, n\geq 1} Sym^m(V[-1])\otimes Sym^n((V)^*[-1])\]
the (fully completed) \emph{symmetric algebra}  (on $(V[-1]\oplus 
(V)^*[-1])$), with the usual differential given by the extension as a derivation of the one of $V$, 
and the degree $-2$  \emph{Poisson bracket} induced
by  the evaluation pairing
$ev:(V^{\bullet})^*\otimes V\rightarrow\mathbb{K}$, $[\chi, v] := ev(\chi \otimes v)= \chi(v)$  and the Leibniz rule :
$$[a \cdot b, c] = a\cdot [b,c] + (-1)^{|a| |b|} b\cdot [a,c]. $$
The subalgebra \[\widehat{Sym}
\big((V)^*[-1]\big) \otimes Sym(V[-1]) \cong 
\prod_{m\geq 1} \bigoplus_{n\geq 1} Sym^m(V[-1])\otimes Sym^n((V)^*[-1])\] is a dg sub-$Pois_3$-algebra and such
also are the sub-complexes
\[\widehat{Sym}^{\geq 1}
\big((V)^{*}[-1]\big) \otimes Sym(V[-1]), \qquad \widehat{Sym}
\big((V)^{*}[-1]\big) \otimes Sym^{\geq 1}(V[-1]). \]
\end{defn}
Note that if $V^{1} =0$, then the completed symmetric 
algebra
is just the usual symmetric algebra. 

\begin{thm}\label{T:IdentificationGSforSym(V)} Let $(V,d)$ be a ($\mathbb{Z}$-graded) cochain complex with finite dimensional
cohomology in each degre.
\begin{enumerate}
\item  The Gertenhaber-Schack cohomology of $Sym(V))$ is given by 
\begin{align*}
H^*_{GS}(Sym(V),Sym(V)) &\cong \prod_{m \geq 1}\big(\bigoplus_{n\geq 1} Sym^n(H^*(V)[-1])\otimes 
Sym^m(H^*(V)^*[-1])\big)\\
&\cong 
\widehat{Sym}(
H^*(V)^*[-1])\otimes Sym (H^*(V)[-1])
\end{align*} as a  $Pois_3$-algebra. Here, the algebra structure is the one of the symmetric algebra and the the degree $-2$  Poisson bracket is induced by  the evaluation pairing
$ev:H^*(V)^*\otimes H^*(V)\rightarrow\mathbb{K}$ as above. 
\item The Gertenhaber-Schack complex  $C^*_{GS}(Sym(V),Sym(V)) $ is equivalent as an $E_3$-algebra 
to $\widehat{Sym}(V[-1]\oplus 
(V)^*[-1])$, where the later is equipped with the $E_3$-algebra structure induced by the chosen formality morphism
$E_3\stackrel{\sim}{\rightarrow}Pois_3$. Furthermore, there is a commutative diagram of $E_3$-algebras
\[
\xymatrix{
C^*_{GS}(Sym(V),Sym(V)) \ar[r]^-{\simeq} \ar@{^{(}->}[d] & {Sym}^{\geq 1}(V[-1]){\otimes} \widehat{Sym}^{\geq 1}(V)^*[-1]) \ar@{^{(}->}[d] \\
\tilde{C}_{GS}^*(Sym(V),Sym(V)) \ar@{^{(}->}[d]\ar[r]^-{\simeq} & {Sym}(V[-1]){\otimes}  
\widehat{Sym}^{\geq 1}((V)^*[-1]) \ar@{^{(}->}[d] \\
C^{full}_{GS}(Sym(V),Sym(V))\ar[r]^-{\simeq} & {Sym}(V[-1]){\otimes}  
\widehat{Sym}((V)^*[-1])
}
\]
whose horizontal arrows are natural (with respect to $V$) equivalences of $E_3$-algebras and the vertical arrows are the canonical inclusions.
\item The three Gertenhaber-Schack complexes are formal as $E_3$-algebras (in particular, there is an equivalence of $E_3$-algebras between 
$C^*_{GS}(Sym(V),Sym(V)) $ and $H^*_{GS}(Sym(V),Sym(V)) $). 
\end{enumerate}
\end{thm}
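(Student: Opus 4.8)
The plan is to reduce everything to the computation of the $E_2$-Hochschild cohomology of $\tilde{\Omega}(Sym(V))$ and then apply the higher HKR theorem. By Corollary~\ref{C:GSconjecture} and Corollary~\ref{C:DefcomplexesGS=Hoch}, the three Gerstenhaber--Schack complexes $C^*_{GS}(Sym(V),Sym(V))$, $\tilde{C}_{GS}^*(Sym(V),Sym(V))$ and $C^{full}_{GS}(Sym(V),Sym(V))$ are identified, as $E_3$-algebras, with the truncated, intermediate and full $E_2$-Hochschild complexes of the cobar construction $\tilde{\Omega}(Sym(V))$. So the first and crucial step is to identify this $E_2$-algebra explicitly. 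I claim that $\tilde{\Omega}(Sym(V))$ is equivalent, as an augmented $E_2$-algebra, to the free graded-commutative algebra $Sym(V[-1])$ (viewed as an $E_2$-algebra through $E_2\to E_{\infty}\simeq Com$), with augmentation ideal $Sym^{\geq 1}(V[-1])$. This is obtained by unwinding the definition $\tilde{\Omega}(B)=Cobar^{(2)}\phi^*\mathcal{B}^{enh}_{E_1}(B_-)$ from Theorem~\ref{T: tildomega}: the enhanced $E_1$-bar construction of Theorem~\ref{T:Barenhanced} applied to the free--cofree symmetric bialgebra $Sym^{\geq 1}(V)$ is computed by Koszul duality, and the interchange equivalence $\phi^*$ followed by the twice-iterated cobar construction $Cobar^{(2)}$ produces a free object on a single shift of $V$.

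Once this identification is in place, I would invoke the higher HKR theorem~\eqref{eq:CHEnSymshifted}, which provides an equivalence of $E_3$-algebras $CH^*_{E_2}(A,A)\simeq \widehat{Sym}_A(Der(A,A)[-2])$ for a cofibrant cdga $A$, the $E_3$-structure on the right being induced from its canonical $Pois_3$-structure via the formality $E_3\stackrel{\sim}{\to}Pois_3$ (and identified with the $Pois_2$ picture through Theorem~\ref{T:Defcomplexesallagree}). For $A=Sym(V[-1])$ one has $Der(A,A)\simeq (V[-1])^*\otimes A$, and since $(V[-1])^*[-2]=V^*[-1]$ this gives $CH^*_{E_2}(A,A)\simeq Sym(V[-1])\otimes\widehat{Sym}(V^*[-1])$ as $E_3$-algebras, which is exactly the strict $Pois_3$-algebra of Definition~\ref{D:strictPois3onHGS} pulled back to $E_3$. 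To obtain the commutative diagram of part~(2) relating the three versions, I would run this computation through the augmented-versus-non-augmented comparison of Lemma~\ref{L:reducedHochCochain} and Proposition~\ref{P:augmentedtononaugmentedDeligne}: the diagram~\eqref{eq:HKRforAugmentationEn} identifies $CH^*_{E_2}(\overline{A},\overline{A})$, $CH^*_{E_2}(A,\overline{A})$ and $CH^*_{E_2}(A,A)$ with $Sym^{\geq 1}(V[-1])\otimes\widehat{Sym}^{\geq 1}(V^*[-1])$, $Sym(V[-1])\otimes\widehat{Sym}^{\geq 1}(V^*[-1])$ and $Sym(V[-1])\otimes\widehat{Sym}(V^*[-1])$ respectively, matching the three rows of the asserted diagram. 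Part~(1) then follows by passing to cohomology: since $H^*(V)$ is finite-dimensional in each degree, the dual $H^*(V)^*$ is well behaved and the induced $Pois_3$-structure on $H^*_{GS}(Sym(V),Sym(V))$ is precisely the symmetric-algebra structure with bracket given by the evaluation pairing.

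For the formality statement~(3), the key point is that part~(2) has already exhibited each Gerstenhaber--Schack complex as the pullback along $E_3\stackrel{\sim}{\to}Pois_3$ of the \emph{strict} dg $Pois_3$-algebra $\widehat{Sym}(V[-1]\oplus V^*[-1],d)$ of Definition~\ref{D:strictPois3onHGS}. It therefore suffices to prove that this dg $Pois_3$-algebra is formal as a $Pois_3$-algebra. I would do this by choosing a deformation retraction of cochain complexes $V\simeq H^*(V)$; by the finite-dimensionality hypothesis it dualizes to a compatible retraction of $V^*$, and the two are compatible with the evaluation pairing, so the induced map of (completed) symmetric algebras is a $Pois_3$-algebra quasi-isomorphism onto the cohomology. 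Pulling back along the formality morphism then yields the desired $E_3$-formality of all three complexes, and the commutativity of the resulting quasi-isomorphisms with the three inclusions is inherited from the $Pois_3$-level.

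The main obstacle I expect is the first step: the explicit identification of $\tilde{\Omega}(Sym(V))$ with the free cdga $Sym(V[-1])$ as an augmented $E_2$-algebra. This requires carefully composing the enhanced bar construction, the Boardman--Vogt interchange equivalence $\phi^*$, and the double cobar $Cobar^{(2)}$, while keeping precise track of the homological and weight gradings, of the shift producing $V[-1]$ rather than a positive shift, and of the completion subtleties that arise exactly when $V^1\neq 0$ (which is why the completed symmetric algebra already appears in Definition~\ref{D:strictPois3onHGS}). Everything downstream is then a formal consequence of the HKR theorem~\eqref{eq:CHEnSymshifted} and the comparison results of this section.
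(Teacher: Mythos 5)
Your proposal follows essentially the same route as the paper: it identifies $\tilde{\Omega}(Sym(V))\simeq Sym^{\geq 1}(V[-1])$ via the enhanced bar construction and double cobar (the paper's Proposition~\ref{P:EnhancedCobarofSym(V)}, proved through Lemmas~\ref{L:psistar}, \ref{L:Cobarn=Pois}, \ref{L:CobarPoisn} and \ref{L:Cobarn}), applies the higher HKR equivalence~\eqref{eq:CHE2Symshifted} together with the augmented/non-augmented comparison of Proposition~\ref{P:augmentedtononaugmentedDeligne} and diagram~\eqref{eq:HKRforAugmentationEn} to get part~(2), and deduces parts~(1) and~(3) by passing to cohomology and choosing a quasi-isomorphism $H^*(V)\simeq V$ over a characteristic-zero field. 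The obstacle you flag — the explicit computation of $\tilde{\Omega}(Sym(V))$ — is exactly where the paper invests its technical work, resolving it by reducing $Cobar^{(2)}$ to $Cobar^{(2)}_{Pois_2}$ via formality of right modules over operads and computing the latter through the Harrison complex of a cofree cocommutative coalgebra.
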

The theorem also holds with $T_{\tilde{\Omega}Sym(V)}$ and $CH_{E_2}(\tilde{\Omega}Sym(V),
\tilde{\Omega}Sym(V))$  instead of $C_{GS}^*(Sym(V),Sym(V))$ and $\tilde{C}_{GS}^*(Sym(V),Sym(V))$ 
since the $E_3$-structure on the later one is induced by transfer from
the first one along the weak equivalence given by Theorem~\ref{T:EquivDef}.
\begin{proof}[Proof of Theorem~\ref{T:GSforSym}]
Since $Def(Sym(V))$ is precisely the (shifted) Gerstenhaber-Schack complex with its underlying $L_\infty$-structure, 
Theorem~\ref{T:EquivDef} shows that it suffices to prove that  $CH_{E_2}^*(\tilde{\Omega}Sym(V), \tilde{\Omega}Sym(V))$
is formal as an $L_\infty$-algebra. The previous Theorem~\ref{T:IdentificationGSforSym(V)} 
(in the case where $V$ has trivial differential) shows the stronger statement that the later is actually formal as an $E_3$-algebra,
which concludes the proof.
\end{proof}

\smallskip
 
Theorem~\ref{T:IdentificationGSforSym(V)} is a consequence of Proposition~\ref{P:EnhancedCobarofSym(V)} and Proposition~\ref{P:CHE2Symshifted} below. 

\begin{prop} \label{P:CHE2Symshifted} There is a commutative diagram of $E_3$-algebras 
\[ \xymatrix{ T_{Sym(V [-1])} [-2] \ar[r] &
CH^*_{E_2}((Sym(V [-1]), (Sym(V [-1])) \\
 Sym (V[-1])\otimes \widehat{Sym}^{\geq1}((V)^*[-1]) \ar@{^{(}->}[r] \ar[u]_{\simeq} & 
Sym(V[-1])\otimes \widehat{Sym}((V)^*[-1]) \ar[u]_{\simeq}  
} \] whose vertical arrows are quasi-isomorphisms (and the upper map is the canonical one from Theorem~\ref{T:Defcomplexesallagree}).
\end{prop}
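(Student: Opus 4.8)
The plan is to deduce this proposition from the higher Hochschild--Kostant--Rosenberg theorem for $E_n$-algebras already recalled in the proof of Proposition~\ref{P:augmentedtononaugmentedDeligne}. Applying equation~\eqref{eq:CHEnSymshifted} with $n=2$ and $A=Sym(V[-1])$ --- which is a (cofibrant) cdga, hence canonically an $E_2$-algebra --- yields an equivalence of $E_3$-algebras
\[
CH^*_{E_2}(Sym(V[-1]),Sym(V[-1])) \;\simeq\; \widehat{Sym}_A\big(Der(A,A)[-2]\big),
\]
where the $E_3$-structure on the right comes, via the chosen formality $E_3 \stackrel{\sim}{\rightarrow} Pois_3$, from the canonical $Pois_3$-structure on this completed symmetric algebra. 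First I would make the right-hand side explicit: since $A=Sym(W)$ with $W=V[-1]$, a derivation is determined by its restriction to generators, so $Der(A,A)\cong A\otimes W^* = Sym(V[-1])\otimes V^*[1]$, whence $Der(A,A)[-2]\cong A\otimes V^*[-1]$ is the free $A$-module on $V^*[-1]$ and
\[
\widehat{Sym}_A\big(Der(A,A)[-2]\big) \;\cong\; Sym(V[-1])\otimes \widehat{Sym}\big((V)^*[-1]\big).
\]
The finiteness of the cohomology of $V$ in each degree guarantees that this completed symmetric algebra is well behaved and identifies the bottom-right corner of the diagram.

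Next I would check that the $Pois_3$-structure transported by HKR coincides with the strict one of Definition~\ref{D:strictPois3onHGS}. The commutative product is simply the symmetric-algebra product on both sides, and the degree $-2$ bracket on $\widehat{Sym}_A(Der(A,A)[-2])$ is, by construction, the Schouten-type bracket pairing a derivation against a function; under the identification $Der(A,A)\cong A\otimes V^*[1]$ this pairing is exactly the contraction of $V^*$ against the generators $V$, i.e. the evaluation pairing $ev\colon (V^\bullet)^*\otimes V\to\mathbb{K}$, extended by the Leibniz rule. A degree count (the shift $[-1]$ on both $V$ and $V^*$ makes $ev$ land in degree $0$ once the bracket is assigned degree $-2$) confirms the match, so the $E_3$-structures agree after formality.

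Finally I would match the left column. By Corollary~\ref{L:gE2+=TA} and the fiber sequence of Theorem~\ref{T:Defcomplexesallagree}, the desuspended tangent complex $T_A[-2]$ is the canonical fiber of $CH^*_{E_2}(A,A)\to A$; under the HKR identification the quotient $A$ is the weight-zero part $Sym(V[-1])\otimes\widehat{Sym}^{0}((V)^*[-1])$, so $T_A[-2]$ is identified with the positive-weight part $Sym(V[-1])\otimes\widehat{Sym}^{\geq 1}((V)^*[-1])$, which is a sub-$E_3$-algebra. The square then commutes by naturality of the HKR equivalence with respect to the weight filtration, and both vertical arrows are equivalences. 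The main obstacle is not the chain-level computation but the bookkeeping of the \emph{$E_3$}-structures: one must ensure that the equivalence~\eqref{eq:CHEnSymshifted} is compatible with the inclusion of the tangent complex (equivalently, with the weight grading) at the level of $E_3$-algebras and not merely of $L_\infty$- or $Pois_3$-algebras, which is where the functoriality of the Calaque--Willwacher and \cite{Fra} constructions must be invoked carefully.
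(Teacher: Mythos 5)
Your proposal is correct and follows essentially the same route as the paper's own proof: apply the Calaque--Willwacher/GTZ HKR equivalence $CH^*_{E_2}(A,A)\simeq \widehat{Sym}_A(Der(A,A)[-2])$ to the Sullivan algebra $A=Sym(V[-1])$, identify $Der(A,A)$ with $(V[-1])^*\otimes Sym(V[-1])$ so that the bracket becomes the evaluation pairing, and then use the fiber sequence of Theorem~\ref{T:Defcomplexesallagree} to identify $T_A[-2]$ with the kernel of the projection onto the weight-zero part $Sym(V[-1])$. Your additional checks --- that the transported $Pois_3$-structure agrees with Definition~\ref{D:strictPois3onHGS} and that the HKR equivalence respects the weight decomposition at the level of $E_3$-structures --- only make explicit points the paper treats implicitly via the Tamarkin-complex description of the $E_3$-structures.
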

\begin{proof}
By \cite{CaWi}, \cite[Corollary 6.39]{GTZ}\footnote{beware that one uses an homological grading in \emph{loc. cit.} 
while we are using a  cohomological grading}, we have, for a Sullivan algebra $A$ (that is a cofibrant cdga), an equivalence 
of $E_3$-algebras 
\begin{equation} \label{eq:CHE2Symshifted} CH^*_{E_2}(A, A) \simeq \widehat{Sym}_A(Der(A,A)[-2]) \end{equation}
where the shifted Lie bracket is given by the Lie bracket on derivations and the Leibniz rule.
Since the differential on $Sym(V [-1])$ is the unique derivation induced by the desuspension of
$d: V^\bullet \to V^{\bullet+1}$, it is in particular a Sullivan algebra so that we can apply this result to 
$A= Sym(V [-1])$. For any cochain complex $W$ with cohomology of finite dimension in each degree,
we have an isomorphism of cochain complexes $Der(Sym(W), Sym(W))\cong (W)^*\otimes Sym(W)$, 
and, under this equivalence,  the Lie bracket of derivations is induced by the pairing $W \otimes  (W)^* \to \mathbb{K}$. 
Thus, for $A=Sym(V [-1])$,  the right hand side of the equivalence~\eqref{eq:CHE2Symshifted}
is precisely $Sym (V[-1])\otimes \widehat{Sym}^{\geq1}((V)^*[-1])$ as an $E_3$-algebra.
Thanks to Theorem~\ref{T:Defcomplexesallagree}  (since our $E_3$-algebra structures are induced by the one on Tamarkin complexes,
we are in fact mainly using the sequence~\eqref{eq:fibersequencePoisn}), we can identify $T_{Sym(V [-1])}$ with the kernel of 
the canonical projection $$ Sym(V[-1])\otimes \widehat{Sym}((V)^*[-1])\to Sym(V[-1])$$ 
and the equivalences as well as the diagram follow.
\end{proof}

\begin{proof}[Proof of Theorem~\ref{T:IdentificationGSforSym(V)}] 
Recall that the $E_3$-algebra structure on $\tilde{C}_{GS}(Sym(V),Sym(V))$ is obtained from the one on 
$CH_{E_2}^{*}(\tilde{\Omega}Sym(V),\tilde{\Omega}Sym(V))$ by transfer thanks to Theorem~\ref{T:EquivDef}. 
Then, from Proposition~\ref{P:EnhancedCobarofSym(V)} we obtain weak equivalences of $E_3$-algebras
\begin{align*}C_{GS}(Sym(V),Sym(V))& \simeq  T_{\tilde{\Omega}Sym(V)} \cong  T_{Sym^{\geq 1}(V [-1])}, \\
\tilde{C}_{GS}(Sym(V),Sym(V))& \simeq CH^*_{E_2}(\tilde{\Omega}Sym(V),\tilde{\Omega}Sym(V))\\
&\cong CH^*_{E_2}(Sym^{\geq 1}(V [-1]), Sym^{\geq 1}(V [-1])). \end{align*} 
We are going to use
 Proposition~\ref{P:CHE2Symshifted} and Proposition~\ref{P:augmentedtononaugmentedDeligne} to compute these complexes. 
 Indeed,
 we have a canonical augmentation $Sym (V [-1])) \cong \mathbb{K}\oplus Sym^{\geq 1}(V [-1]))$ and these 
 propositions as well as diagram~\eqref{eq:HKRforAugmentationEn} yields the equivalence of $E_3$-algebras
 \[Sym(V[-1])\otimes \widehat{Sym}^{\geq 1}((V)^*[-1])
 \stackrel{\simeq}\longrightarrow \tilde{C}_{GS}(Sym(V),Sym(V)). \]
 The commutative
 left square of the diagram of claim  (2) follows similarly. 
Further, by Proposition~\ref{P:augmentedtononaugmentedDeligne}, the right upper map of the diagram fits 
into a diagram of cofiber sequences 
\[\xymatrix{
CH^*_{E_2}(Sym^{\geq 1}(V [-1])) \ar[r]^{\iota} & 
CH^*_{E_2}(Sym(V [-1]))
\ar[r]^{\pi_*} & CH^*_{E_2}(Sym(V [-1]),\mathbb{K}) \\ 
Sym(V[-1])\otimes \widehat{Sym}^{\geq 1}((V)^*[-1])\ar@{^{(}->}[r] \ar[u]^{\simeq} \ar[d]_{\simeq}&  
Sym(V[-1])\otimes \widehat{Sym}((V)^*[-1])\ar@{->>}[r] \ar[u]^{\simeq}
\ar[d]_{\simeq}& \widehat{Sym}((V)^*[-1]) \ar[u]^{\simeq} \ar[d]_{\simeq}\\
\tilde{C}_{GS}(Sym(V),Sym(V)) \ar@{^{(}->}[r] & 
{C}^{full}_{GS}(Sym(V),Sym(V)).
\ar[r]^{\pi_*} & {C}^{full}_{GS}(Sym(V),\mathbb{K}).
}
\]
Hence the commutativity of the right hand square. 
 This proves assertion (2). 

Assertion (1) follows from assertion (2) by passing to the cohomology groups, while
assertion (3) also follows from (2) by choosing any quasi-isomorphism $H^*(V)\stackrel{\simeq}\to V$ 
since we work over a characteristic zero field.
\end{proof}

\subsection{Enhanced Cobar functor on $Sym(V)$}
The main goal of this section is to compute the value of our enhanced cobar functor $\tilde{\Omega}: E_1-Alg(dgCog^{conil}) 
\to E_2-Alg^{aug}$ on the commutative and cocommutative dg bialgebra $Sym(V)$, where $(V, d)$ is a cochain complex with cohomology of finite dimension in each degree: 
\begin{prop}\label{P:EnhancedCobarofSym(V)}
 There is a quasi-isomorphism of $E_2$-algebras $\tilde{\Omega}(Sym(V)) \stackrel{\sim}\to Sym^{\geq 1}(V [-1])$.
\end{prop}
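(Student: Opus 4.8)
The plan is to evaluate, one after another, the three functors composing $\tilde{\Omega}$ on the bicommutative bialgebra $Sym(V)$, exploiting at each stage that $Sym(V)$ is simultaneously free as a commutative algebra and cofree as a conilpotent cocommutative coalgebra on the primitive space $V$. By Example~\ref{Ex:Sym} we have $\tilde{\Omega}(Sym(V)) = Cobar^{(2)}\phi^*\mathcal{B}^{enh}_{E_1}(Sym^{\geq 1}(V))$, so the first task is to compute the enhanced bar construction $\mathcal{B}^{enh}_{E_1}(Sym^{\geq 1}(V))$. First I would record that, since $Sym(V)$ is a commutative and cocommutative Hopf algebra with $V$ primitive, the bar construction $\mathcal{B}_{E_1}$ of its augmentation ideal is itself a bicommutative bialgebra, and the classical computation of $\mathrm{Tor}^{Sym(V)}(\mathbb{K},\mathbb{K})$ identifies it, as a bicommutative bialgebra, with the symmetric bialgebra $Sym(V[1])$ on the suspension of $V$, with $V[1]$ primitive (this is the standard fact that the bar construction of the symmetric algebra is the symmetric coalgebra on the shift, valid in the graded-symmetric sense independently of the parity of $V$). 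The extra datum recorded by $\mathcal{B}^{enh}_{E_1}$ is precisely the pair of commuting coalgebra structures on $Sym(V[1])$: the coproduct dual to the bar product, and the cocommutative coproduct inherited from the base category $dgCog^{conil}$; their compatibility is forced by the bialgebra distributivity law for $Sym(V)$.

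Next I would push this object through $\phi^*$ and $Cobar^{(2)}$. Under the identification $E_2 \simeq E_1\otimes Ass$ used to define $\phi^*$ in the proof of Theorem~\ref{T: tildomega}, the two commuting coalgebra structures assemble into the conilpotent $E_2$-coalgebra structure on $Sym(V[1])$ obtained by restricting its cosymmetric ($E_\infty$) structure along the canonical map $E_2\to E_\infty$. It then remains to compute the twice-iterated cobar construction $Cobar^{(2)}$ of this cosymmetric $E_2$-coalgebra. Here I would use Koszul duality for free commutative (co)algebras, realized via the model of $Cobar^{(n)}$ as an $n$-iterated classical cobar construction: the cobar of the cofree conilpotent cocommutative coalgebra $Sym^{c}(W)$ is the free commutative algebra $Sym^{\geq 1}(W[-1])$, and iterating once more yields $Sym^{\geq 1}(W[-2])$, the $E_2$-structure on the output being the restriction along $E_2\to E_\infty$ of this commutative structure. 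Applying this with $W = V[1]$ gives $Sym^{\geq 1}(V[1][-2]) = Sym^{\geq 1}(V[-1])$; one checks that the shifts balance, the single bar contributing $[1]$ and the double cobar $[-2]$, for a net shift $[-1]$, in agreement with the statement.

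The main obstacle will be to make all of these identifications compatible with the $E_2$-algebra (and not merely the underlying chain complex) structures, in particular to control the interplay between the two (co)multiplicative directions when passing through $\phi^*$ and the iterated cobar. The two delicate points are: first, verifying that the two coalgebra structures on $\mathcal{B}_{E_1}(Sym^{\geq 1}(V))$ genuinely interchange, so as to define the $E_2$-coalgebra $\phi^*\mathcal{B}^{enh}_{E_1}(Sym^{\geq 1}(V))$, which rests on the lax symmetric monoidality of the bar construction and the primitivity of $V$; and second, confirming that $Cobar^{(2)}$ produces \emph{exactly} the free commutative algebra $Sym^{\geq 1}(V[-1])$ with its induced $E_2$-structure, and not merely a quasi-isomorphic $E_2$-algebra. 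The latter I would obtain by assembling the two iterated-cobar computations along the Eilenberg--Zilber/interchange equivalences and using that every step is a weak equivalence of (co)algebras over $\Sigma$-cofibrant operads, hence induces an equivalence of the associated $\infty$-categories. The positive-weight truncations and the augmentation/(co)unit bookkeeping are handled exactly as in Section~\ref{S:proofCorollary0.2}.
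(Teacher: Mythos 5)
Your first step essentially coincides with the paper's: the identification of $\phi^*\mathcal{B}^{enh}_{E_1}(Sym(V))_-$ with $Sym^{\geq 1}(V[1])$ is Lemma~\ref{L:psistar}, and your route to it (the HKR quasi-isomorphism of Hopf algebras $Sym(V[1])\to Bar(Sym(V))$, with $V$ primitive) is exactly the alternative mentioned at the end of the proof of Lemma~\ref{L:cofibrantresolutionSym(V)}. The paper prefers to package the compatibility with both coalgebra structures through the cofibrant resolution $Cobar(Sym(V[1]))\to Sym(V)$ in $E_1-Alg(dgCog^{conil})$ and the fact that the unit $Id\to\mathcal{B}^{enh}_{E_1}\circ Cobar$ is a natural weak equivalence, but this is the same idea.

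Where you genuinely diverge---and where there is a gap---is the computation of $Cobar^{(2)}(Sym^{\geq 1}(V[1]))$. You propose to iterate the classical cobar construction twice and to control the multiplicative structure ``along the Eilenberg--Zilber/interchange equivalences,'' concluding because every step is a weak equivalence of (co)algebras over $\Sigma$-cofibrant operads. This does not address the actual difficulty, which is not whether your comparison maps are quasi-isomorphisms but whether they are quasi-isomorphisms \emph{of the right structured objects}. Concretely: (i) the functor $Cobar^{(2)}:E_2-Cog^{conil}\to E_2-Alg^{aug}$ entering the definition of $\tilde{\Omega}$ is the operadic Koszul duality functor, not by definition an iterated classical cobar; identifying the two \emph{together with} the $E_2$-algebra structure on the output (assembled from the two cobar multiplications via Milgram-type lax monoidal structures \cite{Mil}) is the coalgebra dual of Fresse's iterated bar theorem \cite{Fre-it} and would need to be established in the multiplicative form you require; (ii) even granting (i), your second cobar stage needs the HKR quasi-isomorphism $\Omega(Sym^{c}(V))\to Sym^{\geq 1}(V[-1])$ to be compatible with the algebra structure carried over from the first stage through the lax structure map---a chain-level verification that cannot be deduced from the maps merely being weak equivalences, precisely because the interchange between the cobar multiplication and the residual (co)algebra structure holds only up to coherent homotopy. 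The paper's proof is designed to bypass exactly this: Lemma~\ref{L:Cobarn=Pois} uses the formality $E_2\stackrel{\sim}{\to}Pois_2$ and the theory of right modules over operads to replace $Cobar^{(2)}$ by the strict Poisson Koszul duality functor, and Lemma~\ref{L:CobarPoisn}, applied to $Sym^{\geq 1}(V[1])$ viewed as a $Pois_2$-coalgebra with \emph{zero} cobracket, identifies the answer with $Com(V[-1])$ via the Harrison complex of a cofree cocommutative coalgebra. In the Poisson world the output is manifestly the free commutative algebra with zero bracket, so the identification of the $E_2$-structure (restriction along $E_2\to Com$) is automatic rather than something to be checked at the chain level. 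If you wish to keep your (associator-free) iterated-cobar route, points (i) and (ii) must be proved; as written, the crux of the proposition is asserted rather than established.
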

 
We first start by exhibiting a cofibrant resolution of $ Sym(V)$ in $E_1-Alg(dgCog^{conil})$. 
\begin{lem}\label{L:cofibrantresolutionSym(V)} There is a cofibrant resolution
$Cobar(Sym(V[1])) \to Sym(V)$ of $Sym(V)$ in $E_1-Alg(dgCog^{conil})$. 
\end{lem}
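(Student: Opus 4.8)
The plan is to realize the claimed object as the enhanced cobar construction applied to the Koszul dual coalgebra of the symmetric algebra, and then to use Theorem~\ref{T:Barenhanced}(1) to control it homotopically. First I would record that $Sym(V)$ is naturally an object of $E_1-Alg^{0-con}(dgCog^{conil})$: one takes the commutative multiplication as the $E_1$ (monoid) structure and the cocommutative comultiplication as the ambient conilpotent-coalgebra structure, and by Example~\ref{Ex:Sym} its augmentation ideal $Sym^{\geq 1}(V)$ is concentrated in positive total degree, so that $Sym(V)$ is a connected conilpotent bialgebra to which Theorem~\ref{T:Barenhanced}(1) applies. The same remarks exhibit $Sym(V[1])$ as a connected conilpotent bialgebra.

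Since Theorem~\ref{T:Barenhanced}(1) asserts that $(\mathcal{B}^{enh}_{E_1},\Omega^{enh}_{E_1})$ is an equivalence of $(\infty,1)$-categories, the composite $\Omega^{enh}_{E_1}\mathcal{B}^{enh}_{E_1}(Sym(V))$ is weakly equivalent to $Sym(V)$, and it is quasi-free as a monoid in $dgCog^{conil}$, hence cofibrant in $E_1-Alg(dgCog^{conil})$; moreover, being one half of an equivalence, $\Omega^{enh}_{E_1}$ preserves weak equivalences. Thus it suffices to produce an equivalence $\mathcal{B}^{enh}_{E_1}(Sym(V))\simeq Sym(V[1])$ in $E_1-Cog^{conil}(dgCog^{conil})$ and to set $Cobar(Sym(V[1])):=\Omega^{enh}_{E_1}(Sym(V[1]))$; applying $\Omega^{enh}_{E_1}$ then yields a weak equivalence $Cobar(Sym(V[1]))\simeq\Omega^{enh}_{E_1}\mathcal{B}^{enh}_{E_1}(Sym(V))\stackrel{\sim}{\rightarrow} Sym(V)$ whose source is the concrete quasi-free model $\big(T(s^{-1}Sym^{\geq 1}(V[1])),d\big)$, manifestly cofibrant, the comparison map being the canonical Koszul (twisting) morphism on generators $s^{-1}Sym^{\geq 1}(V[1])\twoheadrightarrow V\hookrightarrow Sym(V)$.

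The heart of the argument is the identification $\mathcal{B}^{enh}_{E_1}(Sym(V))\simeq Sym(V[1])$. On underlying coalgebras this is Koszul duality for the symmetric algebra: over a field of characteristic zero $Sym(V)$ is a Koszul algebra, and its Koszul dual coalgebra includes into the bar construction as a quasi-isomorphism, the point being that $Tor^{Sym(V)}(\mathbb{K},\mathbb{K})$ is the exterior coalgebra on $V[1]$, which under the Koszul sign rule is exactly the free graded-commutative coalgebra $Sym(V[1])$ (see \cite{LV}). I would establish acyclicity of the associated Koszul complex by the standard filtration by polynomial weight, whose spectral sequence degenerates onto the Koszul complex of $Sym(V)$. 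It then remains to promote this to an equivalence of $E_1$-coalgebras in $dgCog^{conil}$, i.e. to verify that the two coalgebra structures carried by the bar construction (the ``bar'' comultiplication used to build the cobar differential and the comultiplication inherited from the cocommutative coproduct of $Sym(V)$) are matched with the two comultiplications carried by the self-dual bialgebra $Sym(V[1])$; this compatibility is handled by the lax (co)monoidality of the conormalization and of the (co)bar functors recorded in Section~4 (the Milgram map \cite{Mil}), exactly as in the proof of Proposition~\ref{P: barvstot}.

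The main obstacle is precisely this last piece of structural bookkeeping. The Koszul resolution and its acyclicity are statements in $Ch_{\mathbb{K}}$, and since weak equivalences in $E_1-Alg(dgCog^{conil})$ are created by the forgetful functor to $Ch_{\mathbb{K}}$, the weak-equivalence claim may be checked after forgetting coalgebra structures. The real work is to check that the cobar differential is a coderivation for the ambient comultiplication and that the comparison map is a morphism of conilpotent coalgebras, so that the entire resolution genuinely lives in $E_1-Alg(dgCog^{conil})$ rather than merely in chain complexes; combined with the exterior-versus-symmetric sign and degree matching, this is where the care is required, and I would discharge it by the same conormalization and Milgram-map techniques used earlier in the paper.
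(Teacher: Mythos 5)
Your plan inverts the paper's logic, and that inversion is where it breaks. The paper's own proof is elementary and explicit: it writes down the map of dg algebras $\pi: T(Sym(V[1])[-1])\to Sym(V)$ induced by the projection onto $V$ followed by the inclusion, observes that $\pi$ kills the cobar differential (because $Sym(V[1])$ is cofree cocommutative, so $p\circ\delta=0$) and is a map of coalgebras for the shuffle coproduct, and then proves that $\pi$ is a quasi-isomorphism by a direct Koszul-type (dual HKR) argument with an explicit small resolution of bicomodules; no part of Theorem~\ref{T:Barenhanced} is invoked. Your proposal instead deduces the lemma from the equivalence of Theorem~\ref{T:Barenhanced}(1) together with the identification $\mathcal{B}^{enh}_{E_1}(Sym(V))\simeq Sym(V[1])$. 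A first problem is that Theorem~\ref{T:Barenhanced}(1) does not apply to $Sym(V)$: that statement concerns $0$-connected algebras, and $Sym(V)$ has $\mathbb{K}=Sym^0(V)$ in degree $0$; you must either pass to the augmentation ideal $Sym^{\geq 1}(V)$ (as the paper does in Lemma~\ref{L:psistar}) or invoke the pointed statement (2), and your write-up blurs this distinction.

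The second, more serious problem is circularity. In this paper's framework $\mathcal{B}_{E_1}$ is the strict indecomposables functor (a reflexive coequalizer), a left Quillen functor whose derived value on $Sym(V)$ is computed by applying it to a cofibrant resolution of $Sym(V)$ in $E_1-Alg(dgCog^{conil})$ --- exactly the object the lemma is meant to construct. Your key step, ``$\mathcal{B}^{enh}_{E_1}(Sym(V))\simeq Sym(V[1])$ by classical Koszul duality,'' silently identifies this derived enhanced bar construction with the classical bar complex carrying its deconcatenation and shuffle structures; establishing that identification internally to $dgCog^{conil}$, with both the comonadic $E_1$-coalgebra structure and the ambient coalgebra structure, is precisely the kind of statement the triple-coresolution machinery of Section 4 exists to handle, and it cannot be discharged by a one-line appeal to the Milgram map. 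Indeed the paper proves the bar-side computation you want (Lemma~\ref{L:psistar}) as a \emph{consequence} of the present lemma, by applying $\mathcal{B}^{enh}_{E_1}$ to the resolution $Cobar(Sym^{\geq 1}(V[1]))\stackrel{\sim}{\rightarrow}Sym^{\geq 1}(V)$ and using the unit equivalence $Id\stackrel{\sim}{\rightarrow}\mathcal{B}^{enh}_{E_1}\circ Cobar$; deriving the lemma from that computation runs the argument in a circle. (Two smaller points: your route only produces an equivalence in the $\infty$-category between $\Omega^{enh}_{E_1}(Sym(V[1]))$ and $Sym(V)$, whereas a cofibrant resolution requires an honest direct quasi-isomorphism, which the explicit $\pi$ provides for free; and the salvageable core of your idea --- the HKR quasi-isomorphism $Sym(V[1])\to Bar(Sym(V))$ followed by the \emph{classical, point-set} bar-cobar counit --- is exactly the alternative argument the paper sketches at the end of its proof, and it works precisely because it avoids the derived enhanced functors altogether.)
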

\begin{proof}
This is dual to the usual Hochschild-Kostant-Rosenberg theorem for symmetric algebras:
the composition $p: Sym(V[1])[-1]\to V\hookrightarrow Sym(V)$ of the canonical projection
with the canonical inclusion, yields the map of dg algebras  
$\pi: T(Sym(V[1])[-1])\to  Sym(V)$, which is further a map of coalgebras with respect to the shuffle coproduct on the source.
Then $Cobar(Sym(V[1]))$ is the semi-free dg algebra obtained from  $T(Sym(V[1])[-1])$ by adding the extra-differential
$\delta (x_1\otimes x_n):= \sum_{i=1}^n \pm \, x_1\otimes \cdots \otimes x_i^{(1)} \otimes x_i^{(2)}\otimes \cdots \otimes x_n
\in T^{n+1}(Sym(V[1])[-1])$, using the coalgebra structure of  $Sym(V[1])$. Since the latter is cofree cocommutative on
the shift of $V$, we get that $p\circ \delta =0$. 
Hence $\pi: Cobar(Sym(V[1])) \to Sym(V)$  is indeed a map in $E_1-Alg(dgCog^{conil})$. 
This is a quasi-isomorphism by the same argument (replacing algebras by coalgebras) as for the bar construction of a symmetric algebra. Indeed, $Cobar(Sym(V[1]))$ is the derived cotensor product of $\mathbb{K}$ and $Sym(V[1])$ as $Sym(V[1])$-bicomodules and we have a resolution $Sym(V[1])\stackrel{\Delta}\to  Sym(V[1])\otimes \bigoplus Sym^n(V) \otimes Sym(V[1])$, where the target is the tensor product of cocommutative coalgebras equipped with the extra differential $\delta(x\otimes f \otimes y) = x^{(1)}\otimes p(x^{(2)}\cdot f \otimes y \pm \, x^\otimes f\cdot p(y^{(1)})\otimes y^{(2)}$.

In fact one can also directly apply the usual HKR theorem which gives a quasi-isomorphism of Hopf algebras
$Sym(V[1])\to Bar(Sym(V))$ (given by the unique dg coalgebra map extending the linear map $Sym(V[1])\stackrel{p}\to V[1] \hookrightarrow Sym(V)[1]$) and then conclude by the counit of the bar-cobar adjunction. 
\end{proof}

Let
\[
\phi^*\mathcal{B}^{enh}_{E_1}(-)_-:E_1-Alg^{con,aug}(dgCog^{conil})\rightarrow E_2-Cog^{conil}
\]
be the equivalence given by Theorem~\ref{T:Barenhanced} and  Section~\ref{S:proofCorollary0.2}. 
We wish to evaluate $\phi^*\mathcal{B}^{enh}_{E_1}(-)_-$ on $Sym(V)$.
 
\smallskip
 
Note that we have  a commutative diagram of ($\infty$-)operads 
\[\label{d:compatibilityEmCog} \xymatrix{E_1\otimes Ass  \ar[r]^{a} & E_\infty \ar[r]^{\simeq} & Com \\ 
E_2 \ar[u]^{b} \ar[ur] & &}  \]
which in turn induces a commutative diagram 
\begin{equation}\label{d:compatibilityEmCogbig}\xymatrix{  E_\infty-Cog^{conil} \ar[rd]\ar[r]^{b^*\qquad } & E_1-Cog^{conil}(dgCog^{conil}) \ar[r]^{\qquad obl} \ar[d]_{a^*} 
& E_1-Cog^{conil} \\ 
dgCom-Cog^{conil} \ar[u]^{\simeq} \ar@/_3pc/[rru]&  E_2-Cog^{conil} \ar[ru] & 
}  \end{equation}
 
\bigskip
 
\noindent where $obl$ is induced by forgetting the $dg$-coalgebra structure and the non-labbeled arrows
are given by the standard restriction functors between $E_m$-algebras.

The differential $d$ on a cochain complex $(V,d)$ extends canonically 
to the coartinian cofree cocommutative
coalgebra $Sym(V)$, giving it a canonical dg-cocommutative coalgebra structure and thus  $ E_2-Cog^{conil}$ structure 
by restriction.

\begin{lem}\label{L:psistar}
One has an equivalence $\phi^*\mathcal{B}^{enh}_{E_1}(Sym(V))_- \cong Sym^{\geq 1}(V[1])$ in $E_2-Cog^{conil}$.
\end{lem}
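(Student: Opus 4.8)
The plan is to compute $\mathcal{B}^{enh}_{E_1}(Sym(V))_-$ by exploiting the fact that $Sym(V)$ is a \emph{commutative and cocommutative} bialgebra, so that the enhanced bar construction can be evaluated through the larger commutative structure and then restricted along the maps of operads in diagram~\eqref{d:compatibilityEmCogbig}. First I would pass to the cofibrant resolution $Cobar(Sym(V[1]))\stackrel{\sim}\to Sym(V)$ in $E_1-Alg^{con,aug}(dgCog^{conil})$ provided by Lemma~\ref{L:cofibrantresolutionSym(V)}, so that the derived functor $\mathcal{B}^{enh}_{E_1}$ may be computed on this explicit model. Since $\mathcal{B}_{E_1}$ is (weakly equivalent to) the derived indecomposables functor, and $Cobar(Sym(V[1]))$ is semi-free as an associative algebra on the coalgebra $Sym(V[1])[-1]$, the underlying complex of the bar construction is quasi-isomorphic to $Sym(V[1])$ itself (this is the bar-cobar duality computing $\mathbb{K}\otimes^{\mathbb L}_{Cobar(Sym(V[1]))}\mathbb{K}$). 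The main point is to track the \emph{full} enhanced structure: the coalgebra-in-coalgebras structure.

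Concretely, I would argue that because $Sym(V)$ lifts to a dg \emph{cocommutative} coalgebra, its image under $\mathcal{B}^{enh}_{E_1}$ lives in the essential image of the restriction $b^*: E_\infty-Cog^{conil}\to E_1-Cog^{conil}(dgCog^{conil})$, where the outer coalgebra structure and the inner $E_1$-coalgebra structure are both induced from a single $E_\infty$- (i.e. commutative) coalgebra structure. The enhanced bar construction of a commutative bialgebra $A$ with respect to the cocommutative structure is computed by the standard bar complex, which for $A=Sym(V)$ gives the cofree cocommutative coalgebra $Sym(V[1])$ with the Harrison/Koszul differential; the HKR-type collapse for symmetric algebras (the dual of the statement in Lemma~\ref{L:cofibrantresolutionSym(V)}) shows this is formal, yielding $Sym(V[1])$ with zero internal bar differential beyond the one induced by $d$. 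Restricting the reduced part along $\phi:uE_2\stackrel{\sim}\to uE_1\otimes uAss$ and passing through the equivalences of Theorem~\ref{T: tildomega}, the commutativity of~\eqref{d:compatibilityEmCogbig} identifies the resulting $E_2$-coalgebra structure on $Sym^{\geq 1}(V[1])$ with the one obtained by restriction from its commutative coalgebra structure, giving the claimed equivalence $\phi^*\mathcal{B}^{enh}_{E_1}(Sym(V))_-\cong Sym^{\geq 1}(V[1])$ in $E_2-Cog^{conil}$.

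The key steps, in order, are: (i) reduce to the cofibrant model $Cobar(Sym(V[1]))$; (ii) compute the underlying complex of $\mathcal{B}_{E_1}$ via derived indecomposables and bar-cobar duality, obtaining $Sym(V[1])$; (iii) upgrade this to the enhanced level by showing the whole construction is the restriction of a single $E_\infty$-coalgebra structure, using that $Sym(V)$ is cocommutative; and (iv) transport through diagram~\eqref{d:compatibilityEmCogbig} and the operad equivalence $\phi$ to read off the $E_2$-coalgebra structure. The hard part will be step (iii): verifying that the enhanced \emph{coalgebra-in-coalgebras} structure genuinely descends from, and is detected by, the commutative structure, i.e. that no extra homotopy-coherent data survives the bar construction beyond what the diagram~\eqref{d:compatibilityEmCogbig} records. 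This requires care because $\mathcal{B}^{enh}_{E_1}$ is defined via the comonadic Barr--Beck--Lurie machinery, so I would need to check compatibility of the lax monoidal (co)normalization and the cofree-coalgebra functor $F^c$ with the commutative structure — precisely the kind of compatibility established in the lemmas of Section~4 — to ensure the identification is one of $E_2$-coalgebras and not merely of underlying complexes.
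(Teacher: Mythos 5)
Your steps (i) and (ii) track the paper's opening moves: the paper likewise starts from the resolution $Sym^{\geq 1}(V)\simeq Cobar(Sym^{\geq 1}(V[1]))$ of Lemma~\ref{L:cofibrantresolutionSym(V)} (applied to the augmentation ideal) and applies $\mathcal{B}^{enh}_{E_1}$ and then $\phi^*$, so that everything reduces to identifying $\mathcal{B}^{enh}_{E_1}Cobar(Sym^{\geq 1}(V[1]))$ with $Sym^{\geq 1}(V[1])$ \emph{as an object of} $E_1-Cog^{conil}(dgCog^{conil})$. The genuine gap is your step (iii). You propose to obtain the enhanced (coalgebra-in-coalgebras) structure by showing that $\mathcal{B}^{enh}_{E_1}(Sym(V))$ lies in the essential image of $b^*:E_\infty-Cog^{conil}\to E_1-Cog^{conil}(dgCog^{conil})$ and that the structure is detected by the cocommutativity of $Sym(V)$; but you give no argument for this, and you yourself flag it as the hard part. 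Nothing in Section 4 establishes that the comonadic Barr--Beck--Lurie enhancement interacts with $E_\infty$-structures in the way you need: the compatibility results proved there (lax monoidality of the conormalization, commutation of $F^c$ with reflexive coequalizers, and so on) live entirely in the $E_1$-in-coalgebras setting, so step (iii) as stated is an unproven --- and substantially harder --- claim, not a reduction to existing lemmas.

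The paper closes this gap by a purely formal mechanism that never invokes cocommutativity. Because $E_1$ is a Hopf operad, the Koszul duality adjunction $Cobar:E_1-Cog^{conil}\rightleftarrows E_1-Alg^{aug}:Bar$ lifts, via the resulting distributive law, to an adjunction $Cobar:E_1-Cog^{conil}(dgCog^{conil})\rightleftarrows E_1-Alg^{aug}(dgCog^{conil}):Bar$; the unit and counit of this lifted adjunction are weak equivalences for model-categorical reasons (every conilpotent coalgebra is cofibrant, every augmented algebra is fibrant, and weak equivalences on both sides are created in chain complexes); and the lifted $Bar$ is weakly equivalent to $\mathcal{B}^{enh}_{E_1}$, since both compute the derived functor of indecomposables. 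The natural weak equivalence $Id\stackrel{\sim}{\rightarrow}\mathcal{B}^{enh}_{E_1}\circ Cobar$, evaluated at the object $Sym^{\geq 1}(V[1])$ of $E_1-Cog^{conil}(dgCog^{conil})$, then yields the identification at the fully enhanced level in one stroke, and applying $\phi^*$ finishes the proof. If you want to repair your outline, replace step (iii) by this adjunction-unit argument; the $E_\infty$-factorization you envisage is not needed and would require machinery the paper does not develop.
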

\begin{proof}
Applying the argument line of the proof of Lemma~\ref{L:cofibrantresolutionSym(V)} to the augmentation ideal $Sym^{\geq 1}(V)$ of $Sym(V)$, we have
\[
Sym^{\geq 1}(V)\simeq Cobar(Sym^{\geq 1}(V[1]))
\]
in $E_1-Alg(dgCog^{conil})$.

Applying the functor $\mathcal{B}_{E_1}^{enh}$ gives an equivalence
\[
\mathcal{B}_{E_1}^{enh}Sym^{\geq 1}(V)\simeq \mathcal{B}_{E_1}^{enh}Cobar(Sym^{\geq 1}(V[1]))
\]
in $E_1-Cog^{conil}(dgCog^{conil})$,
hence
\[
\phi^*\mathcal{B}_{E_1}^{enh}Sym^{\geq 1}(V)\simeq \phi^*\mathcal{B}_{E_1}^{enh}Cobar(Sym^{\geq 1}(V[1]))
\]
in $E_2-Cog^{conil}$. It thus remains to prove that $\mathcal{B}_{E_1}^{enh}Cobar(Sym^{\geq 1}(V[1]))\simeq Sym^{\geq 1}(V[1])$ in $E_1-Cog^{conil}(dgCog^{conil})$.

Recall that the Koszul duality of $E_1$-operads provides us a Quillen equivalence
\[
Cobar:E_1-Cog^{conil}\rightleftarrows E_1-Alg^{aug}:Bar
\]
where $Bar$ is weakly equivalent to the derived functor of indecomposables. Let us note that the unit and counit of this adjunction are actually always weak equivalences, because every object of $E_1-Cog^{conil}$ is cofibrant (cofibrations of coalgebras are determined in chain complexes) and every object of $E_1-Alg^{aug}$ is fibrant (fibrations of algebras are determined in chain complexes). The fact that $E_1$ is a Hopf operad gives a distributive law between $E_1$ and $Ass$ allowing to lift these functors as follows
\[
\xymatrix{
 & E_1-Alg^{aug}(dgCog^{conil}) \ar[dr]^-{oblv} & \\
E_1-Cog^{conil}(dgCog^{conil}) \ar[ur]^-{Cobar}\ar[r]_-{oblv} & E_1-Cog^{conil} \ar[r]_-{Cobar} & E_1-Alg^{aug}
}
\]
where $oblv$ is the forgeful functor, and
\[
\xymatrix{
 & E_1-Cog^{conil}(dgCog^{conil}) \ar[dr]^-{oblv} & \\
E_1-Alg^{aug}(dgCog^{conil}) \ar[ur]^-{Bar}\ar[r]_-{oblv} & E_1-Alg^{aug}\ar[r]_-{Bar} & E_1-Cog^{conil}
}.
\]
This gives us a new adjunction
\[
Cobar:E_1-Cog^{conil}(dgCog^{conil})\rightleftarrows E_1-Alg^{aug}(dgCog^{conil}):Bar
\]
whose unit and counit are still weak equivalences (recall that weak equivalences here are all determined in cochain complexes).
This functor $Cobar$ is the one of Lemma~\ref{L:cofibrantresolutionSym(V)} 
and the functor $Bar$ is actually weakly equivalent to $\mathcal{B}_{E_1}^{enh}$ (they are both equivalent to the derived functor of indecomposables), which means that we have a natural quasi-isomorphism
\[
Id\stackrel{\sim}{\rightarrow}\mathcal{B}_{E_1}^{enh}\circ Cobar,
\]
giving in particular
\[
Sym^{\geq 1}(V[1])\stackrel{\sim}{\rightarrow}\mathcal{B}_{E_1}^{enh}Cobar(Sym^{\geq 1}(V[1]))
\]
in $E_1-Cog^{conil}(dgCog^{conil})$.
\end{proof}
The last step in the construction of the functor $\tilde{\Omega}$ of Corollary~\ref{C:Barenhanced} is the Koszul duality
equivalence $Cobar^{(n)}: E_n-Cog^{conil} \to E_n-Alg^{aug} $ in the case $n=2$.

We wish to prove that $Cobar^{(n)}(Sym^{\geq 1}(W)) \cong Sym^{\geq 1}(W[-n])$.
A first step is to prove that, by formality of $E_n$-operads,
it is enough to prove this result with the operad $Pois_n$ (instead of $E_n$), given that $Sym^{\geq 1}(W)$
can be seen as a $Pois_n$-coalgebra.
For this, we use several features of the theory of right modules over operads as thoroughly studied in \cite{Fre3}.

Given any operad $R$ equipped with an operad morphism $E_1\rightarrow R$, one can extend the bar construction from $E_1$-algebras
to $R$-algebras: this extended bar construction is the functor $S_R(B_R,-):R-Alg\rightarrow Ch_{\mathbb{K}}$
associated to a certain cofibrant quasi-free right $R$-module $B_R$. 
Moreover, any weak equivalence of operads $\varphi:R\stackrel{\sim}{\rightarrow}S$ induces a weak equivalence
of right $R$-modules $B_R\stackrel{\sim}{\rightarrow}\varphi^*B_S$, where $\varphi^*B_S$ is $B_S$ equipped
with the right $R$-module structure induced by $\varphi$. In the case of $E_n$-operads, it turns out that this extended
bar construction is given by the iterated bar construction $Bar^{(n)}$ \cite{Fre-it} (right adjoint to $Cobar^{(n)}$).

\smallskip

Let us fix a formality morphism $\varphi:E_n\stackrel{\sim}{\rightarrow}Pois_n$ and denote respectively  $Bar^{(n)}_{Pois_n}$ and
$Cobar^{(n)}_{Pois_n}$ the Koszul duality Bar and Cobar construction for $Pois_n$-algebras. 
\begin{lem}\label{L:Cobarn=Pois} Let $n\geq 2$. 
One has natural equivalences 
\[
Bar^{(n)}\circ\varphi^*\sim Bar^{(n)}_{Pois_n}, 
\]
where $\varphi^*$ is the restriction of structures from $Pois_n$-algebras to $E_n$-algebras and
\[
Cobar^{(n)}\circ\varphi^*\sim Cobar^{(n)}_{Pois_n}
\]
 where $\varphi^*$ is the restriction of structures from $Pois_n$-coalgebras to $E_n$-coalgebras.
\end{lem}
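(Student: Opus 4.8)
The plan is to deduce Lemma~\ref{L:Cobarn=Pois} from the theory of right modules over operads as developed in \cite{Fre3} and \cite{Fre-it}, combined with the formality morphism $\varphi:E_n\stackrel{\sim}{\rightarrow}Pois_n$. First I would recall the key structural fact already emphasized in the paragraph preceding the statement: for any operad $R$ under $E_1$, the extended bar construction is a functor $S_R(B_R,-)$ associated to a cofibrant quasi-free right $R$-module $B_R$, and in the case of $E_n$-operads this functor is precisely the iterated bar construction $Bar^{(n)}$ (this is the content of \cite{Fre-it}). The same formalism applies to $Pois_n$, yielding a right $Pois_n$-module $B_{Pois_n}$ with $S_{Pois_n}(B_{Pois_n},-)\sim Bar^{(n)}_{Pois_n}$.

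The heart of the argument is then the standard comparison principle for functors associated to right modules (\cite[Theorem 15.1.A]{Fre3}), which I would apply exactly as it was used earlier in the excerpt for the projection $\pi:E_1\stackrel{\sim}{\rightarrow}Ass$ in the conservativity proof of $\mathcal{B}_{E_1}$. Concretely, the formality weak equivalence $\varphi:E_n\stackrel{\sim}{\rightarrow}Pois_n$ induces a weak equivalence of right $E_n$-modules $B_{E_n}\stackrel{\sim}{\rightarrow}\varphi^*B_{Pois_n}$ (pulling back the right $Pois_n$-module structure along $\varphi$), and by \cite[Theorem 16.B]{Fre3} the adjunction between right modules given by extension/restriction along $\varphi$ is a Quillen equivalence, so one can transport cofibrancy. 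Since $B_{E_n}$ and $\varphi^*B_{Pois_n}$ are weakly equivalent cofibrant right $E_n$-modules, \cite[Theorem 15.1.A]{Fre3} gives a natural weak equivalence of the associated functors
\[
S_{E_n}(B_{E_n},-)\circ\varphi^*\cong S_{Pois_n}(\varphi_!B_{E_n},-)\stackrel{\sim}{\rightarrow}S_{Pois_n}(B_{Pois_n},-),
\]
which upon identifying both sides with the respective iterated bar constructions reads $Bar^{(n)}\circ\varphi^*\sim Bar^{(n)}_{Pois_n}$. This establishes the first equivalence.

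For the second equivalence, concerning the cobar constructions, I would dualize. The functors $Cobar^{(n)}$ and $Cobar^{(n)}_{Pois_n}$ are the left adjoints in the Koszul duality Quillen equivalences for $E_n$- and $Pois_n$-coalgebras respectively, with $Bar^{(n)}$ and $Bar^{(n)}_{Pois_n}$ as right adjoints. The formality morphism $\varphi$ induces compatible restriction functors on the coalgebra side, and the natural equivalence $Bar^{(n)}\circ\varphi^*\sim Bar^{(n)}_{Pois_n}$ on the algebra side transposes, via the two bar-cobar adjunctions and the fact that all the relevant units and counits are weak equivalences (because over a characteristic zero field every chain complex is both fibrant and cofibrant, and cofibrations of coalgebras are degreewise injections), into the desired $Cobar^{(n)}\circ\varphi^*\sim Cobar^{(n)}_{Pois_n}$ for the restriction of structures from $Pois_n$-coalgebras to $E_n$-coalgebras.

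The main obstacle I anticipate is not the formal comparison machinery but rather verifying that the iterated bar construction for $Pois_n$-algebras is genuinely realized by a functor associated to a cofibrant right $Pois_n$-module in a way fully parallel to the $E_n$-case of \cite{Fre-it}, and that the module $B_{Pois_n}$ admits a quasi-free cofibrant resolution to which \cite[Theorem 15.1.A]{Fre3} applies. One must also take care that the hypothesis $n\geq 2$ is used where needed: it guarantees that a formality morphism $\varphi:E_n\stackrel{\sim}{\rightarrow}Pois_n$ exists at all (the operads $E_n$ are formal for $n\geq 2$, while $E_1$ is not equivalent to $Pois_1=Ass$ in the same sense). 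Once these identifications are in place, the two naturality statements follow essentially formally from the right-module comparison theorem and adjunction transposition, so I would expect the bulk of the actual work to reside in pinning down these module-theoretic models rather than in any delicate homotopical estimate.
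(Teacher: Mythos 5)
Your proposal is correct and follows essentially the same route as the paper: the first equivalence is obtained exactly as in the paper via the right-module formalism of \cite{Fre3} (the natural isomorphism $S_{E_n}(B_{E_n},-)\circ\varphi^*\cong S_{Pois_n}(\varphi_!B_{E_n},-)$, the Quillen equivalence of \cite[Theorem 16.B]{Fre3}, and the comparison theorem \cite[Theorem 15.1.A]{Fre3}), with $Bar^{(n)}$ identified as the functor of the right module $B_{E_n}$ by \cite{Fre-it}. The second equivalence is likewise deduced in the paper by transposing the first across the bar--cobar Koszul duality Quillen equivalences for both operads, just as you do.
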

\begin{proof}
Given the formality morphism $\varphi:E_n\stackrel{\sim}{\rightarrow}Pois_n$, according to \cite[Theoerem 7.2.2]{Fre3},
there is a natural isomorphism
\[
S_{E_n}(B_{E_n},-)\circ\varphi^*\cong S_{Pois_n}(\varphi_{!}B_{E_n},-)
\]
where $\varphi^*$ is the restriction of structures fitting in the adjunction
\[
\varphi_{!}:Pois_n-Alg\rightleftarrows E_n-Alg:\varphi^*
\]
and $\varphi_{!}$ is the extension of structures fitting in the adjunction
\[
\varphi_{!}:Pois_n-Mod\rightleftarrows E_n-Mod:\varphi^*
\]
between right $Pois_n$-modules and right $E_n$-modules.
Since $\varphi$ is a weak equivalence, by \cite[Theorem 16.B]{Fre3} the adjunction above between right modules is a Quillen
equivalence, so the weak equivalence of right $E_n$-modules $B_{E_n}\stackrel{\sim}{\rightarrow}\varphi^*B_{Pois_n}$ corresponds by adjunction to a weak equivalence of right $Pois_n$-modules $\varphi_{!}B_{E_n}\stackrel{\sim}{\rightarrow}B_{Pois_n}$.
Moreover, since $\varphi_{!}$ is a left Quillen functor, the right module $\varphi_{!}B_{E_n}$ is still cofibrant, so this is a weak equivalence of cofibrant right $Pois_n$-modules. By \cite[Theorem 15.1.A]{Fre3}, it induces consequently a natural weak equivalence
\[
S_{E_n}(B_{E_n},-)\circ\varphi^*\cong S_{Pois_n}(\varphi_{!}B_{E_n},-)\stackrel{\sim}{\rightarrow}S_{Pois_n}(B_{Pois_n},-),
\]
hence
\[
Bar^{(n)}\circ\varphi^*\sim Bar^{(n)}_{Pois_n}
\]
where $\varphi^*$ is the restriction of structures. This proves the first claim. 
Since $Bar^{(n)}$ fits in a Quillen equivalence with $Cobar^{(n)}$ (for both operads) 
as a left adjoint, the first claim also implies that
\[
Cobar^{(n)}\circ\varphi^*\sim Cobar^{(n)}_{Pois_n}
\]
as well, where $\varphi^*$ is now the restriction of structures from $Pois_n$-coalgebras to $E_n$-coalgebras.
\end{proof}

We now compute $Cobar^{(n)}_{Pois_n}$ on $Sym^{\geq 1}(W)$. Recall that to a  a dg Lie coalgebra $(\mathfrak{g},\delta, d)$, we can associate 
a $Pois_{n}$-coalgebra defined by $Sym^{\geq 1}(\mathfrak{g}[n-1])$ with cocommutative cobracket given by the one of $Sym^{\geq 1}(-)=Cocom$ (the free 
cocommutative conilpotent coalgebra functor)
and Lie cobracket 
induced by the one of $\mathfrak{g}$ and the Leibniz rule. Note that $\mathfrak{g} \mapsto Sym^{\geq 1}(\mathfrak{g}[1-n])$ is the right adjoint
of the canonical functor $Pois_{n}-Coalg \to Lie-Coalg$ given by $P\mapsto P[1-n]$. The following Lemma is rather standard
\begin{lem}\label{L:CobarPoisn} One has 
  $$Cobar^{(n)}_{Pois_n}(Sym^{\geq 1}(\mathfrak{g}[n-1])) \cong \big(Com(\mathfrak{g}[-1]), d_{CE}\big)$$ 
where $Cobar^{(n)}_{Pois_n}(-) $ is the $E_n$-Koszul duality functor $Pois_n-Cog^{conil} \to Pois_n-Alg^{aug}$ and 
the right hand side is the part of positive weight of the Chevalley-Eilenberg algebra of the dg-Lie coalgebra $\mathfrak{g}$. 
\end{lem}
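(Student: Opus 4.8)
The plan is to deduce the identification from the Koszul self-duality of the operad $Pois_n$ together with the behaviour of this duality with respect to the Poisson PBW decomposition. Recall that $Pois_n$ is a Koszul operad whose Koszul dual is again a Poisson cooperad up to an $n$-fold operadic suspension, so that $Cobar^{(n)}_{Pois_n}$ is exactly the Koszul-duality cobar functor realizing the equivalence $Pois_n-Cog^{conil}\simeq Pois_n-Alg^{aug}$ (see \cite{LV} for the general formalism and \cite{CaWi, Ta-deformationofd-algebra} for the Poisson case). On a conilpotent $Pois_n$-coalgebra $C$ it produces a quasi-free $Pois_n$-algebra $(Pois_n(s^{-n}\overline{C}),\partial)$ whose differential is built from the reduced cooperations of $C$ via the Koszul twisting morphism. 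First I would record the self-dual distributive law $Pois_n\cong Com\circ Lie\{n-1\}$, whose dual statement describes a conilpotent $Pois_n$-coalgebra as a conilpotent cocommutative coalgebra equipped with a compatible $Lie\{n-1\}$-cobracket. Under this description $Sym^{\geq 1}(\mathfrak{g}[n-1])=Cocom(\mathfrak{g}[n-1])$ is the \emph{cofree} conilpotent cocommutative coalgebra on $\mathfrak{g}[n-1]$, with Lie cobracket induced by $\delta$ and the co-Leibniz rule; this is precisely the value on $\mathfrak{g}$ of the co-induction functor right adjoint to $P\mapsto P[1-n]$ mentioned just above.

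The heart of the argument is then to exploit that the coalgebra is cofree in the cocommutative direction. Since $Com^{!}=Lie$ and $Lie^{!}=Com$, the two halves of the twisting differential on $Cobar^{(n)}_{Pois_n}(C)$ split according to the distributive law: the part coming from the cocommutative coproduct is the $Com$-Koszul differential, while the part coming from the Lie cobracket is the $Lie$-Koszul differential. For a cofree cocommutative coalgebra the former contributes no homology beyond the cogenerators, the associated cobar complex deformation-retracting onto $\mathfrak{g}[n-1]$, exactly as the reduced indecomposables complex of a cofree coalgebra is contractible onto its cogenerators. Carrying out this contraction, and using that the distributive law and its Koszul dual are compatible so the two differentials do not interfere after the retraction, reduces $Cobar^{(n)}_{Pois_n}(Sym^{\geq 1}(\mathfrak{g}[n-1]))$ to the $Lie$-direction cobar evaluated on the cogenerators $\mathfrak{g}[n-1]$, with the $n$-fold suspension of $Cobar^{(n)}$ accounting for the shift $\mathfrak{g}[n-1]\rightsquigarrow\mathfrak{g}[-1]$.

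The $Lie$-direction cobar on a Lie coalgebra is by definition the Chevalley-Eilenberg functor $C^{*}_{CE}\colon Lie-Cog\to Com-Alg$ coming from the duality $Lie^{!}=Com$, which sends $\mathfrak{g}$ to the reduced commutative algebra $Sym^{\geq 1}(\mathfrak{g}[-1])=Com(\mathfrak{g}[-1])$ equipped with the differential $d_{CE}$ induced by $\delta$. Finally I would check that the resulting $Pois_n$-algebra carries the trivial Poisson bracket, so that it is genuinely the commutative CE algebra regarded as a $Pois_n$-algebra, and that the differential obtained from the cobar twisting morphism agrees on the nose with $d_{CE}$; both are routine sign and weight verifications given the explicit form of the Poisson Koszul twisting morphism. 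The main obstacle is the middle step, namely turning the heuristic ``the cofree cocommutative direction cancels'' into a precise statement. I expect to handle it either by invoking the compatibility of operadic Koszul duality with the self-dual distributive law (so that $Cobar^{(n)}_{Pois_n}$ restricted to co-induced coalgebras is computed by the $Lie$-cobar alone), or, alternatively and more robustly, by applying the inverse equivalence $Bar^{(n)}_{Pois_n}$ and verifying directly that $Bar^{(n)}_{Pois_n}(Com(\mathfrak{g}[-1]),d_{CE})\cong Sym^{\geq 1}(\mathfrak{g}[n-1])$, that is, by computing the $n$-fold Poisson bar construction of a Chevalley-Eilenberg commutative algebra.
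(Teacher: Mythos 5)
Your proposal follows essentially the same route as the paper's proof: Koszul self-duality of $Pois_n$ together with the distributive law $Pois_n\cong Com\circ Lie\{n-1\}$ identifies $Cobar^{(n)}_{Pois_n}(C)$ with $Com\big(Lie(C[-1])[1-n]\big)$, and cofreeness of $Sym^{\geq 1}(\mathfrak{g}[n-1])$ in the cocommutative direction is what collapses this onto $Com(\mathfrak{g}[-1])$ with the Chevalley--Eilenberg differential. The precise form of your ``middle step'' in the paper is the observation that the inner complex $Lie\big(Sym^{\geq 1}(\mathfrak{g}[n-1])[-1]\big)$ is the Harrison complex of a cofree conilpotent cocommutative coalgebra, hence quasi-isomorphic to the cogenerators $(\mathfrak{g}[n-1])[-1]$ --- which is exactly the first of the two options you propose for making the contraction rigorous.
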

\begin{proof}
By Koszul
duality of the $Pois_n$-operad, the functor $Cobar^{(n)}_{Pois_n}:dg-Pois_n-Cog^{conil} \to dg-Pois_n-Alg^{aug}$
on $C$ is given by the free $Pois_n^{!}$-algebra on $C$ endowed with the differential induced by its $Pois_n$-coalgebra structure.
Recall that $Pois_n^{!}= \Lambda^n Pois_n$. 
Hence, one has an equivalence 
$$Cobar^{(n)}(C)= Com\big( Lie(C[-1])[1-n]\big)$$
see \cite{Ta-deformationofd-algebra, Gi-HomotopyGerstenhaber, Hin}.
We are interested in the case where $C= Sym^{\geq 1}(\mathfrak{g}[n-1])$ 
which is  cofree as a conilpotent cocommutative coalgebra (in cochain complexes). 
It follows that the Harrison cochain complex $Har_*\big(Sym^{\geq 1}(\mathfrak{g}[n-1])\big) = Lie(Sym^{\geq 1}(\mathfrak{g}[n-1])[-1])$ 
of  $Sym^{\geq 1}(\mathfrak{g}[n-1])$ is quasi-isomorphic to $(\mathfrak{g}[n-1])[-1]$. 
Hence  we have a quasi-isomorphism of cochain complexes
$$Cobar^{(n)}(Sym(\mathfrak{g}[n-1]))= Com\big( Lie(Sym^{\geq 1}(\mathfrak{g}[n-1])[-1])[1-n]\big) 
\simeq Com (\mathfrak{g}[-1])\big)$$ where the right hand side is 
identified with the part of positive weight in the Chevalley-Eilenberg algebra of the Lie coalgebra $\mathfrak{g}$. 
(see~\cite{Tam1, Gi-HomotopyGerstenhaber, Hin}
for detailled computation of this dg-$Pois_n$-algebra cohomology of the free $Pois_n$-algebra generated by a $dg$-Lie algebra). 
\end{proof}

We finally compute $Cobar^{(n)}$ on $Sym^{\geq 1}(W)$.
\begin{lem}\label{L:Cobarn} Let $n\geq 2$. One has $Cobar^{(n)}(Sym^{\geq 1}(W)) \cong Sym^{\geq 1}(W[-n])$ 
where $Cobar^{(n)}$ is the $E_n$-Koszul duality functor $E_n-Cog^{conil} \to E_n-Alg^{aug}$. 
\end{lem}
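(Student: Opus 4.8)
The plan is to reduce the computation of $Cobar^{(n)}$ on $Sym^{\geq 1}(W)$ to the Poisson operad case, where Lemma~\ref{L:CobarPoisn} gives us an explicit answer, and then to recognize that explicit answer as the desired symmetric algebra. First I would use the formality morphism $\varphi:E_n\stackrel{\sim}{\rightarrow}Pois_n$ together with Lemma~\ref{L:Cobarn=Pois}, which provides the natural equivalence $Cobar^{(n)}\circ\varphi^*\sim Cobar^{(n)}_{Pois_n}$. To apply this, I must exhibit $Sym^{\geq 1}(W)$ as an $E_n$-coalgebra that is the restriction of structures along $\varphi$ of a $Pois_n$-coalgebra. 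The natural candidate is to view $Sym^{\geq 1}(W)$ as a cocommutative coalgebra and hence, via the canonical map $E_n\to E_\infty\stackrel{\sim}{\rightarrow}Com$ and its factorization through $Pois_n$ encoded in diagram~\eqref{d:compatibilityEmCog}, as a $Pois_n$-coalgebra with \emph{trivial} Lie cobracket.

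The key step is then to match this with the input of Lemma~\ref{L:CobarPoisn}. That lemma is stated for $Pois_n$-coalgebras of the form $Sym^{\geq 1}(\mathfrak{g}[n-1])$ arising from a dg Lie coalgebra $\mathfrak{g}$, and computes $Cobar^{(n)}_{Pois_n}(Sym^{\geq 1}(\mathfrak{g}[n-1]))\cong \big(Com(\mathfrak{g}[-1]),d_{CE}\big)$. I would take $\mathfrak{g}$ to be the \emph{abelian} dg Lie coalgebra on $W[1-n]$, so that $\mathfrak{g}[n-1]=W$ and the Poisson cobracket induced on $Sym^{\geq 1}(W)$ is precisely the trivial one — matching the structure identified above. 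Since $\mathfrak{g}$ is abelian, its Chevalley-Eilenberg differential $d_{CE}$ vanishes, and the positive-weight part of the Chevalley-Eilenberg algebra reduces to the free cocommutative object $Com(\mathfrak{g}[-1])=Sym^{\geq 1}(W[1-n][-1])=Sym^{\geq 1}(W[-n])$. Combining with Lemma~\ref{L:Cobarn=Pois} yields the desired equivalence
\[
Cobar^{(n)}(Sym^{\geq 1}(W))\cong Cobar^{(n)}_{Pois_n}(Sym^{\geq 1}(W))\cong Sym^{\geq 1}(W[-n]).
\]

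The main obstacle I anticipate is bookkeeping around the degree shifts and, more substantively, verifying that the $E_n$-coalgebra structure on $Sym^{\geq 1}(W)$ really does lift to (equivalently, is the restriction of) the $Pois_n$-coalgebra structure with trivial cobracket, rather than some nontrivial one. This requires invoking the compatibility encoded in diagram~\eqref{d:compatibilityEmCogbig}: the cocommutative coalgebra $Sym(W)$, obtained as the cofree conilpotent cocommutative coalgebra, carries its canonical dg-cocommutative structure, and restriction along the factorization $E_n\to Pois_n\to Com$ produces exactly the $Pois_n$-coalgebra with vanishing Lie cobracket. One must check that the abelian Lie coalgebra input is genuinely the correct one and that no hidden cobracket is introduced; this is where care with the Koszul duality $Pois_n^{!}=\Lambda^n Pois_n$ and the identification of the Harrison complex of a cofree cocommutative coalgebra (as used in the proof of Lemma~\ref{L:CobarPoisn}) is essential. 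Once that identification is secured, the remainder is a direct unwinding of definitions.
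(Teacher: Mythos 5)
Your proposal is correct and follows essentially the same route as the paper's proof: reduce to the $Pois_n$-cobar construction via Lemma~\ref{L:Cobarn=Pois}, view $Sym^{\geq 1}(W)$ as a $Pois_n$-coalgebra with trivial cobracket, and apply Lemma~\ref{L:CobarPoisn} with $\mathfrak{g}=W[1-n]$ abelian, so that $d_{CE}$ vanishes and $Com(\mathfrak{g}[-1])=Sym^{\geq 1}(W[-n])$. Your extra care in checking, via diagram~\eqref{d:compatibilityEmCogbig}, that the $E_n$-coalgebra structure really is the restriction of the trivial-cobracket $Pois_n$-coalgebra structure is a point the paper treats implicitly, but it does not change the argument.
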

\begin{proof}
By Lemma~\ref{L:Cobarn=Pois}, 
we conclude that proving the result for the $E_n$-cobar construction $Cobar^{(n)}(Sym^{\geq 1}(W))$ on $Sym^{\geq 1}(W)$ 
considered as an $E_n$-coalgebra boils down to check it for the $Pois_n$-cobar construction $Cobar^{(n)}_{Pois_n}(-)$ on $Sym^{\geq 1}(W)$
considered as a $Pois_n$-coalgebra with trivial cobracket. 
Applying the previous Lemma~\ref{L:CobarPoisn} to the case $\mathfrak{g}= W[1-n]$ equipped with zero cobracket (or simply redoing the proof of the Lemma in this simpler case), we get  
 $$Cobar^{(n)}(Sym^{\geq 1}(W))= Com\big( Lie(Sym^{\geq 1}(W)[-1])[1-n]\big) \simeq Com (W[-n])\big)$$ where the right hand side is identified with the part of positive weight of the Chevalley-Eilenberg algebra of a Lie coalgebra with a null cobracket, since $ Sym^{\geq 1}(W)$ has  a null cobracket. 
Hence it is formal and we get that $Cobar^{(n)}(Sym^{\geq 1}(W)) \simeq Sym^{\geq 1}(W[-n])$.
\end{proof}

\smallskip

\begin{proof}[Proof of Proposition~\ref{P:EnhancedCobarofSym(V)}] 
By definition of the functor $\tilde{\Omega}$ (Section~\ref{S:proofCorollary0.2}), it is the composition of functors $Cobar^{(2)}\circ \phi^*\mathcal{B}_{E_1}^{enh}(-)_-$. Hence, one has 
\begin{eqnarray*}
\tilde{\Omega}(Sym(V)) &\simeq&  Cobar^{(n)}\Big(\phi^*\mathcal{B}_{E_1}^{enh}(Sym^{\geq 1}(V))\Big) \\
 &\simeq&  Cobar^{(n)}\Big( Sym^{\geq 1}(V[1])\Big) \qquad \mbox{by Lemma~\ref{L:psistar} } \\
 &\simeq& Sym^{\geq 1}(V[1][-2]) =Sym^{\geq 1}(V[-1])
\end{eqnarray*}
by Lemma~\ref{L:Cobarn} in the case $n=2$.   
\end{proof}

\section{Etingof-Kazdhan deformation quantization}\label{S:EKQ}
We now apply our computations of the $L_\infty$-structure of the Gerstenhaber-Schack complex to quantization of Lie bialgebras and prove Corollary~\ref{C:EKQuant}.

If $V$ is a finite dimensional vector space, by~\cite[Corollary 5.1]{Mer1}, the Maurer-Cartan elements of the graded Lie algebra
$$\prod_{m \geq 1}\big(\bigoplus_{n\geq 1} Sym^n(V[-1])\otimes 
Sym^m((V)^*[-1])\big)[2]$$
(for the Lie bracket induced by the Poisson bracket of Definition~\ref{D:strictPois3onHGS})
are exactly the prop morphisms 
$BiLie_{\infty}^{+}\rightarrow End_V$, that is, the homotopy Lie bialgebra structures on $V$.
Here $BiLie$ is the prop governing (dg-)Lie bialgebras structures on (dg-)vector spaces,  $BiLie_{\infty}$ is its standard cofibrant 
resolution, and $BiLie_{\infty}^{+}$ is the result of the $+$ construction (Section~\ref{S:Plus}) on the later one. 

By Theorem~\ref{T:IdentificationGSforSym(V)}, this Lie algebra
is the one of $H^*_{GS}(Sym(V),Sym(V))$ and is quasi-isomorphic to the deformation complex $C^*_{GS}(Sym(V),Sym(V))$. This is the first step
to derive the Etingof-Kazdhan quantization theorem from the formality theorem. 

Next, let $\varphi^+: Bialg_{\infty}^+\to End^{Poly}_{Sym(V)}$ be the map canonically associated to the (cocommutative and commutative) 
bialgebra structure on $Sym(V)$.
The Etingof-Kazdhan quantization theorem assert that any Maurer-Cartan element in the above Lie algebra gives rise functorially to 
a formal deformation of $\psi^+$, in other words a point of the moduli space $\underline{Bialg_{\infty}^+}\{Sym(V)\}^{\varphi^+}(k[[\hbar]])$.
These observations  (a dg extension of the quantization of Lie bialgebra was established
in the Appendix of \cite{GH}) can be made for a dg vector space $V$ as well.  

\smallskip

According to Merkulov~\cite{Mer2}, 
there is an explicit completed\footnote{completion for the filtrations induced by the genus and the number
of vertices} dg prop $(DefQ^+,d^+)$ such that, for any cochain complex $(V, d_V)$,  the representations of 
$(DefQ^+,d^+)$ into $(V, d_V)$ are in bijection with polydifferential representations of homotopy bialgebra structures on 
the cochain algebra $(Sym(V), d_V)$. In other words
\begin{equation} Mor_{Prop}(DefQ^+, End_{V}) \; \cong \; Mor_{Prop}\big(Bialg_{\infty}^+, End^{Poly}_{Sym(V)} 
\big). \end{equation}
Polydifferential representations are explained in~\cite{Mer1, Mer2}; they amount to replace the endomorphism prop $End_{Sym(V)} $  
with its polydifferential endomorphism $End^{Poly}_{Sym(V)}$ which is the dg-sub-prop of $End_{Sym(V)} $ spanned by 
those multilinear maps that are (normalized) polydifferential operators on $Sym(V)$. Merkulov~\cite{Mer2} has proved that the natural map  
\begin{equation}
 Mor_{Prop}(DefQ^+, End_{V}) \stackrel{\simeq}\longrightarrow Mor_{Prop}\big(Bialg_{\infty}^+, End_{Sym(V)} 
\big).
\end{equation}
is a quasi-isomorphism. 

\medskip

Using the above analysis, the formality Theorem~\ref{T:GSforSym} implies the following quantization theorem. Following the previous notation,
we have the map $\varphi^+: Bialg_{\infty}^+\to End_{Sym(V)}$ giving the dg bialgebra structure of $Sym(V)$ for a cochain complex $V$.
Further, we also have $0^+:BiLie_{\infty}^+\to End_{V}$ the (trivial) map corresponding to the trivial (= null) Lie bialgebra structure on $V$.
\begin{thm}\label{T:EK} Let $(V,d)$ be a ($\mathbb{Z}$-graded) cochain complex with finite dimensional
cohomology in each degree. 
There is a weak-equivalence of formal moduli problems
\[ \underline{Bialg_{\infty}^+}\{Sym(V)\}^{\varphi^+} \simeq \underline{BiLie_{\infty}^+}\{V\}^{0^+}\]
\end{thm}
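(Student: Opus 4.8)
The plan is to invoke Lurie's equivalence theorem (Section 2.1) to reduce the claimed equivalence of formal moduli problems to a quasi-isomorphism of their tangent $L_{\infty}$-algebras, namely $g_{Bialg_{\infty}^+, Sym(V)}^{\varphi^+} \simeq g_{BiLie^+, V}^{0^+}$. Once this is established, tensoring with the maximal ideal $m_A$ of each augmented artinian cdga $A$ and using the invariance of Maurer-Cartan spaces of complete filtered $L_{\infty}$-algebras under filtered quasi-isomorphisms \cite{Yal1} yields the pointwise weak equivalence $\underline{Bialg_{\infty}^+}\{Sym(V)\}^{\varphi^+}(A) \simeq \underline{BiLie_{\infty}^+}\{V\}^{0^+}(A)$, hence the equivalence of formal moduli problems.

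First I would identify the left-hand tangent complex. By Merkulov's theorem \cite{Mer2} (recalled above) there is an isomorphism of $L_{\infty}$-algebras $g_{Bialg_{\infty}^+, Sym(V)}^{\varphi^+} \cong C^*_{GS}(Sym(V),Sym(V))$, and the quasi-isomorphism $Mor_{Prop}(DefQ^+, End_V) \stackrel{\sim}{\rightarrow} Mor_{Prop}(Bialg_{\infty}^+, End_{Sym(V)})$ shows that the polydifferential deformation complex computes the same tangent $L_{\infty}$-algebra, so no information is lost by working with polydifferential cochains. Next I would invoke the $E_3$-formality Theorem~\ref{T:GSforSym}: since $C^*_{GS}(Sym(V),Sym(V))$ is formal as an $E_3$-algebra, it is in particular formal as an $L_{\infty}$-algebra, hence quasi-isomorphic to its cohomology $H^*_{GS}(Sym(V),Sym(V))$ equipped with the $L_{\infty}$-structure underlying its $Pois_3$-structure; by Corollary~\ref{C:GSconjecture}(2) this underlying $L_{\infty}$-structure is precisely the one controlling the deformation theory of the bialgebra. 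By Theorem~\ref{T:IdentificationGSforSym(V)}(1), as a $Pois_3$-algebra
\[
H^*_{GS}(Sym(V),Sym(V)) \cong \widehat{Sym}(H^*(V)^*[-1]) \otimes Sym(H^*(V)[-1]),
\]
with bracket induced by the evaluation pairing of Definition~\ref{D:strictPois3onHGS}; shifting by $[2]$, its underlying graded Lie algebra is exactly the one of \cite[Corollary 5.1]{Mer1} whose Maurer-Cartan elements are the homotopy Lie bialgebra structures on $H^*(V)$.

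Finally I would match this with the right-hand side. The object $g_{BiLie^+, V}^{0^+}$ is the tangent $L_{\infty}$-algebra of the trivial Lie bialgebra structure $0^+$ on $V$; by \cite[Corollary 5.1]{Mer1} it is the graded Lie algebra $\prod_{m,n\geq 1} Sym^n(V[-1]) \otimes Sym^m(V^*[-1])[2]$ with Poisson bracket and with differential induced solely by $d_V$, the structure map being null. Choosing a quasi-isomorphism $H^*(V) \stackrel{\sim}{\rightarrow} V$, which exists over the characteristic-zero field $\mathbb{K}$ and is the same choice as in Theorem~\ref{T:IdentificationGSforSym(V)}(3), and using the homotopy invariance of deformation complexes under quasi-isomorphisms of complexes with finite-dimensional cohomology in each degree, identifies $g_{BiLie^+, V}^{0^+}$ with the graded Lie algebra built on $H^*(V)$ displayed above. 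Composing the chain of quasi-isomorphisms gives the desired $L_{\infty}$-equivalence.

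The main obstacle will be the matching step: verifying that the strict graded Lie bracket that $E_3$-formality deposits on $H^*_{GS}$ (coming from the $Pois_3$ evaluation pairing of Definition~\ref{D:strictPois3onHGS}) coincides, under the identification of Theorem~\ref{T:IdentificationGSforSym(V)}, with the Lie bracket of \cite[Corollary 5.1]{Mer1} governing homotopy Lie bialgebra deformations --- that is, that the two a priori distinct sources of Lie structure (deformation theory of bialgebras, via $E_2$-Hochschild cohomology, on one side, and the Schouten-type bracket on $Sym(V\oplus V^*)$ on the other) genuinely agree. A second delicate point is controlling the completions (the $\widehat{Sym}$ factors) and the passage between $(V,d)$ and $H^*(V)$ in the differential graded setting, where the finite-type hypothesis on the cohomology of $V$ is essential for linear dualization to commute with taking cohomology.
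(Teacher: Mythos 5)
Your proposal is correct and rests on the same pillars as the paper's proof (the Merkulov identification $g_{Bialg_{\infty}^+,Sym(V)}^{\varphi^+}\simeq C^*_{GS}(Sym(V),Sym(V))$, Theorem~\ref{T:IdentificationGSforSym(V)}, \cite[Corollary 5.1]{Mer1}, and Lurie's classification to pass from tangent $L_\infty$-algebras back to formal moduli problems), but it takes a detour the paper avoids. The paper works entirely at the chain level: it invokes part (2) of Theorem~\ref{T:IdentificationGSforSym(V)}, which identifies $C^*_{GS}(Sym(V),Sym(V))$ with the dg $Pois_3$-algebra ${Sym}^{\geq 1}(V[-1])\otimes \widehat{Sym}^{\geq 1}((V)^*[-1])$ (differential induced by $d_V$, bracket the evaluation pairing of Definition~\ref{D:strictPois3onHGS}), and this is matched directly with $g_{BiLie_{\infty}^+,V}^{0^+}$ by \cite[Corollary 5.1]{Mer1}; no passage to cohomology and no formality of the GS complex is needed for this theorem. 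You instead use parts (1) and (3), i.e.\ $E_3$-formality, to replace $C^*_{GS}$ by $H^*_{GS}(Sym(V),Sym(V))$ built on $H^*(V)$, and must then return to $V$ on the $BiLie$ side by choosing a quasi-isomorphism $H^*(V)\stackrel{\sim}{\to}V$ and invoking invariance of $g_{BiLie^+,-}^{0^+}$ under quasi-isomorphisms of the underlying complex. That extra invariance step is true over a characteristic-zero field (every quasi-isomorphism is a homotopy equivalence, and homotopy transfer of the null structure is null), but it is a genuine additional burden --- the deformation complex is not functorial in $V$, so this requires a transfer or moduli-theoretic argument --- and it is precisely what the paper's chain-level route sidesteps; note also that since part (3) is itself deduced from part (2) by such a choice of quasi-isomorphism, your route makes this choice twice, once in each direction. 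Finally, the ``main obstacle'' you flag (agreement of the formality-induced bracket on $H^*_{GS}$ with the bracket of \cite[Corollary 5.1]{Mer1}) is not left open in the paper's framework: part (1) of Theorem~\ref{T:IdentificationGSforSym(V)} already pins down the bracket as the evaluation-pairing bracket of Definition~\ref{D:strictPois3onHGS}, which is exactly the bracket that \cite[Corollary 5.1]{Mer1} attaches to Lie bialgebra deformations, as stated at the opening of Section~\ref{S:EKQ}; your completion discrepancy (you write a full product $\prod_{m,n}$ where the paper has $\prod_m\bigoplus_n$, i.e.\ completion only in the dual variable) should be fixed to match the cited statements.
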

In particular, given an artinian cdga $R$, any $R$-deformation of the trivial Lie bialgebra structure on $V$ 
(in other words a dg-bialgebra structure on $V\otimes R$)
gives rise to a (essentially unique isomorphism class of) $R$-deformation of  $Sym(V)$.
\begin{proof}
 By Theorem~\ref{T:IdentificationGSforSym(V)} and Theorem~\ref{T:EquivDef}, we have an equivalence of deformation complexes 
 \begin{eqnarray*}
  Def(Sym(V))\simeq g_{Bialg_{\infty}^+,Sym(V)}^{\varphi^+} &\simeq & C^*_{GS}(Sym(V), Sym(V))\\
  &\simeq &  {Sym}^{\geq 1}(V[-1]){\otimes}  
\widehat{Sym}^{\geq 1}(V)^*[-1]) \\
& \simeq & g_{BiLie_{\infty}^+, V}^{0^+}
 \end{eqnarray*}
 where the last equivalence is from~\cite[Corollary 5.1]{Mer1}. 
 Then the result follows from Theorem~\ref{T:Yal2} or Lurie classification of (commutative) formal moduli problem~\cite{Lur0}.
\end{proof}
The theorem also implies that formal deformations of the null Lie bialgebra structure quantize into formal deformation
s of the symmetric bialgebra on $V$, hence Corollary~\ref{C:EKQuant}. 

\begin{proof}[Proof of Corollary~\ref{C:EKQuant}]
 The first claim of the corollary is equivalent to the existence of a  natural (in $V$)  weak-equivalence
 \begin{equation}\label{eq:EKQuant.1}
\underline{Bialg_{\infty}^+}\{Sym(V)\}^{\varphi^+}(k[[\hbar]])
 \; \simeq \;  \underline{BiLie_{\infty}^+}\{V\}^{0^+}(k[[\hbar]])
 \end{equation}
where,  for any formal moduli problem
$\mathcal{F}$, we denote $$\mathcal{F}(k[[\hbar]]) \; := \;\mathop{lim}\limits_{i}\, \mathcal{F}(k[\hbar]/h^i).$$
By~\cite[Corollaire 2.11]{To-Bourbaki} (or \cite{Lur0}), there is an natural weak-equivalence
\begin{equation}\label{eq:FMPinhbar}
 \mathcal{F}(k[[\hbar]])  \; \simeq \; Map(k[-1], g_\mathcal{F})
\end{equation}
where $g_\mathcal{F}$ is the $L_\infty$-algebra associated to the formal moduli problem $\mathcal{F}$ and $k[-1]$ is the abelian one dimensional
Lie algebra concentrated in degree $1$.

The equivalence~\eqref{eq:FMPinhbar} now implies the first claim~\eqref{eq:EKQuant.1} in virtue of Theorem~\ref{T:EK} 
(and Theorem~\ref{T:EquivalentModuligivesEquivalentDefComplexex}).

\smallskip

For claim (2), we note that the above proof relies essentially on the formality Theorem~\ref{T:IdentificationGSforSym(V)} 
for the Gerstenhaber-Schack $L_\infty$-algebra of $Sym(V)$. This theorem relies on the formality of the $E_n$-operad for $n\geq 3$,
which does not rely on the choice of a Drinfeld associator. 
\end{proof}
Our proof somehow goes  in the converse way as Merkulov~\cite{Mer2} proof
of a dg prop quasi-isomorphism $EK: (DefQ^+,d^+)\to (\widehat{BiLie},0)$, which relies in \textit{loc. cit.} on the existence
of the Etingof-Kazdhan quantization functor for dg Lie bialgebras. 
Our proof establishes first this equivalence, from which the above statement actually follows. 
Applied to an ordinary finite dimensional vector space, our result gives a new proof 
of Etingof-Kazhdan deformation quantization. Indeed, homotopy Lie bialgebra structures $BiLie_{\infty}\rightarrow End_V$ are in this case strict Lie bialgebra structures: since the endomorphism prop is concentrated in degree $0$ and prop morphisms preserve the degree, the generators of positive degree in $BiLie_{\infty}$ are sent to zero. In particular, the $1$-cycles whose images under the differential of $BiLie_{\infty}$ are the $0$-boundaries inducing the Jacobi, co-Jacobi and Drinfeld's compatibility relations in homology, are sent to $0$. Since dg prop morphisms commute with the differentials, this means that these $0$-boundaries are also sent to $0$. That is, the two generators of degree $0$ in $BiLie_{\infty}$ are sent to a bracket and a cobracket satisfying the Jacobi, co-Jacobi and Drinfeld's compatibility relations, i.e. a strict Lie bialgebra structure.
This is a particular incarnation of the general fact that any $P_{\infty}$-algebra structure on a vector space is actually a strict one. Non trivial homotopy algebra structures appear only in the differential graded setting.
By the same argument, homotopy bialgebra structures $Bialg_{\infty}\rightarrow End_{Sym(V)}$ are only the ones factoring through $Bialg$.

Moreover, it further applies to any $\mathbb{Z}$-graded dg Lie bialgebra $g$ with cohomology 
of finite total dimension as well as homotopy Lie bialgebra structures on $g$.

\end{document}